\documentclass[12pt]{amsart}
\usepackage{geometry}
\usepackage[all]{xy}
\usepackage{graphicx}
\usepackage{amssymb, extarrows}
\usepackage{epstopdf}
\usepackage{xr}

\DeclareGraphicsRule{.tif}{png}{.png}{`convert #1 `dirname #1`/`basename #1 .tif`.png}


\makeatletter
\def\timenow{\@tempcnta\time
  \@tempcntb\@tempcnta
  \divide\@tempcntb60
  \ifnum10>\@tempcntb0\fi\number\@tempcntb
  \multiply\@tempcntb60
  \advance\@tempcnta-\@tempcntb
  :\ifnum10>\@tempcnta0\fi\number\@tempcnta}
\makeatother


\newtheorem{thm}{Theorem}[section]

\newtheorem{problem}{Problem}

\newtheorem{cor}[thm]{Corollary}
\newtheorem{sett}[thm]{Setting}
\newtheorem{lem}[thm]{Lemma}
\newtheorem{prop}[thm]{Proposition}
\newtheorem{fact}[thm]{Fact}
\newtheorem{example}[thm]{Example}
\theoremstyle{definition}
\newtheorem{defn}[thm]{Definition}
\theoremstyle{remark}
\newtheorem{rem}[thm]{Remark}
\numberwithin{figure}{section}
\numberwithin{table}{section}
\numberwithin{equation}{section}
\setcounter{tocdepth}{1}

\newcommand{\To}{\longrightarrow}

\mathchardef\mhyphen="2D

           \newcommand{\eq}[1][r]
   {\ar@<-3pt>@{-}[#1]
    \ar@<-1pt>@{}[#1]|<{}="gauche"
    \ar@<+0pt>@{}[#1]|-{}="milieu"
    \ar@<+1pt>@{}[#1]|>{}="droite"
    \ar@/^2pt/@{-}"gauche";"milieu"
    \ar@/_2pt/@{-}"milieu";"droite"}

 \setcounter{section}{0}
\newcommand{\R}{\mathbb R}  
\newcommand{\Z}{\mathbb Z}  

\newcommand{\N}{\mathbb N}  
\newcommand{\C}{\mathbb C}  

\newcommand{\rightsetse}[1]{%
\hidewidth\rotatebox[origin=c]{-45}{$\xrightarrow{\kern2em}$}
     \rlap{\raisebox{1ex}
     {$\kern-.8em\scriptstyle #1$}}\hidewidth}
\newcommand{\rightsetsw}[1]{%
\hidewidth\rotatebox[origin=c]{45}{$\xleftarrow{\kern2em}$}
     \rlap{\raisebox{.1ex}
     {$\kern-.8em\scriptstyle #1$}}\hidewidth}
\newcommand{\leftsetsw}[1]{%
\hidewidth
     \llap{\raisebox{1ex}
     {$\scriptstyle #1$\kern-.8em}}
    \rotatebox[origin=c]{45}{$\xleftarrow{\kern2em}$}\hidewidth}
\newcommand{\rightsetnw}[1]{%
\hidewidth\rotatebox[origin=c]{135}{$\xrightarrow{\kern2em}$}
     \rlap{\raisebox{1ex}
     {$\kern-.8em\scriptstyle #1$}}\hidewidth}
\newcommand{\rightsetd}[1]{%
\hidewidth\rotatebox[origin=c]{-90}{$\xrightarrow{\kern2em}$}
     \rlap{{$\scriptstyle #1$}}\hidewidth}
\usepackage[breaklinks]{hyperref}
\subjclass[2010 MSC]{Primary 22E45;
Secondary
  22E46, 
  32M15, 
  33C45,  
  33C80, 
  43A85
 }
\title
{Inversion of Rankin--Cohen operators via Holographic Transform}
\author{Toshiyuki Kobayashi, Michael Pevzner}

\begin{document}
\begin{abstract}
The analysis of branching problems for restriction
of representations brings the concept of  \emph{symmetry breaking transform} and \emph{holographic transform}.
Symmetry breaking operators decrease the number of variables
 in geometric models,
 whereas holographic operators increase it.
Various expansions in classical analysis
 can be interpreted as particular occurrences of these transforms.
From this perspective we investigate two remarkable families of 
 differential operators:
 the Rankin--Cohen operators and the holomorphic Juhl conformally covariant operators.
Then we establish for 
the corresponding symmetry breaking transforms 
 the Parseval--Plancherel type theorems
 and find explicit inversion formul{\ae} with integral expression
 of holographic operators.

The proof uses the F-method 
 which provides a duality
 between symmetry breaking operators
 in the holomorphic model
 and holographic operators
 in the $L^2$-model, 
 leading us to  deep links
 between special orthogonal polynomials and branching laws for infinite-dimensional representations of real reductive
Lie groups.\vskip7pt
\noindent Keywords and phrases: \emph{Symmetry breaking, holographic transform, Rankin--Cohen operators, Juhl operators, orthogonal polynomials, branching rules, F-method}.
\end{abstract}

\maketitle
\tableofcontents
\section{Introduction}\label{sec:Intro}

Let $\pi$ be an irreducible representation of a group $G$
 on a vector space $V$, 
 and $G'$ a subgroup.  
The $G$-module $(\pi,V)$ may be seen as a $G'$-module by restriction, 
 for which we write $\pi|_{G'}$.  
For an irreducible representation $(\rho,W)$
 of the subgroup $G'$, 
 a {\it{symmetry breaking operator}} is
 a (continuous) linear map $V \to W$
 which intertwines $\pi|_{G'}$ and $\rho$.  
In recent years {\it{individual}} symmetry breaking operators
 have been studied intensively
 in different settings
 ranging from automorphic form theory
 to conformal geometry,
 see \cite{Clerc, Cohen, FG, Juhl, KKPAdS, KP16b, KS, Z} 
 and references therein.

In this article,
 we investigate a {\bf{collection}} of symmetry breaking operators,
 $$R_\ell\, \colon\, V\To W_\ell,\,\ell\in\Lambda,
 $$
 referred to as a \emph{symmetry breaking transform}, 
 for a family of irreducible representations $\rho_\ell$ of the subgroup $G'$
 on vector spaces $W_\ell$ with parameter $\ell\in\Lambda$.

Various expansions in classical analysis
 can be interpreted through this paradigm:

\begin{example}
[$GL_n \downarrow GL_{n-1}$]
Arranging homogeneous polynomials of $x=(x_1,\cdots,x_n)$
 in descending order with respect to the power of $x_n$ is an example of symmetry breaking transform for $(G,G')=(GL_n, GL_{n-1})$. In fact, taking the $\ell$-th component in the expansion
 $$
 f(x)=\sum_{\ell=0}^k f_\ell(x') x^{k-\ell}_n,\quad\mathrm{for}\,\, x'=(x_1,\cdots,x_{n-1})
 $$
 defines a $G'$-homomorphism from
 $V:=\mathrm{Pol}^k[x]$ to $W_\ell:=\mathrm{Pol}^\ell[x']$ on which $G$ and $G'$, respectively, 
act irreducibly.
\end{example}

Traditional representation-theoretic viewpoint tells
 that the Fourier series expansion or Fourier transform
 is the irreducible decomposition
 of the regular representation
 of the {\it{abelian}} group $G'=S^1$ or ${\mathbb{R}}$, 
 whereas we make use of a \emph{hidden symmetry} of the {\it{noncommutative}} group $G=SL(2,{\mathbb{R}})$ in the sense that $G$ contains $G'$ as a subgroup and that $G$
 acts on the space of functions on $S^1$ or $\R$. The latter viewpoint brings us a new
 interpretation of the (classical) Fourier series or Fourier transform in the framework of
 ``symmetry breaking" as follows.

\begin{example}
\label{ex:FourierSL}
A spherical principal series representation $\pi_{\lambda}$
 of $G=SL(2,{\mathbb{R}})$ is realized
 on the vector space of homogeneous functions
$$
V_\lambda:=\left\{ f \in C^\infty (\R^2\setminus\{(0,0)\}) : 
                   f(ax, ay)=|a|^{\lambda}f(x,y) \text { for all } a \in {\mathbb{R}}^{\times} \right\}.
$$
This representation is irreducible for all $\lambda\in\C\setminus\Z$.
\par\noindent
$\bullet$\enspace
{\rm{(Fourier series)}}\enspace
The representation $\pi_{\lambda}$ of $S L(2,{\mathbb{R}})$ can be realized
 in $C^{\infty}(S^1)$
 via the identification $V_{\lambda} \overset \sim \to C^{\infty}(S^1)$, 
 $f(x,y) \mapsto h(\theta):=f(\cos \theta, \sin \theta)$, 
 because any homogeneous function is determined by its restriction to the unit circle $S^1$.  
Since $S^1$ is preserved
 by the subgroup
$G':=SO(2)$, 
 the collection of the Fourier coefficients
\begin{eqnarray*}
        V_{\lambda}&\stackrel
{\sim}{\longrightarrow}& C^\infty(S^1)\To\operatorname{Map}(\Z,\C),
\\ 
       f \mapsto h&\mapsto& \widehat h(\ell):=\frac1{2\pi}\int_0^{2\pi} h(\theta) e^{-i\ell\theta} d\theta, \quad\ell\in\Z
\end{eqnarray*}
gives a symmetry breaking transform from the infinite-dimensional representation
 $(\pi_{\lambda}, V_{\lambda})$
 of $G=S L(2,{\mathbb{R}})$
 to the collection of one-dimensional representations $\chi_\ell$
 of the abelian subgroup $G'=SO(2) \simeq S^1$
 indexed by $\ell \in {\mathbb{Z}}$.

\par\noindent
$\bullet$\enspace
{\rm{(Fourier transform)}}\enspace
Similarly, any function $f(x,y)\in V_\lambda$ is determined by its restriction
to the real line $y=1$, 
which is preserved by the unipotent subgroup 
$
   G'':=\left\{\begin{pmatrix}1&\xi\\0&1\end{pmatrix} : \xi\in\R\right\}\,(\simeq\R)
$.  
Thus the Fourier transform 
$$
  L^1(\R)\To C(\R), 
\quad 
  F \mapsto (\mathcal F F)(\xi):=\int_\R F(x) e^{-i x \xi}d x,
$$
induces another symmetry breaking transform
 for the pair $(G, G'')=(SL(2,\R), \R)$. 
\end{example}

\vskip10pt
\begin{example}
[spherical harmonics]
Expansion of functions on $S^n$
 by eigenfunctions of the Laplacian $\Delta_{S^n}$
 corresponds to a symmetry breaking transform 
{}from a spherical principal series representation $\pi$
 of $G=SO(n+1,1)$
 to a collection
 of irreducible finite-dimensional representations
 of the compact subgroup $G'=O(n+1)$.  
\end{example}

Reversing the arrows in the definition of a symmetry breaking operator 
$R_\ell \colon V \to W_{\ell}$,
 we consider a $G'$-homomorphism $\Psi_\ell \colon W_{\ell} \to V$, 
 going from smaller to larger representation space, and thus
 referred to as a \emph{holographic operator}.  
As in the case of symmetry breaking,
 the collection of holographic operators $\{\Psi_\ell\}$
 is said to be a \emph{holographic transform.}
$$
 G \curvearrowright 
       \xymatrix
       {
          V\ar@<0.5ex>[r]^{R_\ell}&
            W_\ell
              \ar@<0.5ex>[l]^{\Psi_\ell}
       }
  \curvearrowleft G'.  
$$
To illustrate a holographic transform
 by an example 
 with both $V$ and $W_{\ell}$ being infinite-dimensional, 
 we recall that the classical Poisson integral (see \emph{e.g.} \cite[Sec. 0]{KKMOOT})
\begin{eqnarray*}
\mathcal P_\nu\colon C_c(\R)&\To& C^\infty(\Pi)
\\ 
h(t) &\mapsto&
(\mathcal P_\nu h)(x, y) = \int_{-\infty}^{\infty} \frac{y^\nu}{((x-t)^2+y^2)^{\nu}} h(t) d t 
\end{eqnarray*}
constructs eigenfunctions of the Laplace--Beltrami differential operator $\Delta= y^2\left(\frac{\partial^2}{\partial x^2}+\frac{\partial^2}{\partial y^2}\right)$ for the eigenvalue $\nu(\nu-2)$ on the upper-half plane $\Pi$
 endowed with Poincar\'e metric. 
The group $S L(2,{\mathbb{R}})$ acts isometrically on $\Pi$
 and conformally on its boundary.  
Traditionally,
 the Poisson integral was treated
 in the context of representations
 of $S L(2,{\mathbb{R}})$, 
 however,
 we highlight the fact
 that the totality of functions on $\Pi$ admits a larger symmetry
 because the group $S L(2,{\mathbb{C}})$
 acts on the conformal compactification of $\Pi$.  
Thus the Poisson integral can be interpreted as a particular occurrence
 of a holographic operator
 for the pair $(G,G')=(SL(2,\C), SL(2,\R))$
 as below.

\begin{example}[Poisson integral]
\label{ex:Poisson}
A generic symmetry breaking operator $A_{\lambda,\nu}$ from the spherical principal series representation $\pi_\lambda$
 of $G=S L(2,{\mathbb{C}})$ on $C^\infty(S^2)$
 to the one $\varpi_\nu$ of the subgroup $G'=S L(2,{\mathbb{R}})$ on $C^\infty(S^1)$ takes the following form $($see \cite[(7.2)]{KS}$)$:
\begin{eqnarray*}
A_{\lambda,\nu}:
C_c^\infty(\R^2)&\To& C^\infty(\R), 
\\
f(x,y) &\mapsto& (A_{\lambda,\nu} f)(x)=\int_{\R^2} f(t,y) K_{\lambda,\nu}(x-t,y)dtdy
\end{eqnarray*}
in the flat coordinates
 where $K_{\lambda,\nu}$ is a distributional kernel
 given by 
\begin{equation}
\label{eqn:Klmdnu}
K_{\lambda,\nu}(x,y)=(x^2+y^2)^{-\nu}\vert y\vert^{\lambda+\nu-2}.  
\end{equation}
Then the dual map of $A_{\lambda,\nu}$ yields a holographic operator
 $\Psi_{\lambda,\nu}$ with the formula
$$
g(t) \mapsto \left(\Psi_{\lambda,\nu}g\right)(x,y):=\int_{\R}g(t) K_{\lambda,\nu}(x-t,y)dt.
$$
Thus 
 the (classical) Poisson integral $\mathcal P_{\nu}$ can be viewed as the restriction
 of the holographic operator $\Psi_{\lambda,\nu}$
 with $\lambda=2$, 
 namely,  {defn}
$
\mathcal P_\nu=\mathrm{Rest}_\Pi\circ\Psi_{2,\nu}.  
$
\end{example}

With these interpretations of classical examples in mind, 
 we raise the following two general problems
 for a symmetry breaking transform
$\displaystyle R(v)=\{R_\ell(v)\}_{\ell\in \Lambda},$
where $R_\ell
 \colon V\To W_{\ell}$ ($\ell\in\Lambda$), 
are symmetry breaking operators:
\begin{problem}
Can we recover an element $v$ of $V$ from its symmetry breaking transform
$R(v)=\{R_\ell(v)\}_{\ell\in\Lambda}$?
\end{problem}

Problem A includes the following subproblems:
\begin{itemize}
   \item[{\bf A.0.}] Tell \emph{a priori} if $\Lambda$ is sufficiently large for $R$ to be injective.
      \item[{\bf A.1.}] Construct a \lq\lq{holographic transform}\rq\rq.
   \item[{\bf A.2.}] Find an explicit inversion of the symmetry breaking transform $R$.
\end{itemize}

When $V$ is a Hilbert space on which $G$ acts unitarily, 
 we also ask for a Parseval--Plancherel type theorem
 for the symmetry breaking transform:

\begin{problem}\label{prob:B}
  Find a closed formula for the norm of an element $v$ in $V$ in terms of its symmetry breaking transform  $\{R_\ell(v)\}_{\ell\in\Lambda}$.
\end{problem}

In this article, we investigate Problems A and B in the following two cases:
\begin{itemize}
  \item Rankin--Cohen transform (Section \ref{sec:RCT});
  \item Holomorphic Juhl transform (Section \ref{sec:HJT}).
\end{itemize}
In both cases,
 the transform is a collection of holomorphic differential operators
 between complex manifolds: 
 the first case is associated with the family of
 the Rankin--Cohen operators that appeared in the theory of holomorphic modular forms \cite{Cohen}, 
 whereas the second case originated from Juhl's conformally covariant operators \cite{Juhl}.

These transforms can be analyzed in the framework of infinite dimensional representations of Lie groups, namely, the decomposition of the tensor product of two holomorphic discrete series
representations of $SL(2,\R)$ in the first case, and the branching laws of holomorphic discrete series
representations of the conformal Lie group $G=SO_o(2,n)$
 when restricted to a subgroup $G'=SO_o(2,n-1)$, in the second case. 


The main goal here is to
give a solution
 to Problems A and \ref{prob:B} for the above two transforms.  
We provide two types of integral expressions
 as a solution to Problem A1, 
 see Theorems \ref{thm:psi} and \ref{thm:CIRM18}.
The main results are summarized as below.

{\small\begin{figure}[!h]
   \begin{center}
    \begin{tabular}{c|c|c}
       $G\supset G'$ &$SL_2\times SL_2\supset SL_2$  &$SO_o(2,n)\supset SO_o(2,n-1)$ \\
      &&\\
               \hline
         Problem A1 & &\\
         construction of & Theorem \ref{thm:psi}& Theorem \ref{thm:CIRM18}\\
         holographic transform &&\\
         \hline
         Problem A2 &&\\
         inversion of symmetry & Theorem \ref{thm:RCinv} &Theorem \ref{thm:170887}\\
         breaking transform&&\\
         \hline
         Problem B &&\\
         $L^2$-theory for &Theorem \ref{thm:PPTT4RCB}&Theorem \ref{thm:170887}\\
            &&
        \end{tabular}
   \end{center}
  \end{figure}%
}


The key idea of our approach is to 
introduce \lq\lq{special orthogonal polynomials}\rq\rq\ $\{P_\ell\}$ associated to
symmetry breaking operators. This can be done
  via the F-method, which we developed in \cite{KP16a,KP16b},
 that analyzes the representations through the Fourier transform
 of their geometric realizations. 
 In this article, we show for
 the Rankin--Cohen bidifferential operators
 $\{R_\ell\}$ 
that the polynomials $\{P_\ell\}$ are the Jacobi polynomials and that 
 the holographic operators are given by the Jacobi transforms
 along the transversal direction to a codimension-one foliation of the symmetric cone
 (Section \ref{sec:RCT});
for the holomorphic Juhl operators $\{R_\ell\}$, the holographic operators are associated to the Gegenbauer polynomials $\{P_\ell\}$
(Section \ref{sec:HJT}).  
Thus Problems A and \ref{prob:B}
 for symmetry breaking transforms
 can be studied
 as questions on special orthogonal polynomials via the F-method.

The table below shows some new links
 which the F-method provides
 between representations
 and special functions
 in this setting.

{\small
{\begin{figure}[!h]
   \begin{center}
    \begin{tabular}{c|c}
    Symmetry breaking operators $\{R_\ell\}$   
& Special orthogonal polynomials $\{P_\ell\}$
\\
&\\
\hline
&\\
   $G'$-intertwining property
&  hypergeometric differential equations
\\
&\\
\hline
&\\
  operator norm of  $R_\ell$     
&
 $L^2$-norm of $P_\ell$
\\
&\\
\hline
&\\
branching law $\pi|_{G'}$
&
$L^2$-completeness of $\{P_\ell\}$
\\
&\\
\hline
&\\
holographic transform
($L^2$-model)
&
integral transform
associated to $\{P_\ell\}$
\\

\end{tabular}
   \end{center}
  \end{figure}%
}
}
\vskip7pt

Analogously to the classical Poisson transform
 (Example \ref{ex:Poisson}), 
 the holographic transform
 provides an integral expression 
 of eigenfunctions
 of certain holomorphic differential operator.
We illustrate this idea with the example of the Rankin--Cohen operators, see Theorem \ref{thm:180297} which is proved as a byproduct of the main results.

In Section \ref{sec:conclusion} we discuss the background of Problems A and \ref{prob:B} from a viewpoint of the representation theory of real reductive Lie groups.


\vskip 10pt
Notation: $\N=\{0,1,2,\cdots\}$, $i=\sqrt{-1}$ (imaginary unit), $(x)_k=x(x+1)(x+2)\cdots(x+k-1)$ for $k\in\N$ (Pochhammer symbol), and $[x]$ is the largest integer that does not exceed $x\in\R$.

\vskip1pc

\emph{Acknowledgments}. The first author was partially supported by the JSPS under the 
Grant-in-Aid for Scientific Research (A) (JP18H03669).
Both authors were partially supported by the CNRS grant PICS-7270 and 
they
are grateful to Institut des Hautes \'Etudes Scientifiques 
(Bures-sur-Yvette, France), Institut Henri Poincar\'e (Paris, 
France)
and Centre International de Rencontres Math\'ematiques (Luminy, 
France) where an important part of this work was done.

\vskip10pt

\section{Rankin--Cohen transform and its holographic transform}\label{sec:RCT}

The Rankin--Cohen bidifferential operators map functions of two variables to those of one variable,
respecting twisted actions of $SL(2,\R)$. In this section, we solve Problems A and B stated in Section \ref{sec:Intro} for the \emph{Rankin--Cohen transform} (Definition
\ref{def:RCT}), a collection of such operators.

\subsection{Rankin--Cohen bidifferential operators}
We begin with a quick review of the Rankin--Cohen bidifferential operators.

\subsubsection{Holomorphic discrete series representations of ${SL(2,\R)}{\;}\widetilde{}$}\label{subsec:pilmd}
Let $\Pi=\{z=x+iy\in\C:\; x\in\R,\;y>0\}$ be the upper half-plane, and $\mathcal O(\Pi)$ the space
 of holomorphic functions on $\Pi$. 
For $\lambda\in\Z$
 we define a representation $\pi_\lambda$ of ${SL(2,\R)}$  on $\mathcal O(\Pi)$ by
$$
\pi_\lambda(g)f(z)=(cz+d)^{-\lambda}f\left(\frac{az+b}{cz+d}\right)
\qquad\mathrm{for}\quad
g^{-1}=\left(
\begin{array}{cc}
  a   & b  \\
  c   & d   
\end{array}
\right).
$$
Viewed as a representation of the universal covering group 
 ${SL(2,\R)}{\;}\widetilde{}$, 
 the representation $\pi_{\lambda}$ is well-defined
 for all $\lambda \in {\mathbb{C}}$.  
There is a canonical perfect pairing between
$(\pi_\lambda,\mathcal O(\Pi))$ and the Verma module
$$
M_{-\lambda}:= U(\mathfrak g_\C)\otimes_{U(\mathfrak b)}\C_{-\lambda},
$$
where $U(\mathfrak g_\C)$ denotes the universal enveloping algebra of 
$\mathfrak g_\C=\mathfrak{sl}(2,\C)$ and $\mathfrak b$ is a Borel subalgebra
 containing
$\mathfrak k_\C=\mathfrak{so}(2,\C)$.
Therefore,
 $(\pi_\lambda,\mathcal O(\Pi))$ is irreducible 
 if and only if $\lambda \in {\mathbb{C}} \setminus (-{\mathbb{N}})$ because
 the $\mathfrak g$-module $M_\nu$ is reducible if and only if $\nu\in\N$.
   
Let $p \colon {SL(2,{\mathbb{R}}}){\;}\widetilde{} \to SL(2,{\mathbb{R}})$
 be the covering homomorphism, 
 and set ${SO(2)}{\;}\widetilde{}=p^{-1}(SO(2))$.  
For every $\lambda\in\C$, we can form a homogeneous holomorphic line bundle $\mathcal L_\lambda$ over $\Pi\simeq
{SL(2,\R)}{\;}\widetilde{}/{SO(2)}{\;}\widetilde{}$ associated  to a character
$\C_\lambda$ of ${SO(2)}{\;}\widetilde{}$, and the multiplier representation $(\pi_\lambda, \mathcal O(\Pi))$ is equivalent to the natural action
 of ${SL(2,\R)}{\;}\widetilde{}\;$
 on the space $\mathcal O(\Pi, \mathcal L_\lambda)$ of holomorphic sections of  $\mathcal L_\lambda$.

\subsubsection{Holomorphic model ${\mathcal{H}}^2(\Pi)_{\lambda}$}
For $\lambda>1$ the weighted Bergman space $\mathcal H^2(\Pi)_\lambda:=(\mathcal O\cap L^2)(\Pi, y^{\lambda-2}d x d y)$ is nonzero, and the Hilbert space $\mathcal H^2(\Pi)_\lambda$ admits a reproducing kernel
$K_\lambda(z,w)=\frac{\lambda-1}{4\pi}\left(\frac{z-\bar w}{2i}\right)^{-\lambda}$, see \cite[Prop. XIII.1.2]{FK}.
The representation
$(\pi_\lambda,\mathcal O(\Pi))$ yields an irreducible unitary representation of ${SL(2,\R)}{\;}\widetilde{}{\;}$ on
$\mathcal H^2(\Pi)_\lambda$, 
which descends to $SL(2,\R)$ when $\lambda\in\Z$. The set of equivalence classes of
irreducible unitary representations (\emph{unitary dual}) of $SL(2,\R)$ contains a family of those with continuous parameter (\emph{e.g.} principal series representations, complementary series representations), whereas $\pi_\lambda$ ($\lambda= 2,3,\cdots$)
form a countable family of irreducible unitary representations realized in the kernel of the Cauchy--Riemann operator. Thus 
$\pi_\lambda$ ($\lambda= 2,3,\cdots$)
is referred to as a \emph{holomorphic discrete series representations} of $SL(2,\R)$,
and $\pi_\lambda$ ($\lambda>1$)
  as a relative holomorphic discrete series representation of the covering group
 ${SL(2,\R)}{\;}\widetilde{}$.
We call the realization on $\mathcal H^2(\Pi)_{\lambda}$ \emph{holomorphic model} of the representation $\pi_\lambda$.  
Similarly, the direct product group ${SL(2,\R)}{\;}\widetilde{}\times
{SL(2,\R)}{\;}\widetilde{}\;$ acts on $\mathcal H^2(\Pi\times\Pi)_{(\lambda',\lambda'')}\simeq
\mathcal H^2(\Pi)_{\lambda'}\widehat\otimes \mathcal H^2(\Pi)_{\lambda''}$ as an irreducible unitary representation if $\lambda',\lambda''>1$, where $\widehat\otimes$ stands for the completion of the algebraic tensor product.

We shall deal with another realization ($L^2$-model) of the same representation $\pi_\lambda$
in Section \ref{subsec:phi}. 
$$
$$
\subsubsection{Rankin--Cohen bidifferential operators}
~~~\par
Consider $\lambda',\lambda'',\lambda'''\in\C$ such that  $\ell:=\frac12(\lambda'''-\lambda'-\lambda'')\in\N$ and
define a differential operator 
$\mathcal{R}_{\lambda',\lambda''}^{\lambda'''}:\mathcal O(\Pi\times\Pi)\To \mathcal O(\Pi\times\Pi)$ by
\begin{equation}\label{rcb}
\mathcal{R}_{\lambda',\lambda''}^{\lambda'''}(f)(\zeta_1,\zeta_2):=
\sum_{j=0}^\ell (-1)^{j}
\frac{
\left(\lambda'+\ell-j\right)_j \left(\lambda''+j \right)_{\ell-j}}
{j!(\ell-j)!}           
\frac{\partial^\ell f}{\partial \zeta_1^{\ell-j}\partial \zeta_2^{j}}(\zeta_1,\zeta_2).
\end{equation} 

 The {\it{Rankin--Cohen bidifferential operator}} is a linear map
$$
\mathcal{RC}_{\lambda',\lambda''}^{\lambda'''}: \mathcal O(\Pi\times
\Pi)\To \mathcal O(\Pi),
$$
defined by $\mathcal{RC}_{\lambda',\lambda''}^{\lambda'''}:=
\mathrm{Rest}\circ \mathcal{R}_{\lambda',\lambda''}^{\lambda'''}$,
 where
 $\mathrm{Rest}$ stands for the restriction map $f(\zeta_1,\zeta_2)\mapsto f(\zeta,\zeta)$ to the diagonal.

The Rankin--Cohen bidifferential operator $\mathcal{RC}_{\lambda',\lambda''}^{\lambda'''}$
 is a symmetry breaking operator from the tensor product representation $\pi_{\lambda'}\widehat\otimes \pi_{\lambda''}$ to 
$\pi_{\lambda'''}$ with respect to the diagonal embedding
${SL(2,\R)}{\;}\widetilde{}\hookrightarrow {SL(2,\R)}{\;}\widetilde{}\times
{SL(2,\R)}{\;}\widetilde{},$ and such a symmetry breaking operator
is unique up to scalar multiplication for generic parameters (see \cite[Cor. 9.3]{KP16b} for the precise condition).
Moreover,
 $\mathcal{RC}_{\lambda',\lambda''}^{\lambda'''}$ induces a continuous map from the weighted Bergman space
  ${\mathcal{H}}^2(\Pi \times \Pi)_{(\lambda',\lambda'')}$
 to ${\mathcal{H}}^2(\Pi)_{\lambda'''}$
 if $\lambda', \lambda'' >1$
 (\cite[Thm.~5.13]{KP16a}, see also Proposition \ref{prop:RCopnorm} below
 for an explicit formula of its operator norm).  


\subsection{Notations and two constants $c_\ell(\lambda',\lambda'')$ and $r_\ell(\lambda',\lambda'')$}\label{sec:constants}
The parameter set in Section \ref{sec:RCT} is $(\lambda',\lambda'',\lambda''')\in\C^3$ with
$\lambda'''-\lambda'-\lambda''\in2\N$. Throughout this section, we use the following notation:
\begin{equation}\label{eqn:abl}
\alpha=\lambda'-1,\quad \beta=\lambda''-1,\quad 2\ell=\lambda'''-\lambda'-\lambda''.
\end{equation}
The main results involve the following two constants
\begin{eqnarray}\label{eqn:v-ell}
c\equiv c_\ell(\lambda',\lambda'') &:=& \nonumber
\frac1{2^{\alpha+\beta+1}}\int_{-1}^1\vert P_\ell^{\alpha,\beta}(v)\vert^2 (1-v)^\alpha (1+v)^\beta dv\\
&=& 
\frac{\Gamma(\lambda'+\ell) \Gamma(\lambda''+\ell)}{(\lambda'+\lambda''+2\ell-1) \Gamma(\lambda'+\lambda''+\ell-1)\ell!},
\\
\label{eqn:b-ell}
r\equiv r_\ell(\lambda',\lambda'') &:=& \nonumber
\frac{b(\lambda''')}{b(\lambda')b(\lambda'')}\\
&=& 
\frac{\Gamma(\lambda'+\lambda''+2\ell-1)}{2^{2\ell+2}\pi \Gamma(\lambda'-1)\Gamma(\lambda''-1)},
\end{eqnarray}
where $P_\ell^{\alpha,\beta}(v)$ is the Jacobi polynomial (see \eqref{eqn:Pnorm} in Appendix), and
$b(\lambda)=2^{2-\lambda}\pi\Gamma(\lambda-1)$ is a Plancherel density (see Fact \ref{fact:Laplace} below).
We note that $c_{\ell}(\lambda',\lambda'') \ne 0$
 if $\operatorname{Re}\lambda',\operatorname{Re}\lambda'' >0$
 and $\ell \in {\mathbb{N}}$.  

\subsection{Integral formula for holographic operators}
~~~

In this section,
 we introduce integral transforms $\Psi_{\lambda',\lambda''}^{\lambda'''}$
 ({\it{holographic operator}})
 that realize  irreducible summands in the tensor
product representations $\pi_{\lambda'}\widehat\otimes \pi_{\lambda''}$. 

\subsubsection{Construction of holographic operators for the tensor product}
~~~\par
\begin{defn}[holographic operators]\label{def:HOtensor}
For $\lambda',\lambda'',\lambda'''\in\C$ we set  $\ell:=\frac12(\lambda'''-\lambda'-\lambda'')$. Assume that
\begin{equation}\label{eqn:HOconverge}
\mathrm{Re}(\lambda'+\ell)>0,\quad\mathrm{Re}(\lambda''+\ell)>0,\quad \mathrm{and}\quad\ell\in\N.
\end{equation}
 For a holomorphic function $g$ on the upper half plane $\Pi$, we define a holomorphic function on $\Pi\times\Pi$ by
 the line integral:
\begin{align}\label{eqn:psi}
&\left(\Psi_{\lambda',\lambda''}^{\lambda'''} g\right)(\zeta_1,\zeta_2):=
\frac{(\zeta_1-\zeta_2)^\ell}{2^{\lambda'+\lambda''+2\ell-1}\ell!}\times\\
&\int_{-1}^1
g\left(\frac{(\zeta_2-\zeta_1)v+(\zeta_1+\zeta_2)}{2}\right)
(1-v)^{\lambda'+\ell-1}(1+v)^{\lambda''+\ell-1}
dv.\nonumber
\end{align}

\end{defn}
We note that the set $\left\{\frac{(\zeta_2-\zeta_1)v+(\zeta_1+\zeta_2)}{2} : -1\leq v\leq 1\right\}$ is
the line segment connecting the two points $\zeta_1$ and $\zeta_2$ in $\Pi$.

\subsubsection{Basic properties of $\Psi_{\lambda',\lambda''}^{\lambda'''}$}

The integral transform $\Psi_{\lambda',\lambda''}^{\lambda'''}$
in \eqref{eqn:psi} provides
 a holographic operator in the following sense:

\begin{thm}[holographic operator in the upper half plane]
\label{thm:psi}
Suppose ${\lambda',\lambda''},{\lambda'''}\in\C$ satisfy \eqref{eqn:HOconverge}.
\begin{itemize}
\item[(1)] The map
$\Psi_{\lambda',\lambda''}^{\lambda'''}:\mathcal O(\Pi)\To 
\mathcal O(\Pi\times\Pi)$
intertwines the action of ${SL(2,\R)}{\;}\widetilde{}$\,\, from $\pi_{\lambda'''}$ to the tensor product representation
$\pi_{\lambda'}\widehat\otimes\pi_{\lambda''}$.

\item[(2)] Moreover, if 
both $\lambda'$ and $\lambda''$ are real
 and greater than 1,
 then the linear map $\Psi_{\lambda',\lambda''}^{\lambda'''}$ induces an isometric embedding
 (up to rescaling)
 of the weighted Bergman space:
$$
\mathcal H^2(\Pi)_{\lambda'''}\To\mathcal H^2(\Pi\times\Pi)_{(\lambda',\lambda'')}.
$$
\end{itemize}
\end{thm}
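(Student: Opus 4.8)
The plan is to establish Part (1) infinitesimally and then derive Part (2) by a rigidity argument based on irreducibility and multiplicity one. As a preliminary I would check that $\Psi:=\Psi_{\lambda',\lambda''}^{\lambda'''}$ is well defined: for $\zeta_1,\zeta_2\in\Pi$ the integration point $w=w(v):=\tfrac{1-v}{2}\zeta_1+\tfrac{1+v}{2}\zeta_2$ traverses the segment joining $\zeta_1$ and $\zeta_2$, which lies in $\Pi$ by convexity, the weight $(1-v)^{\lambda'+\ell-1}(1+v)^{\lambda''+\ell-1}$ is integrable on $[-1,1]$ precisely under the hypotheses \eqref{eqn:HOconverge}, and differentiation under the integral sign shows $\Psi g\in\mathcal O(\Pi\times\Pi)$. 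Since $\widetilde{SL(2,\R)}$ is connected, Part (1) reduces to the infinitesimal intertwining relation $\Psi\circ d\pi_{\lambda'''}(X)=\big(d\pi_{\lambda'}(X)\otimes 1+1\otimes d\pi_{\lambda''}(X)\big)\circ\Psi$ (the two factors acting in $\zeta_1$ and $\zeta_2$) for $X$ ranging over the standard $\mathfrak{sl}_2$-triple $\{E,H,F\}$, where
\[
d\pi_\lambda(E)=-\tfrac{d}{dz},\qquad d\pi_\lambda(H)=-2z\tfrac{d}{dz}-\lambda,\qquad d\pi_\lambda(F)=z^2\tfrac{d}{dz}+\lambda z.
\]

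The generators $E$ and $H$ are immediate. Because $w$ is affine in $v$ and homogeneous of degree one in $(\zeta_1,\zeta_2)$, one has $(\partial_{\zeta_1}+\partial_{\zeta_2})w=1$ and $(\zeta_1\partial_{\zeta_1}+\zeta_2\partial_{\zeta_2})w=w$, while the prefactor $(\zeta_1-\zeta_2)^\ell$ is annihilated by $\partial_{\zeta_1}+\partial_{\zeta_2}$ and is homogeneous of degree $\ell$. Differentiating under the integral sign then yields the $E$- and $H$-relations directly, the $H$-case using $\lambda'''=\lambda'+\lambda''+2\ell$.

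The only genuine work is the generator $F$, which I expect to be the crux. Differentiating $\Psi g$ twice and using the identity $\zeta_1^2\partial_{\zeta_1}w+\zeta_2^2\partial_{\zeta_2}w=w^2+\tfrac{1-v^2}{4}(\zeta_1-\zeta_2)^2$, the $\int w^2 g'(w)\,dv$ contributions cancel on the two sides, and the relation collapses to a single identity controlling $\tfrac{(\zeta_1-\zeta_2)^2}{4}\int(1-v^2)\,g'(w)\,(1-v)^{\lambda'+\ell-1}(1+v)^{\lambda''+\ell-1}\,dv$. Here the factor $(1-v^2)$ absorbs into the weight to produce the \emph{regular} weight $(1-v)^{\lambda'+\ell}(1+v)^{\lambda''+\ell}$, so an integration by parts in $v$ becomes available; its boundary terms vanish exactly because $\mathrm{Re}(\lambda'+\ell)>0$ and $\mathrm{Re}(\lambda''+\ell)>0$. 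After integrating by parts the remaining expression is linear in $\zeta_1,\zeta_2$ with $v$-independent coefficients, and these coefficients vanish precisely on account of $\lambda'''=\lambda'+\lambda''+2\ell$. This is the single place where both the exact exponents in \eqref{eqn:psi} and the convergence hypotheses \eqref{eqn:HOconverge} are indispensable.

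For Part (2), assume $\lambda',\lambda''>1$, so $\lambda'''=\lambda'+\lambda''+2\ell>1$ and $(\pi_{\lambda'''},\mathcal H^2(\Pi)_{\lambda'''})$ is an irreducible (relative) holomorphic discrete series representation. By Part (1), $\Psi$ intertwines $\pi_{\lambda'''}$ with $\pi_{\lambda'}\widehat\otimes\pi_{\lambda''}$. The Rankin--Cohen operator $\mathcal{RC}_{\lambda',\lambda''}^{\lambda'''}$ is a \emph{bounded} intertwiner in the opposite direction, with operator norm given by Proposition \ref{prop:RCopnorm}, so its Hilbert-space adjoint $\big(\mathcal{RC}_{\lambda',\lambda''}^{\lambda'''}\big)^{*}$ is a bounded intertwiner $\mathcal H^2(\Pi)_{\lambda'''}\to\mathcal H^2(\Pi\times\Pi)_{(\lambda',\lambda'')}$. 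By multiplicity one in this branching (equivalently, the uniqueness up to scalar of the Rankin--Cohen symmetry breaking operator recalled after \eqref{rcb}), the space of such intertwiners is one-dimensional on the underlying irreducible $(\mathfrak g,K)$-module, so $\Psi=\kappa\,\big(\mathcal{RC}_{\lambda',\lambda''}^{\lambda'''}\big)^{*}$ on $K$-finite vectors for some $\kappa\in\C$; by density $\Psi$ extends to a bounded operator obeying the same identity. Consequently $\Psi^{*}\Psi$ is a bounded positive self-adjoint intertwiner of the irreducible $\pi_{\lambda'''}$, and Schur's lemma forces $\Psi^{*}\Psi=c\cdot\mathrm{Id}$ with $c\ge0$, which is the asserted isometry up to rescaling. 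Finally $c>0$: evaluating the composition $\mathcal{RC}_{\lambda',\lambda''}^{\lambda'''}\circ\Psi$, which the F-method computes as a scalar built from $c_\ell(\lambda',\lambda'')$ and $r_\ell(\lambda',\lambda'')$ of Section \ref{sec:constants} (the former nonzero for $\mathrm{Re}\lambda',\mathrm{Re}\lambda''>0$), shows $\Psi\ne0$ and simultaneously pins down the rescaling constant $c$ explicitly.
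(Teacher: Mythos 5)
Your proposal is correct, but it runs in the opposite logical direction from the paper's own proof, so it is worth comparing. The paper proves part (2) \emph{first}, entirely in the $L^2$-model: it shows $\widehat\Psi_{\lambda',\lambda''}^{\lambda'''}=i^\ell\Phi_{\lambda',\lambda''}^{\lambda'''}$ by a direct computation using the Rodrigues formula \eqref{eqn:JRodrigues} (Proposition \ref{prop:PhiPsi}), identifies $\Phi_{\lambda',\lambda''}^{\lambda'''}$ with the adjoint of $\widehat{\mathcal{RC}}_{\lambda',\lambda''}^{\lambda'''}$ to get the intertwining property (Proposition \ref{lem:2}, Theorem \ref{thm:phi}), and gets the isometry from the orthogonality relation \eqref{eqn:Pnorm} for the Jacobi polynomials; part (1) is then deduced from part (2) by density of $\mathcal H^2(\Pi)_{\lambda'''}$ in the Montel space $\mathcal O(\Pi)$ together with holomorphic dependence of both $\Psi_{\lambda',\lambda''}^{\lambda'''}g$ and the group actions on $(\lambda',\lambda'')$ subject to \eqref{eqn:HOconverge}. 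You instead prove (1) directly and infinitesimally — and your sketch of the crux does close up: with $a=\lambda'+\ell$, $b=\lambda''+\ell$, the $\int w^2g'(w)$ terms cancel, the identity $\zeta_1^2\partial_{\zeta_1}w+\zeta_2^2\partial_{\zeta_2}w=w^2+\tfrac{1-v^2}{4}(\zeta_1-\zeta_2)^2$ plus one integration by parts (boundary terms vanishing exactly under \eqref{eqn:HOconverge}) reduces the $F$-relation to $\lambda'''w-a\zeta_1-b\zeta_2=-\tfrac{\zeta_1-\zeta_2}{2}\bigl(a(1+v)-b(1-v)\bigr)$, which holds identically since $\lambda'''=a+b$; note you also correctly use the first-order operator $d\pi_\lambda(F)=z^2\tfrac{d}{dz}+\lambda z$ where \eqref{eqn:sl2vect} contains a misprint. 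You then derive (2) by rigidity: boundedness of $\mathcal{RC}_{\lambda',\lambda''}^{\lambda'''}$ (which needs only the cited continuity from \cite[Thm.~5.13]{KP16a}, not Proposition \ref{prop:RCopnorm}, though the latter is not circular here), multiplicity one, and Schur's lemma. Two small points deserve explicit mention in a final write-up: the passage from the $\mathfrak{sl}_2$-relation to the group-level statement requires the standard smooth-vector/ODE argument for the multiplier representations on the Montel space $\mathcal O(\Pi)$; and your identification $\Psi=\kappa(\mathcal{RC}_{\lambda',\lambda''}^{\lambda'''})^*$ on $K$-finite vectors tacitly uses that the $K$-isotypic components of $\mathcal O(\Pi\times\Pi)$ and of $\mathcal H^2(\Pi)_{\lambda'}\widehat\otimes\mathcal H^2(\Pi)_{\lambda''}$ coincide (both of dimension $\ell+1$, cf.\ \eqref{eqn:Ktensor}) — the same observation the paper makes in proving Theorem \ref{thm:180297}(3); for the nonvanishing of the scalar, the cleanest route is to evaluate on the minimal $K$-type as in \eqref{eqn:181223}. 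What each approach buys: yours establishes (1) for all complex parameters in \eqref{eqn:HOconverge} without any analytic continuation and is self-contained at the level of the integral formula \eqref{eqn:psi}, while the paper's F-method route produces the exact constant in (2) for free ($\Vert\Psi g\Vert^2=\tfrac{c_\ell}{r_\ell}\Vert g\Vert^2$, Proposition \ref{prop:psinorm}), which your Schur argument leaves as an abstract $c\ge0$ to be pinned down afterwards.
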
  
The image of the holographic operator $\Psi_{\lambda',\lambda''}^{\lambda'''}$ is characterized by a
differential equation of second order on $\Pi\times\Pi$
associated to the Casimir element under the diagonal action, 
see Theorem \ref{thm:180297}.
For ${\lambda',\lambda''}>1$, 
the operator
 $\Psi_{\lambda',\lambda''}^{\lambda'''}$ is a scalar multiple
 of the adjoint $\left(\mathcal{RC}_{\lambda',\lambda''}^{\lambda'''}\right)^*$
 of the Rankin--Cohen bidifferential operator $\mathcal{RC}_{\lambda',\lambda''}^{\lambda'''}$
 (see Proposition \ref{prop:PsiRC}), and its operator norm
 is given in Theorem \ref{thm:PPTT4RCB} (2).  

Theorem \ref{thm:psi} will be proved in Section \ref{subsec:pfthmpsi}.

\begin{rem}\label{rem:2integral}
In Section \ref{sec:HJT}, we introduce {\it{relative}} reproducing kernels to construct irreducible summands in the holomorphic model. The integral formula given there (see
Theorem \ref{thm:CIRM18}) is different from the one introduced in Definition \ref{def:HOtensor}.
The advantage of the definition \eqref{eqn:psi} is
 that the holographic operator $\Psi_{\lambda',\lambda''}^{\lambda'''}$
 is defined also for the nonunitary case, see Theorem \ref{thm:psi} (1).  
\end{rem}


\subsection{The Rankin--Cohen transform and its inversion}

In this section we introduce the Rankin--Cohen transform $\mathcal{RC}_{\lambda',\lambda''}$ as the collection of individual operators
$\mathcal{RC}_{\lambda',\lambda''}^{\lambda'''}$ for fixed $\lambda'$ and $\lambda''$. Its inversion formula
is proved in Theorem \ref{thm:RCinv} by using the holographic operators,  giving a solution to Problem A
in Section 1.

\begin{defn}[Rankin--Cohen transform]\label{def:RCT}
For $\lambda',\lambda''\in\C$, the \emph{Rankin--Cohen transform} $\mathcal{RC}_{\lambda',\lambda''}$ is
a linear map
\begin{equation}\label{eqn:RCtrans}
\mathcal{RC}_{\lambda',\lambda''}\colon \mathcal O(\Pi\times\Pi)\longrightarrow\operatorname{Map}(\mathbb N, \mathcal O(\Pi)), 
\quad
f \mapsto \left(\ell\mapsto\mathcal{RC}_{\lambda',\lambda''}(f)_{\ell}\right)
\end{equation}
defined by
$
\left(\mathcal{RC}_{\lambda',\lambda''}(f)\right)_\ell:=\mathcal{RC}_{\lambda',\lambda''}^{\lambda'+\lambda''+2\ell} f\qquad\mathrm{for}\;\ell\in\N.
$
\end{defn}

The Rankin--Cohen transform $\mathcal{RC}_{\lambda',\lambda''}$ intertwines $(\pi_{\lambda'}\widehat\otimes\pi_{\lambda''},\mathcal O(\Pi\times\Pi))$ with the formal direct
sum
$\widehat\bigoplus_{\ell\in\N}(\pi_{\lambda'+\lambda''+2\ell},\mathcal O(\Pi))$, and
can be inverted by using the integral operators $\Psi_{\lambda',\lambda''}^{\lambda'''}$ as follows.

\begin{thm}[inversion of the Rankin--Cohen transform]\label{thm:RCinv}
Suppose $\lambda',\lambda''>1.$ Then for any $f\in \mathcal H^2(\Pi)_{\lambda'}\widehat\otimes \mathcal H^2(\Pi)_{\lambda''}$ one has
$$
f=\sum_{\ell=0}^\infty \frac1{c_\ell(\lambda',\lambda'')}\,
\Psi_{\lambda',\lambda''}^{\lambda'+\lambda''+2\ell}\left(\mathcal{RC}_{\lambda',\lambda''}(f)\right)_\ell.
$$
\end{thm}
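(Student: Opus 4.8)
The plan is to recognize the operators
$$
P_\ell:=\frac1{c_\ell(\lambda',\lambda'')}\,\Psi_{\lambda',\lambda''}^{\lambda'+\lambda''+2\ell}\circ\mathcal{RC}_{\lambda',\lambda''}^{\lambda'+\lambda''+2\ell}
$$
as the orthogonal projections onto the irreducible summands of the tensor product, so that the asserted inversion is exactly the spectral resolution $\sum_{\ell}P_\ell=\mathrm{Id}$. First I would invoke the Clebsch--Gordan decomposition for the relative holomorphic discrete series of the universal covering group of $SL(2,\R)$,
$$
\mathcal H^2(\Pi\times\Pi)_{(\lambda',\lambda'')}=\widehat\bigoplus_{\ell\in\N}H_\ell,
\qquad
H_\ell\simeq\left(\pi_{\lambda'+\lambda''+2\ell},\mathcal H^2(\Pi)_{\lambda'+\lambda''+2\ell}\right),
$$
which is multiplicity-free, each constituent occurring exactly once; this is precisely the intertwining property of $\mathcal{RC}_{\lambda',\lambda''}$ recorded just before the statement. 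Since the $H_\ell$ are pairwise inequivalent irreducible summands of a unitary representation, they are mutually orthogonal.

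Next I would isolate the $\ell$-th term. Writing $\lambda'''=\lambda'+\lambda''+2\ell$, the symmetry breaking operator $\mathcal{RC}_{\lambda',\lambda''}^{\lambda'''}$ is $G'$-equivariant onto the irreducible $\pi_{\lambda'''}$, so by Schur's lemma $\mathcal{RC}_{\lambda',\lambda''}^{\lambda'''}$ vanishes on $H_m$ for $m\ne\ell$ and is a nonzero isomorphism (up to scalar) on $H_\ell$. By Theorem \ref{thm:psi}(2) the holographic operator $\Psi_{\lambda',\lambda''}^{\lambda'''}$ is an isometric embedding up to scaling, with image exactly $H_\ell$. Hence $P_\ell$ annihilates every $H_m$ with $m\ne\ell$ and maps into $H_\ell$. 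To see that it restricts to the identity on $H_\ell$ it suffices to compute the reverse composite on the irreducible target, which is a scalar by Schur; I claim
$$
\mathcal{RC}_{\lambda',\lambda''}^{\lambda'''}\circ\Psi_{\lambda',\lambda''}^{\lambda'''}=c_\ell(\lambda',\lambda'')\cdot\mathrm{Id}_{\mathcal H^2(\Pi)_{\lambda'''}}.
$$
Granting this and writing any $w\in H_\ell$ as $w=\Psi_{\lambda',\lambda''}^{\lambda'''}g$, one gets $\Psi_{\lambda',\lambda''}^{\lambda'''}\mathcal{RC}_{\lambda',\lambda''}^{\lambda'''}w=c_\ell(\lambda',\lambda'')\,w$, so that $P_\ell$ is the identity on $H_\ell$. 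Thus $P_\ell$ is the idempotent onto $H_\ell$ along $\widehat\bigoplus_{m\ne\ell}H_m$, i.e. the orthogonal projection; the self-adjointness can also be read off from Proposition \ref{prop:PsiRC}, since $\Psi_{\lambda',\lambda''}^{\lambda'''}$ is a scalar multiple of $(\mathcal{RC}_{\lambda',\lambda''}^{\lambda'''})^*$, making $P_\ell$ a multiple of $(\mathcal{RC}_{\lambda',\lambda''}^{\lambda'''})^*\mathcal{RC}_{\lambda',\lambda''}^{\lambda'''}$.

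The main obstacle is the scalar identity above, where the precise constant $c_\ell(\lambda',\lambda'')$ must emerge. I would establish it through the F-method: transporting both operators to the $L^2$-model of Section \ref{subsec:phi} turns the symmetry breaking operator into the pairing against the Jacobi polynomial $P_\ell^{\alpha,\beta}$ and the holographic operator $\Psi_{\lambda',\lambda''}^{\lambda'''}$ into multiplication by the same polynomial along the transversal direction, so the composite reduces to the weighted inner product $\frac1{2^{\alpha+\beta+1}}\int_{-1}^1|P_\ell^{\alpha,\beta}(v)|^2(1-v)^\alpha(1+v)^\beta\,dv$, which is the defining expression of $c_\ell(\lambda',\lambda'')$ in \eqref{eqn:v-ell}; alternatively the scalar can be evaluated directly on a lowest-weight (reproducing-kernel) vector using the integral formula \eqref{eqn:psi} and the differential expression \eqref{rcb}.

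Finally, with the $P_\ell$ identified as the orthogonal projections onto the summands $H_\ell$, the Hilbert direct sum decomposition yields $\sum_{\ell}P_\ell f=f$ with convergence in the norm of $\mathcal H^2(\Pi\times\Pi)_{(\lambda',\lambda'')}$ for every $f$, which is the assertion of the theorem. Boundedness of each individual operator $\mathcal{RC}_{\lambda',\lambda''}^{\lambda'''}$, needed for each term to be well defined on the Hilbert space, is guaranteed by \cite[Thm.~5.13]{KP16a}, and nonvanishing of $c_\ell(\lambda',\lambda'')$ under $\lambda',\lambda''>1$ is noted after \eqref{eqn:b-ell}.
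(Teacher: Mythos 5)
Your proof is correct, but it assembles the ingredients in a different order than the paper does. The paper's proof is a two-line application of the abstract Lemma \ref{lem:Hilbertdeco}\,(1) to $R_\ell=\mathcal{RC}_{\lambda',\lambda''}^{\lambda'+\lambda''+2\ell}$: the quantitative input is the operator norm $\Vert R_\ell\Vert_{\mathrm{op}}^2=r_\ell(\lambda',\lambda'')c_\ell(\lambda',\lambda'')$ (Proposition \ref{prop:RCopnorm}, itself obtained in the $L^2$-model), after which the adjoint $R_\ell^*$ in the inversion formula $f=\sum_\ell C_\ell^{-1}R_\ell^*R_\ell f$ is converted into $r_\ell(\lambda',\lambda'')\Psi_{\lambda',\lambda''}^{\lambda'+\lambda''+2\ell}$ via Proposition \ref{prop:PsiRC}, the factor $r_\ell$ cancelling. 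You instead bypass the adjoint calculus and the constants $r_\ell$, $b(\lambda)$ altogether by proving the composite identity $\mathcal{RC}_{\lambda',\lambda''}^{\lambda'''}\circ\Psi_{\lambda',\lambda''}^{\lambda'''}=c_\ell(\lambda',\lambda'')\,\mathrm{id}$ directly --- and your $L^2$-model computation does check out: conjugating by $\mathcal F_1$ and combining Proposition \ref{lem:1} with $\widehat\Psi_{\lambda',\lambda''}^{\lambda'''}=i^\ell\Phi_{\lambda',\lambda''}^{\lambda'''}$ (Proposition \ref{prop:PhiPsi}), the factors $M(z,v)^{\pm1}$ cancel and the scalar is exactly the weighted Jacobi norm \eqref{eqn:v-ell} --- and then deducing the projection property from Schur's lemma and the multiplicity-free branching law \eqref{eqn:absbra}. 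Note that the paper records this composite identity only later, as \eqref{eqn:RCPsic} in the proof of Theorem \ref{thm:180297}\,(3), where it is \emph{derived from} Propositions \ref{prop:RCopnorm} and \ref{prop:PsiRC}; you reverse that logical order. Both arguments rest on the same two pillars (the Molchanov--Repka decomposition \eqref{eqn:absbra} --- which, a small quibble, is a classical theorem cited in the paper, not a consequence of the intertwining property of $\mathcal{RC}_{\lambda',\lambda''}$ alone --- and Jacobi orthogonality funneled through the F-method), so the trade-off is organizational: the paper's route produces the operator norms that it needs anyway for the Parseval--Plancherel Theorem \ref{thm:PPTT4RCB}, while yours gives a shorter, self-contained proof of the inversion in which $c_\ell$ appears directly without the cancellation of $r_\ell$. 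One step you state tersely but should make explicit: that the image of $\Psi_{\lambda',\lambda''}^{\lambda'''}$ is \emph{exactly} $H_\ell$ is not literally part of Theorem \ref{thm:psi}\,(2); it follows because the image is a nonzero closed invariant subspace contained in $H_\ell$ (Schur kills the projections to $H_m$, $m\neq\ell$), hence equals $H_\ell$ by irreducibility.
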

Theorem \ref{thm:RCinv} will be proved in Section \ref{subsec:pfthmRCinv}.


\subsection{Parseval--Plancherel type theorem for the Rankin--Cohen transform and its holographic transform}

In this section we develop an $L^2$-theory for the Rankin--Cohen transform (Definition \ref{def:RCT}) and for the
holographic transform (Theorem \ref{thm:PPTT4RCB} (2)), thus providing an answer to Problem B for these two transforms.


\subsubsection{Weighted Hilbert sums}

In order to formulate the Parseval--Plancherel type theorem, we fix some notations for the Hilbert direct sum.
\begin{defn}
[weighted Hilbert sum]
\label{def:compsum}
Let$\{V_\ell\}_{\ell\in\N}$ be a family of Hilbert spaces and $\{a_\ell\}_{\ell\in\N}$ a sequence  of positive numbers. 
The Hilbert sum $\displaystyle{\sum\limits_{\ell\in\N}}^\oplus V_\ell$  associated to the {\it{weights}} $\{a_\ell\}_{\ell\in\N}$ is the Hilbert
completion of the
algebraic direct sum $\displaystyle\bigoplus\limits_{\ell\in\N}V_\ell$ equipped with the pre-Hilbert structure given by
$$
(v,v'):=\sum_{\ell=0}^\infty a_\ell(v_\ell,v'_\ell)_{V_\ell}
\quad
\text{for $v=(v_\ell)_{\ell\in\N}$ and $v'=(v'_\ell)_{\ell\in\N}$}.
$$
\end{defn}


\subsubsection{Parseval--Plancherel type theorem}\label{subsec:PP-RC}

For
$\lambda',\lambda''>1$ the bidifferential 
operators $\mathcal{RC}_{\lambda',\lambda''}^{\lambda'''}$ 
extend to a continuous map between Hilbert spaces.
Now, we formulate a Parseval--Plancherel type theorem for the Rankin--Cohen transform as well as the ``holographic transform'', hence answer Problem B for these transforms.

\begin{thm}[Parseval--Plancherel theorem]\label{thm:PPTT4RCB}

Suppose $\lambda',\lambda''>1$. 
\begin{itemize}
\item[(1)] 
The Rankin--Cohen transform $\mathcal{RC}_{\lambda',\lambda''}$ (Definition \ref{def:RCT})
induces an ${SL(2,\R)}{\;}\widetilde{}$-equivariant unitary operator
$$
\mathcal H^2(\Pi)_{\lambda'}\widehat\otimes \mathcal H^2(\Pi)_{\lambda''}  \stackrel
{\sim}{\longrightarrow}
{\sum_{\ell\in\N}}^\oplus  \mathcal H^2(\Pi)_{\lambda'+\lambda''+2\ell}
$$
to the Hilbert sum associated to weights $\left
\{
\frac1{r_\ell(\lambda',\lambda'')c_\ell(\lambda',\lambda'')}\right\}_{\ell\in\N}$.
 Thus, for every $f\in \mathcal H^2(\Pi)_{\lambda'}\widehat\otimes \mathcal H^2(\Pi)_{\lambda''}$,
\begin{eqnarray*}
&&\Vert f\Vert_{\mathcal H^2(\Pi)_{\lambda'}\widehat\otimes \mathcal H^2(\Pi)_{\lambda''}}^2\\
&=&
\sum_{\ell=0}^\infty \frac1{r_\ell(\lambda',\lambda'')c_\ell(\lambda',\lambda'')}
\Vert \left(\mathcal{RC}_{\lambda',\lambda''}(f)\right)_\ell\Vert_{\mathcal H^2(\Pi)_{\lambda'+\lambda''+2\ell}}^2.
\end{eqnarray*}
\item[(2)]
Collecting the holographic operators
$\Psi_{\lambda',\lambda''}^{\lambda'''}$, we define
the holographic 
 transform 
 $$
 \Psi_{\lambda',\lambda''}:\bigoplus_{\ell\in\N} 
 \mathcal H^2(\Pi)_{\lambda'+\lambda''+2\ell}{\longrightarrow}
\mathcal H^2(\Pi)_{\lambda'}\widehat\otimes \mathcal H^2(\Pi)_{\lambda''}
$$
by
 $$
 \Psi_{\lambda',\lambda''}:=\bigoplus_{\ell=0}^\infty \Psi_{\lambda',\lambda''}^{\lambda'+\lambda''+2\ell}.
 $$
  Then $\Psi_{\lambda',\lambda''}$
induces an ${SL(2,\R)}{\;}\widetilde{}$-equivariant unitary operator
$$
\sum_{\ell=0}^\infty{}^{\oplus} \mathcal H^2(\Pi)_{\lambda'+\lambda''+2\ell}\stackrel
{\sim}{\longrightarrow}
\mathcal H^2(\Pi)_{\lambda'}\widehat\otimes \mathcal H^2(\Pi)_{\lambda''}
$$
from the Hilbert sum associated to the weights $\left\{\frac{c_\ell(\lambda',\lambda'')}{r_\ell(\lambda',\lambda'')}\right\}_{\ell\in\N}$. 
Thus,
$$
\Vert \Psi_{\lambda',\lambda''} g\Vert^2_{\mathcal H^2(\Pi)_{\lambda'}\widehat\otimes \mathcal H^2(\Pi)_{\lambda''}}=
\sum_{\ell=0}^\infty 
\frac{c_\ell(\lambda',\lambda'')}{r_\ell(\lambda',\lambda'')}\Vert g_\ell\Vert^2_{\mathcal H^2(\Pi)_{\lambda'+\lambda''+2\ell}}
$$
for $g=(g_\ell)_{\ell\in\N}$.
\end{itemize}
\end{thm}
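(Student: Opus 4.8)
The plan is to reduce the theorem to the classical branching law for the tensor product together with a single scalar computation, the operator norm of each Rankin--Cohen operator. As a unitary representation of ${SL(2,\R)}{\;}\widetilde{}$ the tensor product decomposes discretely and multiplicity-freely,
$$
\mathcal H^2(\Pi)_{\lambda'}\widehat\otimes \mathcal H^2(\Pi)_{\lambda''}\;\simeq\;\widehat\bigoplus_{\ell\in\N}\mathcal H^2(\Pi)_{\lambda'+\lambda''+2\ell},
$$
so that $H:=\mathcal H^2(\Pi)_{\lambda'}\widehat\otimes \mathcal H^2(\Pi)_{\lambda''}$ splits as an orthogonal Hilbert sum $H=\widehat\bigoplus_\ell H^{(\ell)}$ of pairwise inequivalent irreducible subspaces, with $H^{(\ell)}\simeq H_\ell:=\mathcal H^2(\Pi)_{\lambda'+\lambda''+2\ell}$. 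Writing $f=\sum_\ell f_\ell$ with $f_\ell\in H^{(\ell)}$ one has $\|f\|_H^2=\sum_\ell\|f_\ell\|_H^2$. First I would invoke Schur's lemma: since $R_\ell:=\mathcal{RC}_{\lambda',\lambda''}^{\lambda'+\lambda''+2\ell}\colon H\to H_\ell$ is a ${SL(2,\R)}{\;}\widetilde{}$-intertwiner and the $H^{(m)}$ are mutually inequivalent irreducibles, it follows that $R_\ell|_{H^{(m)}}=0$ for $m\ne\ell$ and $R_\ell|_{H^{(\ell)}}=\mu_\ell\,U_\ell$ for a unitary isomorphism $U_\ell\colon H^{(\ell)}\xrightarrow{\sim}H_\ell$ and a scalar $\mu_\ell>0$. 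In particular $\|R_\ell\|_{\mathrm{op}}=\mu_\ell$ and $\|R_\ell f\|_{H_\ell}^2=\mu_\ell^2\|f_\ell\|_H^2$, so both transforms become diagonal with respect to the decomposition.

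The only genuinely analytic input is the value $\mu_\ell^2=r_\ell(\lambda',\lambda'')\,c_\ell(\lambda',\lambda'')$, which is the operator-norm formula for the Rankin--Cohen operator and the place where the F-method enters. The plan is to transport the computation to the $L^2$-model of Section \ref{subsec:phi}: the Laplace transform identifies $\mathcal H^2(\Pi)_\lambda$ with an $L^2$-space on the symmetric cone $\R_+$ of Plancherel density $b(\lambda)$, and it turns the bidifferential operator $R_\ell$ into pairing against the Jacobi polynomial $P_\ell^{\alpha,\beta}$ in the coordinate $v\in[-1,1]$ transversal to the codimension-one foliation of $\R_+\times\R_+$. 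Integrating out the transversal variable produces the factor $\int_{-1}^1|P_\ell^{\alpha,\beta}(v)|^2(1-v)^\alpha(1+v)^\beta\,dv$, equal to $2^{\alpha+\beta+1}c_\ell$ by \eqref{eqn:v-ell}, while the three Plancherel densities combine into the ratio $r_\ell=b(\lambda''')/(b(\lambda')b(\lambda''))$ of \eqref{eqn:b-ell}. Equivalently, one may evaluate $\mu_\ell$ on a single lowest-weight vector of $H^{(\ell)}$ using the reproducing kernels $K_\lambda$, bypassing the general transfer. I expect this factorization of the norm to be the main obstacle; everything else is formal.

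Granting $\mu_\ell^2=r_\ell c_\ell$, part (1) would follow immediately, since
$$
\|f\|_H^2=\sum_\ell\|f_\ell\|_H^2=\sum_\ell\frac{1}{\mu_\ell^2}\|R_\ell f\|_{H_\ell}^2=\sum_\ell\frac{1}{r_\ell c_\ell}\big\|\big(\mathcal{RC}_{\lambda',\lambda''}(f)\big)_\ell\big\|_{H_\ell}^2,
$$
which is the asserted isometry onto the weighted Hilbert sum with weights $\{1/(r_\ell c_\ell)\}$; equivariance holds because each $R_\ell$ is an intertwiner, and surjectivity (hence unitarity) follows from the inversion formula of Theorem \ref{thm:RCinv}, whose right-hand side $g\mapsto\sum_\ell c_\ell^{-1}\Psi_\ell g_\ell$ supplies a two-sided inverse of the transform.

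For part (2) I would run the same Schur argument on the holographic side. By Theorem \ref{thm:psi}(1) each $\Psi_\ell\colon H_\ell\to H$ is an intertwiner, so its image lies in the $\lambda'''$-isotypic subspace $H^{(\ell)}$; combined with the surjectivity of $c_\ell^{-1}\Psi_\ell R_\ell=\mathrm{pr}_{H^{(\ell)}}$ read off from Theorem \ref{thm:RCinv}, this gives $\Psi_\ell=\sigma_\ell V_\ell$ for a unitary $V_\ell\colon H_\ell\to H^{(\ell)}$ and a scalar $\sigma_\ell>0$. Feeding this into $c_\ell^{-1}\Psi_\ell R_\ell|_{H^{(\ell)}}=\mathrm{Id}$ yields $\sigma_\ell\mu_\ell=c_\ell$, whence $\sigma_\ell^2=c_\ell^2/\mu_\ell^2=c_\ell/r_\ell$. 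Since the images $\Psi_\ell g_\ell\in H^{(\ell)}$ are mutually orthogonal,
$$
\big\|\Psi_{\lambda',\lambda''}g\big\|_H^2=\sum_\ell\|\Psi_\ell g_\ell\|_H^2=\sum_\ell\sigma_\ell^2\|g_\ell\|_{H_\ell}^2=\sum_\ell\frac{c_\ell}{r_\ell}\|g_\ell\|_{H_\ell}^2,
$$
and $\Psi_{\lambda',\lambda''}$ is unitary onto $H$ because, up to the diagonal rescaling by $\{c_\ell\}$, it is the inverse of the already unitary Rankin--Cohen transform.
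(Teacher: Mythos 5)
Your proposal is correct and is essentially the paper's own argument: the Schur-lemma diagonalization you describe is exactly what the paper packages as Lemma \ref{lem:Hilbertdeco}, and your ``single scalar'' $\mu_\ell^2=r_\ell(\lambda',\lambda'')c_\ell(\lambda',\lambda'')$ is Proposition \ref{prop:RCopnorm}, which the paper computes precisely along the route you sketch: transfer to the $L^2$-model via the Fourier--Laplace transform with the densities $b(\lambda)$ (Lemma \ref{lem:opFourier}), the adjoint identity of Proposition \ref{lem:2}, and the Jacobi $L^2$-norm (Proposition \ref{prop:phinorm}).

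The one point you should repair is the appeal to Theorem \ref{thm:RCinv}: in the paper that theorem is proved \emph{after} Theorem \ref{thm:PPTT4RCB}, and its proof explicitly invokes the proof of part (1) together with Proposition \ref{prop:PsiRC}, so citing it here is circular relative to the paper's logical order. The gap is cosmetic rather than substantive, because everything you take from it is already available inside your own framework. For surjectivity in part (1), your Schur decomposition gives $R_\ell|_{H^{(\ell)}}=\mu_\ell U_\ell$ with $U_\ell$ unitary \emph{onto} $H_\ell$ (polar decomposition plus Schur on $R_\ell^*R_\ell$, with $\mu_\ell>0$ guaranteed by the norm computation), so the weighted transform is an isometry with closed image containing the algebraic direct sum; no inversion theorem is needed. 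For part (2), the scalar $\sigma_\ell$ with $\sigma_\ell\mu_\ell=c_\ell$ can be obtained without Theorem \ref{thm:RCinv}, either by identifying $\Psi_{\lambda',\lambda''}^{\lambda'''}=r_\ell^{-1}\,(\mathcal{RC}_{\lambda',\lambda''}^{\lambda'''})^*$ (Proposition \ref{prop:PsiRC}, the paper's route, which then feeds Lemma \ref{lem:Hilbertdeco}(2)), or by verifying the composition identity directly in the $L^2$-model: combining the integral formula of Proposition \ref{lem:1} with \eqref{eqn:iholograph} and the Jacobi norm \eqref{eqn:Pnorm} gives
$$
\widehat{\mathcal{RC}}_{\lambda',\lambda''}^{\lambda'''}\circ\Phi_{\lambda',\lambda''}^{\lambda'''}
= i^{-\ell}\,c_\ell(\lambda',\lambda'')\,\mathrm{id},
$$
hence $\mathcal{RC}_{\lambda',\lambda''}^{\lambda'''}\circ\Psi_{\lambda',\lambda''}^{\lambda'''}=c_\ell(\lambda',\lambda'')\,\mathrm{id}$ --- the same transversal integration you already perform for $\mu_\ell$. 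Finally, a citation quibble: the fact that $\Psi_{\lambda',\lambda''}^{\lambda'''}$ maps $\mathcal H^2(\Pi)_{\lambda'''}$ continuously into the tensor product of Bergman spaces is Theorem \ref{thm:psi}(2) (equivalently \eqref{eqn:3holograph}), not part (1), which concerns only the Montel spaces $\mathcal O(\Pi)$.
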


Theorem \ref{thm:PPTT4RCB} will be proved in Section \ref{subsec:pfthmPPTT4RCB}. It
gives quantitative information on the classical
branching law (\emph{fusion rule})
of the tensor product of two holomorphic discrete series representations $\pi_{\lambda'}$ and $\pi_{\lambda''}$ 
that decomposes  into a multiplicity-free direct Hilbert sum of irreducible unitary representations 
 when $\lambda',\lambda''>1$ \cite{Mol,Repka}:
\begin{equation}\label{eqn:absbra}
\pi_{\lambda'}\widehat\otimes \pi_{\lambda''}
\simeq
{\sum_{\ell\in\N}}^{\oplus}\pi_{\lambda'+\lambda''+2\ell}. 
\end{equation}

The projection to each irreducible summand in the decomposition \eqref{eqn:absbra} is given as the composition of
the corresponding Rankin--Cohen operator and the holographic operator
in the holomorphic model. 
Thus Theorem \ref{thm:RCinv} (and Proposition
\ref{prop:PsiRC} below) shows the following corollary.
\begin{cor}
[projection operator]
\label{cor:20181103}
Suppose $\lambda', \lambda'', \lambda'''>1$
 and $\ell :=\frac 1 2(\lambda'''- \lambda'- \lambda'') \in {\mathbb{N}}$.  
Then 
\[\frac{1}{c_{\ell}(\lambda', \lambda'')}
\Psi_{\lambda',\lambda''}^{\lambda'''}
\circ
\mathcal{RC}_{\lambda',\lambda''}^{\lambda'''}
=
\frac{1}{r_{\ell}(\lambda', \lambda'')c_{\ell}(\lambda', \lambda'')}
(\mathcal{RC}_{\lambda',\lambda''}^{\lambda'''})^{\ast}
\circ
\mathcal{RC}_{\lambda',\lambda''}^{\lambda'''}
\]
is the projection operator of the Hilbert space
${\mathcal{H}}^2(\Pi)_{\lambda'}
\widehat{\otimes} 
{\mathcal{H}}^2(\Pi)_{\lambda''}$
onto the irreducible summand 
 which is isomorphic to 
${\mathcal{H}}^2(\Pi)_{\lambda'''}$, see \eqref{eqn:absbra}.  
\end{cor}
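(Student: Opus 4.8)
The plan is to deduce the statement from the inversion formula (Theorem \ref{thm:RCinv}) together with the multiplicity-freeness of the fusion rule \eqref{eqn:absbra}, reducing everything to an application of Schur's lemma. First I would dispose of the equality between the two displayed operators. By Proposition \ref{prop:PsiRC}, for $\lambda',\lambda''>1$ the holographic operator is a fixed scalar multiple of the adjoint of the Rankin--Cohen operator, namely $\Psi_{\lambda',\lambda''}^{\lambda'''}=\frac{1}{r_\ell(\lambda',\lambda'')}(\mathcal{RC}_{\lambda',\lambda''}^{\lambda'''})^{\ast}$; dividing by $c_\ell(\lambda',\lambda'')$ turns the left-hand expression into the right-hand one verbatim. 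Write $P_\ell$ for this common operator. Since $r_\ell,c_\ell$ are positive reals and $P_\ell=\frac{1}{r_\ell c_\ell}(\mathcal{RC}_{\lambda',\lambda''}^{\lambda'''})^{\ast}\circ\mathcal{RC}_{\lambda',\lambda''}^{\lambda'''}$, the operator $P_\ell$ is visibly self-adjoint and positive.

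Next I would isolate the target summand. Put $\lambda'''=\lambda'+\lambda''+2\ell$ and let $H_m\subseteq\mathcal H^2(\Pi)_{\lambda'}\widehat\otimes\mathcal H^2(\Pi)_{\lambda''}$ be the closed subspace realizing the summand $\pi_{\lambda'+\lambda''+2m}$ in \eqref{eqn:absbra}, on which the diagonally embedded ${SL(2,\R)}{\;}\widetilde{}$ acts. The operator $\mathcal{RC}_{\lambda',\lambda''}^{\lambda'''}$ intertwines this diagonal action with $\pi_{\lambda'''}=\pi_{\lambda'+\lambda''+2\ell}$, so by Schur's lemma and the multiplicity-freeness of \eqref{eqn:absbra} it annihilates every $H_m$ with $m\neq\ell$; consequently $P_\ell|_{H_m}=0$ for $m\neq\ell$. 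Dually, Theorem \ref{thm:psi}(1) shows that the image of $\Psi_{\lambda',\lambda''}^{\lambda'''}$ is an invariant submodule isomorphic to $\pi_{\lambda'''}$, hence (again by multiplicity one) contained in $H_\ell$; thus $P_\ell$ maps into $H_\ell$.

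It then remains to verify that $P_\ell$ restricts to the identity on $H_\ell$. For $f\in H_\ell$ the previous step kills all components $(\mathcal{RC}_{\lambda',\lambda''}(f))_m$ with $m\neq\ell$, so the inversion formula of Theorem \ref{thm:RCinv} collapses to its single surviving term $f=\frac{1}{c_\ell(\lambda',\lambda'')}\Psi_{\lambda',\lambda''}^{\lambda'''}\bigl(\mathcal{RC}_{\lambda',\lambda''}^{\lambda'''}f\bigr)=P_\ell f$. Hence $P_\ell|_{H_\ell}=\mathrm{Id}$ and $P_\ell|_{H_m}=0$ for $m\neq\ell$. Since distinct summands of \eqref{eqn:absbra} are inequivalent irreducible unitary representations, they are mutually orthogonal, so the $H_m$ form an orthogonal Hilbert direct sum; an operator acting as the identity on $H_\ell$ and as zero on $\widehat\bigoplus_{m\neq\ell}H_m$ is precisely the orthogonal projection onto $H_\ell$. (Equivalently, one checks idempotency from the reproducing relation $\mathcal{RC}_{\lambda',\lambda''}^{\lambda'''}\circ\Psi_{\lambda',\lambda''}^{\lambda'''}=c_\ell(\lambda',\lambda'')\,\mathrm{Id}$, valid because $\Psi_{\lambda',\lambda''}^{\lambda'''}$ is injective by Theorem \ref{thm:psi}(2) and $c_\ell(\lambda',\lambda'')\neq0$, and combines with the self-adjointness of the first paragraph to recover the orthogonal projection.)

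I expect the only delicate point to be the normalization constant: Schur's lemma alone gives that $\frac{1}{c_\ell(\lambda',\lambda'')}\Psi_{\lambda',\lambda''}^{\lambda'''}\circ\mathcal{RC}_{\lambda',\lambda''}^{\lambda'''}$ acts as some scalar on the irreducible summand $H_\ell$, but pinning this scalar down to exactly $1$ is where the explicit value of $c_\ell(\lambda',\lambda'')$ enters. All of that precision is already packaged in Theorem \ref{thm:RCinv}, so once that inversion is granted the corollary is essentially a bookkeeping consequence of multiplicity-freeness and Schur's lemma.
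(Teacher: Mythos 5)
Your proof is correct and follows essentially the same route as the paper, which likewise obtains the corollary by combining the inversion formula of Theorem \ref{thm:RCinv} with the adjoint identity $(\mathcal{RC}_{\lambda',\lambda''}^{\lambda'''})^{\ast}=r_\ell(\lambda',\lambda'')\,\Psi_{\lambda',\lambda''}^{\lambda'''}$ of Proposition \ref{prop:PsiRC} against the multiplicity-free decomposition \eqref{eqn:absbra}. The Schur-lemma bookkeeping you spell out (vanishing on $H_m$ for $m\neq\ell$, identity on $H_\ell$, orthogonality of inequivalent summands) is exactly what the paper packages into the abstract Hilbert-space Lemma \ref{lem:Hilbertdeco}, so your write-up simply makes those implicit steps explicit.
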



\subsection{Holographic transform in the $L^2$-model}

By the Fourier--Laplace transform, the weighted Bergman space $\mathcal H^2(\Pi)_\lambda$ realized in the space
of holomorphic functions on the upper half plane $\Pi$ is mapped into the space of functions supported on the
positive axis $\R_+$, more precisely, onto the Hilbert space $L^2(\R_+, x^{1-\lambda}d x)$, giving thus rise to an
$L^2$-\emph{model} of the same representation of ${SL(2,\R)}{\;}\widetilde{}$ (Fact \ref{fact:Laplace}). 
We shall find closed formul{\ae} for
the symmetry breaking transform and the holographic transform also in this model and give an answer to Problems A and B,
see Theorems \ref{thm:phi}, \ref{thm:FRCinv} and \ref{thm:CL2SL2}. The results in the $L^2$-model 
give a new interpretation of the classical theory of the Jacobi transform,
 and also play a key role in proving the theorems for the holomorphic model, see Section \ref{subsec:proofs}.


\subsubsection{$L^2$-model of holomorphic discrete series}

 For $\lambda>1$ we consider the Hilbert space $L^2(\R_+)_\lambda:=L^2(\R_+,x^{1-\lambda}d x)$. 

\begin{fact}\label{fact:Laplace}
Suppose $\lambda>1$.
The Fourier--Laplace transform
$$
\mathcal F\colon F\mapsto \mathcal F F(\zeta):= \int_0^\infty F(z)e^{i\zeta z} dz,
$$
is an isometry from $L^2(\R_+)_\lambda$ onto the weighted Bergman space $\mathcal H^2(\Pi)_\lambda$ up to scalar multiplication.
To be precise, we have 
 $$
\Vert\mathcal FF\Vert^2_{\mathcal H^2(\Pi)_\lambda}=b(\lambda)\Vert F\Vert^2_{L^2(\R_+)_\lambda}
$$
for all $F\in L^2(\R_+)_\lambda$ $($see \emph{e.g.} \cite[Thm.~XIII.1.1]{FK}$)$, where
$$
b(\lambda):=2^{2-\lambda}\pi\Gamma(\lambda-1).
$$
\end{fact}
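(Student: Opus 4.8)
\emph{Proof plan.} The plan is to reduce the two-dimensional weighted Bergman integral over $\Pi$ to the one-dimensional weighted $L^2$-integral over $\R_+$ by applying the classical Plancherel theorem slice-by-slice in the horizontal variable, and then to settle surjectivity by exhibiting the reproducing kernels inside the image of $\mathcal F$.

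First I would fix $\lambda>1$ and check that the transform is well defined: for $F\in L^2(\R_+)_\lambda$ and $\zeta=x+iy\in\Pi$ one has $\abs{e^{i\zeta z}}=e^{-yz}$, so the Cauchy--Schwarz inequality against the weight $z^{1-\lambda}$ bounds $\int_0^\infty\abs{F(z)}e^{-yz}dz$ by $\norm{F}_{L^2(\R_+)_\lambda}\bigl(\int_0^\infty z^{\lambda-1}e^{-2yz}dz\bigr)^{1/2}=\norm{F}_{L^2(\R_+)_\lambda}\,\Gamma(\lambda)^{1/2}(2y)^{-\lambda/2}$. Thus $(\mathcal F F)(\zeta)$ converges absolutely for every $\zeta\in\Pi$, and differentiation under the integral sign gives $\mathcal F F\in\mathcal O(\Pi)$.

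For the norm identity I would compute $\int_\Pi\abs{(\mathcal F F)(x+iy)}^2\,y^{\lambda-2}\,dx\,dy$. For each fixed $y>0$ the function $x\mapsto (\mathcal F F)(x+iy)$ is the Euclidean Fourier transform of $z\mapsto F(z)e^{-yz}$ (extended by zero to $\R$), so the Plancherel theorem on $\R$ yields $\int_\R\abs{(\mathcal F F)(x+iy)}^2\,dx=2\pi\int_0^\infty\abs{F(z)}^2 e^{-2yz}\,dz$. Integrating against $y^{\lambda-2}\,dy$ and exchanging the order of integration by Tonelli (all integrands being nonnegative), the inner $y$-integral becomes $\int_0^\infty e^{-2yz}y^{\lambda-2}\,dy=\Gamma(\lambda-1)(2z)^{1-\lambda}$, where convergence near $y=0$ is exactly what forces $\lambda>1$. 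Collecting constants gives $\int_\Pi\abs{\mathcal F F}^2 y^{\lambda-2}=2^{2-\lambda}\pi\,\Gamma(\lambda-1)\int_0^\infty\abs{F(z)}^2 z^{1-\lambda}\,dz=b(\lambda)\norm{F}_{L^2(\R_+)_\lambda}^2$, which is the claimed formula; it holds first on a dense subspace (say compactly supported $F$) and then on all of $L^2(\R_+)_\lambda$ by continuity.

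Finally, for surjectivity I would argue that the image of $\mathcal F$ is closed (being the range of a scalar multiple of an isometry on a complete space) and contains a dense subset of $\mathcal H^2(\Pi)_\lambda$. The natural dense family is the set of reproducing kernels $K_\lambda(\cdot,w)=\frac{\lambda-1}{4\pi}\bigl(\frac{\,\cdot\,-\bar w}{2i}\bigr)^{-\lambda}$; a direct computation shows that $F_w(z):=z^{\lambda-1}e^{-i\bar w z}$ lies in $L^2(\R_+)_\lambda$ and that $(\mathcal F F_w)(\zeta)=\Gamma(\lambda)\bigl(-i(\zeta-\bar w)\bigr)^{-\lambda}$ is a nonzero constant multiple of $K_\lambda(\zeta,w)$. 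Hence every reproducing kernel lies in the image, the image is dense, and being closed it equals $\mathcal H^2(\Pi)_\lambda$. I expect the surjectivity step and the Fubini/Plancherel justifications to be the only points requiring care; alternatively one may replace the reproducing-kernel argument by invoking the irreducibility of $\mathcal H^2(\Pi)_\lambda$ recorded above together with the intertwining property of $\mathcal F$, so that its closed, nonzero, invariant image is forced to be the whole space.
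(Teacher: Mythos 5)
Your proof is correct. The paper itself offers no argument for this statement: it is recorded as a \emph{Fact} with a citation to Faraut--Koranyi \cite[Thm.~XIII.1.1]{FK}, which is the Paley--Wiener-type theorem for weighted Bergman spaces on general tube domains over symmetric cones; the half-plane case used here is its rank-one specialization. What you have written is, in effect, a self-contained elementary proof of that rank-one case, and all the steps check out: the Cauchy--Schwarz bound $\int_0^\infty\abs{F(z)}e^{-yz}dz\le\norm{F}_{L^2(\R_+)_\lambda}\Gamma(\lambda)^{1/2}(2y)^{-\lambda/2}$ gives absolute convergence and holomorphy (and, since $Fe^{-y\cdot}\in L^1\cap L^2$, identifies the pointwise integral with the $L^2$-Fourier transform needed for the slice Plancherel identity); the Tonelli exchange and the Gamma integral $\int_0^\infty e^{-2yz}y^{\lambda-2}dy=\Gamma(\lambda-1)(2z)^{1-\lambda}$ produce exactly $b(\lambda)=2^{2-\lambda}\pi\Gamma(\lambda-1)$, with $\lambda>1$ entering precisely where you say; and the surjectivity argument via $F_w(z)=z^{\lambda-1}e^{-i\bar wz}$, whose image is $\Gamma(\lambda)\bigl(-i(\zeta-\bar w)\bigr)^{-\lambda}=\frac{2^{2-\lambda}\pi\,\Gamma(\lambda)}{\lambda-1}K_\lambda(\zeta,w)$, combined with density of reproducing kernels and closedness of the range of an isometry, is complete. (Your closing density remark is actually unnecessary: the norm identity holds directly for all $F\in L^2(\R_+)_\lambda$, since the pointwise convergence and slice Plancherel were established without any support restriction.) One caveat on your proposed alternative ending: within this paper's logical order the irreducibility route is circular, because the $\widetilde{SL(2,\R)}$-action on $L^2(\R_+)_\lambda$, its irreducibility, and the intertwining property of $\mathcal F$ are all \emph{defined} or deduced via $\mathcal F$ itself immediately after this Fact; it can be repaired by defining the Lie algebra action on $L^2(\R_+)_\lambda$ directly, but your reproducing-kernel argument is the cleaner and logically independent one, so keep it as the main line.
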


For $\lambda>1$, via the unitary (up to scaling) map $\mathcal F \colon L^2(\R_+)_\lambda \stackrel
{\sim}{\longrightarrow}
\mathcal H^2(\Pi)_\lambda$, we define an irreducible unitary representation of
${SL(2,\R)}{\;}\widetilde{}$ on $L^2(\R_+)_\lambda$, which
 is referred to as the \emph {$L^2$-model} of the holomorphic discrete series representation $\pi_\lambda$.

 We shall write 
 $$\mathcal F_1\equiv\mathcal F\quad \rm{and}\quad\mathcal F_2:=\mathcal F
\otimes \mathcal F
$$ 
in order to distinguish the framework  of functions of one or two variables, respectively, 
and we write, by abuse of notations, 
\begin{equation}\label{eqn:L2two}
L^2(\R_+^2)_{\lambda',\lambda''}:
=L^2(\R_+\times\R_+,x^{1-\lambda'}y^{1-\lambda''}dx\;dy)\simeq
L^2(\R_+)_{\lambda'}\widehat\otimes L^2(\R_+)_{\lambda''}.
\end{equation}

\subsubsection{Construction of discrete summands in the $L^2$-model}\label{subsec:phi}

Via the Fourier--Laplace transform, we can define the counterpart for the $L^2$-model of the Rankin--Cohen
bidifferential operator $\mathcal{RC}_{\lambda',\lambda''}^{\lambda'''}$ and the holographic integral operator
$\Psi_{\lambda',\lambda''}^{\lambda'''}$ \eqref{eqn:psi} by
\begin{equation}\label{eqn:RCFourier}
   \widehat{\mathcal{RC}}_{\lambda',\lambda''}^{\lambda'''}:=\mathcal F_1^{-1}\circ
   {\mathcal{RC}_{\lambda',\lambda''}^{\lambda'''}}\circ \mathcal F_2.
\end{equation}
\begin{equation}\label{eqn:FPsi}
   \widehat{\Psi}_{\lambda',\lambda''}^{\lambda'''}:= \mathcal F_2^{-1}\circ
    {\Psi_{\lambda',\lambda''}^{\lambda'''}}\circ \mathcal F_1.
\end{equation}
We know from \cite{KP16a} that $\mathcal{RC}_{\lambda',\lambda''}^{\lambda'''}$
is continuous between the weighted Bergman spaces, and so is
$\widehat{\mathcal{RC}}_{\lambda',\lambda''}^{\lambda'''}$.
In turn, $\widehat{\Psi}_{\lambda',\lambda''}^{\lambda'''}$ is continuous between the Hilbert spaces by \eqref{eqn:3holograph} below, hence so is
${\Psi}_{\lambda',\lambda''}^{\lambda'''}$. Alternatively, the
continuity of ${\Psi}_{\lambda',\lambda''}^{\lambda'''}$ is also given by that of another
holographic operator
${\Phi}_{\lambda',\lambda''}^{\lambda'''}$ introduced in
Definition \ref{def:Phi} (Proposition \ref{prop:phinorm}).
The
 following commutative diagrams summarize these definitions: 
$$
\small{
\xymatrix{
 \ar@/_5pc/[dd]_{\widehat{\mathcal{RC}}_{\lambda',\lambda''}^{\lambda'''}}
  L^2(\R_+^2,x^{1-\lambda'} y^{1-\lambda''}dxdy)
 \ar[r]^-{\mathcal F_{2}}
 \ar @{} [dr] |{\circlearrowright}
  \ar[d]
 & \mathcal H^2_{\lambda'}(\Pi) \widehat\otimes \mathcal H^2_{\lambda''}(\Pi) 
 \ar[d]^{\mathcal{R}_{\lambda',\lambda''}^{\lambda'''}} 
  \ar@/^5pc/[dd]^{\mathcal{RC}_{\lambda',\lambda''}^{\lambda'''}}
 \\
 L^2(\R_+^2,x^{1-\lambda'}
y^{1-\lambda''}dx d y)
\ar[r]^-{\mathcal F_{2}}
 \ar @{} [dr] |{\circlearrowright}
 \ar[d]
&\mathcal H^2_{\lambda'}(\Pi) \widehat\otimes \mathcal H^2_{\lambda''}(\Pi) 
\ar[d]^{\mathrm{Rest}} 
\\
L^2(\R_+,z^{1-\lambda'''}dz)\ar[r]_-{\mathcal F_{1}} 
 &
 \mathcal H^2_{\lambda'''}(\Pi) 
 }
 }
$$
{\centerline{ Diagram 2.1. Symmetry breaking operators
$\pi_{\lambda'}\widehat\otimes\,\pi_{\lambda''}\twoheadrightarrow\pi_{\lambda'''}$}
{\centerline{ for holomorphic
 and $L^2$-models.}}

$$
\small{
\xymatrix{
L^2(\R_+^2,x^{1-\lambda'} y^{1-\lambda''}dx d y)
 \ar[r]^-{\mathcal F_{2}} 
\ar @{}[rd] |{\circlearrowright}
& \mathcal H^2_{\lambda'}(\Pi) \widehat\otimes \mathcal H^2_{\lambda''}(\Pi) 
 \\
 L^2(\R_+,z^{1-\lambda'''}dz)
 \ar[r]^-{\mathcal F_{1}} 
  \ar[u]^{\qquad \widehat{\Psi}_{\lambda',\lambda''}^{\lambda'''}=(c'\Phi_{\lambda',\lambda''}^{\lambda'''})}
 & \mathcal H^2_{\lambda'''}(\Pi)
\ar[u]_{\Psi_{\lambda',\lambda''}^{\lambda'''}(=c''(\mathcal{RC}_{\lambda',\lambda''}^{\lambda'''})^*)} 
}
}
$$

{\centerline{Diagram 2.2. Holographic operators $\pi_{\lambda'''}\hookrightarrow
\pi_{\lambda'}\widehat\otimes\pi_{\lambda''}$}}
{\centerline{
 for holomorphic and $L^2$-models.}}
\vskip10pt

We shall give an explicit integral formula of the symmetry breaking operator $\widehat{\mathcal{RC}}_{\lambda',\lambda''}^{\lambda'''}$
 in the $L^2$-model in Proposition \ref{lem:1}.
On the other hand, we observe the holographic operator in the $L^2$-model
 has the following three important characteristics: 
\begin{itemize}
    \item[(1)] the Fourier transform $\widehat \Psi_{\lambda',\lambda''}^{\lambda'''}$
    of the holographic operator $\Psi_{\lambda',\lambda''}^{\lambda'''}$, see \eqref{eqn:FPsi};
    \item[(2)] the adjoint of $\widehat{\mathcal{RC}}_{\lambda',\lambda''}^{\lambda'''}$ (Proposition \ref{lem:2});
    \item[(3)] the multiplication operator $\Phi_{\lambda',\lambda''}^{\lambda'''}$, see \eqref{eqn:defPhi} below for definition.
\end{itemize}

These three approaches may be summarized as the following identities:
\begin{equation}\label{eqn:3holograph}
\widehat\Psi_{\lambda',\lambda''}^{\lambda'''}
=\left(\widehat{\mathcal{RC}}_{\lambda',\lambda''}^{\lambda'''}\right)^*
=i^\ell\Phi_{\lambda',\lambda''}^{\lambda'''},
\end{equation}
see Propositions \ref{lem:2} and \ref{prop:PhiPsi}. 
The third characteristic is remarkable as it does not involve
any integration or differentiation.  
For this reason, 
 we adopt it as our definition  
of holographic transform in the $L^2$-model, see Definition \ref{def:Phi} below.

For $\alpha,\beta\in\C$ and $\ell\in\N$,  let $P_\ell^{\alpha,\beta}(x)$ be the Jacobi polynomial of degree $\ell$, see \eqref{eqn:Jacobi} in Appendix.
\begin{defn}\label{def:Phi}
Retain the setting 
 that $\lambda',\lambda'',\lambda'''\in\C$ with $\ell:=\frac12(\lambda'''-\lambda'-\lambda'')\in\N$.
  For a function $h(z)$ of one variable $z$ ($z>0$), we define a function of two variables $x,y$ ($x,y>0$) by
   \begin{equation}\label{eqn:defPhi}
      \left(\Phi_{\lambda',\lambda''}^{\lambda'''}h\right)(x,y):=
      \frac{x^{\lambda'-1}y^{\lambda''-1}}{(x+y)^{\lambda'+\lambda''+\ell-1}} P_\ell^{\lambda'-1,\lambda''-1}
      \left(\frac{y-x}{x+y}\right)\cdot h(x+y).
   \end{equation}
\end{defn}

\begin{thm}[holographic operator in the $L^2$-model]\label{thm:phi}
Suppose ${\lambda',\lambda''},{\lambda'''}>1$ such that $\ell:=\frac12(\lambda'''-\lambda'-\lambda'')\in\N$.
Then, $\Phi_{\lambda',\lambda''}^{\lambda'''}$ induces
an ${SL(2,\R)}{\;}\widetilde{}$-equivariant continuous homomorphism between the Hilbert spaces: 
\begin{eqnarray*}
\Phi_{\lambda',\lambda''}^{\lambda'''}\colon L^2(\R_+,z^{1-\lambda'''}dz)&\To&L^2(\R_+,x^{1-\lambda'}d x)\widehat\otimes L^2(\R_+,
y^{1-\lambda''}d y).
\end{eqnarray*}

\end{thm}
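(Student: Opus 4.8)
The plan is to prove the two assertions of the statement—continuity (boundedness) and ${SL(2,\R)}{\;}\widetilde{}$-equivariance—separately. Continuity I would get from a direct and exact computation of the norm of $\Phi_{\lambda',\lambda''}^{\lambda'''}h$, which in fact shows that $\Phi_{\lambda',\lambda''}^{\lambda'''}$ is a scalar multiple of an isometry. Equivariance I would deduce by transporting the already-available equivariance of the holomorphic holographic operator $\Psi_{\lambda',\lambda''}^{\lambda'''}$ through the Fourier--Laplace transform, the self-contained alternative being an infinitesimal F-method verification.

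\textbf{Continuity.}
Starting from \eqref{eqn:defPhi}, the weights $x^{1-\lambda'}y^{1-\lambda''}$ of the target space cancel the factor $x^{2(\lambda'-1)}y^{2(\lambda''-1)}$ coming from $|\Phi h|^2$ down to $x^{\lambda'-1}y^{\lambda''-1}$. I then introduce the coordinates adapted to the operator, namely $z=x+y$ and $v=\frac{y-x}{x+y}$, so that $x=\frac{z(1-v)}{2}$, $y=\frac{z(1+v)}{2}$, and $dx\,dy=\frac{z}{2}\,dz\,dv$ on $\R_+\times(-1,1)$. The point of this substitution is that the variables separate: the powers of $z$ collapse to exactly $z^{1-\lambda'''}$, since
\[
(\lambda'-1)+(\lambda''-1)-2(\lambda'+\lambda''+\ell-1)+1=1-(\lambda'+\lambda''+2\ell)=1-\lambda''',
\]
while the $v$-integral equals $\frac{1}{2^{\lambda'+\lambda''-1}}\int_{-1}^{1}\bigl|P_\ell^{\lambda'-1,\lambda''-1}(v)\bigr|^2(1-v)^{\lambda'-1}(1+v)^{\lambda''-1}\,dv$, which is precisely $c_\ell(\lambda',\lambda'')$ by its defining formula \eqref{eqn:v-ell} (recall $\alpha=\lambda'-1$, $\beta=\lambda''-1$). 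Hence
\[
\bigl\|\Phi_{\lambda',\lambda''}^{\lambda'''}h\bigr\|^2_{L^2(\R_+^2)_{\lambda',\lambda''}}=c_\ell(\lambda',\lambda'')\,\|h\|^2_{L^2(\R_+)_{\lambda'''}},
\]
so $\Phi_{\lambda',\lambda''}^{\lambda'''}$ is bounded with operator norm $\sqrt{c_\ell(\lambda',\lambda'')}$, which is nonzero for $\lambda',\lambda''>1$ as noted after \eqref{eqn:b-ell}. This settles continuity.

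\textbf{Equivariance.}
Here I would use the identity \eqref{eqn:3holograph}, which expresses $\Phi_{\lambda',\lambda''}^{\lambda'''}$ as $i^{-\ell}\widehat\Psi_{\lambda',\lambda''}^{\lambda'''}=i^{-\ell}\,\mathcal F_2^{-1}\circ\Psi_{\lambda',\lambda''}^{\lambda'''}\circ\mathcal F_1$ via \eqref{eqn:FPsi}. Since $\mathcal F_1$ and $\mathcal F_2$ intertwine the $L^2$- and holomorphic models by Fact \ref{fact:Laplace}, and $\Psi_{\lambda',\lambda''}^{\lambda'''}$ is ${SL(2,\R)}{\;}\widetilde{}$-equivariant by Theorem \ref{thm:psi}(1), the composite $\widehat\Psi_{\lambda',\lambda''}^{\lambda'''}$, hence $\Phi_{\lambda',\lambda''}^{\lambda'''}$, is equivariant. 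To keep this non-circular, the relation $\widehat\Psi_{\lambda',\lambda''}^{\lambda'''}=i^\ell\Phi_{\lambda',\lambda''}^{\lambda'''}$ should first be obtained by a direct Fourier--Laplace computation of the line integral \eqref{eqn:psi}, a calculation that does not invoke the present theorem. As a conceptually more transparent alternative, one can verify the $\mathfrak{sl}(2,\C)$-intertwining on generators in the $L^2$-model: the raising generator acts as multiplication by $z$, which $\Phi_{\lambda',\lambda''}^{\lambda'''}$ trivially carries to multiplication by $x+y$; the semisimple generator is handled by the homogeneity built into \eqref{eqn:defPhi}; and the only substantial relation, for the second-order lowering generator $\sim z\,\partial_z^2-(\lambda-2)\,\partial_z$, reduces—after the same $(x,y)\mapsto(z,v)$ substitution—to the Jacobi differential equation satisfied by $P_\ell^{\lambda'-1,\lambda''-1}$, after which integration from the Lie algebra to the group is justified by boundedness together with density of the $K$-finite vectors.

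\textbf{Main obstacle.}
Via the transport route the real work is condensed into establishing $\widehat\Psi_{\lambda',\lambda''}^{\lambda'''}=i^\ell\Phi_{\lambda',\lambda''}^{\lambda'''}$, i.e. computing the Fourier--Laplace image of \eqref{eqn:psi}; this is self-contained but requires care with the line-integral parametrization. Via the infinitesimal route the obstacle is the lowering-generator computation: applying the two-variable second-order operator $d\varpi_{\lambda'}(f)\otimes 1+1\otimes d\varpi_{\lambda''}(f)$ to $\Phi_{\lambda',\lambda''}^{\lambda'''}h$ produces mixed derivatives in $x$ and $y$, and the delicate bookkeeping is to confirm that, in the coordinates $(z,v)$, the cross terms reorganize so that the $v$-dependence is governed exactly by the Jacobi eigenvalue equation for $P_\ell^{\lambda'-1,\lambda''-1}$ while the $z$-dependence matches the scalar action $d\varpi_{\lambda'''}(f)$ on $h$. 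This is precisely the instance of the correspondence ``$G'$-intertwining property $\leftrightarrow$ hypergeometric differential equation'' highlighted in the introduction.
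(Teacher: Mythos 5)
Your continuity argument is correct and is in fact the paper's own computation: the change of variables $(x,y)=\iota(z,v)$, the exponent bookkeeping yielding $z^{1-\lambda'''}$ and the prefactor $2^{1-\lambda'-\lambda''}$, and the identification of the $v$-integral with $c_\ell(\lambda',\lambda'')$ reproduce Proposition \ref{prop:phinorm} essentially verbatim, which the paper itself flags as an alternative proof of continuity.

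The equivariance half, however, contains a genuine circularity as written. Your primary route invokes Theorem \ref{thm:psi}~(1), but in the paper that statement is proved \emph{from} Theorem \ref{thm:phi}: the argument in Section \ref{subsec:pfthmpsi} first obtains Theorem \ref{thm:psi}~(2) from Proposition \ref{prop:PhiPsi} together with the intertwining property of $\Phi_{\lambda',\lambda''}^{\lambda'''}$ established in Theorem \ref{thm:phi}, and then passes to $\mathcal O(\Pi)$ by density of $K$-finite vectors and analytic continuation in $(\lambda',\lambda'')$. Your attempted repair --- establishing $\widehat\Psi_{\lambda',\lambda''}^{\lambda'''}=i^\ell\Phi_{\lambda',\lambda''}^{\lambda'''}$ by a direct Fourier--Laplace computation --- fixes the wrong link: that identity (Proposition \ref{prop:PhiPsi}) is indeed proved independently in the paper, but it carries no equivariance information by itself; you would still need an independent proof that the line integral \eqref{eqn:psi} intertwines, which you do not supply. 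Your fallback infinitesimal route is viable in principle (it is precisely the F-method correspondence you cite), but its decisive step --- checking that the diagonal lowering generator applied to \eqref{eqn:defPhi} reduces, in the $(z,v)$ coordinates, to the Jacobi equation \eqref{eqn:JacobiDE} for $P_\ell^{\lambda'-1,\lambda''-1}$ --- is exactly what you defer as ``delicate bookkeeping,'' so the proposal does not contain a complete proof of equivariance. The paper's actual route is shorter and non-circular because it transports equivariance through the symmetry breaking side rather than through $\Psi$: by Proposition \ref{lem:2}, $\left(\Phi_{\lambda',\lambda''}^{\lambda'''}\right)^*=i^\ell\,\widehat{\mathcal{RC}}_{\lambda',\lambda''}^{\lambda'''}$ (proved by a direct pairing computation independent of both theorems); the operator $\widehat{\mathcal{RC}}_{\lambda',\lambda''}^{\lambda'''}=\mathcal F_1^{-1}\circ\mathcal{RC}_{\lambda',\lambda''}^{\lambda'''}\circ\mathcal F_2$ inherits equivariance from the Rankin--Cohen operator, whose intertwining property is a known external input \cite{KP16b}; and the adjoint of an intertwiner between unitary representations again intertwines. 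Rerouting your transport argument through $\widehat{\mathcal{RC}}_{\lambda',\lambda''}^{\lambda'''}$ instead of $\widehat\Psi_{\lambda',\lambda''}^{\lambda'''}$ would close the gap with no new computation.
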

Theorem \ref{thm:phi} will be proved in Section \ref{subsec:pfthmphi}.
\begin{rem}
Using  the notation \eqref{eqn:Pxy} below, we may also write: 
$$
(\Phi_{\lambda',\lambda''}^{\lambda'''}h)(x,y)=(-1)^\ell\frac{x^{\lambda'-1}y^{\lambda''-1}}{(x+y)^{\lambda'''-1}} \widetilde P_\ell^{\lambda'-1,\lambda''-1}\left(x,y\right)\cdot
h(x+y).
$$
\end{rem}


\subsubsection{Symmetry breaking transform in the $L^2$-model and its inversion}

In this subsection, we give an inversion formula of the 
symmetry breaking operator
$\widehat{\mathcal{RC}}_{\lambda',\lambda''}^{\lambda'''}$ in the $L^2$-model by using the holographic operators 
$\Phi_{\lambda',\lambda''}^{\lambda'''}$ (Definition \ref{def:Phi}). The symmetry breaking operator
$\widehat{\mathcal{RC}}_{\lambda',\lambda''}^{\lambda'''}$ was defined originally as the Fourier transform of the 
Rankin--Cohen bidifferential operator ${\mathcal{RC}}_{\lambda',\lambda''}^{\lambda'''}$ (see \eqref{eqn:RCFourier}) but we give a simpler expression 
as an integral operator (\emph{Jacobi transform}).
\begin{prop}\label{lem:1}
Suppose $\lambda',\lambda''>1$ and $\ell\in\N$. Then for any $F\in C_c(\R_+\times\R_+)$, the following identity holds:
$$
\left(\widehat{\mathcal{RC}}_{\lambda',\lambda''}^{\lambda'+\lambda''+2\ell}F\right)(z)
= \frac{z^{\ell+1}}{2i^\ell}\int_{-1}^1 P_\ell^{\lambda'-1,\lambda''-1}(v) F\left(\frac z2(1-v),\frac z2(1+v)\right)dv.
$$
\end{prop}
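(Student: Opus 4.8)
The plan is to unwind the definition $\widehat{\mathcal{RC}}_{\lambda',\lambda''}^{\lambda'''}=\mathcal F_1^{-1}\circ \mathcal{RC}_{\lambda',\lambda''}^{\lambda'''}\circ \mathcal F_2$ from \eqref{eqn:RCFourier} and to exploit the elementary fact that the Fourier--Laplace transform of Fact \ref{fact:Laplace} intertwines differentiation with multiplication by a monomial. Concretely, for $F\in C_c(\R_+\times\R_+)$ the compact support allows differentiation under the integral sign, yielding $\frac{\partial^\ell}{\partial\zeta_1^{\ell-j}\partial\zeta_2^{j}}(\mathcal F_2 F)=i^\ell\,\mathcal F_2\big(x^{\ell-j}y^{j}F\big)$. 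Substituting this into the defining sum \eqref{rcb} of $\mathcal R_{\lambda',\lambda''}^{\lambda'''}$ collapses the bidifferential operator into multiplication by a single homogeneous polynomial of degree $\ell$:
$$
\mathcal R_{\lambda',\lambda''}^{\lambda'''}(\mathcal F_2 F)=i^\ell\,\mathcal F_2\big(Q_\ell\cdot F\big),\qquad Q_\ell(x,y):=\sum_{j=0}^\ell(-1)^j\frac{(\lambda'+\ell-j)_j(\lambda''+j)_{\ell-j}}{j!\,(\ell-j)!}\,x^{\ell-j}y^{j}.
$$

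Next I would take the restriction to the diagonal $\zeta_1=\zeta_2=\zeta$, which replaces the kernel $e^{i\zeta_1x+i\zeta_2 y}$ by $e^{i\zeta(x+y)}$, and then perform the change of variables $z=x+y$, $v=\frac{y-x}{x+y}$ (so that $x=\tfrac z2(1-v)$, $y=\tfrac z2(1+v)$ and $dx\,dy=\tfrac z2\,dz\,dv$), under which $\R_+\times\R_+$ corresponds to $\{z>0,\,-1<v<1\}$. Using the homogeneity $Q_\ell\big(\tfrac z2(1-v),\tfrac z2(1+v)\big)=(\tfrac z2)^\ell Q_\ell(1-v,1+v)$, the entire $\zeta$-dependence is carried by $e^{i\zeta z}$, so the integral is visibly $\mathcal F_1$ applied to an explicit, compactly supported function of $z$. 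Applying $\mathcal F_1^{-1}$ (again legitimate by the compact support) then gives
$$
\big(\widehat{\mathcal{RC}}_{\lambda',\lambda''}^{\lambda'''}F\big)(z)=i^\ell\,\frac{z^{\ell+1}}{2^{\ell+1}}\int_{-1}^1 Q_\ell(1-v,1+v)\,F\!\Big(\tfrac z2(1-v),\tfrac z2(1+v)\Big)\,dv.
$$

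The crux --- and the step I expect to require the most care --- is the identification of $Q_\ell(1-v,1+v)$ with the Jacobi polynomial $P_\ell^{\lambda'-1,\lambda''-1}(v)$, together with the bookkeeping of the scalar prefactor. Writing $\alpha=\lambda'-1$, $\beta=\lambda''-1$ as in \eqref{eqn:abl} and comparing with the explicit expansion of $P_\ell^{\alpha,\beta}$ in powers of $1-v$ and $1+v$ (see \eqref{eqn:Jacobi}), the matching reduces to the term-by-term identity
$$
\frac{(\alpha+\ell-j+1)_j\,(\beta+j+1)_{\ell-j}}{j!\,(\ell-j)!}=\binom{\ell+\alpha}{j}\binom{\ell+\beta}{\ell-j},
$$
which is immediate once each falling product is rewritten as a rising (Pochhammer) factorial; this yields $Q_\ell(1-v,1+v)=(-2)^\ell P_\ell^{\alpha,\beta}(v)$. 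Finally, combining $(-2)^\ell/2^{\ell+1}=(-1)^\ell/2$ with $i^\ell(-1)^\ell=i^{3\ell}=1/i^\ell$ (since $i^{4\ell}=1$) converts the prefactor $i^\ell z^{\ell+1}/2^{\ell+1}$ into $z^{\ell+1}/(2i^\ell)$, which is exactly the constant in the statement and completes the proof. The analytic content is light --- compact support handles all interchanges of integration, differentiation, and Fourier--Laplace inversion --- so the essential work is the combinatorial identity above and the tracking of the powers of $2$ and of $i$.
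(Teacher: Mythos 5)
Your proposal is correct: the Fourier--Laplace transform does convert each term of \eqref{rcb} via $\partial_{\zeta_1}^{\ell-j}\partial_{\zeta_2}^{j}\,\mathcal F_2F=i^\ell\,\mathcal F_2(x^{\ell-j}y^jF)$, the change of variables and the homogeneity of $Q_\ell$ are handled correctly, the identity $Q_\ell(1-v,1+v)=(-2)^\ell P_\ell^{\lambda'-1,\lambda''-1}(v)$ is true, and the powers of $2$ and $i$ do combine to the constant $z^{\ell+1}/(2i^\ell)$. Your argument has the same skeleton as the paper's proof in Section \ref{subsec:FTRC} --- pass to the $L^2$-side, reduce the bidifferential operator to multiplication by its polynomial symbol, change coordinates by \eqref{eqn:SLiota}, and recognize a Jacobi polynomial --- but it genuinely differs at the identification step. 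The paper quotes Proposition \ref{prop:RCPnew}, i.e.\ the closed form $\mathcal R_{\lambda',\lambda''}^{\lambda'''}=\widetilde P_\ell^{\lambda'-1,\lambda''-1}\bigl(\tfrac{\partial}{\partial\zeta_1},\tfrac{\partial}{\partial\zeta_2}\bigr)$, which is itself derived from \cite[(9.9)]{KP16b} via Kummer's first relation for ${}_2F_1$; you instead match the raw symbol of \eqref{rcb} directly against the Jacobi polynomial through an elementary Pochhammer/binomial identity, which makes the proof self-contained and independent of the hypergeometric transformation theory and of the earlier paper. One caveat on sourcing: the expansion of $P_\ell^{\alpha,\beta}$ in powers of $\tfrac{v-1}{2}$ and $\tfrac{v+1}{2}$ that you invoke is \emph{not} the formula \eqref{eqn:Jacobi} of the appendix (that is a one-variable expansion in $\tfrac{t-1}{2}$ alone); it is the classical bivariate form (Szeg\H{o} (4.3.2)), whose equivalence with \eqref{eqn:Jacobi} is precisely the Kummer-type rearrangement the paper uses in proving Proposition \ref{prop:RCPnew}. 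This is a mis-citation rather than a gap, and your proof stays genuinely elementary if you note that the bivariate expansion follows in two lines from the Rodrigues formula \eqref{eqn:JRodrigues} by the Leibniz rule.
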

See Section \ref{subsec:FTRC} for a proof.

Collecting the operators $\widehat{\mathcal{RC}}_{\lambda',\lambda''}^{\lambda'''}$, we define a
 symmetry breaking transform 
\begin{equation}\label{eqn:defRCTL2}
    \widehat{\mathcal{RC}}_{\lambda',\lambda''}\colon L^2(\R_+)_{\lambda'} \widehat\otimes 
    L^2(\R_+)_{\lambda''}
      \To
      \sum_{\ell\in\N}{}^{\oplus} L^2(\R_+)_{\lambda'+\lambda''+2\ell}
      \end{equation}
 by 
$
\left(  \widehat{\mathcal{RC}}_{\lambda',\lambda''} (F)\right)_\ell
         :=
         \widehat{\mathcal{RC}}_{\lambda',\lambda''}^{\lambda'+\lambda''+2\ell}(F)\quad\mathrm{for}\,\,\ell\in\N.
$
Then it can be inverted by using the holographic operators $\Phi_{\lambda',\lambda''}^{\lambda'''}$ (Definition \ref{def:Phi}) as follows.

\begin{thm}\label{thm:FRCinv}
    Suppose $\lambda', \lambda''>1$. Then for any element $F\in L^2(\R_+)_{\lambda'} \widehat\otimes 
    L^2(\R_+)_{\lambda''}$, one has
    $$
       F=\sum_{\ell=0}^\infty \frac{i^\ell }{c_\ell(\lambda',\lambda'')}
       \Phi_{\lambda',\lambda''}^{\lambda'+\lambda''+2\ell}
         \left(  \widehat{\mathcal{RC}}_{\lambda',\lambda''} (F)\right)_\ell,
     $$
     and
     $$
     \Vert F\Vert^2_{L^2(\R_+)_{\lambda'}\widehat\otimes L^2(\R_+)_{\lambda''}}
     =
    \sum_{\ell=0}^\infty
    \frac1{c_\ell(\lambda',\lambda'')}\left\Vert\left(\widehat{\mathcal{RC}}_{\lambda',\lambda''}(F)\right)_\ell\right\Vert^2_{L^2(\R_+)_{\lambda'+\lambda''+2\ell}}. 
     $$
\end{thm}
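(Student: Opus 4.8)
The plan is to reduce both assertions to the classical completeness of the Jacobi polynomials by passing to polar-type coordinates on $\R_+\times\R_+$. Throughout I abbreviate $\widehat{\mathcal{RC}}_\ell:=\widehat{\mathcal{RC}}_{\lambda',\lambda''}^{\lambda'+\lambda''+2\ell}$ and $\Phi_\ell:=\Phi_{\lambda',\lambda''}^{\lambda'+\lambda''+2\ell}$, and write $\alpha=\lambda'-1$, $\beta=\lambda''-1$ as in \eqref{eqn:abl}. First I would introduce the change of variables $(x,y)\mapsto(z,v)$, $z=x+y\in\R_+$, $v=\tfrac{y-x}{x+y}\in(-1,1)$, with inverse $x=\tfrac z2(1-v)$, $y=\tfrac z2(1+v)$ and Jacobian $\tfrac z2$. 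A direct computation shows that the defining measure of $L^2(\R_+^2)_{\lambda',\lambda''}$ factors as
\[
x^{1-\lambda'}y^{1-\lambda''}\,dx\,dy=2^{\alpha+\beta-1}\,z^{1-\alpha-\beta}\,(1-v)^{-\alpha}(1+v)^{-\beta}\,dz\,dv .
\]
Hence, setting $\widetilde F(z,v):=F\!\left(\tfrac z2(1-v),\tfrac z2(1+v)\right)$, the assignment $F\mapsto\widetilde F$ is, up to a scalar, a unitary isomorphism of $L^2(\R_+^2)_{\lambda',\lambda''}$ onto the product Hilbert space $L^2(\R_+,z^{1-\alpha-\beta}dz)\,\widehat\otimes\,L^2\big((-1,1),(1-v)^{-\alpha}(1+v)^{-\beta}dv\big)$.

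The second step is the fiberwise expansion in the angular variable. Let $w(v):=(1-v)^\alpha(1+v)^\beta$ be the Jacobi weight, so that the angular fiber measure above is exactly $w(v)^{-1}dv$. Since $\lambda',\lambda''>1$ forces $\alpha,\beta>-1$, the Jacobi polynomials $\{P_\ell^{\alpha,\beta}\}_{\ell\in\N}$ form a complete orthogonal system of $L^2((-1,1),w\,dv)$; as multiplication by $w$ is a unitary isomorphism $L^2((-1,1),w\,dv)\to L^2((-1,1),w^{-1}dv)$, the family $\{w\,P_\ell^{\alpha,\beta}\}_{\ell\in\N}$ is a complete orthogonal system of the angular fiber, with $\|w P_\ell^{\alpha,\beta}\|^2_{w^{-1}}=\|P_\ell^{\alpha,\beta}\|^2_{w}=2^{\alpha+\beta+1}c_\ell(\lambda',\lambda'')$ by \eqref{eqn:v-ell}. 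By Fubini, $\widetilde F(z,\cdot)$ lies in the angular fiber for almost every $z$, so I would expand $\widetilde F(z,\cdot)=\sum_\ell a_\ell(z)\,w\,P_\ell^{\alpha,\beta}$ with $a_\ell(z)=\|P_\ell^{\alpha,\beta}\|_w^{-2}\int_{-1}^1 P_\ell^{\alpha,\beta}(v)\widetilde F(z,v)\,dv$.

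The third step is to recognize these data as the operators in the statement. By Proposition \ref{lem:1} the coefficient integral equals $\tfrac{2i^\ell}{z^{\ell+1}}(\widehat{\mathcal{RC}}_\ell F)(z)$, while rewriting Definition \ref{def:Phi} in the coordinates $(z,v)$ gives $\Phi_\ell h=2^{-\alpha-\beta}z^{-\ell-1}w(v)P_\ell^{\alpha,\beta}(v)\,h(z)$. Consequently the reconstruction term $\tfrac{i^\ell}{c_\ell}\Phi_\ell(\widehat{\mathcal{RC}}_\ell F)$ coincides, in these coordinates, with $a_\ell(z)\,w\,P_\ell^{\alpha,\beta}$: the powers $z^{\pm(\ell+1)}$ and the phases $i^{\pm\ell}$ cancel, and the normalization $\tfrac{2^{-\alpha-\beta}}{2c_\ell}=\|P_\ell^{\alpha,\beta}\|_w^{-2}$ is precisely the inverse squared norm of the basis vector. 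Completeness of $\{P_\ell^{\alpha,\beta}\}$ then yields the fiberwise identity $\widetilde F=\sum_\ell\tfrac{i^\ell}{c_\ell}\,\Phi_\ell(\widehat{\mathcal{RC}}_\ell F)$, which, transported back by the unitary $F\mapsto\widetilde F$, is the asserted inversion formula. For the Parseval identity I would integrate the fiberwise equality $\|\widetilde F(z,\cdot)\|^2_{w^{-1}}=\sum_\ell\|P_\ell^{\alpha,\beta}\|_w^{2}\,|a_\ell(z)|^2$ against $2^{\alpha+\beta-1}z^{1-\alpha-\beta}dz$; tracking the weight $z^{-1-\alpha-\beta-2\ell}=z^{1-(\lambda'+\lambda''+2\ell)}$ and collecting the powers of $2$ (which combine to $1$) produces exactly $\sum_\ell c_\ell^{-1}\|\widehat{\mathcal{RC}}_\ell F\|^2_{L^2(\R_+)_{\lambda'+\lambda''+2\ell}}$.

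The individual steps are routine, and the point requiring care is the bookkeeping of measures and constants. The crucial structural observation is that the angular fiber weight is the reciprocal $w^{-1}$ of the Jacobi weight, so that the natural isometry $\widetilde F\mapsto\widetilde F/w$ converts the branching problem into a genuine Jacobi expansion, and that the opposing powers $z^{\ell+1}$ in $\widehat{\mathcal{RC}}_\ell$ (Proposition \ref{lem:1}) and $z^{-\ell-1}$ in $\Phi_\ell$ (Definition \ref{def:Phi}) cancel, so that the constant $c_\ell(\lambda',\lambda'')$ emerges exactly as in \eqref{eqn:v-ell}. The only analytic subtleties are the interchange of the $z$-integral with the $\ell$-sum, justified by Tonelli for the norm identity since all summands are nonnegative, and the $L^2$-convergence of the inversion series, which follows from the Parseval identity together with completeness; both rest on the classical completeness of $\{P_\ell^{\alpha,\beta}\}$ in $L^2((-1,1),w\,dv)$.
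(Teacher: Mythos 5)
Your proof is correct, and the constants all check out: the factorization $x^{1-\lambda'}y^{1-\lambda''}\,dx\,dy=2^{\alpha+\beta-1}z^{1-\alpha-\beta}(1-v)^{-\alpha}(1+v)^{-\beta}\,dz\,dv$, the identity $\int_{-1}^1 P_\ell^{\alpha,\beta}(v)\widetilde F(z,v)\,dv=\tfrac{2i^\ell}{z^{\ell+1}}\bigl(\widehat{\mathcal{RC}}_{\lambda',\lambda''}^{\lambda'+\lambda''+2\ell}F\bigr)(z)$ from Proposition \ref{lem:1}, and the coordinate form $(\Phi_\ell h)\circ\iota=2^{-\alpha-\beta}z^{-\ell-1}w(v)P_\ell^{\alpha,\beta}(v)h(z)$ do combine, via $\|P_\ell^{\alpha,\beta}\|_w^2=2^{\alpha+\beta+1}c_\ell(\lambda',\lambda'')$, to exactly the coefficients $\tfrac{i^\ell}{c_\ell}$ and $\tfrac{1}{c_\ell}$ in the statement. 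But your route is genuinely different from the paper's. The paper argues abstractly: it applies the general Hilbert-sum Lemma \ref{lem:Hilbertdeco}~(1) with $R_\ell=\widehat{\mathcal{RC}}_{\lambda',\lambda''}^{\lambda'+\lambda''+2\ell}$, computes $\Vert R_\ell\Vert_{\mathrm{op}}^2=c_\ell(\lambda',\lambda'')$ from Lemma \ref{lem:opFourier} and Proposition \ref{prop:RCopnorm}, and identifies $R_\ell^*=i^\ell\Phi_{\lambda',\lambda''}^{\lambda'+\lambda''+2\ell}$ by Proposition \ref{lem:2}; crucially, the hypotheses of Lemma \ref{lem:Hilbertdeco} (orthogonality and completeness of the summands) are supplied by the multiplicity-free branching law \eqref{eqn:absbra}, which the paper quotes from the literature. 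You instead build the orthogonal decomposition by hand: the coordinates \eqref{eqn:SLiota} split $L^2(\R_+^2)_{\lambda',\lambda''}$ into a radial factor tensored with the angular fiber $L^2\bigl((-1,1),w^{-1}dv\bigr)$, and the classical completeness of $\{P_\ell^{\alpha,\beta}\}$ does all the work. What this buys: your proof is self-contained at the level of classical orthogonal polynomials and in particular re-derives, rather than assumes, the completeness input that the branching law \eqref{eqn:absbra} encodes; this is precisely the dictionary ``branching law $\leftrightarrow$ $L^2$-completeness of $\{P_\ell\}$'' advertised in the introduction, and your argument fleshes out the equivalence sketched in Remark \ref{rem:180716}. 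What the paper's packaging buys: once the operator norms and adjoints are in place, the same two abstract lemmas also yield Theorems \ref{thm:RCinv} and \ref{thm:PPTT4RCB} in the holomorphic model, whereas your fiberwise computation lives only in the $L^2$-model. One small point to add for completeness: Proposition \ref{lem:1} is stated for $F\in C_c(\R_+\times\R_+)$, so your identification of the Jacobi coefficient $a_\ell(z)$ with $\widehat{\mathcal{RC}}_\ell F$ for general $F\in L^2$ requires a one-line density remark --- both $\widehat{\mathcal{RC}}_\ell$ and the map sending $F$ to its $\ell$-th fiberwise component are continuous (the latter being an orthogonal projection up to normalization), so the identity extends from $C_c$; this is routine and does not affect correctness. (Incidentally, your measure factorization is the correct one, consistent with what is actually used in the proof of Proposition \ref{prop:phinorm}; the displayed formula \eqref{eqn:imeasure} in the paper contains a typographical slip in the power of $z$ and the power of $2$.)
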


Theorem \ref{thm:FRCinv} will be proved in Section \ref{subsec:pfthmFRCinv}. 
It gives an answer to Problem A.2 and Problem B in the $L^2$-model.

\begin{rem}\label{rem:180716}
 The Jacobi transform (see \emph{e.g.} \cite[Chap. 15]{DB}) defined by
$$
H(v)\mapsto \left(J^{\alpha,\beta}(H)\right)_\ell:= \int_{-1}^1 H(v) P^{\alpha,\beta}_\ell(v)(1-v)^\alpha(1+v)^\beta dv
$$
is inverted by the following formula:
\begin{equation}\label{eqn:invJacobi}
H(v)=\sum_{\ell=0}^\infty d_\ell(\alpha,\beta)  \left(J^{\alpha,\beta}(H)\right)_\ell  P^{\alpha,\beta}_\ell(v),
\end{equation}
where we set
\begin{eqnarray*}
d_\ell(\alpha,\beta)&:=&\frac{\ell! (\alpha+\beta+2\ell+1)\Gamma(\alpha+\beta+\ell+1)}{2^{\alpha+\beta+1}\Gamma(\alpha+\ell+1)\Gamma(\beta+\ell+1)}
\\
&=&\frac1{2^{\alpha+\beta+1}c_\ell(\alpha+1,\beta+1)}.
\end{eqnarray*}
By change of variables, we can see that Theorem \ref{thm:FRCinv} is equivalent to \eqref{eqn:invJacobi}
applied to
$$
H(v)=(1-v)^{-\alpha}(1+v)^{-\beta} F\left(\frac z2(1-v),\frac z2(1+v)\right)
$$
with $\alpha=\lambda'-1$ and $\beta=\lambda''-1$.
\end{rem}


\subsubsection{Parseval--Plancherel type theorem for the holographic transform in the $L^2$-model}

Collecting the holographic operators $ \Phi_{\lambda',\lambda''}^{\lambda'''}$, we define the holographic
transform
\begin{equation}\label{eqn:defPhiT}
\Phi_{\lambda',\lambda''}\colon \bigoplus_{\ell\in\N}  L^2(\R_+)_{\lambda'+\lambda''+2\ell} \To
 L^2(\R_+)_{\lambda'} \widehat \otimes L^2(\R_+)_{\lambda''}  
\end{equation}
by
$$
\Phi_{\lambda',\lambda''}:=\bigoplus_{\ell=0}^\infty \Phi_{\lambda',\lambda''}^{\lambda'+\lambda''+2\ell}.
$$

This transform is the counterpart in the $L^2$-model of the holographic transform
$ \Psi_{\lambda',\lambda''}$ (Theorem \ref{thm:PPTT4RCB} (2)) defined in the holomorphic model.

\begin{thm}\label{thm:CL2SL2}
Suppose $\lambda',\lambda''>1$ and $\ell\in\N$. Then, the holographic transform
$
\Phi_{\lambda',\lambda''}
$
induces an ${SL(2,\R)}{\;}\widetilde{}$ -equivariant unitary operator
$$
\sum_{\ell=0}^\infty{}^{\oplus}L^2(\R_+)_{\lambda'+\lambda''+2\ell}\stackrel
{\sim}{\longrightarrow}
L^2(\R_+)_{\lambda'}\widehat\otimes L^2(\R_+)_{\lambda''}
$$
subject to the following Parseval--Plancherel type formula:

$$
\Vert \Phi_{\lambda',\lambda''} h\Vert^2_{L^2(\R_+^2)_{\lambda',\lambda''}}=
\sum_{\ell=0}^\infty 
c_\ell(\lambda',\lambda'')
\Vert h_\ell\Vert^2_{L^2(\R_+)_{\lambda'+\lambda''+2\ell}},
$$
for $h=(h_\ell)_{\ell\in\N}$ with $h_\ell\in L^2(\R_+)_{\lambda'+\lambda''+2\ell}$.
\end{thm}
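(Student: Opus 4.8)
The plan is to reduce the Parseval--Plancherel identity to the classical orthogonality and completeness of the Jacobi polynomials by passing to coordinates adapted to the definition \eqref{eqn:defPhi} of $\Phi_{\lambda',\lambda''}^{\lambda'''}$. First I would introduce the change of variables $(x,y)\mapsto(z,v)$ on $\R_+\times\R_+$ given by $z=x+y$ and $v=\frac{y-x}{x+y}$, so that $x=\frac z2(1-v)$, $y=\frac z2(1+v)$, with $z\in\R_+$, $v\in(-1,1)$ and $dx\,dy=\frac z2\,dz\,dv$. Writing $\alpha=\lambda'-1$ and $\beta=\lambda''-1$, substitution in \eqref{eqn:defPhi} gives, for each $\ell$,
\[
\left(\Phi_{\lambda',\lambda''}^{\lambda'+\lambda''+2\ell}h_\ell\right)(x,y)=\frac{(1-v)^\alpha(1+v)^\beta}{2^{\alpha+\beta}}\,z^{-\ell-1}\,P_\ell^{\alpha,\beta}(v)\,h_\ell(z),
\]
so that in the $(z,v)$-coordinates the holographic transform becomes, up to the common weight $(1-v)^\alpha(1+v)^\beta$, a Jacobi expansion in $v$ with $z$-dependent coefficients.

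Next I would compute the square norm in $L^2(\R_+^2)_{\lambda',\lambda''}$. Collecting the powers of $z$ and of $1\mp v$ after the substitution, the diagonal term factorizes into a product of a radial integral and a Jacobi integral, and the latter is exactly the one defining $c_\ell(\lambda',\lambda'')$ in \eqref{eqn:v-ell}; one obtains
\[
\Vert\Phi_{\lambda',\lambda''}^{\lambda'+\lambda''+2\ell}h_\ell\Vert^2_{L^2(\R_+^2)_{\lambda',\lambda''}}=c_\ell(\lambda',\lambda'')\,\Vert h_\ell\Vert^2_{L^2(\R_+)_{\lambda'+\lambda''+2\ell}}.
\]
For $\ell\neq\ell'$ the same substitution factorizes the inner product of $\Phi_{\lambda',\lambda''}^{\lambda'+\lambda''+2\ell}h_\ell$ and $\Phi_{\lambda',\lambda''}^{\lambda'+\lambda''+2\ell'}h_{\ell'}$ into the product of the radial integral $\int_0^\infty\overline{h_\ell(z)}\,h_{\ell'}(z)\,z^{1-(\lambda'+\lambda''+\ell+\ell')}\,dz$ (absolutely convergent, hence amenable to Fubini, by the Cauchy--Schwarz inequality relating the weights of the two target spaces) and the Jacobi integral $\int_{-1}^1 P_\ell^{\alpha,\beta}(v)P_{\ell'}^{\alpha,\beta}(v)(1-v)^\alpha(1+v)^\beta\,dv$, which vanishes by orthogonality of the Jacobi polynomials. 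Hence the images of the $\Phi_{\lambda',\lambda''}^{\lambda'+\lambda''+2\ell}$ are mutually orthogonal, and summing the diagonal contributions yields the asserted Parseval--Plancherel formula on the algebraic direct sum. Since $\Phi_{\lambda',\lambda''}$ is thereby isometric, it extends continuously to the whole weighted Hilbert sum and the series $\sum_\ell\Phi_{\lambda',\lambda''}^{\lambda'+\lambda''+2\ell}h_\ell$ converges in $L^2(\R_+^2)_{\lambda',\lambda''}$.

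It then remains to prove surjectivity. For this I would invoke the inversion formula of Theorem \ref{thm:FRCinv}: for $F\in L^2(\R_+)_{\lambda'}\widehat\otimes L^2(\R_+)_{\lambda''}$ the choice $h_\ell:=\frac{i^\ell}{c_\ell(\lambda',\lambda'')}\left(\widehat{\mathcal{RC}}_{\lambda',\lambda''}(F)\right)_\ell$ gives $\Phi_{\lambda',\lambda''}h=F$, while the Plancherel identity of Theorem \ref{thm:FRCinv} shows $\sum_\ell c_\ell(\lambda',\lambda'')\Vert h_\ell\Vert^2=\sum_\ell c_\ell(\lambda',\lambda'')^{-1}\Vert(\widehat{\mathcal{RC}}_{\lambda',\lambda''}(F))_\ell\Vert^2=\Vert F\Vert^2<\infty$, so that $h$ lies in the weighted Hilbert sum; equivalently, surjectivity can be read off from the $(z,v)$-picture above through completeness of $\{P_\ell^{\alpha,\beta}\}$ in $L^2((-1,1),(1-v)^\alpha(1+v)^\beta\,dv)$ (Remark \ref{rem:180716}). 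The ${SL(2,\R)}{\;}\widetilde{}$-equivariance of the resulting unitary operator is inherited from that of each summand $\Phi_{\lambda',\lambda''}^{\lambda'+\lambda''+2\ell}$ established in Theorem \ref{thm:phi}. I expect the only genuinely delicate point to be the justification of Fubini in the off-diagonal terms; the rest is bookkeeping in the change of variables combined with the classical $L^2$-theory of the Jacobi transform.
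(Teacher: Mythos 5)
Your proposal is correct, and its diagonal part coincides with the paper's: the computation of $\Vert \Phi_{\lambda',\lambda''}^{\lambda'+\lambda''+2\ell}h_\ell\Vert^2$ via the change of variables \eqref{eqn:SLiota}, the measure identity \eqref{eqn:imeasure}, and the Jacobi norm \eqref{eqn:Pnorm} is exactly the paper's Proposition \ref{prop:phinorm}. Where you genuinely diverge is in how you get orthogonality of the images and surjectivity. The paper disposes of both in one stroke by invoking the abstract multiplicity-free branching law \eqref{eqn:absbra}: once each $\Phi_{\lambda',\lambda''}^{\lambda'''}$ is known to be an intertwining isometry up to scalar (Theorem \ref{thm:phi} plus Proposition \ref{prop:phinorm}), Schur's lemma forces the images to be the mutually orthogonal irreducible summands, which exhaust the space, and Lemma \ref{lem:Hilbertdeco} (2) then yields unitarity onto the weighted Hilbert sum. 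You instead prove orthogonality by direct computation -- and your bookkeeping checks out: the cross term factorizes with radial weight $z^{1-(\lambda'+\lambda''+\ell+\ell')}$, which is precisely the geometric mean of the two target weights, so your Cauchy--Schwarz justification of Fubini is sound, and the $v$-integral vanishes by Jacobi orthogonality since $\alpha,\beta>0$. For surjectivity you offer two routes; be aware that citing Theorem \ref{thm:FRCinv} is legitimate but not logically independent, since the paper proves it \emph{after} Theorem \ref{thm:CL2SL2} (though without using it -- its proof rests on Lemma \ref{lem:Hilbertdeco}, Proposition \ref{prop:RCopnorm}, and Proposition \ref{lem:2}, so there is no circularity), and it itself imports the branching law \eqref{eqn:absbra} through the hypotheses of Lemma \ref{lem:Hilbertdeco}. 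Your second route, via completeness of $\{P_\ell^{\alpha,\beta}\}_{\ell\in\N}$ in $L^2\left((-1,1),(1-v)^\alpha(1+v)^\beta dv\right)$, is the genuinely self-contained one: after the unitary change of variables the image of $\Phi_{\lambda',\lambda''}^{\lambda'+\lambda''+2\ell}$ is identified with the $\ell$-th Jacobi isotypic slice of a tensor-product Hilbert space, so density of the span is immediate. What each approach buys: the paper's argument is shorter given the representation-theoretic machinery already in place, whereas yours is purely analytic and in fact reproves the unitary decomposition \eqref{eqn:absbra} in the $L^2$-model as a byproduct rather than assuming it -- in the spirit of the paper's own remark that the whole $L^2$-theory reduces to the orthogonality and completeness of the Jacobi polynomials.
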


Theorem \ref{thm:CL2SL2} will be proved in Section \ref{subsec:PPTPHI}.

\subsubsection{Representation theoretic interpretation of the Plancherel density}

The weights $c_\ell(\lambda',\lambda'')$ in the Plancherel formula (Theorem \ref{thm:CL2SL2}) are obviously positive when $\lambda',\lambda''>1$. We discuss the zeros of the meromorphic continuation of $c_\ell(\lambda',\lambda'')$ when we allow $\lambda'$ and
$\lambda''$ to wander outside the region $\lambda',\lambda''>1$, so that
$\pi_{\lambda'}$ and $\pi_{\lambda''}$ may not be (relative) holomorphic discrete series representations.

Assume furthermore that $\lambda',\lambda'',\lambda'''\in\Z$ such that
 $\ell:=\frac12(\lambda'''-\lambda'-\lambda'')\in\N$. Then the following four conditions on
$(\lambda',\lambda'',\lambda''')$ are equivalent (see \cite[Thm.~9.1]{KP16b}):
\begin{itemize}
\item[(i)] $c_\ell(\lambda',\lambda'')=0$;
\item[(ii)]
$2\geq\lambda'+\lambda''+\lambda'''$ and $\lambda'''\geq\vert\lambda'-\lambda''\vert+2$;
\item[(iii)] the Rankin--Cohen bilinear operator
$\mathcal{RC}_{\lambda',\lambda''}^{\lambda'''}$ vanishes;
\item[(iv)] $
\dim\mathrm{Hom}_{{SL(2,\R)}{\;}\widetilde{}}\left(
\mathcal O(\Pi\times\Pi,\mathcal L_{\lambda'}\,\boxtimes \,\mathcal L_{\lambda''}),
\mathcal O(\Pi,\mathcal L_{\lambda'''})\right)=2.
$
\end{itemize}

\subsection{Proof of Theorems \ref{thm:psi} and \ref{thm:phi}} \label{subsec:proofs}

In this section, we derive from the Rankin--Cohen bidifferential operators
$\mathcal{RC}_{\lambda',\lambda''}^{\lambda'''}$ the integral intertwining
operators that embed irreducible representations of ${SL(2,\R)}{\;}\widetilde{}$
into the tensor product representations, and give a proof of Theorems \ref{thm:psi} and \ref{thm:phi}.

The key idea is to use symmetry breaking operators $\widehat{\mathcal{RC}}_{\lambda',\lambda''}^{\lambda'''}$
in the $L^2$-model, which fits well into the F-method connecting the Rankin--Cohen operators with the
Jacobi polynomials. The scheme of the proof is summarized in the following diagram:
\begin{equation}\label{diagram:squig}
\xymatrix{
\mathcal{RC}_{\lambda',\lambda''}^{\lambda'''} 
\ar@{~>}[d]_{\mathrm{Proposition}\, \ref{lem:1}}
&& \Psi_{\lambda',\lambda''}^{\lambda'''} \ar@{~>}[ll]_{\mathrm{Proposition}\, \ref{prop:PsiRC}} 
& (\mathrm{Theorem}\,\ref{thm:psi})\\
\widehat{\mathcal{RC}}_{\lambda',\lambda''}^{\lambda'''}
\ar@{~>}[rr]_{\mathrm{Proposition}\, \ref{lem:2}}&&  \Phi_{\lambda',\lambda''}^{\lambda'''}
\ar@{~>}[u]_{\mathrm{Proposition}\, \ref{prop:PhiPsi}}
& (\mathrm{Theorem}\,
\ref{thm:phi})
}
\end{equation}

\subsubsection{Jacobi polynomials and Rankin--Cohen bidifferential operators}

We retain the notation and assumption 
that 
$
\ell:=\frac12(\lambda'''-\lambda'-\lambda'')\in\N.
$
{lem}

The nature of the bidifferential symmetry breaking operator $\mathcal{RC}_{\lambda',\lambda''}^{\lambda'''}$ is explained
 in \cite[Thm.\ 8.1]{KP16b} by the F-method, which we recall now.
We inflate  the Jacobi polynomial $P_\ell^{\alpha,\beta}(t)$ 
 (see \eqref{eqn:Jacobi}) into a homogeneous polynomial of degree $\ell$ 
 by 
\begin{eqnarray}\label{eqn:Pxy}
\nonumber\widetilde{P}_\ell^{\alpha,\beta}(x,y)&:=&(-1)^\ell(x+y)^\ell P_\ell^{\alpha,\beta}\left(\frac{y-x}{x+y}\right)\\
&=&
\sum_{j=0}^\ell
\frac{(-1)^{\ell-j}(\alpha+\beta+\ell+1)_j(\alpha+j+1)_{\ell-j}}{(\ell-j)!j!}(x+y)^{\ell-j}x^j.
\end{eqnarray}
Then we have the following
\begin{prop}\label{prop:RCPnew}
Suppose $\ell:=\frac12(\lambda'''-\lambda'-\lambda'')\in\N$. Then the Rankin--Cohen
bidifferential operator $\mathcal{RC}_{\lambda',\lambda''}^{\lambda'''}$ (see \eqref{rcb}) is given by
$
\mathcal{RC}_{\lambda',\lambda''}^{\lambda'''}=\mathrm{Rest}\circ
\mathcal{R}_{\lambda',\lambda''}^{\lambda'''}
$ 
with
\begin{equation}\label{eqn:RCPnew}
\mathcal{R}_{\lambda',\lambda''}^{\lambda'''}=\widetilde{P}_\ell^{\lambda'-1,\lambda''-1}\left(\frac\partial{\partial \zeta_1},\frac\partial{\partial \zeta_2}\right).
\end{equation}
\end{prop}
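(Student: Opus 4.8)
The plan is to reduce the asserted operator identity \eqref{eqn:RCPnew} to an identity of honest polynomials. Since the constant-coefficient operators $\partial/\partial\zeta_1$ and $\partial/\partial\zeta_2$ commute, the assignment $x\mapsto\partial/\partial\zeta_1$, $y\mapsto\partial/\partial\zeta_2$ is an algebra homomorphism from $\C[x,y]$ to the commutative algebra they generate, so it suffices to prove the scalar identity $Q=\widetilde P_\ell^{\lambda'-1,\lambda''-1}$ in $\C[x,y]$, where $Q(x,y):=\sum_{j=0}^\ell(-1)^j\frac{(\lambda'+\ell-j)_j\,(\lambda''+j)_{\ell-j}}{j!\,(\ell-j)!}\,x^{\ell-j}y^j$ is the symbol read off directly from \eqref{rcb}. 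Both sides are homogeneous of degree $\ell$, so I would compare the coefficient of each monomial $x^{\ell-j}y^j$.

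To expand the right-hand side I would take the first expression in \eqref{eqn:Pxy}, $\widetilde P_\ell^{\alpha,\beta}(x,y)=(-1)^\ell(x+y)^\ell P_\ell^{\alpha,\beta}\!\left(\frac{y-x}{x+y}\right)$ with $\alpha=\lambda'-1$, $\beta=\lambda''-1$, and insert the classical double-binomial expansion of the Jacobi polynomial. Using the elementary evaluations $\frac{t-1}{2}=-\frac{x}{x+y}$ and $\frac{t+1}{2}=\frac{y}{x+y}$ at $t=\frac{y-x}{x+y}$, each summand becomes a constant multiple of $(x+y)^{-\ell}(-x)^{\ell-j}y^{j}$, so the prefactor $(x+y)^\ell$ cancels and one is left with the genuine monomial expansion $\widetilde P_\ell^{\alpha,\beta}(x,y)=\sum_{j=0}^\ell(-1)^j\binom{\ell+\alpha}{j}\binom{\ell+\beta}{\ell-j}x^{\ell-j}y^j$, with no residual powers of $x+y$.

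It then remains to match coefficients, which comes down to the two elementary conversions of a falling product into a Pochhammer symbol, $\binom{\ell+\lambda'-1}{j}=\frac{(\lambda'+\ell-j)_j}{j!}$ and $\binom{\ell+\lambda''-1}{\ell-j}=\frac{(\lambda''+j)_{\ell-j}}{(\ell-j)!}$; substituting these makes the coefficient of $x^{\ell-j}y^j$ in $\widetilde P_\ell^{\lambda'-1,\lambda''-1}$ coincide with that of $Q$ term by term, which proves \eqref{eqn:RCPnew}. There is no essential obstacle in this argument; the only step requiring care is the sign bookkeeping, where the overall factor $(-1)^\ell$ combines with the $(-1)^{\ell-j}$ produced by $(-x)^{\ell-j}$ to leave precisely $(-1)^j$. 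As a cross-check, and as a route that stays entirely within the identities already recorded in \eqref{eqn:Pxy}, one may instead expand the second (already monomialized in $x$) line of \eqref{eqn:Pxy} through the binomial theorem applied to $(x+y)^{\ell-j}$ and collect like powers; this recasts the claim as a single terminating hypergeometric summation, but I expect the coefficient comparison above to be the most economical.
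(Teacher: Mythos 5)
Your proof is correct, but it takes a genuinely different route from the paper's. The paper does not check \eqref{eqn:RCPnew} directly against the definition \eqref{rcb}; instead it quotes the earlier F-method formula \eqref{eqn:RCPsel} from \cite[(9.9)]{KP16b}, namely $\mathcal{R}_{\lambda',\lambda''}^{\lambda'''}={P}_\ell^{\lambda'-1,1-\lambda'''}\bigl(\frac{\partial}{\partial\zeta_1},\frac{\partial}{\partial\zeta_2}\bigr)$ with ${P}_\ell^{\alpha,\beta}(x,y)=y^\ell P_\ell^{\alpha,\beta}\bigl(1+\frac{2x}{y}\bigr)$, and converts it into the symmetric form \eqref{eqn:RCPnew} via Kummer's first relation, $P_\ell^{\alpha,\beta}(x)=\bigl(\frac{1+x}{2}\bigr)^\ell P_\ell^{\alpha,-\alpha-\beta-2\ell-1}\bigl(\frac{3-x}{1+x}\bigr)$, so all the combinatorics is packaged into a known transformation between two Jacobi parameter normalizations. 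You instead match the symbol of \eqref{rcb} monomial by monomial against $\widetilde P_\ell^{\lambda'-1,\lambda''-1}$, using the classical double-binomial expansion $P_\ell^{\alpha,\beta}(t)=\sum_{j=0}^{\ell}\binom{\ell+\alpha}{j}\binom{\ell+\beta}{\ell-j}\bigl(\frac{t-1}{2}\bigr)^{\ell-j}\bigl(\frac{t+1}{2}\bigr)^{j}$ (Szeg\H{o}'s formula (4.3.2)); your substitution $\frac{t-1}{2}=-\frac{x}{x+y}$, $\frac{t+1}{2}=\frac{y}{x+y}$ at $t=\frac{y-x}{x+y}$, the resulting sign count $(-1)^{\ell}(-1)^{\ell-j}=(-1)^{j}$, and the conversions $\binom{\ell+\lambda'-1}{j}=\frac{(\lambda'+\ell-j)_j}{j!}$ and $\binom{\ell+\lambda''-1}{\ell-j}=\frac{(\lambda''+j)_{\ell-j}}{(\ell-j)!}$ all check out, and the reduction of the operator identity to a polynomial identity via commuting constant-coefficient partials is legitimate. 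What your route buys is a self-contained, elementary verification that needs neither the prior result of \cite{KP16b} nor a hypergeometric transformation; the one point to attend to is that the double-binomial expansion is not the definition \eqref{eqn:Jacobi} adopted in the paper, so you should cite it or note its validity for complex $\alpha,\beta$ as a polynomial identity (a classical fact, not a gap). The paper's route, conversely, is shorter given the earlier work and records the useful link between the symmetric normalization \eqref{eqn:RCPnew} and the asymmetric one \eqref{eqn:RCPsel}.
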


\begin{rem}
In  \cite[(9.9)]{KP16b}, we gave a similar formula
\begin{equation}\label{eqn:RCPsel}
\mathcal{R}_{\lambda',\lambda''}^{\lambda'''}={P}_\ell^{\lambda'-1,1-\lambda'''}\left(\frac\partial{\partial \zeta_1},\frac\partial{\partial \zeta_2}\right)
\end{equation}
by using another two-variable function 
$P_\ell^{\alpha,\beta}(x,y):=y^\ell P_\ell^{\alpha,\beta}\left(1+\frac{2x}y\right)$. Our expression \eqref{eqn:RCPnew} is symmetric with respect to the first and second variables.
\end{rem}
\begin{proof}
[Proof of Proposition \ref{prop:RCPnew}]
According to the first Kummer's relation for the hypergeometric function we get (see for instance \cite[8.962]{GR}):
$$
P_\ell^{\alpha,\beta}(x)=\left(\frac{1+x}2\right)^\ell P_\ell^{\alpha,-\alpha-\beta-2\ell-1}\left(\frac{3-x}{1+x}\right),
$$
and therefore
$$
P_\ell^{\lambda'-1,1-\lambda'''}(1-2s)=(1-s)^\ell P_\ell^{\lambda'-1,\lambda''-1}\left(\frac{1+s}{1-s}\right).
$$
Hence the right-hand sides of \eqref{eqn:RCPnew} and \eqref{eqn:RCPsel} are equal to each other.
\end{proof}

\subsubsection{Coordinate change in the $L^2$-model}\label{sec:coord-changeSL2}

For the study of symmetry breaking in the $L^2$-model, we introduce the following coordinates:
\begin{equation}\label{eqn:SLiota}
\iota \colon \R_+\times (-1,1) \stackrel{\sim}{\longrightarrow} \R_+^2, 
\quad
(z,v) \mapsto (x,y):=\left(\frac z2(1-v), \frac z2(1+v)\right).
\end{equation}

 Then, $\iota$ is a diffeomorphism with $d x d y=\frac z2 d z d v$. With the convention \eqref{eqn:abl}
in Section \ref{sec:constants}, we set
\begin{equation}\label{eqn:Azv}
M(z,v)\equiv M_{\lambda',\lambda'',\lambda'''}(z,v)
      := 2^{\alpha+\beta}z^{\ell+1}
(1-v)^{-\alpha}(1+v)^{-\beta}.
\end{equation}
If $(x,y)=\iota (z,v)$, then we have
\begin{equation}\label{eqn:mmAQ-SL}
\frac{z^{1-\lambda'''}}{x^{1-\lambda'}y^{1-\lambda''}}M(z,v)=z^{-\ell}, 
\end{equation}
\begin{equation}\label{eqn:imeasure}
x^{1-\lambda'}y^{1-\lambda''}d x d y=M(z,v)^2z^{-\alpha-\beta-\lambda'''}(1-v)^\alpha(1+v)^\beta dzd v,
\end{equation}
whereas the holographic operator $\Phi_{\lambda',\lambda''}^{\lambda'''}$ (see \eqref{eqn:defPhi})
takes the form
\begin{equation}\label{eqn:iholograph}
\left(\Phi_{\lambda',\lambda''}^{\lambda'''}h\right)\circ\iota(z,v)= M(z,v)^{-1} P^{\alpha,\beta}_\ell(v)h(z).
\end{equation}

\subsubsection{Fourier transform of the Rankin--Cohen bidifferential operators}\label{subsec:FTRC}

We are ready to prove Proposition \ref{lem:1} for an integral expression of the symmetry breaking operator
$\widehat{\mathcal{RC}}_{\lambda',\lambda''}^{\lambda'''}$
 (see \eqref{eqn:RCFourier}).

\begin{proof}[Proof of Proposition \ref{lem:1}]
For a function $F\in L^2(\R_+ ^2)_ {\lambda',\lambda''}$ we set 
$$
G(x,y):=\widetilde{P}_\ell^{\lambda'-1,\lambda''-1}(x,y) F(x,y).
$$ 
By Proposition \ref{prop:RCPnew} the F-method shows that the
 Rankin--Cohen bidifferential operator is induced from the multiplication 
 by the polynomial $\widetilde P_\ell^{\lambda'-1,\lambda''-1}(x,y)$, namely,
\begin{equation}\label{eqn:RCFG}
 (\mathcal{RC}_{\lambda',\lambda''}^{\lambda'''}\mathcal F_2F)(\zeta)=i^\ell(\mathrm{Rest}\circ
\mathcal F_2G)(\zeta).
\end{equation}
The left-hand side of \eqref{eqn:RCFG} equals $\left(\mathcal F_1 \widehat{\mathcal{RC}}_{\lambda',\lambda''}^{\lambda'''} F\right)(\zeta)$
by the definition \eqref{eqn:RCFourier}.
 We compute the right-hand side of \eqref{eqn:RCFG}.
Via the diffeomorphism \eqref{eqn:SLiota}, 
we have  $\widetilde{P}_\ell^{\alpha,\beta}\circ\iota(z,v)=(-1)^\ell z^\ell 
{P}_\ell^{\alpha,\beta}(v)$. 
Thus we get
\begin{eqnarray*}
\left(\mathrm{Rest}\circ \mathcal F_{2}\right)G(\zeta)&=&
\int_0^\infty\int_0^\infty G(x,y) e^{i(x+y)\zeta} d x d y\\
&=&\frac12\int_0^\infty\int_{-1}^1G\circ\iota( z,v)e^{i z\zeta}zdzdv\\
&=&\frac12\mathcal F_{1}(\mathcal JF)(\zeta),
\end{eqnarray*}
where
\begin{eqnarray*}
\mathcal JF(z)&:=&z\int_{-1}^1G\circ\iota(z,v)dv\\
&=&(-1)^\ell z^{\ell+1}\int_{-1}^1
P_\ell^{\lambda'-1,\lambda''-1}(v) F\circ\iota(z,v)dv.
\end{eqnarray*}
Hence Proposition \ref{lem:1} is proved.
\end{proof}


\subsubsection{Three characteristics of holographic operators in the $L^2$-model}

In Section \ref{subsec:phi}, we discussed the three characteristics (1), (2), and (3) of holographic operators in the $L^2$-model. These three
characteristics play a key role in the proof of main theorems. In this subsection we explicate the relationship between
\begin{itemize}
\item[] (2) and (3) in Proposition \ref{lem:2}, and
\item[] (1) and (3) in Proposition \ref{prop:PhiPsi},
\end{itemize}
and prove the formula \eqref {eqn:3holograph}.

\begin{prop}\label{lem:2}
The adjoint of the holographic operator
 $\Phi_{\lambda',\lambda''}^{\lambda'''}$
 (Definition \ref{def:Phi})
is proportional to the Fourier transform of the Rankin--Cohen operator ${\mathcal{RC}}_{\lambda',\lambda''}^{\lambda'''}$:
$$
\left(\Phi_{\lambda',\lambda''}^{\lambda'''}\right)^*=i^\ell\widehat{\mathcal{RC}}_{\lambda',\lambda''}^{\lambda'''}.
$$
\end{prop}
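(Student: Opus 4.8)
The plan is to compute the adjoint $\left(\Phi_{\lambda',\lambda''}^{\lambda'''}\right)^{*}$ directly from the definition of the two Hilbert-space inner products and then match the resulting integral operator with the formula for $\widehat{\mathcal{RC}}_{\lambda',\lambda''}^{\lambda'''}$ furnished by Proposition \ref{lem:1}. Since $\Phi_{\lambda',\lambda''}^{\lambda'''}$ is a bounded operator $L^2(\R_+)_{\lambda'''}\to L^2(\R_+^2)_{\lambda',\lambda''}$ by Theorem \ref{thm:phi}, and $\widehat{\mathcal{RC}}_{\lambda',\lambda''}^{\lambda'''}$ is bounded in the reverse direction, it suffices to verify the identity $(\Phi_{\lambda',\lambda''}^{\lambda'''} h, F) = (h,\, i^\ell\,\widehat{\mathcal{RC}}_{\lambda',\lambda''}^{\lambda'''} F)$ for $h\in L^2(\R_+)_{\lambda'''}$ and $F$ in the dense subspace $C_c(\R_+\times\R_+)$, where the integral formula of Proposition \ref{lem:1} applies verbatim.

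First I would substitute Definition \ref{def:Phi} into $(\Phi_{\lambda',\lambda''}^{\lambda'''} h, F)_{L^2(\R_+^2)_{\lambda',\lambda''}} = \int_0^\infty\!\!\int_0^\infty (\Phi_{\lambda',\lambda''}^{\lambda'''} h)(x,y)\,\overline{F(x,y)}\,x^{1-\lambda'}y^{1-\lambda''}\,dx\,dy$. The crucial algebraic simplification is that the weight $x^{1-\lambda'}y^{1-\lambda''}$ exactly cancels the prefactor $x^{\lambda'-1}y^{\lambda''-1}$ in $(\Phi_{\lambda',\lambda''}^{\lambda'''} h)(x,y)$, leaving only $(x+y)^{-(\lambda'+\lambda''+\ell-1)}P_\ell^{\lambda'-1,\lambda''-1}\!\left(\tfrac{y-x}{x+y}\right)h(x+y)$ paired against $\overline{F(x,y)}$. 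Next I would apply the coordinate change $\iota$ of \eqref{eqn:SLiota}, under which $x+y=z$, $\tfrac{y-x}{x+y}=v$, and $dx\,dy=\tfrac z2\,dz\,dv$, turning the double integral into $\int_0^\infty h(z)\left[\tfrac12 z^{2-\lambda'-\lambda''-\ell}\int_{-1}^1 P_\ell^{\lambda'-1,\lambda''-1}(v)\,\overline{F\circ\iota(z,v)}\,dv\right]dz$; the identities recorded in Section \ref{sec:coord-changeSL2} make this bookkeeping routine.

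To read off the adjoint I would factor out the weight $z^{1-\lambda'''}$ of $L^2(\R_+)_{\lambda'''}$, using the defining relation $\lambda'''=\lambda'+\lambda''+2\ell$ to check that the remaining exponent is $2-\lambda'-\lambda''-\ell-(1-\lambda''')=\ell+1$. Since $\lambda',\lambda''$ are real, the Jacobi polynomial $P_\ell^{\lambda'-1,\lambda''-1}$ has real coefficients, so conjugation is harmless on the real segment $v\in(-1,1)$, and one obtains $\left(\Phi_{\lambda',\lambda''}^{\lambda'''}\right)^{*}F(z)=\tfrac{z^{\ell+1}}{2}\int_{-1}^1 P_\ell^{\lambda'-1,\lambda''-1}(v)\,F\circ\iota(z,v)\,dv$. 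Comparing with Proposition \ref{lem:1}, whose right-hand side carries the extra factor $i^{-\ell}$, yields $\left(\Phi_{\lambda',\lambda''}^{\lambda'''}\right)^{*}=i^\ell\,\widehat{\mathcal{RC}}_{\lambda',\lambda''}^{\lambda'''}$.

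The computation is essentially mechanical, so the only genuine point requiring care is the analytic justification: confirming that both operators are bounded, so the adjoint is well defined, and that Fubini's theorem legitimizes the interchange of integrals and the coordinate change on the dense class $C_c(\R_+\times\R_+)$. Boundedness of $\Phi_{\lambda',\lambda''}^{\lambda'''}$ is exactly Theorem \ref{thm:phi}, while boundedness of $\widehat{\mathcal{RC}}_{\lambda',\lambda''}^{\lambda'''}$ follows from the continuity of $\mathcal{RC}_{\lambda',\lambda''}^{\lambda'''}$ between the weighted Bergman spaces together with Fact \ref{fact:Laplace}. I expect the exponent bookkeeping --- keeping $\lambda'''=\lambda'+\lambda''+2\ell$ consistent throughout and tracking the single factor $i^{-\ell}$ --- to be the main source of potential sign or constant errors, rather than any conceptual obstacle.
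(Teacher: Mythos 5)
Your proposal is correct and follows essentially the same route as the paper's proof: one verifies the pairing identity on the dense subspaces $C_c(\R_+)$ and $C_c(\R_+^2)$ via the integral formula of Proposition \ref{lem:1} together with the coordinate change \eqref{eqn:SLiota} and the identities \eqref{eqn:imeasure} and \eqref{eqn:iholograph}, the only (immaterial) difference being that you compute $(\Phi_{\lambda',\lambda''}^{\lambda'''}h,F)$ while the paper computes $(h,\widehat{\mathcal{RC}}_{\lambda',\lambda''}^{\lambda'''}F)$. One small caution: invoking Theorem \ref{thm:phi} for the boundedness of $\Phi_{\lambda',\lambda''}^{\lambda'''}$ is circular relative to the paper's logical order (Theorem \ref{thm:phi} is deduced from this proposition), so you should instead get boundedness from the direct isometry computation of Proposition \ref{prop:phinorm}, or simply observe that the boundedness of $\widehat{\mathcal{RC}}_{\lambda',\lambda''}^{\lambda'''}$ together with the dense-subspace identity already suffices.
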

\begin{proof}
We have already seen in Section \ref{subsec:phi} that $\widehat{\mathcal{RC}}_{\lambda',\lambda''}^{\lambda'''}$ is a continuous map between the Hilbert spaces.
Hence we shall work with dense subspaces $C_c(\R_+)$ and $C_c(\R_+^2)$
in $L^2(\R_+)_{\lambda'''}$ and $L^2(\R_+^2)_{\lambda',\lambda''}$, respectively.
Take  $h\in C_c(\R_+)$ and $F\in C_c(\R_+^2)$. 
By the integral expression of $\widehat{\mathcal{RC}}_{\lambda',\lambda''}^{\lambda'''}$ given in Proposition \ref{lem:1}, we have
\begin{eqnarray*}
&&(h,\widehat{\mathcal{RC}}_{\lambda',\lambda''}^{\lambda'''}F)
_{L^2(\R_+,z^{1-\lambda'''}dz)}\\
&=&\frac{i^\ell}2\int_0^\infty
h(z)\overline{ z^{\ell+1}\int_{-1}^1 P_\ell^{\lambda'-1,\lambda''-1}(v) F\circ\iota(z,v)dv} z^{1-\lambda'''}dz
\\
&=&i^\ell\int_0^\infty\int_0^\infty (\Phi_{\lambda',\lambda''}^{\lambda'''}h)(x,y)\overline{F(x,y)} x^{1-\lambda'}y^{1-\lambda''}dxdy
\\
&=&i^\ell(\Phi_{\lambda',\lambda''}^{\lambda'''}h,F)_{L^2(\R_+^2,x^{1-\lambda'}y^{1-\lambda''}dxdy)}.
\end{eqnarray*}
Here, in the second equality we have used \eqref{eqn:imeasure}
and \eqref{eqn:iholograph}. Thus Proposition \ref{lem:2} is 
proved.
\end{proof}

\begin{prop}\label{prop:PhiPsi}
With the notation \eqref{eqn:FPsi}, we have 
$$
\widehat\Psi_{\lambda',\lambda''}^{\lambda'''}
=i ^\ell
\Phi_{\lambda',\lambda''}^{\lambda'''}. 
$$
\end{prop}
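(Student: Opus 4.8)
The plan is to reduce the claimed operator identity to a single intertwining relation and then to turn the prefactor $(\zeta_1-\zeta_2)^\ell$ of \eqref{eqn:psi} into a $v$-derivative, which the Rodrigues formula for the Jacobi polynomials converts into the polynomial $P_\ell^{\lambda'-1,\lambda''-1}$ appearing in \eqref{eqn:defPhi}. By the definition \eqref{eqn:FPsi} of $\widehat\Psi_{\lambda',\lambda''}^{\lambda'''}$, it suffices to prove the relation
\[
\Psi_{\lambda',\lambda''}^{\lambda'''}\circ\mathcal F_1 = i^\ell\,\mathcal F_2\circ\Phi_{\lambda',\lambda''}^{\lambda'''},
\]
and I would test this on a function $h\in C_c(\R_+)$, so that Fubini applies. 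First I would compute the right-hand side. Using the coordinate change $\iota$ of \eqref{eqn:SLiota} together with \eqref{eqn:imeasure} and \eqref{eqn:iholograph}, the expression $\mathcal F_2(\Phi_{\lambda',\lambda''}^{\lambda'''}h)(\zeta_1,\zeta_2)$ becomes an integral over $(z,v)\in\R_+\times(-1,1)$ of $h(z)$ against the exponential kernel $E(z,v):=\exp\!\bigl(\tfrac{iz}{2}[(\zeta_1+\zeta_2)+v(\zeta_2-\zeta_1)]\bigr)$, weighted by $z^{-\ell}(1-v)^{\lambda'-1}(1+v)^{\lambda''-1}P_\ell^{\lambda'-1,\lambda''-1}(v)$ and an explicit power of $2$.

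Next I would expand the left-hand side. Substituting the Fourier--Laplace transform $g=\mathcal F_1 h$ into \eqref{eqn:psi} and interchanging the order of integration, $\Psi_{\lambda',\lambda''}^{\lambda'''}(\mathcal F_1 h)(\zeta_1,\zeta_2)$ becomes an integral of $h(z)$ against the \emph{same} kernel $E(z,v)$ — the point is that the argument of $g$ in \eqref{eqn:psi} is exactly $\tfrac12[(\zeta_1+\zeta_2)+v(\zeta_2-\zeta_1)]$ — but now weighted by the prefactor $(\zeta_1-\zeta_2)^\ell$ and by $(1-v)^{\lambda'+\ell-1}(1+v)^{\lambda''+\ell-1}$. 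Thus both sides carry the common kernel $E$, and the entire problem reduces to matching the two remaining weights.

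The bridge between them is the observation that $E$ satisfies $\partial_v E = \tfrac{iz}{2}(\zeta_2-\zeta_1)E$, whence
\[
(\zeta_1-\zeta_2)^\ell E(z,v)=\Bigl(\tfrac{2i}{z}\Bigr)^\ell\partial_v^\ell E(z,v).
\]
I would pull this into the $v$-integral and integrate by parts $\ell$ times, moving $\partial_v^\ell$ off the kernel and onto the weight $(1-v)^{\lambda'+\ell-1}(1+v)^{\lambda''+\ell-1}$; the boundary contributions all vanish provided $\Re\lambda',\Re\lambda''>0$, since after $k<\ell$ differentiations the weight still contains factors $(1-v)^{\lambda'+\ell-k}$ and $(1+v)^{\lambda''+\ell-k}$ with positive real exponents. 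The classical Rodrigues formula (see \cite{DB}) then gives
\[
\frac{d^\ell}{dv^\ell}\bigl[(1-v)^{\lambda'+\ell-1}(1+v)^{\lambda''+\ell-1}\bigr]=(-1)^\ell 2^\ell\ell!\,(1-v)^{\lambda'-1}(1+v)^{\lambda''-1}P_\ell^{\lambda'-1,\lambda''-1}(v),
\]
which produces precisely the Jacobi weight of the $\Phi$-side. Collecting the factors of $2$, of $i$, and of $\ell!$ coming from \eqref{eqn:psi}, from $(2i/z)^\ell$, and from the Rodrigues constant leaves exactly $i^\ell$ times the expression computed for $\mathcal F_2\circ\Phi_{\lambda',\lambda''}^{\lambda'''}$, establishing the intertwining relation and hence the proposition.

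The main obstacle is bookkeeping rather than conceptual: one must carry the powers of $2$, $z$, and $i$ correctly through the $\ell$-fold integration by parts and the Rodrigues substitution, and verify that the boundary terms genuinely drop out. For parameters merely satisfying \eqref{eqn:HOconverge} rather than $\Re\lambda',\Re\lambda''>0$, I would conclude by analytic continuation, since both sides of the asserted identity depend holomorphically on $(\lambda',\lambda'')$.
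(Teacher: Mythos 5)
Your argument is correct — I checked the constants: after the $\ell$-fold integration by parts the prefactor $\frac{1}{2^{\lambda'+\lambda''+2\ell-1}\ell!}\cdot\left(\frac{2i}{z}\right)^\ell\cdot 2^\ell\ell!$ collapses to $i^\ell\,2^{1-\lambda'-\lambda''}z^{-\ell}$, which is exactly $i^\ell$ times the weight produced by $\mathcal F_2\circ\Phi_{\lambda',\lambda''}^{\lambda'''}$ under the coordinate change \eqref{eqn:SLiota} — and it is essentially the paper's computation run in the opposite direction, but with one genuine substitution worth noting. The paper proves the equivalent identity $\mathcal F_2\circ\Phi_{\lambda',\lambda''}^{\lambda'''}\circ\mathcal F_1^{-1}=(-i)^\ell\Psi_{\lambda',\lambda''}^{\lambda'''}$ starting from the $\Phi$-side: it applies the Rodrigues formula \eqref{eqn:JRodrigues} first, integrates by parts to move $\partial_v^\ell$ onto $z^{-\ell}h(z)e^{izt(v)}$, and then invokes a separate Lemma \ref{lem:3} (resting on Fourier inversion and the Paley--Wiener theorem) to convert $\left(\frac{d}{dt}\right)^\ell\int_0^\infty z^{-\ell}(\mathcal F_1^{-1}g)(z)e^{izt}dz$ into $i^\ell g(t)$. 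You start from the $\Psi$-side, test on $h\in C_c(\R_+)$, and replace Lemma \ref{lem:3} by the elementary kernel identity $(\zeta_1-\zeta_2)^\ell E=(2i/z)^\ell\partial_v^\ell E$, with integration by parts moving the derivatives onto the weight and Rodrigues applied last. What your variant buys is the elimination of any appeal to Paley--Wiener or inversion theorems — only Fubini and differentiation of an explicit exponential kernel, fully justified for compactly supported $h$ — at the modest cost of a density argument to pass from $C_c(\R_+)$ to all of $L^2(\R_+)_{\lambda'''}$, which is legitimate here because $\Phi_{\lambda',\lambda''}^{\lambda'''}$ is bounded by the independent Proposition \ref{prop:phinorm} and $\Psi_{\lambda',\lambda''}^{\lambda'''}\circ\mathcal F_1$ is continuous into $\mathcal O(\Pi\times\Pi)$; you should state this extension step explicitly. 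Two small blemishes, neither fatal: the exponents surviving after $k$ differentiations of the weight are $\lambda'+\ell-1-k$ and $\lambda''+\ell-1-k$ (not $\lambda'+\ell-k$ as you wrote), which for $k\leq\ell-1$ are still at least $\lambda'$ and $\lambda''$ respectively, so the boundary terms do vanish; and your closing analytic-continuation remark is unnecessary — indeed ill-posed as stated — since the definition \eqref{eqn:FPsi} of $\widehat\Psi_{\lambda',\lambda''}^{\lambda'''}$ presupposes the Fourier--Laplace isomorphisms of Fact \ref{fact:Laplace}, which require real $\lambda',\lambda''>1$, a regime in which $\operatorname{Re}\lambda',\operatorname{Re}\lambda''>0$ holds automatically.
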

Before giving a proof of Proposition \ref{prop:PhiPsi}, we need the following.

\begin{lem}\label{lem:3}
For any $g\in \mathcal H^2(\Pi)_\lambda$, we have
$$
\left(\frac d{dt}\right)^\ell\int_0^\infty z^{-\ell}(\mathcal F_1^{-1}g)(z) e^{izt}dz=i^\ell g(t).
$$
\end{lem}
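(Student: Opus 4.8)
The plan is to reduce the statement to the defining property of $\mathcal F_1$ together with a routine justification of differentiation under the integral sign. Put $F:=\mathcal F_1^{-1}g\in L^2(\R_+)_\lambda$, so that by Fact~\ref{fact:Laplace} one has $g(t)=\int_0^\infty F(z)\,e^{izt}\,dz$ for $t\in\Pi$. The left-hand side of the asserted identity is then $\left(\frac{d}{dt}\right)^\ell I(t)$ with $I(t):=\int_0^\infty z^{-\ell}F(z)\,e^{izt}\,dz$. The whole point is that $\left(\frac{d}{dt}\right)^\ell e^{izt}=(iz)^\ell e^{izt}$, so that differentiating $\ell$ times under the integral produces a factor $(iz)^\ell$ which cancels the weight $z^{-\ell}$ up to the constant $i^\ell$, giving $\left(\frac{d}{dt}\right)^\ell I(t)=i^\ell\int_0^\infty F(z)\,e^{izt}\,dz=i^\ell g(t)$, as desired.

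First I would verify that $I(t)$ is well defined and holomorphic on $\Pi$ and that the $t$-derivatives up to order $\ell$ may be taken inside the integral. Fix $\delta>0$ and restrict to the region $\{\,t\in\Pi:\Im t\geq\delta\,\}$. For $0\le k\le\ell$ the $k$-th $t$-derivative of the integrand is $(iz)^{k}z^{-\ell}F(z)\,e^{izt}$, whose modulus is bounded by $z^{k-\ell}\,|F(z)|\,e^{-\delta z}$. By Cauchy--Schwarz against the weight $z^{1-\lambda}$,
$$
\int_0^\infty z^{k-\ell}|F(z)|\,e^{-\delta z}\,dz
\le \|F\|_{L^2(\R_+)_\lambda}\Bigl(\int_0^\infty z^{2k-2\ell+\lambda-1}e^{-2\delta z}\,dz\Bigr)^{1/2},
$$
and the integral on the right converges at infinity for all parameters and at the origin as soon as $2k-2\ell+\lambda>0$. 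Thus one obtains a dominating function that is integrable and independent of $t$ on $\{\Im t\ge\delta\}$, which legitimizes both the differentiation under the integral sign and (through Morera's theorem, say) the holomorphy of $I$.

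The only genuine constraint is the behaviour of the $z^{-\ell}$ singularity at the origin, which via the worst case $k=0$ above forces the condition $\lambda>2\ell$. This is harmless for the intended use: Lemma~\ref{lem:3} is applied with $\lambda=\lambda'''=\lambda'+\lambda''+2\ell$, and the standing hypothesis $\lambda',\lambda''>1$ yields $\lambda'''>2\ell$, so the dominating integrals converge and the computation of the first paragraph goes through verbatim. Should one wish to remove the restriction, one can instead establish the identity first for $F$ in the dense subspace $C_c^\infty(\R_+)$, where all convergence issues disappear, and then pass to the limit; but this refinement is not needed here. The main---and essentially only---obstacle is therefore this near-origin integrability, everything else being a mechanical differentiation under the integral.
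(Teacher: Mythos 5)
Your proof is correct and takes essentially the same route as the paper, whose one-line argument invokes the Paley--Wiener theorem (Fact \ref{fact:Laplace}) to write $g=\mathcal F_1 F$ with $F\in L^2(\R_+)_\lambda$ and then the classical Fourier inversion --- that is, precisely the differentiation under the integral sign, with the factor $(iz)^\ell$ cancelling the weight $z^{-\ell}$, which you carry out in detail. Your observation that absolute convergence near $z=0$ requires $\lambda>2\ell$ --- a condition satisfied in the only place the lemma is used, namely the proof of Proposition \ref{prop:PhiPsi}, where $\lambda=\lambda'''=\lambda'+\lambda''+2\ell>2\ell+2$ --- is a useful precision that the paper leaves implicit.
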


\begin{proof}
The statement follows from the (classical) Fourier inversion formula
 and the Paley--Wiener theorem for $g$.
\end{proof}

\begin{proof}[Proof of Proposition \ref{prop:PhiPsi}]
It suffices to show
$$
\mathcal F_2\circ\Phi_{\lambda',\lambda''}^{\lambda'''}\circ\mathcal F_1^{-1}
=(-i) ^\ell
\Psi_{\lambda',\lambda''}^{\lambda'''}. 
$$

We set
$$
t(v):=\frac12 \left((\zeta_2-\zeta_1)v+(\zeta_1+\zeta_2)\right).
$$
By the definitions \eqref{eqn:Azv}
 and \eqref{eqn:iholograph} of $M$ and $\Phi_{\lambda',\lambda''}^{\lambda'''}$, we have,

\begin{eqnarray*}
&&2^{\alpha+\beta+1}(\mathcal F_2\circ\Phi_{\lambda',\lambda''}^{\lambda'''}h)(\zeta_1,\zeta_2)\\
&=& 
\int_0^\infty \int_{-1}^1 z^{-\ell} (1-v)^\alpha (1+v)^\beta P^{\alpha,\beta}_\ell(v) h(z) e^{iz t(v)}dvdz
\\
&=&
\frac{(-1)^\ell}{2^{\ell}\ell!}
\int_0^\infty \int_{-1}^1
z^{-\ell}h(z) e^{izt(v)} \left(\frac d{dv}\right)^\ell \left((1-v)^{\alpha+\ell} (1+v)^{\beta+\ell}\right) dvdz
\\
&=&
\frac{i^\ell}{2^{\ell}\ell!}
 \int_{-1}^1
(1-v)^{\alpha+\ell} (1+v)^{\beta+\ell}\left(\frac{dt(v)}{dv}\right)^\ell
\left(\mathcal F_1 h\right)(t(v))
dv,
\end{eqnarray*}
where the second equality follows from the Rodrigues formula \eqref{eqn:JRodrigues} for the Jacobi polynomials, and
the third one from integration by parts and Lemma \ref{lem:3}.
Putting $g=\mathcal F_1 h$, we obtain
\begin{eqnarray*}
&&(\mathcal F_2\circ\Phi_{\lambda',\lambda''}^{\lambda'''}\circ\mathcal F_1^{-1}g)(\zeta_1,\zeta_2)\\
&=&
\frac{(\zeta_1-\zeta_2)^\ell (-i)^\ell}{2^{\alpha+\beta+2\ell+1}\ell!}
\int_{-1}^1 g(t(v))(1-v)^{\alpha+\ell} (1+v)^{\beta+\ell}dv\\
&=&
(-i)^\ell\left(\Psi_{\lambda',\lambda''}^{\lambda'''}g\right)(\zeta_1,\zeta_2).
\end{eqnarray*}

Hence Proposition 
 \ref{prop:PhiPsi} is proved.
\end{proof}

\subsubsection{Proof of Theorem \ref{thm:phi}}\label{subsec:pfthmphi}

In this subsection we give a proof of Theorem \ref{thm:phi}. 

\begin{proof}[Proof of Theorem \ref{thm:phi}]
As the Rankin--Cohen bidifferential operator ${\mathcal{RC}}_{\lambda',\lambda''}^{\lambda'''}$ intertwines the tensor product
$\pi_{\lambda'}\widehat\otimes\pi_{\lambda''}$ and $\pi_{\lambda'''}$, so does its Fourier transform $\widehat{\mathcal{RC}}_{\lambda',\lambda''}^{\lambda'''}$ (see \eqref{eqn:RCFourier}), and in turn its adjoint operator $\left(\widehat{\mathcal{RC}}_{\lambda',\lambda''}^{\lambda'''}\right)^*$
because $\pi_{\lambda'},\pi_{\lambda''}$, and $\pi_{\lambda'''}$ are unitary representations.
Hence Theorem \ref{thm:phi} follows from Proposition \ref{lem:2}.
\end{proof}

\subsubsection{Proof of Theorem \ref{thm:psi}}\label{subsec:pfthmpsi}
Theorem \ref{thm:phi} together with an argument of holomorphic continuation on parameters
completes the proof of Theorem \ref{thm:psi} as follows.

\begin{proof}[Proof of Theorem \ref{thm:psi}]

The second assertion follows from Proposition \ref{prop:PhiPsi} because $\Phi_{\lambda',\lambda''}^{\lambda'''}$
is an intertwining operator as it was shown in Theorem \ref{thm:phi}.

Let $\ell\in\N$ and $\lambda'''=\lambda'+\lambda''+2\ell$. If $(\lambda',\lambda'')\in\C^2$ satisfies \eqref{eqn:HOconverge} then the
 integral \eqref{eqn:psi} converges for all $g\in\mathcal O(\Pi)$, and $\Psi_{\lambda',\lambda''}^{\lambda'''}$ is continuous viewed
as a map from the Montel
space $\mathcal O(\Pi)$ to the one $\mathcal O(\Pi\times\Pi)$.

On the other hand, if furthermore $\lambda',\lambda''$ are real and $\lambda',\lambda''>1$,
then $\Psi_{\lambda',\lambda''}^{\lambda'''}$ is a $G$-homomorphism on $\mathcal H^2(\Pi)_{\lambda'''}$ by the second statement. 
Since $\mathcal H^2(\Pi)_{\lambda'''}$
 is dense in the Montel space $\mathcal O(\Pi)$ as its subspace of $K$-finite functions is already
 dense in
 $\mathcal O(\Pi)$, the continuous map 
$\Psi_{\lambda',\lambda''}^{\lambda'''} \colon \mathcal O(\Pi)\To\mathcal O(\Pi\times\Pi)$ intertwines
$\pi_{\lambda'''}$ and the tensor product representation $\pi_{\lambda'}\widehat\otimes\pi_{\lambda''}$
if $\lambda',\lambda''>1$. Since $\Psi_{\lambda',\lambda''}^{\lambda'''}g\in \mathcal O(\Pi\times\Pi)$ 
 depends holomorphically on $(\lambda',\lambda'')\in\C^2$ subject to \eqref{eqn:HOconverge} and since the actions
 $\pi_{\lambda'}, \pi_{\lambda''}$ and $\pi_{\lambda'+\lambda''+2\ell}$ of ${SL(2,\R)}{\;}\widetilde{}$ also depend holomorphically on  $(\lambda',\lambda'')\in\C^2$,
 the first statement
is shown.
\end{proof}

\subsubsection{Adjoint of the Rankin--Cohen operator}

As the last part of the diagram \eqref{diagram:squig}, we show that $\Psi_{\lambda',\lambda''}^{\lambda'''}$ is the adjoint of the Rankin--Cohen operator $\mathcal{RC}_{\lambda',\lambda''}^{\lambda'''}$ up to scalar multiplication.

Suppose $\lambda',\lambda'',\lambda'''>1$ and $\lambda'''-\lambda'-\lambda''\in2\N$. We regard 
the Rankin--Cohen operator $\mathcal{RC}_{\lambda',\lambda''}^{\lambda'''}$ as a continuous map between
Hilbert spaces
$$
\mathcal{RC}_{\lambda',\lambda''}^{\lambda'''}
\colon
\mathcal H^2(\Pi)_{\lambda'}\widehat\otimes \mathcal H^2(\Pi)_{\lambda''}\longrightarrow \mathcal H^2(\Pi)_{\lambda'''}.
$$
By \eqref{eqn:3holograph} and Lemma \ref{lem:opFourier} below, we obtain
\begin{prop}\label{prop:PsiRC}
Let $\ell:=\frac12(\lambda'''-\lambda'-\lambda'')\in\N$. The adjoint of $\mathcal{RC}_{\lambda',\lambda''}^{\lambda'''}$ is given by
$$
\left(\mathcal{RC}_{\lambda',\lambda''}^{\lambda'''}\right)^*=
r_\ell(\lambda',\lambda'')\Psi_{\lambda',\lambda''}^{\lambda'''}.
$$
\end{prop}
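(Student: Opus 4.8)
The plan is to transport the adjoint identity \eqref{eqn:3holograph}, which lives in the $L^2$-model, back to the holomorphic model via the Fourier--Laplace transforms $\mathcal F_1$ and $\mathcal F_2$. The only subtlety is that these transforms are isometries \emph{up to scalar}: by Fact \ref{fact:Laplace} one has $\|\mathcal F_1 G\|^2_{\mathcal H^2(\Pi)_{\lambda'''}} = b(\lambda''')\|G\|^2$ in one variable and $\|\mathcal F_2 F\|^2_{\mathcal H^2(\Pi\times\Pi)_{(\lambda',\lambda'')}} = b(\lambda')b(\lambda'')\|F\|^2$ in two variables. Because the scaling factor on the one-variable side differs from that on the two-variable side, transporting an adjoint across these maps produces precisely the ratio $b(\lambda''')/(b(\lambda')b(\lambda'')) = r_\ell(\lambda',\lambda'')$ recorded in \eqref{eqn:b-ell}. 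This is the conceptual reason the scalar $r_\ell$ appears.

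Concretely, first I would record the elementary operator-theoretic fact (this is Lemma \ref{lem:opFourier}) that for any continuous $T\colon \mathcal H^2(\Pi\times\Pi)_{(\lambda',\lambda'')}\to\mathcal H^2(\Pi)_{\lambda'''}$ with Fourier--Laplace counterpart $\widehat T := \mathcal F_1^{-1}\circ T\circ\mathcal F_2$, the Hilbert-space adjoints are related by
\[
T^* \;=\; \frac{b(\lambda''')}{b(\lambda')b(\lambda'')}\,\mathcal F_2\circ(\widehat T)^*\circ\mathcal F_1^{-1}.
\]
This follows by expanding $\langle Tf,g\rangle$, writing $f=\mathcal F_2 F$ and $g=\mathcal F_1 G$, using $T\mathcal F_2 = \mathcal F_1\widehat T$, and then applying the scaling relations of Fact \ref{fact:Laplace} together with the $L^2$-adjoint relation $\langle \widehat T F, G\rangle = \langle F,(\widehat T)^* G\rangle$; the factor $b(\lambda''')$ enters once in the numerator and the factor $b(\lambda')b(\lambda'')$ once in the denominator.

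Next I would specialize to $T=\mathcal{RC}_{\lambda',\lambda''}^{\lambda'''}$, which is continuous between the relevant weighted Bergman spaces by \cite[Thm.~5.13]{KP16a}. Substituting $(\widehat{\mathcal{RC}}_{\lambda',\lambda''}^{\lambda'''})^* = \widehat\Psi_{\lambda',\lambda''}^{\lambda'''}$ from \eqref{eqn:3holograph}, and then using the definition $\widehat\Psi_{\lambda',\lambda''}^{\lambda'''} = \mathcal F_2^{-1}\circ\Psi_{\lambda',\lambda''}^{\lambda'''}\circ\mathcal F_1$ in \eqref{eqn:FPsi} to undo the conjugation by $\mathcal F_1,\mathcal F_2$, the displayed formula collapses to
\[
(\mathcal{RC}_{\lambda',\lambda''}^{\lambda'''})^* \;=\; \frac{b(\lambda''')}{b(\lambda')b(\lambda'')}\,\mathcal F_2\circ\widehat\Psi_{\lambda',\lambda''}^{\lambda'''}\circ\mathcal F_1^{-1} \;=\; \frac{b(\lambda''')}{b(\lambda')b(\lambda'')}\,\Psi_{\lambda',\lambda''}^{\lambda'''} \;=\; r_\ell(\lambda',\lambda'')\,\Psi_{\lambda',\lambda''}^{\lambda'''},
\]
which is the claim.

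The computation is essentially bookkeeping, and I expect the only place to slip is the tracking of the scaling constants: one must be sure that the single factor $b(\lambda''')$ from the one-variable transform and the product $b(\lambda')b(\lambda'')$ from the two-variable transform land in the numerator and denominator respectively, so that their quotient matches the definition of $r_\ell(\lambda',\lambda'')$ in \eqref{eqn:b-ell} and not its reciprocal. A secondary point worth verifying is that the adjoint $(\widehat{\mathcal{RC}}_{\lambda',\lambda''}^{\lambda'''})^*$ appearing in \eqref{eqn:3holograph} is taken with respect to the weighted inner products $L^2(\R_+)_{\lambda'''}$ and $L^2(\R_+^2)_{\lambda',\lambda''}$, so that the pairing manipulations remain internally consistent throughout.
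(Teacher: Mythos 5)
Your proof is correct and is essentially the paper's own argument: the identity you derive by expanding inner products is exactly Lemma \ref{lem:opFourier} (2) with $b_1=b(\lambda''')$, $b_2=b(\lambda')b(\lambda'')$, and combining it with $(\widehat{\mathcal{RC}}_{\lambda',\lambda''}^{\lambda'''})^*=\widehat\Psi_{\lambda',\lambda''}^{\lambda'''}$ from \eqref{eqn:3holograph} is precisely how the paper deduces Proposition \ref{prop:PsiRC}. Your scalar bookkeeping, including the placement of $b(\lambda''')$ in the numerator so that the ratio matches $r_\ell(\lambda',\lambda'')$ of \eqref{eqn:b-ell}, checks out.
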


\subsection{Proof of the Parseval--Plancherel type theorem for the symmetry breaking transform and the holographic transform}

In this section we complete the proof  of Theorems \ref{thm:RCinv} and \ref{thm:PPTT4RCB} in the holomorphic model and Theorems
\ref{thm:FRCinv} and \ref{thm:CL2SL2} in the $L^2$-model for
the Parseval--Plancherel type results
for the symmetry breaking and holographic transforms.
Our strategy consists in applying the $F$-method,
 and then in reducing the proof of these theorems
 to the fact that the Jacobi polynomials $\left\{P_\ell^{\alpha,\beta}\right\}_{\ell\in\N}$ form an orthogonal basis of the Hilbert space $L^2([-1,1], (1-v)^{\alpha}(1+v)^{\beta}dv)$.

\subsubsection{Some properties of operators on Hilbert spaces}

We review a general fact on operators on Hilbert spaces. Suppose a Hilbert space $V$ is decomposed into a Hilbert direct sum of closed subspaces 
$\{V_\ell\}_{\ell\in\N}$, that is, $V\simeq\sum_{\ell\in\N}^{\oplus} V_\ell$, where the inner product on $V_\ell$ is induced from that of $V$.
Let $\mathrm{pr}_{V\to V_\ell} \colon V\To V_\ell$ be the projection operator.
 Let $\{W_\ell\}_{\ell\in\N}$ be another family of Hilbert spaces. Suppose that we are given a continuous map $R_\ell \colon V\To W_\ell$ such that the
restriction $R_\ell\vert_{V_\ell}:V_\ell\To W_\ell$ is a unitary operator up to scalar multiplication and $R_\ell\vert_{V_\ell^\perp}\equiv 0$ for every $\ell\in\N$. 
Then the adjoint operator
$R_\ell^* \colon W_\ell\To V$ is an isometry (up to scalar) onto $V_\ell$.
We write $\Vert R_\ell\Vert_{\mathrm{op}}$ for the operator norm of $R_\ell$ and set
$$
C_\ell:=\Vert R_\ell\Vert_{\mathrm{op}}^2.
$$
 The following two lemmas are elementary.
\begin{lem}\label{lem:Hilbertdeco}
{}
\begin{itemize}
\item[(1)] The linear map 
$\displaystyle R:=\bigoplus\limits_{\ell\in\N}R_\ell \colon V\to\bigoplus\limits_{\ell\in\N} W_\ell$ satisfies
\begin{eqnarray}
R_\ell^*R_\ell&=&C_\ell\mathrm{pr}_{V\to V_\ell}, \label{eqn:RR1}\\
\Vert F\Vert^2_V&=&\sum_{\ell\in\N}\frac1{C_\ell}\Vert R_\ell F\Vert^2_{W_\ell}
\quad\mathrm{for\,\, all}\quad F\in V. \label{eqn:Rexpansion}
\end{eqnarray}
In particular, we have the following inversion formula and the unitarity of the map $R$:
\begin{itemize}
\item[(inversion)] $\qquad F=\sum_{\ell\in\N}\frac1{C_\ell}R_\ell^*(R_\ell F)$,
\item[(unitarity)] $\qquad R$ extends to a unitary operator $\displaystyle V\stackrel
{\sim}{\longrightarrow}{\sum\limits_{\ell\in\N}}^\oplus W_\ell$,
where $\displaystyle{\sum_{\ell\in\N}}^\oplus W_\ell$ is the Hilbert sum associated to the weights $\{C_\ell^{-1}\}_{\ell\in\N}$ (see Definition \ref{def:compsum}).
\end{itemize}
\item[(2)] The linear map $R^*:=\bigoplus\limits_{\ell\in\N}R_\ell^* \colon \bigoplus\limits_{\ell\in\N} W_\ell\to V$ satisfies
\begin{eqnarray}
R_\ell R_\ell^*&=&C_\ell\mathrm{id}_{W_\ell}, \label{eqn:RR2}\\
\Vert R_\ell^*w_\ell\Vert^2_V&=&{C_\ell}\Vert w_\ell\Vert^2_{W_\ell}
\quad\mathrm{for\, \, all}\quad w_\ell\in W_\ell. \nonumber
\end{eqnarray}
In particular $R^*$ extends to a unitary operator $\displaystyle{\sum\limits_{\ell\in\N}}^\oplus W_\ell\stackrel
{\sim}{\longrightarrow} V$,
where $\displaystyle{\sum_{\ell\in\N}}^\oplus W_\ell$ is the Hilbert sum associated to the weights $\{C_\ell\}_{\ell\in\N}$.
\end{itemize}
\end{lem}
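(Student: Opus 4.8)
The plan is to reduce everything to the two operator identities \eqref{eqn:RR1} and \eqref{eqn:RR2}, from which the Parseval formula, the inversion formula, and both unitarity statements will follow formally. The starting observation is that, writing $P_\ell:=\mathrm{pr}_{V\to V_\ell}$, the hypothesis $R_\ell\vert_{V_\ell^\perp}\equiv 0$ says $R_\ell=R_\ell P_\ell$, while the hypothesis that $R_\ell\vert_{V_\ell}$ is unitary up to scalar says that $R_\ell\vert_{V_\ell}=c\,U_\ell$ for some unitary $U_\ell\colon V_\ell\To W_\ell$ and some scalar $c$; absorbing the phase of $c$ into $U_\ell$, I may take $R_\ell\vert_{V_\ell}=\sqrt{C_\ell}\,U_\ell$, since $\Vert R_\ell\Vert_{\mathrm{op}}=\vert c\vert$ is attained on $V_\ell$ and hence $C_\ell=\vert c\vert^2$. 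A first short computation then shows that $R_\ell^*$ maps $W_\ell$ into $V_\ell$ (for $v\in V_\ell^\perp$ one has $\langle R_\ell^* w,v\rangle=\langle w,R_\ell v\rangle=0$) and equals $\sqrt{C_\ell}\,U_\ell^*$ there.

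With this partial-isometry picture in hand, the two identities are immediate: on $V_\ell$ one computes $R_\ell^*R_\ell=C_\ell\,U_\ell^*U_\ell=C_\ell\,\mathrm{id}_{V_\ell}$, while both sides vanish on $V_\ell^\perp$, giving \eqref{eqn:RR1}; dually $R_\ell R_\ell^*=C_\ell\,U_\ell U_\ell^*=C_\ell\,\mathrm{id}_{W_\ell}$, which is \eqref{eqn:RR2}. The norm identity $\Vert R_\ell^* w_\ell\Vert_V^2=C_\ell\Vert w_\ell\Vert_{W_\ell}^2$ then follows by pairing $R_\ell^* w_\ell$ with itself and applying \eqref{eqn:RR2}.

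For the global statements I would expand $F\in V$ as $F=\sum_\ell P_\ell F$ using the orthogonal decomposition $V\simeq\sum_\ell^{\oplus}V_\ell$. Since $R_\ell F=R_\ell P_\ell F$ and $R_\ell$ is $\sqrt{C_\ell}$ times a unitary on $V_\ell$, one gets $\Vert R_\ell F\Vert_{W_\ell}^2=C_\ell\Vert P_\ell F\Vert_V^2$, and summing over $\ell$ against Parseval for $V$ yields \eqref{eqn:Rexpansion}; the inversion formula is then $\sum_\ell\frac1{C_\ell}R_\ell^*R_\ell F=\sum_\ell P_\ell F=F$ by \eqref{eqn:RR1}. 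For unitarity of $R$, formula \eqref{eqn:Rexpansion} is exactly the isometry of $R$ into the weighted sum with weights $\{C_\ell^{-1}\}$, and surjectivity follows by verifying, via \eqref{eqn:RR2} together with the mutual orthogonality of the $V_\ell$, that $R\bigl(\sum_\ell\frac1{C_\ell}R_\ell^*w_\ell\bigr)=(w_\ell)_\ell$, the preimage having finite norm precisely because the weights match. Part (2) is the mirror image: since $R^*=\bigoplus_\ell R_\ell^*$ has image in the mutually orthogonal $V_\ell$, one finds $\Vert R^*(w_\ell)_\ell\Vert_V^2=\sum_\ell\Vert R_\ell^*w_\ell\Vert^2=\sum_\ell C_\ell\Vert w_\ell\Vert^2$, identifying it as an isometry from the weighted sum with weights $\{C_\ell\}$, with surjectivity witnessed by $w_\ell:=\frac1{C_\ell}R_\ell F$.

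I do not expect a genuine obstacle: the statement is formal once \eqref{eqn:RR1} and \eqref{eqn:RR2} are established. The only points requiring a little care are bookkeeping rather than mathematics, namely making the phrase \emph{unitary up to scalar} precise so the partial-isometry description is unambiguous, and handling convergence of the infinite series inside the completed weighted Hilbert sums of Definition \ref{def:compsum} — in particular checking that the candidate preimages lie in the correct completion, which is exactly what the matching of weights guarantees. I would therefore state the two elementary identities first and let the direct-sum convergence be justified throughout by Parseval together with the orthogonality of $\{V_\ell\}$.
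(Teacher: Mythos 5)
Your proof is correct. The paper offers no written argument for this lemma (it is dismissed as elementary just before its statement), and your verification — normalizing $R_\ell\vert_{V_\ell}=\sqrt{C_\ell}\,U_\ell$ with $U_\ell$ unitary, deducing \eqref{eqn:RR1} and \eqref{eqn:RR2} from this partial-isometry picture, and then obtaining Parseval, inversion, and both unitarity claims with the surjectivity witnesses $F=\sum_\ell\frac1{C_\ell}R_\ell^*w_\ell$ and $w_\ell=\frac1{C_\ell}R_\ell F$, whose membership in the relevant completions is exactly the weight-matching you check — is precisely the standard argument the authors leave to the reader, including the convergence bookkeeping.
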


\begin{lem}\label{lem:opFourier}
Suppose that $H_j$ and $L_j$ $(j=1,2)$ are Hilbert spaces
 and that $\mathcal F_j \colon L_j\To H_j$ are
unitary operators up to scalar multiple. Let $b_j$ be positive numbers such that
$$
\Vert \mathcal F_j(F)\Vert^2_{H_j}=b_j\Vert F\Vert^2_{L_j}\qquad \mathrm{for\,\, all}\quad F\in L_j.
$$
Let $\Psi \colon H_1\To H_2$
 and $D \colon H_2\To H_1$ be continuous linear maps, and
  we define
 $\widehat \Psi\colon L_1\To L_2$ and $\widehat D\colon L_2\To L_1$ by
$$
\widehat \Psi:=\mathcal F_2^{-1}\circ \Psi\circ\mathcal F_1,\qquad
\widehat D:=\mathcal F_1^{-1}\circ D\circ\mathcal F_2.
$$
We set $r:=\frac{b_1}{b_2}$. Then,
\begin{itemize}
\item[(1)] the operator norms of these operators satisfy
$$
\Vert \widehat D\Vert^2_{\mathrm{op}}=\frac 1r \Vert D\Vert^2_{\mathrm{op}}, \qquad
\Vert \widehat \Psi\Vert^2_{\mathrm{op}}=r \Vert \Psi\Vert^2_{\mathrm{op}};
$$
\item[(2)] 
the adjoint operators of $D$ and $\widehat D$ are related as
$$
\widehat{D^*}= r\left(\widehat D\right)^*.
$$
\end{itemize}
\end{lem}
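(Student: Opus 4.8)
The plan is to reduce everything to a single structural observation: the hypothesis that $\mathcal F_j$ is unitary up to scalar means precisely that $b_j^{-1/2}\mathcal F_j\colon L_j\To H_j$ is a surjective isometry, hence unitary. From this I would extract the two facts used throughout. First, the norm identity $\Vert \mathcal F_j F\Vert^2_{H_j}=b_j\Vert F\Vert^2_{L_j}$ together with the bijectivity of $\mathcal F_j$. Second, the adjoint identity $\mathcal F_j^*=b_j\mathcal F_j^{-1}$, equivalently $(\mathcal F_j^*)^{-1}=b_j^{-1}\mathcal F_j$, which follows from $(b_j^{-1/2}\mathcal F_j)^*=(b_j^{-1/2}\mathcal F_j)^{-1}$ by clearing the scalar.

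For part (1) I would compute the Rayleigh quotient directly. Taking $F\in L_2\setminus\{0\}$ and writing $\widehat D F=\mathcal F_1^{-1}D\mathcal F_2 F$, the norm identity gives $\Vert\widehat D F\Vert^2_{L_1}=b_1^{-1}\Vert D\mathcal F_2 F\Vert^2_{H_1}$ and $\Vert F\Vert^2_{L_2}=b_2^{-1}\Vert\mathcal F_2 F\Vert^2_{H_2}$, so that
\[
\frac{\Vert\widehat D F\Vert^2_{L_1}}{\Vert F\Vert^2_{L_2}}
=\frac{b_2}{b_1}\cdot
\frac{\Vert D(\mathcal F_2 F)\Vert^2_{H_1}}{\Vert \mathcal F_2 F\Vert^2_{H_2}}.
\]
Since $\mathcal F_2$ is a bijection $L_2\To H_2$, the vector $\mathcal F_2 F$ runs over all of $H_2\setminus\{0\}$ as $F$ runs over $L_2\setminus\{0\}$; taking the supremum then yields $\Vert\widehat D\Vert^2_{\mathrm{op}}=\frac1r\Vert D\Vert^2_{\mathrm{op}}$. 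The claim for $\widehat\Psi$ is entirely parallel, with the roles of $b_1$ and $b_2$ interchanged, producing the factor $r=b_1/b_2$ in place of $1/r$.

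For part (2) I would use the composition rule for adjoints and then substitute the identity $\mathcal F_j^*=b_j\mathcal F_j^{-1}$. Starting from $\widehat D=\mathcal F_1^{-1}D\mathcal F_2$, taking adjoints gives $(\widehat D)^*=\mathcal F_2^*\,D^*\,(\mathcal F_1^*)^{-1}$. Replacing $\mathcal F_2^*=b_2\mathcal F_2^{-1}$ and $(\mathcal F_1^*)^{-1}=b_1^{-1}\mathcal F_1$ produces $(\widehat D)^*=\frac{b_2}{b_1}\,\mathcal F_2^{-1}D^*\mathcal F_1$. Recognizing $\mathcal F_2^{-1}D^*\mathcal F_1$ as exactly the hat of the operator $D^*\colon H_1\To H_2$, this reads $(\widehat D)^*=\frac1r\,\widehat{D^*}$, and rearranging gives $\widehat{D^*}=r(\widehat D)^*$.

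The only point requiring genuine care—the main obstacle—is the bookkeeping in the adjoint relation $\mathcal F_j^*=b_j\mathcal F_j^{-1}$: one must retain the scalar $b_j$ and not $\sqrt{b_j}$, since adjoints are computed with respect to the inner products rather than the norms, and surjectivity of $\mathcal F_j$ is precisely what upgrades \emph{isometry up to scalar} to \emph{unitary up to scalar}, without which $(\mathcal F_j^*)^{-1}$ would not be defined. Once this identity is pinned down, both assertions follow by the routine manipulations above.
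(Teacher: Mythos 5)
Your proof is correct: the reduction to the unitarity of $b_j^{-1/2}\mathcal F_j$, the Rayleigh-quotient computation using bijectivity of $\mathcal F_2$ (resp.\ $\mathcal F_1$), and the adjoint identity $\mathcal F_j^*=b_j\mathcal F_j^{-1}$ (with the scalar $b_j$, not $\sqrt{b_j}$) all check out, including the identification of $\mathcal F_2^{-1}D^*\mathcal F_1$ with $\widehat{D^*}$ under the paper's hat convention for maps $H_1\To H_2$. The paper itself gives no proof, declaring the lemma elementary, and your argument is exactly the routine verification it has in mind.
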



\subsubsection{Parseval--Plancherel type theorem for $\Phi_{\lambda',\lambda''}^{\lambda'''}$}\label{subsec:PPTPHI}

In this subsection, we prove Theorem \ref{thm:CL2SL2}. 
By the (abstract) branching law \eqref{eqn:absbra},
Theorem \ref{thm:CL2SL2} is deduced from the following proposition.

\begin{prop}\label{prop:phinorm}
Suppose $\lambda',\lambda'',\lambda'''>1$
 satisfy $\ell:=\frac12(\lambda'''-\lambda'-\lambda'')\in\N$. Then,
$$
\Vert \Phi_{\lambda',\lambda''}^{\lambda'''} h\Vert^2_{L^2(\R_+^2)_{\lambda',\lambda''}}=
c_\ell (\lambda',\lambda'')\Vert h\Vert^2_{L^2(\R_+)_{\lambda'''}},
$$
for all $h\in L^2(\R_+)_{\lambda'''}$.
Here we recall \eqref{eqn:v-ell}
 for the definition of $c_{\ell}(\lambda',\lambda'')$.  
\end{prop}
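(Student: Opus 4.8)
The plan is to prove the identity by a single direct computation, exploiting the coordinate change $\iota$ of \eqref{eqn:SLiota} under which the holographic operator $\Phi_{\lambda',\lambda''}^{\lambda'''}$ effectively separates variables: the $z$-direction carries $h$ while the $v$-direction carries the Jacobi polynomial. The whole point is that, expressed in the $(z,v)$-coordinates, the natural measure $x^{1-\lambda'}y^{1-\lambda''}\,dx\,dy$ on $\R_+^2$ produces exactly the Jacobi orthogonality weight $(1-v)^\alpha(1+v)^\beta$, so that the resulting $v$-integral reproduces the constant $c_\ell(\lambda',\lambda'')$ of \eqref{eqn:v-ell}.

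First I would write the left-hand side as
$$\Vert \Phi_{\lambda',\lambda''}^{\lambda'''} h\Vert^2_{L^2(\R_+^2)_{\lambda',\lambda''}}=\int_0^\infty\!\!\int_0^\infty \bigl|(\Phi_{\lambda',\lambda''}^{\lambda'''}h)(x,y)\bigr|^2\, x^{1-\lambda'}y^{1-\lambda''}\,dx\,dy,$$
and then pull everything back along the diffeomorphism $\iota$. Substituting the expression \eqref{eqn:iholograph} for $(\Phi_{\lambda',\lambda''}^{\lambda'''}h)\circ\iota$ and the transformation of the measure \eqref{eqn:imeasure}, the two occurrences of $M(z,v)$ cancel: the factor $M(z,v)^{-2}$ coming from $\bigl|(\Phi_{\lambda',\lambda''}^{\lambda'''}h)\circ\iota\bigr|^2$ is killed against the factor $M(z,v)^2$ in the transformed measure. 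After this cancellation the integrand factors as a product of a function of $z$ alone and a function of $v$ alone; since all integrands are non-negative, Tonelli's theorem then lets me write the double integral as a product of a $z$-integral and a $v$-integral.

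Next I would identify the two factors. The $z$-integral collapses to $\int_0^\infty |h(z)|^2\, z^{1-\lambda'''}\,dz=\Vert h\Vert^2_{L^2(\R_+)_{\lambda'''}}$, once one checks that the surviving power of $z$ is indeed $1-\lambda'''$; this is a bookkeeping check using $\lambda'''=\lambda'+\lambda''+2\ell$ together with $\alpha=\lambda'-1$, $\beta=\lambda''-1$ and the definition \eqref{eqn:Azv} of $M$. The remaining $v$-integral, including the accumulated constant $2^{-(\alpha+\beta+1)}$, is $\frac1{2^{\alpha+\beta+1}}\int_{-1}^1 \bigl|P_\ell^{\alpha,\beta}(v)\bigr|^2 (1-v)^\alpha(1+v)^\beta\,dv$, which is precisely $c_\ell(\lambda',\lambda'')$ by \eqref{eqn:v-ell}. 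Multiplying the two factors yields the claimed formula.

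I do not anticipate a genuine obstacle here: the only content beyond bookkeeping is the structural fact already encoded in \eqref{eqn:iholograph}--\eqref{eqn:imeasure}, namely that the natural measure on $\R_+^2$, read in the $(z,v)$-variables, supplies exactly the Jacobi weight $(1-v)^\alpha(1+v)^\beta\,dv$ against which $P_\ell^{\alpha,\beta}$ is integrated. Convergence is automatic for $\lambda',\lambda''>1$, since then $\alpha,\beta>0$ make the Jacobi weight integrable on $(-1,1)$ while $P_\ell^{\alpha,\beta}$ is a polynomial, and the $z$-integral is finite because $h\in L^2(\R_+)_{\lambda'''}$. The only care required is in tracking the powers of $2$ and of $z$ through the change of variables.
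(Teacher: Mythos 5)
Your proof is correct and coincides with the paper's own argument in Section \ref{subsec:PPTPHI}: pull back along the diffeomorphism \eqref{eqn:SLiota}, cancel the factors $M(z,v)^{\pm 2}$ coming from \eqref{eqn:iholograph} and \eqref{eqn:imeasure}, separate the $z$- and $v$-integrals, and identify the $v$-integral together with its prefactor $2^{-(\alpha+\beta+1)}$ as $c_\ell(\lambda',\lambda'')$ via the Jacobi $L^2$-norm \eqref{eqn:Pnorm}. One point in your favor: the constants you track (the factor $2^{-(\alpha+\beta+1)}$ and the surviving power $z^{1-\lambda'''}$) are the correct ones, which the paper itself uses inside its proof, even though the displayed identity \eqref{eqn:imeasure} as printed contains a typo --- its right-hand side should read $2^{-\alpha-\beta-1}\,M(z,v)^2\,z^{1-\lambda'''}(1-v)^\alpha(1+v)^\beta\,dz\,dv$.
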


We reduce Proposition \ref{prop:phinorm} to
the fact that the Jacobi polynomials are orthogonal polynomials, see \eqref{eqn:Pnorm} in Appendix.

\begin{proof}[Proof of Proposition \ref{prop:phinorm}]
Via the diffeomorphism \eqref{eqn:SLiota}, we get
from the formul{\ae} \eqref{eqn:imeasure} for the measure and \eqref{eqn:iholograph}  for the holographic operator $\Phi_{\lambda',\lambda''}^{\lambda'''}$:
\begin{eqnarray*}
&&\Vert \Phi_{\lambda',\lambda''}^{\lambda'''} h\Vert^2_{L^2(\R_+^2)_{\lambda',\lambda''}}
=
\frac1{2^{\lambda'+\lambda''-1}} \times\\
&&
\int_0^\infty\int_{-1}^1\vert h(z)\vert^2
\left\vert P_\ell^{\lambda'-1,\lambda''-1}(v)\right\vert^2 z^{1-\lambda'''}
\left({1-v}\right)^{\lambda'-1} \left({1+v}\right)^{\lambda''-1}dvdz.
\end{eqnarray*}
By the $L^2$-norm \eqref{eqn:Pnorm} of the Jacobi polynomials, we conclude the proposition.
\end{proof}

\subsubsection{Operator norm of the holographic operator $\Psi_{\lambda',\lambda''}^{\lambda'''}$ in the holomorphic model}\label{subsec:PPTPSY}

\begin{prop}\label{prop:psinorm}
Suppose $\lambda',\lambda'',\lambda'''>1$ and $\ell:=\frac12(\lambda'''-\lambda'-\lambda'')\in\N$. Then
$$
\Vert \Psi_{\lambda',\lambda''}^{\lambda'''}g\Vert^2_{\mathcal H^2(\Pi)_{\lambda'}\widehat\otimes \mathcal H^2(\Pi)_{\lambda''}}
= \frac{c_\ell(\lambda',\lambda'')}
{r_\ell(\lambda',\lambda'')}
\Vert g\Vert^2_{\mathcal H^2(\Pi)_{\lambda'''}}\,\mathrm{for\,\,all}\,\, g\in\mathcal H^2(\Pi)_{\lambda'''}.
$$
\end{prop}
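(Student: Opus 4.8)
The plan is to reduce the computation in the holomorphic model to the already-established norm identity for $\Phi_{\lambda',\lambda''}^{\lambda'''}$ in the $L^2$-model (Proposition \ref{prop:phinorm}), transporting everything through the Fourier--Laplace transform. The commutation relation $\widehat\Psi_{\lambda',\lambda''}^{\lambda'''}=i^\ell\Phi_{\lambda',\lambda''}^{\lambda'''}$ of Proposition \ref{prop:PhiPsi}, combined with the explicit Plancherel constants of Fact \ref{fact:Laplace}, is exactly what converts the weight $c_\ell(\lambda',\lambda'')$ attached to $\Phi_{\lambda',\lambda''}^{\lambda'''}$ into the weight $c_\ell(\lambda',\lambda'')/r_\ell(\lambda',\lambda'')$ attached to $\Psi_{\lambda',\lambda''}^{\lambda'''}$.

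First I would fix $g\in\mathcal H^2(\Pi)_{\lambda'''}$ and use that $\mathcal F_1=\mathcal F$ is a bijection (an isometry up to the scalar $b(\lambda''')$ by Fact \ref{fact:Laplace}) to write $g=\mathcal F_1 h$ for a unique $h\in L^2(\R_+)_{\lambda'''}$. Rewriting Proposition \ref{prop:PhiPsi} as $\Psi_{\lambda',\lambda''}^{\lambda'''}\circ\mathcal F_1=\mathcal F_2\circ\widehat\Psi_{\lambda',\lambda''}^{\lambda'''}=i^\ell\,\mathcal F_2\circ\Phi_{\lambda',\lambda''}^{\lambda'''}$ then gives $\Psi_{\lambda',\lambda''}^{\lambda'''}g=i^\ell\,\mathcal F_2\bigl(\Phi_{\lambda',\lambda''}^{\lambda'''}h\bigr)$.

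Next I would compute the norm by chaining the three normalization constants. Since $\mathcal F_2=\mathcal F\otimes\mathcal F$ is an isometry up to the scalar $b(\lambda')b(\lambda'')$ from $L^2(\R_+^2)_{\lambda',\lambda''}$ onto $\mathcal H^2(\Pi)_{\lambda'}\widehat\otimes\mathcal H^2(\Pi)_{\lambda''}$, and $\abs{i^\ell}=1$, we obtain
$$
\Vert \Psi_{\lambda',\lambda''}^{\lambda'''}g\Vert^2_{\mathcal H^2(\Pi)_{\lambda'}\widehat\otimes\mathcal H^2(\Pi)_{\lambda''}}
=b(\lambda')b(\lambda'')\,\bigl\Vert \Phi_{\lambda',\lambda''}^{\lambda'''}h\bigr\Vert^2_{L^2(\R_+^2)_{\lambda',\lambda''}}.
$$
Applying Proposition \ref{prop:phinorm} to the right-hand side, and then $\Vert h\Vert^2_{L^2(\R_+)_{\lambda'''}}=b(\lambda''')^{-1}\Vert g\Vert^2_{\mathcal H^2(\Pi)_{\lambda'''}}$ (Fact \ref{fact:Laplace} again), produces the factor $b(\lambda')b(\lambda'')b(\lambda''')^{-1}c_\ell(\lambda',\lambda'')$, which equals $c_\ell(\lambda',\lambda'')/r_\ell(\lambda',\lambda'')$ by the definition $r_\ell(\lambda',\lambda'')=b(\lambda''')/(b(\lambda')b(\lambda''))$ in \eqref{eqn:b-ell}. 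This is the asserted identity.

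There is no serious obstacle here: the argument is essentially a bookkeeping of Plancherel constants. The only point requiring care is confirming that the tensor-product transform $\mathcal F_2$ carries the scalar $b(\lambda')b(\lambda'')$ and that the isomorphism $\mathcal H^2(\Pi\times\Pi)_{(\lambda',\lambda'')}\simeq\mathcal H^2(\Pi)_{\lambda'}\widehat\otimes\mathcal H^2(\Pi)_{\lambda''}$ is compatible with this factorization. Alternatively, the operator-norm version of the identity follows at once from Lemma \ref{lem:opFourier}(1) with $r=r_\ell(\lambda',\lambda'')$; but since Proposition \ref{prop:phinorm} supplies the pointwise norm for every $h$, the direct computation above yields the stronger pointwise statement for every $g$.
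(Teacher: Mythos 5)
Your proposal is correct and is essentially identical to the paper's own proof: the paper likewise writes $\Vert \Psi_{\lambda',\lambda''}^{\lambda'''}g\Vert^2 = b(\lambda')b(\lambda'')\Vert \Phi_{\lambda',\lambda''}^{\lambda'''}\mathcal F_1^{-1}g\Vert^2_{L^2(\R_+^2)_{\lambda',\lambda''}}$ via Proposition \ref{prop:PhiPsi} and Fact \ref{fact:Laplace}, then applies Proposition \ref{prop:phinorm} and Fact \ref{fact:Laplace} again to obtain the factor $\frac{b(\lambda')b(\lambda'')}{b(\lambda''')}c_\ell(\lambda',\lambda'')$, which equals $c_\ell(\lambda',\lambda'')/r_\ell(\lambda',\lambda'')$ by \eqref{eqn:b-ell}. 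Your bookkeeping of the Plancherel constants, including the unimodularity of $i^\ell$ and the factorwise scaling of $\mathcal F_2$, matches the paper's argument step for step.
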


\begin{proof}
By Proposition \ref{prop:PhiPsi} and Fact \ref{fact:Laplace}, we have
$$
\Vert \Psi_{\lambda',\lambda''}^{\lambda'''}g\Vert^2_{\mathcal H^2(\Pi)_{\lambda'}\widehat\otimes \mathcal H^2(\Pi)_{\lambda''}}=
b(\lambda')b(\lambda'')\Vert
\Phi_{\lambda',\lambda''}^{\lambda'''}\mathcal F_1^{-1}g\Vert^2_{L^2(\R_+^2)_{\lambda',\lambda''}}.
$$

By Proposition \ref{prop:phinorm} and Fact \ref{fact:Laplace} again, the right-hand side of the above equality amounts to
$$
\frac{b(\lambda')b(\lambda'')}{b(\lambda''')}c_\ell(\lambda', \lambda'')
\Vert g\Vert^2_{\mathcal H^2(\Pi)_{\lambda'''}}.
$$
Now the proposition follows from the definition \eqref{eqn:b-ell} of $r_\ell(\lambda',\lambda'')$.
\end{proof}

\subsubsection{Norm of the Rankin--Cohen bidifferential operators}

 We find the operator norm of $\mathcal{RC}_{\lambda',\lambda''}^{\lambda'''}$
as below.
\begin{prop}\label{prop:RCopnorm}
Suppose that $\lambda',\lambda''> 1$ and  $\lambda'''=\lambda'+\lambda''+2\ell \,(\ell\in\N)$.
Then the operator norm of the Rankin--Cohen bidifferential operator 
$\mathcal{RC}_{\lambda',\lambda''}^{\lambda'''}$ seen as a map from the weighted Bergman
space $\mathcal H^2(\Pi)_{\lambda'}\widehat\otimes \mathcal H^2(\Pi)_{\lambda''}$ to $\mathcal H^2(\Pi)_{\lambda'''}$ is given by
$$
\Vert \mathcal{RC}_{\lambda',\lambda''}^{\lambda'''} \Vert_{\mathrm{op}}^2=r_\ell(\lambda',\lambda'')
c_\ell(\lambda',\lambda'').
$$
\end{prop}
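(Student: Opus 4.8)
The plan is to reduce the computation of $\|\mathcal{RC}_{\lambda',\lambda''}^{\lambda'''}\|_{\mathrm{op}}$ entirely to two ingredients already in hand: the identification of the adjoint in Proposition \ref{prop:PsiRC} and the exact norm of the holographic operator $\Psi_{\lambda',\lambda''}^{\lambda'''}$ in Proposition \ref{prop:psinorm}. The only additional input is the elementary Hilbert-space identity $\|A\|_{\mathrm{op}}=\|A^*\|_{\mathrm{op}}$, valid for any bounded operator $A$ between Hilbert spaces; here $\mathcal{RC}_{\lambda',\lambda''}^{\lambda'''}$ is bounded by \cite[Thm.~5.13]{KP16a}, so passing to the adjoint is legitimate.

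First I would invoke Proposition \ref{prop:PsiRC} to write the adjoint as a scalar multiple of the holographic operator,
$$
\left(\mathcal{RC}_{\lambda',\lambda''}^{\lambda'''}\right)^*=r_\ell(\lambda',\lambda'')\,\Psi_{\lambda',\lambda''}^{\lambda'''},
$$
so that $\|\mathcal{RC}_{\lambda',\lambda''}^{\lambda'''}\|_{\mathrm{op}}^2=\|(\mathcal{RC}_{\lambda',\lambda''}^{\lambda'''})^*\|_{\mathrm{op}}^2=r_\ell(\lambda',\lambda'')^2\,\|\Psi_{\lambda',\lambda''}^{\lambda'''}\|_{\mathrm{op}}^2$, the scalar $r_\ell(\lambda',\lambda'')$ being real and positive for $\lambda',\lambda''>1$ by \eqref{eqn:b-ell}. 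Next, Proposition \ref{prop:psinorm} shows that $\Psi_{\lambda',\lambda''}^{\lambda'''}$ is a scalar multiple of an isometry: for \emph{every} $g\in\mathcal H^2(\Pi)_{\lambda'''}$ one has the exact identity $\|\Psi_{\lambda',\lambda''}^{\lambda'''}g\|^2=\frac{c_\ell(\lambda',\lambda'')}{r_\ell(\lambda',\lambda'')}\|g\|^2$. Because this is an equality for all $g$ rather than a mere bound, the operator norm is read off at once as $\|\Psi_{\lambda',\lambda''}^{\lambda'''}\|_{\mathrm{op}}^2=c_\ell(\lambda',\lambda'')/r_\ell(\lambda',\lambda'')$. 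Substituting gives
$$
\|\mathcal{RC}_{\lambda',\lambda''}^{\lambda'''}\|_{\mathrm{op}}^2=r_\ell(\lambda',\lambda'')^2\cdot\frac{c_\ell(\lambda',\lambda'')}{r_\ell(\lambda',\lambda'')}=r_\ell(\lambda',\lambda'')\,c_\ell(\lambda',\lambda''),
$$
which is the asserted formula.

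Since all the analytic content has already been absorbed into Propositions \ref{prop:PsiRC} and \ref{prop:psinorm}, no genuine obstacle remains at this stage: the argument is a short bookkeeping of constants. The only point worth checking carefully is that the scalars are assembled with the correct powers—$r_\ell$ enters squared from the adjoint relation but is then partially cancelled by the $1/r_\ell$ in the norm of $\Psi_{\lambda',\lambda''}^{\lambda'''}$, leaving the symmetric product $r_\ell c_\ell$. The real difficulty of the result lies upstream, in establishing the proportionality constant $r_\ell$ between $(\mathcal{RC}_{\lambda',\lambda''}^{\lambda'''})^*$ and $\Psi_{\lambda',\lambda''}^{\lambda'''}$ via the F-method and the Fourier--Laplace transform, and in evaluating the Jacobi-polynomial $L^2$-norm that produces $c_\ell$; both of these have already been carried out in the preceding sections.
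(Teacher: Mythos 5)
Your proof is correct and is essentially the paper's argument in lightly repackaged form: the paper transfers the norm to the $L^2$-model via Lemma \ref{lem:opFourier} (1) and then invokes Propositions \ref{lem:2} and \ref{prop:phinorm}, whereas you stay in the holomorphic model and quote Propositions \ref{prop:PsiRC} and \ref{prop:psinorm}, which encapsulate exactly that same transfer. There is no circularity, since both cited propositions are established independently of Proposition \ref{prop:RCopnorm} (Proposition \ref{prop:PsiRC} from \eqref{eqn:3holograph} and Lemma \ref{lem:opFourier}, and Proposition \ref{prop:psinorm} from Propositions \ref{prop:PhiPsi} and \ref{prop:phinorm} together with Fact \ref{fact:Laplace}), and your bookkeeping of the constants --- including the identity $\Vert A\Vert_{\mathrm{op}}=\Vert A^{*}\Vert_{\mathrm{op}}$, the positivity of $r_\ell(\lambda',\lambda'')$ for $\lambda',\lambda''>1$, and the exact isometry (rather than mere bound) in Proposition \ref{prop:psinorm} that pins down $\Vert \Psi_{\lambda',\lambda''}^{\lambda'''}\Vert_{\mathrm{op}}^{2}$ --- is exact.
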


\begin{proof}
By Lemma \ref{lem:opFourier} (1), we have
$$
\Vert \mathcal{RC}_{\lambda',\lambda''}^{\lambda'''} \Vert_{\mathrm{op}}^2=r_\ell(\lambda',\lambda'')
\Vert \widehat{\mathcal{RC}}_{\lambda',\lambda''}^{\lambda'''} \Vert_{\mathrm{op}}^2,
$$
which equals $r_\ell(\lambda',\lambda'')c_\ell(\lambda',\lambda'')$ by Propositions \ref{lem:2} and
\ref{prop:phinorm}.
\end{proof}

\subsubsection{Proof of Theorem \ref{thm:PPTT4RCB}}\label{subsec:pfthmPPTT4RCB}

Let us complete the proof of the Parseval--Plancherel type theorem for the Rankin--Cohen transform
$\mathcal{RC}_{\lambda',\lambda''}$
 and the holographic transform $\Psi_{\lambda',\lambda''}$.

\begin{proof}[Proof of Theorem \ref{thm:PPTT4RCB}]

(1) We apply Lemma \ref{lem:Hilbertdeco} with
$R_\ell={\mathcal{RC}}_{\lambda',\lambda''}^{\lambda'+\lambda''+2\ell}$.
By Proposition \ref{prop:RCopnorm}, we have
$$
\Vert R_\ell\Vert^2_{\mathrm{op}}=r_\ell({\lambda',\lambda''})c_\ell({\lambda',\lambda''}),
$$
hence the first statement follows from Lemma \ref{lem:Hilbertdeco} (1).

(2) We apply Lemma \ref{lem:Hilbertdeco} with
$R_\ell=\frac1{r_\ell({\lambda',\lambda''})}{\mathcal{RC}}_{\lambda',\lambda''}^{\lambda'+\lambda''+2\ell}$.
By Proposition \ref{prop:RCopnorm}, we have
$$
\Vert R_\ell\Vert^2_{\mathrm{op}}=\frac{c_\ell({\lambda',\lambda''})}{r_\ell({\lambda',\lambda''})}.
$$
Since $\Psi_{\lambda',\lambda''}^{\lambda'+\lambda''+2\ell}=R_\ell^*$ (see Proposition \ref{prop:PsiRC}), we get the second statement by 
Lemma \ref{lem:Hilbertdeco} (2).
\end{proof}


\subsubsection{Proof of Theorem \ref{thm:RCinv}}\label{subsec:pfthmRCinv}

We are ready to complete the proof of Theorem \ref{thm:RCinv}.
\begin{proof}[Proof of Theorem \ref{thm:RCinv}]

By Lemma \ref{lem:Hilbertdeco} (1)
applied to $R_\ell=\mathcal{RC}_{\lambda',\lambda''}^{\lambda'+\lambda''+2\ell}$, the above proof of Theorem \ref{thm:PPTT4RCB} (1)
implies
$$
f=\sum_{\ell=0}^\infty\frac1{r_\ell({\lambda',\lambda''})c_\ell({\lambda',\lambda''})} R_\ell^*R_\ell f
$$
for any $f\in \mathcal H^2(\Pi)_{\lambda'}\widehat\otimes \mathcal H^2(\Pi)_{\lambda''}$. Now Theorem \ref{thm:RCinv}
follows from the equation $R_\ell^*=r_\ell(\lambda',\lambda'')\Psi_{\lambda',\lambda''}^{\lambda'+\lambda''+2\ell}$ (see Proposition \ref{prop:PsiRC}).
\end{proof}


\subsubsection{Proof of Theorem \ref{thm:FRCinv}}\label{subsec:pfthmFRCinv}

Finally, we show Theorem \ref{thm:FRCinv}.
\begin{proof}[Proof of Theorem \ref{thm:FRCinv}]
We apply Lemma \ref{lem:Hilbertdeco} (1) with $R_\ell=\widehat{\mathcal{RC}}_{\lambda',\lambda''}^{\lambda'+\lambda''+2\ell}$.
By Lemma \ref{lem:opFourier} and Proposition \ref{prop:RCopnorm}, we obtain
$$
\Vert R_\ell\Vert^2_{\mathrm{op}}=\frac1{r_\ell(\lambda',\lambda'')}\left\Vert
\mathcal{RC}_{\lambda',\lambda''}^{\lambda'+\lambda''+2\ell}
\right\Vert^2_{\mathrm{op}}= c_\ell(\lambda',\lambda'').
$$
Since
$R_\ell^*=i^\ell\; \Phi_{\lambda',\lambda''}^{\lambda'+\lambda''+2\ell}$ by Proposition \ref{lem:2},
Theorem \ref{thm:FRCinv} follows from Lemma \ref{lem:Hilbertdeco} (1).
\end{proof}


\subsection{Some applications of symmetry breaking and holographic transforms}\label{sec:applications}

We point out two applications of the symmetry breaking and holographic transforms introduced in the previous 
section. First, we provide explicit description of the minimal $K$-types of the ${SL(2,\R)}{\;}\widetilde{}$-module 
$(\pi_\lambda,\mathcal O(\Pi))$ in both holomorphic
model $\pi_{\lambda'}\widehat\otimes\pi_{\lambda''}$
 and $L^2$-model $L^2(\R_+^2)_{\lambda',\lambda''}$ (see Propositions 
\ref{cor:Ktypeholom} and \ref{cor:KtypeL2}). Second, we find in Theorem \ref{thm:180297} an integral expression of any eigenfunction 
for a specific second-order holomorphic partial differential operator arising from the diagonal action of the Casimir in the enveloping algebra.

\subsubsection{Minimal $K$-types}

The minimal $K$-type of the ${SL(2,\R)}{\;}\widetilde{}$-module $(\pi_\lambda,\mathcal O(\Pi))$ 
is given by $\C (\zeta+i)^{-\lambda}$, see \eqref{eqn:piKtype} for the whole set of $K$-types.
 As an application of the integral formula \eqref{eqn:psi} we find an explicit expression for the minimal $K$-types of submodules in the tensor product $\pi_{\lambda'}\widehat\otimes\pi_{\lambda''}$ as follows.

\begin{prop}\label{cor:Ktypeholom}
Suppose ${\mathrm{Re}\;\lambda',\mathrm{Re}\;\lambda''}>0$ and $\lambda'''=\lambda'+\lambda''+2\ell\,(\ell\in\N)$. Then the holomorphic function
$$
(\zeta_1,\zeta_2)\mapsto (\zeta_1-\zeta_2)^\ell(\zeta_1+i)^{-\lambda'-\ell}(\zeta_2+i)^{-\lambda''-\ell}
$$
is a minimal $K$-type in the submodule $\Psi_{\lambda',\lambda''}^{\lambda'''}(\mathcal O(\Pi))$ in $\pi_{\lambda'} \widehat\otimes\pi_{\lambda''}$.
\end{prop}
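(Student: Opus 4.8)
The plan is to apply the holographic operator $\Psi_{\lambda',\lambda''}^{\lambda'''}$ directly to the minimal $K$-type of $\pi_{\lambda'''}$ and to evaluate the resulting line integral \eqref{eqn:psi} in closed form. By Theorem \ref{thm:psi}~(1), the map $\Psi_{\lambda',\lambda''}^{\lambda'''}\colon\mathcal O(\Pi)\to\mathcal O(\Pi\times\Pi)$ is an intertwining operator onto the submodule $\Psi_{\lambda',\lambda''}^{\lambda'''}(\mathcal O(\Pi))\simeq\pi_{\lambda'''}$, and in particular it is $K$-equivariant. Since the minimal $K$-type of $(\pi_{\lambda'''},\mathcal O(\Pi))$ is spanned by $g(\zeta):=(\zeta+i)^{-\lambda'''}$ with $\lambda'''=\lambda'+\lambda''+2\ell$, and this $K$-type is one-dimensional, the image $\Psi_{\lambda',\lambda''}^{\lambda'''}g$ automatically spans the minimal $K$-type of the submodule as soon as it is shown to be nonzero. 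Thus the proposition reduces to an explicit computation of $\Psi_{\lambda',\lambda''}^{\lambda'''}g$.

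For the computation I would substitute $g(\zeta)=(\zeta+i)^{-\lambda'''}$ into \eqref{eqn:psi} and set $a:=\zeta_1+i$, $b:=\zeta_2+i$. A short manipulation gives
$$
\frac{(\zeta_2-\zeta_1)v+(\zeta_1+\zeta_2)}{2}+i=\frac{a(1-v)+b(1+v)}{2},
$$
so that after the change of variables $w:=\tfrac12(1+v)\in(0,1)$, which absorbs all the surplus powers of $2$, one is left with
$$
\bigl(\Psi_{\lambda',\lambda''}^{\lambda'''}g\bigr)(\zeta_1,\zeta_2)=\frac{(\zeta_1-\zeta_2)^\ell}{\ell!}\int_0^1\bigl(a(1-w)+bw\bigr)^{-\lambda'''}(1-w)^{\lambda'+\ell-1}w^{\lambda''+\ell-1}\,dw.
$$
Both $a$ and $b$ lie in the upper half-plane, so the integrand carries no branch ambiguity and the integral converges under the hypothesis $\operatorname{Re}\lambda',\operatorname{Re}\lambda''>0$, which is exactly \eqref{eqn:HOconverge} for $\lambda'''=\lambda'+\lambda''+2\ell$.

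The key step is the Euler-type evaluation
$$
\int_0^1 w^{c-1}(1-w)^{d-1}\bigl(a(1-w)+bw\bigr)^{-(c+d)}\,dw=\frac{\Gamma(c)\Gamma(d)}{\Gamma(c+d)}\,a^{-d}b^{-c}\qquad(\operatorname{Re} c,\operatorname{Re} d>0),
$$
which one verifies by the substitution $t=\tfrac{bw}{a(1-w)+bw}$ reducing it to the Beta integral. Applied with $c=\lambda''+\ell$ and $d=\lambda'+\ell$, so that $c+d=\lambda'''$, this yields
$$
\bigl(\Psi_{\lambda',\lambda''}^{\lambda'''}g\bigr)(\zeta_1,\zeta_2)=\frac{\Gamma(\lambda'+\ell)\Gamma(\lambda''+\ell)}{\ell!\,\Gamma(\lambda''')}\,(\zeta_1-\zeta_2)^\ell(\zeta_1+i)^{-\lambda'-\ell}(\zeta_2+i)^{-\lambda''-\ell}.
$$

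Finally, since $\operatorname{Re}\lambda',\operatorname{Re}\lambda''>0$ and $\ell\in\N$ force $\operatorname{Re}\lambda'''>0$, the scalar $\frac{\Gamma(\lambda'+\ell)\Gamma(\lambda''+\ell)}{\ell!\,\Gamma(\lambda''')}$ is finite and nonzero (the Gamma function has no zeros), so $\Psi_{\lambda',\lambda''}^{\lambda'''}g$ is a nonzero multiple of the asserted function; by the reduction in the first paragraph this proves the claim. The only genuine obstacle is the Euler integral together with the bookkeeping of the powers of $2$ and of the branches of the power functions; I expect no representation-theoretic difficulty beyond the $K$-equivariance of $\Psi_{\lambda',\lambda''}^{\lambda'''}$ already furnished by Theorem \ref{thm:psi}.
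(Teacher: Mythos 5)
Your proposal is correct and follows essentially the same route as the paper: both apply $\Psi_{\lambda',\lambda''}^{\lambda'''}$ to the minimal $K$-type vector $g(\zeta)=(\zeta+i)^{-\lambda'''}$, perform the change of variables $t=\frac12(1+v)$, and arrive at the identical closed formula with constant $\frac{1}{\ell!}B(\lambda'+\ell,\lambda''+\ell)$. The only cosmetic difference is that you evaluate the integral via the symmetric Beta-type identity $\int_0^1 w^{c-1}(1-w)^{d-1}\bigl(a(1-w)+bw\bigr)^{-(c+d)}dw=B(c,d)\,a^{-d}b^{-c}$ directly, whereas the paper factors out $(\zeta_1+i)^{-\lambda'''}$ and invokes the Euler integral representation together with ${}_2F_1(a,b;b;z)=(1-z)^{-a}$ --- an equivalent computation.
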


\begin{proof}
We set $g(\zeta):= (\zeta+i)^{-\lambda'''}$. By the change of variables $t=\frac12(1+v)$, the definition \eqref{eqn:psi} shows
$$
\left(\Psi_{\lambda',\lambda''}^{\lambda'''} g\right) (\zeta_1,\zeta_2)
=\frac1{\ell!} (\zeta_1-\zeta_2)^\ell (\zeta_1+i)^{-\lambda'''}
\int_0^1t^{a-1}(1-t)^{c-a-1}(1-tz)^{-b}dt,
$$
where $a=\lambda''+\ell, b=c=\lambda'''$, and $z=\frac{\zeta_1-\zeta_2}{\zeta_1+i}$.

By the Euler integral  representation of the hypergeometric function ${}_2F_1$, and by the fact that 
${}_2F_1(a,b;b;z)=(1-z)^{-a}$, we obtain
\begin{equation}\label{eqn:181223}
\left(\Psi_{\lambda',\lambda''}^{\lambda'''} g\right) (\zeta_1,\zeta_2)
=\frac1{\ell!}
B(\lambda'+\ell,\lambda''+\ell) (\zeta_1-\zeta_2)^\ell
(\zeta_1+i)^{-\lambda'-\ell} (\zeta_2+i)^{-\lambda''-\ell},
\end{equation}
where $B(\cdot,\cdot)$ stands for the Euler beta function.
\end{proof}

\begin{prop}\label{cor:KtypeL2}
Suppose $\lambda',\lambda''>1$ and $\ell\in\N$. Then 
the function
$$
(x,y)\mapsto (x^{\lambda'-1}e^{-x})(y^{\lambda''-1}e^{-y})(x+y)^\ell P_\ell^{\lambda'-1,\lambda''-1}\left(\frac{y-x}{x+y}\right)
$$
belongs to $L^2(\R_+^2)_{\lambda',\lambda''}$, and gives a minimal $K$-type in the
image of the holographic operator $\Phi_{\lambda',\lambda''}^{\lambda'+\lambda''+2\ell}$.
\end{prop}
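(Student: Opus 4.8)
The plan is to recognize the displayed function as the image under $\Phi_{\lambda',\lambda''}^{\lambda'''}$, with $\lambda''':=\lambda'+\lambda''+2\ell$, of the minimal $K$-type of the $L^2$-model $L^2(\R_+)_{\lambda'''}$, in exact parallel with the holomorphic-model computation of Proposition \ref{cor:Ktypeholom}. First I would identify that minimal $K$-type. Recall that the minimal $K$-type of $(\pi_{\lambda'''},\mathcal O(\Pi))$ is spanned by $(\zeta+i)^{-\lambda'''}$. Applying the Fourier--Laplace transform of Fact \ref{fact:Laplace} to $h(z):=z^{\lambda'''-1}e^{-z}$ and using the Gamma integral $\int_0^\infty z^{\lambda'''-1}e^{-(1-i\zeta)z}\,dz=\Gamma(\lambda''')(1-i\zeta)^{-\lambda'''}$ (convergent since $\operatorname{Re}(1-i\zeta)=1+\operatorname{Im}\zeta>0$ on $\Pi$) together with the identity $1-i\zeta=-i(\zeta+i)$, one obtains $\mathcal F_1 h=\Gamma(\lambda''')(-i)^{-\lambda'''}(\zeta+i)^{-\lambda'''}$, a nonzero scalar multiple of the holomorphic minimal $K$-type. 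Since $\mathcal F_1$ is unitary up to scaling, $h$ spans the minimal $K$-type of $L^2(\R_+)_{\lambda'''}$; and a one-line Gamma integral shows $h\in L^2(\R_+)_{\lambda'''}$.

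Next I would substitute $h$ into Definition \ref{def:Phi}. Since $h(x+y)=(x+y)^{\lambda'+\lambda''+2\ell-1}e^{-(x+y)}$, the prefactor $(x+y)^{-(\lambda'+\lambda''+\ell-1)}$ combines with it, the powers of $(x+y)$ adding up to $\ell$, so that
\[
\bigl(\Phi_{\lambda',\lambda''}^{\lambda'''}h\bigr)(x,y)
=(x^{\lambda'-1}e^{-x})(y^{\lambda''-1}e^{-y})(x+y)^\ell P_\ell^{\lambda'-1,\lambda''-1}\left(\frac{y-x}{x+y}\right),
\]
which is precisely the function in the statement. Membership in $L^2(\R_+^2)_{\lambda',\lambda''}$ is then immediate from the continuity of $\Phi_{\lambda',\lambda''}^{\lambda'''}$ established in Theorem \ref{thm:phi}, since $h\in L^2(\R_+)_{\lambda'''}$.

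Finally, since $\Phi_{\lambda',\lambda''}^{\lambda'''}$ is a nonzero ${SL(2,\R)}{\;}\widetilde{}$-intertwining operator (Theorem \ref{thm:phi}, or the unitarity up to scaling in Theorem \ref{thm:CL2SL2}) out of the irreducible module $L^2(\R_+)_{\lambda'''}$, it is injective, and its image is an irreducible submodule isomorphic to $\pi_{\lambda'''}$. Being $K$-equivariant, it carries the one-dimensional minimal $K$-type $\C\,h$ of the domain onto the minimal $K$-type of the image; hence $\Phi_{\lambda',\lambda''}^{\lambda'''}h$ spans the minimal $K$-type of $\Phi_{\lambda',\lambda''}^{\lambda'''}(L^2(\R_+)_{\lambda'''})$, as claimed. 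The only step requiring genuine care is the identification of $z^{\lambda'''-1}e^{-z}$ as the $L^2$-model minimal $K$-type via the Fourier--Laplace transform; the rest is a single substitution, so I do not anticipate a serious obstacle.
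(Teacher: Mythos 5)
Your proposal is correct and follows essentially the same route as the paper: take $h(z)=z^{\lambda'''-1}e^{-z}$ as the minimal $K$-type vector in $L^2(\R_+)_{\lambda'''}$, transport it by the intertwining operator $\Phi_{\lambda',\lambda''}^{\lambda'''}$ into the (irreducible) image, and read off the displayed function from the formula \eqref{eqn:defPhi}. The only difference is that you verify, via the Fourier--Laplace transform $\mathcal F_1 h=\Gamma(\lambda''')(-i)^{-\lambda'''}(\zeta+i)^{-\lambda'''}$, the identification of $h$ with the minimal $K$-type, a fact the paper simply asserts as known --- a welcome but inessential addition.
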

\begin{proof}
Since $z^{\lambda'''-1}e^{-z}$ belongs to the minimal $K$-type in the irreducible re\-presentation $L^2(\R_+)_{\lambda'''}$, so does
$\Phi_{\lambda',\lambda''}^{\lambda'''}(z^{\lambda'''-1}e^{-z})$ in the irreducible
representation $\Phi_{\lambda',\lambda''}^{\lambda'''}(L^2(\R_+^2)_{\lambda',\lambda''})$. Then
  the formula \eqref{eqn:defPhi}
of the holographic operator $\Phi_{\lambda',\lambda''}^{\lambda'''}$ with
$\lambda'''=\lambda'+\lambda''+2\ell$ shows Proposition \ref{cor:KtypeL2}.
\end{proof}


\subsubsection{An application of the integral formula}

Fix $\lambda',\lambda''\in\C$ and consider eigenfunctions of the following holomorphic differential operator on $\Pi\times\Pi$:
\begin{equation}\label{eqn:diagCa}
P_{\lambda',\lambda''}:=(\zeta_1-\zeta_2)^2\frac{\partial^2}{\partial\zeta_1\partial\zeta_2}+
(\lambda''\zeta_2+\lambda'-\lambda'')\zeta_1\frac{\partial}{\partial\zeta_1}+
(\lambda'\zeta_1-\lambda'+\lambda'')\zeta_2\frac{\partial}{\partial\zeta_2},
\end{equation}
and define for $\mu\in\C$
$$
\mathcal{S}o\ell(\Pi\times\Pi, \mathcal M_{\lambda',\lambda'',\;\mu}):=\left\{f\in\mathcal O(\Pi\times\Pi)\colon P_{\lambda',\lambda''}f=\mu f\right\}.
$$
The integral transform \eqref{eqn:psi} constructs all eigenfunctions of $P_{\lambda',\lambda''}$ as follows.

\begin{thm}\label{thm:180297}
Suppose $\lambda',\lambda''\in\C$. Then, the following hold.
\begin{itemize}
\item[(1)] $\mathcal{S}o\ell(\Pi\times\Pi, \mathcal M_{\lambda',\lambda'',\;\mu})\neq\{0\}$ if and only if $\mu$ is of the form
$$
\mu=-\ell(\lambda'+\lambda''+\ell-1) \quad\mathrm{for}\,\,\mathrm{some}\,\,\ell\in\N.
$$
\item[(2)] For any $\lambda', \lambda''\in\C$ and $\ell\in\N$,
$$
(\zeta_1-\zeta_2)^\ell (\zeta_1+i)^{-\lambda'-\ell}(\zeta_2+i)^{-\lambda''-\ell}
\in
\mathcal{S}o\ell (\Pi\times\Pi, \mathcal M_{\lambda',\lambda'',-\ell(\lambda'+\lambda''+\ell-1)}).
$$

\item[(3)] If ${\mathrm{Re}\;\lambda',\mathrm{Re}\;\lambda''}>0$ and $\ell\in\N$, then
 the integral transform \eqref{eqn:psi} gives a bijection
$$
\Psi_{\lambda',\lambda''}^{\lambda'+\lambda''+2\ell}
\colon
\mathcal O(\Pi)
\stackrel
{\sim}{\longrightarrow}
\mathcal{S}o\ell(\Pi\times\Pi, \mathcal M_{\lambda',\lambda'',-\ell(\lambda'+\lambda''+\ell-1)}).
$$
The inverse map is proportional to the Rankin--Cohen bidifferential operator, namely,
$$
\mathcal{RC}_{\lambda',\lambda''}^{\lambda'+\lambda''+2\ell}
\circ  \Psi_{\lambda',\lambda''}^{\lambda'+\lambda''+2\ell}= c_\ell(\lambda',\lambda'')\mathrm{id}\quad
\mathrm{on}\,\, \mathcal O(\Pi),
$$
where $c_\ell(\lambda',\lambda'')$ is defined as in \eqref{eqn:v-ell}.
\end{itemize}
\end{thm}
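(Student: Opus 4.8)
The backbone of the argument is the observation that $P_{\lambda',\lambda''}$ is, up to an additive constant, the holomorphic differential operator on $\mathcal O(\Pi\times\Pi)$ induced by the Casimir element of $\mathfrak{sl}(2,\R)$ acting diagonally, normalized so as to annihilate the bottom summand $\ell=0$. Since the Casimir is central, it acts by a scalar on each irreducible constituent $\pi_{\lambda'+\lambda''+2\ell}$ of the branching law \eqref{eqn:absbra}, and that scalar is the eigenvalue $\mu_\ell$. Because $\Psi_{\lambda',\lambda''}^{\lambda'''}$ intertwines $\pi_{\lambda'''}$ with $\pi_{\lambda'}\widehat\otimes\pi_{\lambda''}$ (Theorem \ref{thm:psi} (1)) and the Casimir acts by a scalar on the irreducible module $(\pi_{\lambda'''},\mathcal O(\Pi))$, one gets $P_{\lambda',\lambda''}\circ\Psi_{\lambda',\lambda''}^{\lambda'''}=\mu_\ell\,\Psi_{\lambda',\lambda''}^{\lambda'''}$. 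Thus the whole statement will follow once $\mu_\ell$ is computed and the image of $\Psi_{\lambda',\lambda''}^{\lambda'''}$ is shown to exhaust the eigenspace.

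To pin down $\mu_\ell$ and prove (2), I would apply $P_{\lambda',\lambda''}$ directly to $h_\ell:=(\zeta_1-\zeta_2)^\ell(\zeta_1+i)^{-\lambda'-\ell}(\zeta_2+i)^{-\lambda''-\ell}$. For $\mathrm{Re}\,\lambda',\mathrm{Re}\,\lambda''>0$ this function is, up to the nonzero factor $\frac1{\ell!}B(\lambda'+\ell,\lambda''+\ell)$, the image $\Psi_{\lambda',\lambda''}^{\lambda'''}\bigl((\zeta+i)^{-\lambda'''}\bigr)$ of the minimal $K$-type of $\pi_{\lambda'''}$ by \eqref{eqn:181223}, so by the backbone it is an eigenfunction and the eigenvalue is read off on this single vector as $\mu_\ell=-\ell(\lambda'+\lambda''+\ell-1)$. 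Since $P_{\lambda',\lambda''}h_\ell+\ell(\lambda'+\lambda''+\ell-1)h_\ell$ depends polynomially on $(\lambda',\lambda'')$ for fixed $\ell$, the identity continues to all $(\lambda',\lambda'')\in\C^2$, giving (2). Assertion (2) exhibits a nonzero element of $\mathcal{S}o\ell(\Pi\times\Pi,\mathcal M_{\lambda',\lambda'',\mu_\ell})$ for each $\mu=\mu_\ell$, which is the \lq\lq if\rq\rq\ direction of (1).

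For (3), the map $\Psi_{\lambda',\lambda''}^{\lambda'+\lambda''+2\ell}$ lands in $\mathcal{S}o\ell(\Pi\times\Pi,\mathcal M_{\lambda',\lambda'',\mu_\ell})$ by the backbone, and it is injective because of the composition identity
\[
\mathcal{RC}_{\lambda',\lambda''}^{\lambda'''}\circ\Psi_{\lambda',\lambda''}^{\lambda'''}=c_\ell(\lambda',\lambda'')\,\mathrm{id}_{\mathcal O(\Pi)},
\]
with $c_\ell(\lambda',\lambda'')\neq0$ for $\mathrm{Re}\,\lambda',\mathrm{Re}\,\lambda''>0$ (Section \ref{sec:constants}); this also yields the displayed inverse formula in (3). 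To establish the identity I would first prove it for $\lambda',\lambda''>1$, where $\mathcal{RC}_{\lambda',\lambda''}^{\lambda'''}$ and $\Psi_{\lambda',\lambda''}^{\lambda'''}=\frac1{r_\ell}(\mathcal{RC}_{\lambda',\lambda''}^{\lambda'''})^{\ast}$ (Proposition \ref{prop:PsiRC}) are bounded Hilbert space operators: then $\mathcal{RC}\circ\Psi=\frac1{r_\ell}\mathcal{RC}\,\mathcal{RC}^{\ast}=\frac1{r_\ell}\Vert\mathcal{RC}_{\lambda',\lambda''}^{\lambda'''}\Vert_{\mathrm{op}}^2\,\mathrm{id}=c_\ell\,\mathrm{id}$ by Lemma \ref{lem:Hilbertdeco} (2) (see \eqref{eqn:RR2}) and Proposition \ref{prop:RCopnorm}. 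Both sides act on the fixed space $\mathcal O(\Pi)$ and depend holomorphically on $(\lambda',\lambda'')$, so the identity continues analytically to $\mathrm{Re}\,\lambda',\mathrm{Re}\,\lambda''>0$.

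The remaining and \textbf{main point} is the surjectivity of $\Psi_{\lambda',\lambda''}^{\lambda'''}$ onto the eigenspace, which simultaneously yields the \lq\lq only if\rq\rq\ direction of (1). Writing $E:=\mathcal{S}o\ell(\Pi\times\Pi,\mathcal M_{\lambda',\lambda'',\mu_\ell})$, I would decompose any $f\in E$ into its weight components $f=\sum_n f_n$ under the diagonal action of ${SO(2)}{\;}\widetilde{}$; the weights are discrete, so the expansion converges in $\mathcal O(\Pi\times\Pi)$, each $f_n$ is a single weight vector, hence $K$-finite, and because $P_{\lambda',\lambda''}$ commutes with the diagonal action each $f_n$ again lies in $E$. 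Thus each $f_n$ is a $K$-finite Casimir eigenvector, hence belongs to the Harish--Chandra module of $\pi_{\lambda'}\widehat\otimes\pi_{\lambda''}$; at the $K$-finite level this module is $\bigoplus_\ell\pi_{\lambda'+\lambda''+2\ell}$ (the algebraic form of \eqref{eqn:absbra}), and the eigenvalue $\mu_\ell$ singles out the summand $\pi_{\lambda'+\lambda''+2\ell}$, whose $K$-finite part coincides with the $K$-finite part of $\Psi_{\lambda',\lambda''}^{\lambda'''}(\mathcal O(\Pi))$. Since $\mathcal{RC}\circ\Psi=c_\ell\,\mathrm{id}$ exhibits a continuous left inverse, $\Psi_{\lambda',\lambda''}^{\lambda'''}$ is a topological embedding and its image is closed in the Montel space $\mathcal O(\Pi\times\Pi)$; as $f=\sum_n f_n$ converges there with every $f_n$ in the image, we conclude $f\in\Psi_{\lambda',\lambda''}^{\lambda'''}(\mathcal O(\Pi))$. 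The delicate part is exactly this control of the non-$K$-finite vectors: the argument hinges on each weight component being genuinely $K$-finite and on closedness of the image, so that convergence of the weight expansion transfers membership from the algebraic level to all of $E$. For $\mu\notin\{\mu_\ell\}$ the same reduction forces every $f_n=0$, whence $E=\{0\}$, completing the \lq\lq only if\rq\rq\ direction of (1).
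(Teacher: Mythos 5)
Your overall architecture is the same as the paper's (Section \ref{subsec:intsol}): you identify $P_{\lambda',\lambda''}$ with the diagonal Casimir action (the paper's Lemma \ref{lem:diagCasimir}), you prove (2) by reading off the Casimir eigenvalue on the vector \eqref{eqn:181223} and extending by polynomial dependence in $(\lambda',\lambda'')$, you obtain $\mathcal{RC}_{\lambda',\lambda''}^{\lambda'''}\circ\Psi_{\lambda',\lambda''}^{\lambda'''}=c_\ell(\lambda',\lambda'')\,\mathrm{id}$ exactly as the paper does (Proposition \ref{prop:PsiRC}, Lemma \ref{lem:Hilbertdeco} (2), Proposition \ref{prop:RCopnorm}, then analytic continuation), and your surjectivity argument is the paper's $K$-multiplicity comparison; your closed-image packaging of the last step (continuous left inverse $\Rightarrow$ closed range, then termwise membership of the weight expansion) is a harmless variant of the paper's argument that $\mathrm{Ker}\left(\mathcal{RC}_{\lambda',\lambda''}^{\lambda'''}\right)\cap\mathcal{S}o\ell$ has zero underlying $(\mathfrak g,K)$-module.

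There is, however, a genuine gap in your ``only if'' direction of (1). The theorem asserts (1) for \emph{all} $\lambda',\lambda''\in\C$, but your reduction sends each weight component $f_n$ into ``the algebraic form of \eqref{eqn:absbra}'', i.e.\ the decomposition of the $K$-finite part of $\pi_{\lambda'}\widehat\otimes\pi_{\lambda''}$ as $\bigoplus_\ell\pi_{\lambda'+\lambda''+2\ell}$. That decomposition is only available (via injectivity of the $\Psi_{\lambda',\lambda''}^{\lambda'+\lambda''+2\ell}$ and the weight count \eqref{eqn:Ktensor}) when $\mathrm{Re}\,\lambda',\mathrm{Re}\,\lambda''>0$: outside this range the integrals \eqref{eqn:psi} need not converge for all $\ell$, the constituents $\pi_{\lambda'''}$ can be reducible ($\lambda'''\in-\N$), the eigenvalues $\mu_\ell$ can collide (this happens when $\lambda'+\lambda''=1-\ell-\ell'$), and, as the paper remarks before Lemma \ref{lem:subtensor}, the tensor product need not split into irreducibles in the nonunitary case. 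The paper closes (1) for arbitrary complex parameters by an argument that avoids any decomposition (Lemma \ref{lem:subtensor}): by \eqref{eqn:Ktensor} every weight space is finite-dimensional and the weights lie in $-\lambda'-\lambda''-2\N$, bounded above; hence any subrepresentation on which the Casimir acts by a scalar is admissible and contains a highest weight vector of weight $-\lambda'-\lambda''-2\ell$, forcing the scalar to equal $\frac18(\lambda'+\lambda''+2\ell)(\lambda'+\lambda''+2\ell-2)$, i.e.\ $\mu=\mu_\ell$. Your own reduction can be repaired the same way: the $(\mathfrak g,K)$-module generated by a nonzero $f_n$ has weights bounded above with finite multiplicities, so a maximal-weight vector in it is a highest weight vector, which pins down $\mu$. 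Relatedly, even in the range $\mathrm{Re}\,\lambda',\mathrm{Re}\,\lambda''>0$ you should not cite \eqref{eqn:absbra} (stated only for $\lambda',\lambda''>1$) but carry out the multiplicity count: each weight $-\lambda'-\lambda''-2m$ occurs with multiplicity $m+1$ in $\mathcal O(\Pi\times\Pi)$ and exactly once in each of the $m+1$ images $\Psi_{\lambda',\lambda''}^{\lambda'+\lambda''+2\ell}(\mathcal O(\Pi))$, $0\le\ell\le m$, whose sum is direct because the $\mu_\ell$ are distinct there --- this is precisely what the paper's proof of (3) does.
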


\subsubsection{Quick review of representations of the universal covering group $SL(2,\R){\;}\widetilde{}$}

In order to prove Theorem \ref{thm:180297} we recall some properties of representations of ${SL(2,\R)}{\;}\widetilde{}$.
The universal covering group ${SO(2)}{\;}\widetilde{}$ of the maximal compact subgroup $K=SO(2)$ is isomorphic to $\R$. 
We parametrize its characters $\chi_{\lambda}$
 by $\lambda \in \C$ 
 as an extension of the following group homomorphisms
 originally defined for $\lambda\in\Z$:
$$
\R\simeq{SO(2)}{\;}\widetilde{}\To SO(2)\To\C^\times,\quad \theta\mapsto
\left(\begin{matrix}
\cos\theta&-\sin\theta\\\sin\theta&\cos\theta
\end{matrix}\right)
\mapsto e^{i\lambda\theta}.
$$
The representation $\pi_\lambda$ on $\mathcal O(\Pi)$ given in Section \ref{subsec:pilmd} is a highest weight module with highest weight $-\lambda$ because it has the following $K$-types:
\begin{equation}\label{eqn:piKtype}
-\lambda,-\lambda-2,-\lambda-4,\ldots.
\end{equation}
Choose the standard basis of the Lie algebra $\mathfrak{sl}(2,\R)$:
$$
H:=
\left(\begin{matrix} 1&0\\0&-1\end{matrix}\right),\quad
X:=\left(\begin{matrix} 0&1\\0&0\end{matrix}\right),\quad
Y:=\left(\begin{matrix} 0&0\\1&0\end{matrix}\right).
$$
Then the Casimir element $C$ is expressed as
$
C=\frac18( H^2+2XY+2YX).
$

The infinitesimal action $d\pi_\lambda$ is given by holomorphic differential operators:
\begin{equation}\label{eqn:sl2vect}
d\pi_\lambda(H)=-\lambda-2z\frac{d}{dz},\quad
d\pi_\lambda(X)=-\frac{d}{dz},\quad
d\pi_\lambda(Y)=\lambda z +z^2\frac{d^2}{dz^2},
\end{equation}
and the Casimir element $C$ acts on $(d\pi_\lambda,\mathcal O(\Pi))$ as $d\pi_\lambda(C)=\frac18\lambda(\lambda-2)\mathrm{id}$.
In general, 
 if $\pi$ is a highest weight module of ${SL(2,\R)}{\;}\widetilde{}\,\,$ with highest weight $\nu$ ($\nu\in\C$), then the 
Casimir element is given via $d\pi$ as the scalar
multiplication $\frac18\nu(\nu+2)\mathrm{id}$.

\subsubsection{Proof of Theorem \ref{thm:180297}}\label{subsec:intsol}
\begin{lem}\label{lem:diagCasimir}
The Casimir element $C$ of $\mathfrak{sl}(2,\R)$ acts
 on $\mathcal O(\Pi \times \Pi)$ via $d\pi_{\lambda'}\otimes d\pi_{\lambda''}$
 by
$$
\left(d\pi_{\lambda'}\otimes d\pi_{\lambda''}\right)(\mathrm{diag(C)})=-\frac12 P_{\lambda',\lambda''}
+\frac18(\lambda'+\lambda'')(\lambda'+\lambda''-2),
$$
where the holomorphic differential operator $P_{\lambda',\lambda''}$ is defined in \eqref{eqn:diagCa}.
\end{lem}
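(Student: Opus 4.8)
The plan is to prove Lemma~\ref{lem:diagCasimir} by a direct computation of the holomorphic differential operator $(d\pi_{\lambda'}\otimes d\pi_{\lambda''})(\mathrm{diag}(C))$ starting from the explicit infinitesimal action \eqref{eqn:sl2vect}. First I would make the meaning of $\mathrm{diag}(C)$ explicit: it is the image of the Casimir $C=\frac18(H^2+2XY+2YX)$ under the algebra homomorphism $U(\mathfrak g_\C)\to U(\mathfrak g_\C)\otimes U(\mathfrak g_\C)$ induced by the diagonal embedding $Z\mapsto Z\otimes 1+1\otimes Z$. Expanding each quadratic monomial and using that operators acting on the first variable $\zeta_1$ commute with those acting on the second variable $\zeta_2$ (so the cross terms carry no ordering ambiguity), I would reduce this to the clean decomposition
\[
\mathrm{diag}(C)=C\otimes 1+1\otimes C+\tfrac14\,H\otimes H+\tfrac12\bigl(X\otimes Y+Y\otimes X\bigr).
\]

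Next I would substitute the explicit operators. The two pure terms $C\otimes 1$ and $1\otimes C$ contribute the scalars $\tfrac18\lambda'(\lambda'-2)$ and $\tfrac18\lambda''(\lambda''-2)$, since $C$ acts on $(\pi_\lambda,\mathcal O(\Pi))$ by $\tfrac18\lambda(\lambda-2)\,\mathrm{id}$. For the three cross terms I plug the vector fields \eqref{eqn:sl2vect} into the appropriate slots and expand. The crucial simplification occurs in the second-order part: the $\partial_1\partial_2$-coefficient produced by $\tfrac14\,H\otimes H$ is $\zeta_1\zeta_2$, while the one produced by $\tfrac12(X\otimes Y+Y\otimes X)$ is $-\tfrac12(\zeta_1^2+\zeta_2^2)$, and these combine through the elementary identity
\[
\zeta_1\zeta_2-\tfrac12(\zeta_1^2+\zeta_2^2)=-\tfrac12(\zeta_1-\zeta_2)^2
\]
to reproduce exactly the principal symbol $-\tfrac12(\zeta_1-\zeta_2)^2\partial_1\partial_2$ of $-\tfrac12 P_{\lambda',\lambda''}$ in \eqref{eqn:diagCa}.

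Finally I would assemble the lower-order contributions. The first-order terms coming from $\tfrac14\,H\otimes H$ and from $\tfrac12(X\otimes Y+Y\otimes X)$ are collected, organized by separating the $\partial_1$- and $\partial_2$-coefficients, and matched against the first-order part of $-\tfrac12 P_{\lambda',\lambda''}$; the purely scalar contributions combine by completing the square,
\[
\tfrac18\lambda'(\lambda'-2)+\tfrac18\lambda''(\lambda''-2)+\tfrac14\lambda'\lambda''=\tfrac18(\lambda'+\lambda'')(\lambda'+\lambda''-2),
\]
which is precisely the additive constant in the statement. The computation is otherwise routine; its only conceptual content is the two algebraic identities displayed above. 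I expect the collection of the first-order coefficients to be the one place where sign and index errors can enter, so I would carry it out by treating the $\partial_1$- and $\partial_2$-parts separately and checking each coefficient against \eqref{eqn:diagCa} before concluding.
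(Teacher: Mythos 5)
Your proof is correct, and at bottom it is the same direct computation the paper performs, but organized differently and more economically. The paper's proof first writes out the three diagonal actions $(d\pi_{\lambda'}\otimes d\pi_{\lambda''})(\mathrm{diag}(H))$, $(d\pi_{\lambda'}\otimes d\pi_{\lambda''})(\mathrm{diag}(X))$, $(d\pi_{\lambda'}\otimes d\pi_{\lambda''})(\mathrm{diag}(Y))$ via the Leibniz rule and then substitutes them wholesale into $C=\frac18(H^2+2XY+2YX)$, leaving the expansion to ``a direct computation''. You instead split off the pure terms through the coproduct identity
\[
\mathrm{diag}(C)=C\otimes 1+1\otimes C+\tfrac14\,H\otimes H+\tfrac12\bigl(X\otimes Y+Y\otimes X\bigr),
\]
so the scalars $\frac18\lambda'(\lambda'-2)$ and $\frac18\lambda''(\lambda''-2)$ come for free from the known Casimir action on each factor, and only the three cross terms need expanding. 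This buys a real simplification: the paper's route requires squaring the diagonal vector fields, where terms such as $\zeta_i^2\partial_i^2$ and $\zeta_i\partial_i$ arise from $(\zeta_1\partial_1+\zeta_2\partial_2)^2$ and must cancel against those coming from $\mathrm{diag}(Y)\,\mathrm{diag}(X)$, whereas in your route no second derivatives in a single variable ever appear. Your two displayed identities are exactly right, and the first-order cross terms indeed collect to $\frac{\lambda''}{2}(\zeta_1-\zeta_2)\partial_{\zeta_1}-\frac{\lambda'}{2}(\zeta_1-\zeta_2)\partial_{\zeta_2}$, which is $-\tfrac12$ times the first-order part of $P_{\lambda',\lambda''}$.

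One caution about the coefficient check you defer to the end: the formula \eqref{eqn:diagCa} as printed (and likewise $d\pi_\lambda(Y)$ in \eqref{eqn:sl2vect}, which you have tacitly and correctly taken to be $\lambda z+z^2\frac{d}{dz}$ rather than $\lambda z+z^2\frac{d^2}{dz^2}$) contains typographical errors: the printed first-order coefficients $(\lambda''\zeta_2+\lambda'-\lambda'')\zeta_1$ and $(\lambda'\zeta_1-\lambda'+\lambda'')\zeta_2$ are quadratic in $\zeta$, which cannot occur, since $\mathrm{diag}(C)$ minus its scalar part has linear first-order coefficients. The computation forces
\[
P_{\lambda',\lambda''}=(\zeta_1-\zeta_2)^2\frac{\partial^2}{\partial\zeta_1\partial\zeta_2}+\lambda''(\zeta_2-\zeta_1)\frac{\partial}{\partial\zeta_1}+\lambda'(\zeta_1-\zeta_2)\frac{\partial}{\partial\zeta_2},
\]
and this corrected form is the unique one consistent with Theorem \ref{thm:180297}: it annihilates $(\zeta_1+i)^{-\lambda'}(\zeta_2+i)^{-\lambda''}$ (the case $\ell=0$), and via the Casimir scalar $\frac18\nu(\nu+2)$ on the highest weight module of weight $\nu=-(\lambda'+\lambda''+2\ell)$ it reproduces the eigenvalue $\mu=-\ell(\lambda'+\lambda''+\ell-1)$. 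So when you carry out the sign-and-index check you announce, match your coefficients against this corrected form rather than against \eqref{eqn:diagCa} verbatim; with that understanding, your argument is complete.
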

\begin{proof}
By \eqref{eqn:sl2vect} and the Leibniz rule,
 we have
\begin{eqnarray*}
\left(d\pi_{\lambda'}\otimes d\pi_{\lambda''}\right)(\mathrm{diag(H)})
&=&
-\lambda'-\lambda''-2\left(\zeta_1\frac{\partial}{\partial\zeta_1}+\zeta_2\frac{\partial}{\partial\zeta_2}\right),\\
\left(d\pi_{\lambda'}\otimes d\pi_{\lambda''}\right)(\mathrm{diag(X)})
&=&
-\frac{\partial}{\partial\zeta_1}-\frac{\partial}{\partial\zeta_2},\\
\left(d\pi_{\lambda'}\otimes d\pi_{\lambda''}\right)(\mathrm{diag(Y)})
&=&
\lambda'\zeta_1+\lambda''\zeta_2+\left(\zeta_1^2\frac{\partial^2}{\partial\zeta_1^2}+\zeta_2^2\frac{\partial^2}{\partial\zeta_2^2}\right).
\end{eqnarray*}
Now the lemma follows by a direct computation.
\end{proof}
The tensor product representation $\pi_{\lambda'}\widehat\otimes \pi_{\lambda''}$ does not always split into a direct sum of irreducible representations in the nonunitary case when $\lambda',\lambda''\in\C$, see \cite{KP16b} for instance.
We determine the set of possible infinitesimal characters
 of subrepresentations of the tensor product $\pi_{\lambda'}\widehat\otimes \pi_{\lambda''}$ in this case.

\begin{lem}\label{lem:subtensor}
Let $\lambda',\lambda''\in\C$. 
Suppose $\pi$ is a subrepresentation  of $\pi_{\lambda'} \widehat\otimes \pi_{\lambda''}$ such that the Casimir element $C$ acts as scalar multiplication via $d\pi$. 
Then this scalar must be of the form
$$
\frac18 (\lambda'+\lambda''+2\ell)(\lambda'+\lambda''+2\ell-2)\quad\mathrm{for}\,\mathrm{some}\,\,\ell\in\N.
$$ 
\end{lem}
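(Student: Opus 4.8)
The plan is to produce, inside the subrepresentation $\pi$, a highest weight vector and then read off the Casimir scalar from the general formula recalled after \eqref{eqn:sl2vect}. First I would record the $\widetilde{K}$-spectrum of the ambient module, where $\widetilde{K}=\widetilde{SO(2)}$ is the maximal compact subgroup of ${SL(2,\R)}{\;}\widetilde{}$. By the weight list \eqref{eqn:piKtype}, the $\widetilde{K}$-types of $\pi_{\lambda'}$ and of $\pi_{\lambda''}$ are $\{-\lambda'-2a\}_{a\in\N}$ and $\{-\lambda''-2b\}_{b\in\N}$; adding weights under the tensor product and collecting terms shows that the $\widetilde{K}$-types of $\pi_{\lambda'}\widehat\otimes\pi_{\lambda''}$ are exactly
\[
-(\lambda'+\lambda''+2m)\qquad(m\in\N),
\]
the weight $-(\lambda'+\lambda''+2m)$ occurring with finite multiplicity $m+1$. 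Thus the weights form a single string bounded above, with finite-dimensional weight spaces.

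Next I would pass to the underlying Harish--Chandra ($\widetilde{K}$-finite) module and extract a singular vector. Since $\pi$ is $\widetilde{K}$-stable it is the sum of its $\widetilde{K}$-weight spaces; let $\ell\in\N$ be the least integer for which $\pi$ meets the weight space of weight $\nu:=-(\lambda'+\lambda''+2\ell)$, and choose a nonzero weight vector $w$ there. The raising operator of the $\widetilde{K}$-action carries $w$ into the weight space of weight $\nu+2=-(\lambda'+\lambda''+2(\ell-1))$, which again lies in $\pi$ because $\pi$ is $\mathfrak g$-invariant; by minimality of $\ell$ (or, when $\ell=0$, because $\nu+2$ is not a weight of the ambient module at all) this image vanishes, so $w$ is a highest weight vector of highest weight $\nu$ and generates a highest weight submodule of $\pi$.

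Finally I would invoke the general fact recalled after \eqref{eqn:sl2vect}: on a highest weight module of highest weight $\nu$ the Casimir element $C$ acts by the scalar $\tfrac18\nu(\nu+2)$. Applying this to the cyclic submodule generated by $w$, and using the hypothesis that $C$ acts on all of $\pi$ by one scalar, that scalar must equal
\[
\tfrac18\nu(\nu+2)=\tfrac18(\lambda'+\lambda''+2\ell)(\lambda'+\lambda''+2\ell-2),
\]
which is the asserted form. (As a consistency check, via Lemma \ref{lem:diagCasimir} this translates into the eigenvalue $\mu=-\ell(\lambda'+\lambda''+\ell-1)$ of $P_{\lambda',\lambda''}$, matching Theorem \ref{thm:180297}(1).)

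The hard part will be the middle step: justifying that the a priori merely closed, possibly non-$\widetilde{K}$-finite subrepresentation $\pi$ contains a nonzero $\widetilde{K}$-weight vector at all, and that the minimal-level vector is genuinely annihilated by the raising operator. I expect this to follow from the observation that $\pi_{\lambda'}\widehat\otimes\pi_{\lambda''}$ lies in a highest weight category for $\mathfrak{sl}(2,\C)$ relative to the compact Cartan --- weights bounded above, finite multiplicities, and semisimple $\widetilde{K}$-action --- so that every nonzero submodule carries a singular vector; the hypothesis that $C$ act by a scalar is what confines us to a single such eigenvalue and makes the minimal-weight vector highest weight.
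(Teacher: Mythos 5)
Your proposal is correct and takes essentially the same route as the paper's own proof: the same computation of the $K$-types of $\pi_{\lambda'}\widehat\otimes\pi_{\lambda''}$ with multiplicities as in \eqref{eqn:Ktensor}, the conclusion that every subrepresentation is a highest weight module of highest weight $-\lambda'-\lambda''-2\ell$ for some $\ell\in\N$, and the scalar $\frac18\nu(\nu+2)$ by which the Casimir acts on a highest weight module of highest weight $\nu$. The analytic point you flag at the end --- that a closed subrepresentation contains a genuine weight vector, i.e.\ is admissible --- is precisely what the paper settles by invoking the general theory of discretely decomposable restrictions \cite{kdeco98}, so the mechanism you anticipate is exactly the one used.
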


\begin{proof}
We use the general theory of discretely decomposable restrictions
 of (nonunitary) representations \cite{kdeco98}.  
First we observe from \eqref{eqn:piKtype}
 that the $K$-types of the tensor product representation $\pi_{\lambda'} \widehat\otimes \pi_{\lambda''}$ are of the form
$$
-\lambda'-\lambda''-2(\ell'+\ell'') \quad\mathrm{for}\,\,\mathrm{some}\,\,\ell',\ell''\in\N.
$$
Thus the tensor product representation $\pi_{\lambda'}\widehat\otimes \pi_{\lambda''}$ on $\mathcal O(\Pi\times\Pi)$
contains the direct sum of $K$-isotypic spaces
\begin{equation}\label{eqn:Ktensor}
\bigoplus_{\ell\in\N}(\ell+1)\chi_{-\lambda'-\lambda''-2\ell}
\end{equation}
as a dense subset, 
 where $(\ell +1)$ stands for the multiplicity.

In particular, each $K$-type occurs in $\pi_{\lambda'} \widehat\otimes \pi_{\lambda''}$ with at most finite multiplicities.
Hence, 
 any subrepresentation $\pi$ is admissible,
 and of highest weight $-\lambda'-\lambda''-2\ell$ for some $\ell\in\N$. 
Therefore, 
 if the Casimir element acts as a scalar via $d\pi$,
 then this scalar must coincide with 
 $\frac18 (\lambda'+\lambda''+2\ell)(\lambda'+\lambda''+2\ell-2)$.
\end{proof}

\begin{proof}[Proof of Theorem \ref{thm:180297}]
(1) By Lemma \ref{lem:diagCasimir}, $\mathcal So\ell(\Pi\times\Pi,\mathcal M_{\lambda',\lambda'',\mu})$ is characterized as the eigen\-space of the Casimir operator $C$ as follows:
\begin{multline}\label{eqn:SolCas}
\mathcal So\ell(\Pi\times\Pi,\mathcal M_{\lambda',\lambda'',\mu})\cr
=\{f\in\mathcal O(\Pi\times\Pi)\colon
\left(d\pi_{\lambda'}\otimes d\pi_{\lambda''}\right)(\mathrm{diag}(C))=e(\lambda',\lambda'',\mu) f\},
\end{multline}
where we set 
$e(\lambda',\lambda'',\mu)=-\frac12\mu+\frac18(\lambda'+\lambda'')(\lambda'+\lambda''-2)$.

On the other hand, Lemma \ref{lem:subtensor} tells that $e(\lambda',\lambda'',\mu)=
\frac18 (\lambda'+\lambda''+2\ell)(\lambda'+\lambda''+2\ell-2)$ for some $\ell\in\N$. 
This gives the desired formula for $\mu$, showing the ``if\," part of the first statement. The ``only if\," part
follows from the second statement.
\vskip5pt

(2) By the assumption $\mathrm{Re}\,\lambda', \mathrm{Re}\,\lambda''>0$, the integral \eqref{eqn:psi}
converges for any $\ell\in\N$.
Since the Casimir element $C$ acts on $\mathcal O(\Pi)$
 as the scalar 
$\frac18\lambda'''(\lambda'''-2)$ via $d\pi_{\lambda'''}$, 
 and since $\Psi_{\lambda',\lambda''}^{\lambda'''}$ is an intertwining operator,
the Casimir element $C$ acts on the image of $\Psi_{\lambda',\lambda''}^{\lambda'''}$ by the same scalar.
Therefore $\Psi_{\lambda',\lambda''}^{\lambda'+\lambda''+2\ell}(\mathcal O(\Pi))\subset\mathcal So\ell(\Pi\times\Pi,\mathcal M_{\lambda',\lambda'',-\ell(\lambda'+\lambda''+\ell-1)})$ by \eqref{eqn:SolCas}. In turn, Proposition \ref{cor:Ktypeholom} and Lemma \ref{lem:diagCasimir} imply
that
$$
\left(   
P_{\lambda',\lambda''} +\ell(\lambda'+\lambda''+\ell-1)
\right)
\left(
(\zeta_1-\zeta_2)^\ell (\zeta_1+i)^{-\lambda'-\ell}(\zeta_2+i)^{-\lambda''-\ell}
\right)
=0
$$
as far as $\mathrm{Re}\,\lambda', \mathrm{Re}\,\lambda''>0$. By the analytic continuation, the equation holds for all $\lambda',\lambda''\in\C$.

(3) We begin with the case where 
$\lambda'$ and $\lambda''$ are real and $\lambda',\lambda''>1$. Then it follows from Proposition
\ref{prop:RCopnorm}, Lemma \ref{lem:Hilbertdeco} (2) and Proposition \ref{prop:PsiRC} that
$$
\mathcal{RC}_{\lambda',\lambda''}^{\lambda'''}\circ\Psi_{\lambda',\lambda''}^{\lambda'''}=
c_\ell({\lambda',\lambda''})\mathrm{id}\quad\mathrm{on}\;{\mathcal H^2(\Pi)_{\lambda'''}}.  
$$
Since ${\mathcal{H}}^2(\Pi)_{\lambda'''}$ is dense in ${\mathcal{O}}(\Pi)$, 
 the equality holds on 
 the whole $\mathcal O(\Pi)$ by continuity.

Moreover,
 since the operators $\mathcal{RC}_{\lambda',\lambda''}^{\lambda'''}$ and $\Psi_{\lambda',\lambda''}^{\lambda'''}$ depend
holomorphically on $(\lambda',\lambda'')\in\C^2$ satisfying \eqref{eqn:HOconverge}, we conclude that
\begin{equation}\label{eqn:RCPsic}
\mathcal{RC}_{\lambda',\lambda''}^{\lambda'''}\circ\Psi_{\lambda',\lambda''}^{\lambda'''}=
c_\ell({\lambda',\lambda''})\mathrm{id}\quad\mathrm{on}\;{\mathcal O(\Pi)}
\end{equation}
for any $(\lambda',\lambda'',\lambda''')$ 
 subject to \eqref{eqn:HOconverge} 
 by analytic continuation.
In particular, $\Psi_{\lambda',\lambda''}^{\lambda'''}$ is injective and $\mathcal{RC}_{\lambda',\lambda''}^{\lambda'''}$ is surjective because $c_\ell(\lambda',\lambda'')\neq0$ in this case.

Let us prove that $ \Psi_{\lambda',\lambda''}^{\lambda'''}\colon \mathcal O(\Pi)\To
\mathcal So\ell(\Pi\times\Pi,\mathcal M_{\lambda',\lambda'',-\ell(\lambda'+\lambda''+\ell-1)})$
is surjective. We let ${SL(2,\R)}{\;}\widetilde{}$ act on $\mathcal O(\Pi\times\Pi)$ via $\pi_{\lambda'}\widehat\otimes
\pi_{\lambda''}$.
 Since the map $\N\To\C,\, \ell\mapsto -\ell(\lambda'+\lambda''+\ell-1)$ is injective
if  $\mathrm{Re}\,\lambda', \mathrm{Re}\,\lambda''>0$,
 we have the following inclusion of ${SL(2,\R)}{\;}\widetilde{}$ -submodules of $\mathcal O(\Pi\times\Pi)$:
$$
\bigoplus_{\ell\in\N} \Psi_{\lambda',\lambda''}^{\lambda'+\lambda''+2\ell}
\left(\mathcal O(\Pi) \right)\subset
\bigoplus_{\ell\in\N}\mathcal So\ell(\Pi\times\Pi,\mathcal M_{\lambda',\lambda'',-\ell(\lambda'+\lambda''+\ell-1)}).
$$

We observe that $K$-multiplicities coincide by \eqref{eqn:Ktensor} because the holographic operator $\Psi_{\lambda',\lambda''}^{\lambda'+\lambda''+2\ell}$ is injective for any $\ell\in\N$. 
Therefore, 
$$\Psi_{\lambda',\lambda''}^{\lambda'+\lambda''+2\ell} \colon \mathcal O(\Pi)\To
\mathcal So\ell(\Pi\times\Pi,\mathcal M_{\lambda',\lambda'',-\ell(\lambda'+\lambda''+\ell-1)})$$
is surjective on the level of $(\mathfrak g, K)$-modules.

Since $c_\ell(\lambda',\lambda'')\neq0$,  the symmetry breaking operator $\mathcal{RC}_{\lambda',\lambda''}^{\lambda'''}$
is injective on $\Psi_{\lambda',\lambda''}^{\lambda'+\lambda''+2\ell}\left(\mathcal O(\Pi)\right)$
by \eqref{eqn:RCPsic}, and therefore the underlying  $(\mathfrak g, K)$-module of
$\mathrm{Ker}\left(\mathcal{RC}_{\lambda',\lambda''}^{\lambda'''}\right)\cap
\mathcal So\ell(\Pi\times\Pi,\mathcal M_{\lambda',\lambda'',-\ell(\lambda'+\lambda''+\ell-1)})$ must be zero for any $\lambda'''=\lambda'+\lambda''+2\ell$
($\ell\in\N$). Hence $\mathcal{RC}_{\lambda',\lambda''}^{\lambda'''}$ is injective
when restricted to $\mathcal So\ell(\Pi\times\Pi,\mathcal M_{\lambda',\lambda'',-\ell(\lambda'+\lambda''+\ell-1)})$. Now we conclude that $\Psi_{\lambda',\lambda''}^{\lambda'''}$ is surjective. 
Thus the theorem is proved.
\end{proof}


\section{Holomorphic Juhl transform and its holographic transform}
\label{sec:HJT}

Another remarkable family of differential operators is provided by conformally covariant differential operators for the pair $S^n\supset S^{n-1}$ of Riemannian manifolds, introduced by Juhl \cite{Juhl}.
These operators are symmetry breaking operators from spherical principal series representations
of the Lorentz group $O(1,n+1)$ to those of the subgroup $O(1,n)$.
This setting is intimately related to the holographic or AdS/CFT correspondence in string theory (see \emph{e.g}. \cite{Ma,W1}).

The \emph{holomorphic Juhl operators} are the holomorphic continuation of Juhl's operators, which map holomorphic functions on the $n$-dimensional Lie ball to those on the $(n-1)$-dimensional Lie ball,
intertwining (relative) discrete series representations of $G=SO_o(2,n)$
with those of the subgroup $G'=SO_o(2,n-1)$, see \cite{KP16b}. 

In this section we solve Problems A and \ref{prob:B} stated in Section \ref{sec:Intro} 
 for the symmetry breaking transform
 associated with the holomorphic Juhl operators. 
We assume $n\geq 3$ throughout this section. The case
$n=2$ can be recovered from the previous section with an appropriate change of parameters.

\subsection{Holomorphic Juhl operators}
\subsubsection{Holomorphic discrete series of $SO_o(2,n)$}
Let $Q_{p,q}$ be the standard quadratic form of signature $(p,q)$ on $\R^{p+q}$.
The indefinite orthogonal group
$$
O(p,q):=\left\{ g\in GL(p+q,\R) : Q_{p,q}(gx)=Q_{p,q}(x)\,\mathrm{for\,all}\,x\in\R^{p+q}
             \right\}
$$
has four connected components when $p,q>0$.
Let $G=SO_o(2,n)$ be the identity component of $O(2,n)$ and $K$ a maximal compact subgroup of $G$.
We write $\mathfrak c(\mathfrak k)$ for the first factor of the Lie algebra
$\mathfrak k\simeq \R\oplus \mathfrak{so}(n)$, and
fix a characteristic element $H_0\in \mathfrak c(\mathfrak k)$ such that ad$(H_0)$ gives
the eigenspace decomposition of $\mathfrak g_\C= \mathrm {Lie}(G)\otimes_\R\C$ as
\begin{equation}\label{eqn:gknn}
\mathfrak g_\C=\mathfrak k_\C+\mathfrak n_++\mathfrak n_-
\end{equation}
for eigenvalues $0,-i$ and $i$, respectively. 
The complex structure
 of the homogeneous space $G/K$ is
 given by the $G$-translation of $\exp\left(\mathrm{ad}\left(\frac\pi2 H_0\right)\right)\in GL_\R(T_o(G/K))$, or
 equivalently, it is
  induced from the Borel embedding $G/K\subset
 G_\C/K_\C\exp\mathfrak n_+$.

Let $\widetilde G$ be the universal covering of $G=SO_o(2,n)$, $p:\widetilde G\To G$ the covering homomorphism, and set $\widetilde K:=p^{-1}(K)$.
For $\lambda\in\C$, we define a character of  $\mathfrak c(\mathfrak k)$ by
$tH_0\mapsto\lambda t$, which lifts  to a character $\C_\lambda$ of $\widetilde K$. 

 Then one can define a $\widetilde G$-equivariant holomorphic line bundle $\mathcal L_\lambda=\widetilde G\times_{\widetilde K}\C_\lambda$ over $X:=\widetilde G/\widetilde K\simeq G/K$ for all $\lambda\in\C$, and
obtain representations $\pi_\lambda^{(n)}$ of $\widetilde G$ on the space $\mathcal O(X,\mathcal L_\lambda)$ of holomorphic sections. The representation $\pi_\lambda^{(n)}$ descends to $G$ if $\lambda\in\Z$.

If $\lambda\in\R$, then the line bundle $\mathcal L_\lambda\To X$ carries a $\widetilde G$-invariant Hermitian metric, and therefore we can define a Hilbert space $\left( \mathcal O\cap L^2\right)(X,\mathcal L_\lambda)$. This Hilbert space  is nonzero if and only if $\lambda>n-1$, and the resulting unitary representation of $\widetilde G$, to be denoted by the same symbol $\pi_\lambda^{(n)}$, is called a \emph{(relative) holomorphic discrete series representation} of $\widetilde G$. For
actual computations we use its realization in the weighted Bergman space as below. 

We define the tube domain
$$
T_\Omega\equiv T_{\Omega(n)}:=\R^n+ i\Omega(n)
$$
by taking $\Omega(n)$ to be the time-like cone in the Minkowski space $\R^{1,n-1}$, namely,
$$
\Omega(n):=\left\{ \eta\in\R^n\;:\;Q_{1,n-1}(\eta)>0,\; \eta_1>0\right\}.
$$

Then the tube domain $T_\Omega$ is biholomorphic to the bounded symmetric domain  of type $\mathrm{IV}_n$, sometimes
referred to as the Lie ball. From a group-theoretic viewpoint, $T_\Omega$ is isomorphic
to the Hermitian symmetric space $X=G/K$, which is realized as an open subset of
$\mathfrak n_- (\simeq\C^n)$ via the following maps
\begin{equation}\label{eqn:BorelGK}
\mathfrak n_-\underset{\mathrm{open}}{\hookrightarrow}
G_\C/K_\C\exp(\mathfrak n_+)\underset{\mathrm{open}}{\supset}G/K.
\end{equation}

The homogeneous holomorphic line bundle $\mathcal L_\lambda\To X$ is trivialized via
the Bruhat decomposition, and the Hilbert space $(\mathcal O\cap L^2)(X,\mathcal L_\lambda)$
is then identified with the weighted Bergman space
$$
\mathcal H^2(T_{\Omega(n)})_\lambda:=\mathcal O(T_{\Omega(n)})\cap
L^2(T_{\Omega(n)}, Q_{1,n-1}(\eta)^{\lambda-n}d\xi d\eta),
$$
on which $G$ acts as a multiplier representation
by
$$
f(\zeta)\mapsto b_\lambda (g,\zeta)f(g^{-1}.\zeta)
$$
for $g\in G$ and $f(\zeta)\in\mathcal O(T_{\Omega(n)})$. Here the multiplier
$$
b_\lambda: G\times T_{\Omega(n)}\To \C^\times
$$
is a 1-cocycle defined by $b_\lambda(g,\zeta):=\chi_{-\lambda}(k(g,\zeta))$, where
$\chi_\lambda \colon K_\C\To \C^\times$ is the holomorphic extension of the character $\C_\lambda$ of $K$ and $k(g,\zeta)$ is an element of $K_\C$ determined by
$$
g^{-1}\exp(\zeta)\in \exp(g^{-1}.\zeta)k(g,\zeta)\exp (\mathfrak n_+),
$$
see \eqref{eqn:BorelGK}.

For $\lambda>n-1$ this Hilbert space admits a reproducing kernel $K_\lambda(\zeta,\tau)$ given
by:
\begin{equation}\label{eqn:Klmbd}
K_\lambda(\zeta,\tau)= k_{\lambda,n}Q_{1,n-1}\left({\zeta-\overline \tau}\right)^{-\lambda},
\end{equation}
where $Q_{1,n-1}(\zeta)$ stands for the
holomorphic extension of
$
Q_{1,n-1},$ and
 we set
$$
k_{\lambda,n}:=\frac{(2i)^{2\lambda}}{(4\pi)^{n}}\frac{\left(\lambda-\frac n2\right)\Gamma(\lambda)}{\Gamma(\lambda-n+1)},
$$
see \cite[Prop.~XIII.1.2]{FK}.
We note that $k_{\lambda,n}\neq0$ if $\lambda>n-1$.

We realize $O(2,n-1)$ as the subgroup of $O(2,n)$
 which fixes the $(n+2)$-th coordinate,
 and set $G'=SO_o(2,n-1)$
 as its identity component.
 By abuse of notation, we write $\widetilde G'$ for the connected
 subgroup of $\widetilde G=SO_o(2,n){\;}\widetilde{}$\,
 corresponding to $G'\subset G$. The subgroup $\widetilde G'$ is simply connected if $n\neq4$.
  Similarly, a 
 (relative) holomorphic discrete series representation $\pi_\nu^{(n-1)}$
of $\widetilde G'$
 is defined for $\nu >n-2$, 
 as an irreducible unitary representation
 on the weighted Bergman space $\mathcal H^2(T_{\Omega(n-1)})_\nu$.
 By abuse of notation, the same symbol $\pi_\nu^{(n-1)}$ will be used to denote a (nonunitary) representation
 on $\mathcal O(T_{\Omega(n-1)},\mathcal L_\nu)$
 for $\nu \in {\mathbb{C}}$.

\subsubsection{Holomorphic Juhl operators}

Let $\Delta_{\C^{1,n-2}}:=\frac{\partial^2}{\partial\zeta_1^2}-\frac{\partial^2}{\partial\zeta_2^2}
 -\cdots-\frac{\partial^2}{\partial\zeta_{n-1}^2}$ be the holomorphic Laplacian on $\C^{n-1}$
 associated to the complexified quadratic form $Q_{1,n-2}$.
For $\alpha \in {\mathbb{C}}$
 and $\ell, k \in {\mathbb{N}}$
 with $\ell \ge 2 k$, 
 we define a polynomial of $\alpha$
 of degree $\ell-k$ 
by 
\begin{equation}\label{eqn:alalpha}
a_k(\ell,\alpha):=
\frac{(-1)^k 2^{\ell-2k} \cdot \Gamma\left(\alpha+\ell-k\right)}
{\Gamma\left(\alpha\right) k! (\ell-2k)!}.
\end{equation}
 The coefficients $a_k(\ell,\alpha)$ appear in the definition \eqref{eqn:gegen} of the Gegenbauer
 polynomials $C_\ell^\alpha(x)$, see Appendix.  
We define a holomorphic differential operator
$\mathcal D_\ell^\alpha$ on $\C^n$ by
\begin{equation}\label{eqn:JuhlonCn}
\mathcal D_\ell^\alpha:=\sum_{k=0}^{\left[\frac\ell2\right]}
a_k\left(\ell,\alpha\right)
 \left(\frac{\partial}{\partial\zeta_n}\right)^{\ell-2k}\Delta^k_{\C^{1,n-2}}. 
\end{equation}

For $\lambda,\nu\in \C$ with $\ell:=\nu-\lambda\in\N$,  
 the holomorphic Juhl operator $D_{\lambda\to\nu} \colon \mathcal O(T_{\Omega(n)})\To
\mathcal O(T_{\Omega(n-1)})$ 
 is defined as the composition 
\begin{eqnarray}\label{eqn:holoJuhl}
D_{\lambda\to\nu}:=\mathrm{Rest}_{\zeta_n=0}\circ 
\mathcal D_\ell^{\lambda-\frac{n-1}2}.
 \end{eqnarray}

The operator $D_{\lambda\to\nu}$ may be viewed as the holomorphic extension
 of the original Juhl operator \cite{Juhl}, 
 which is a conformally covariant differential operator
 $C^{\infty}(S^{n})\to C^{\infty}(S^{n-1})$.  
In our setting, 
 the hyperbolic space $H^n$ is realized
 as a totally real submanifold of the tube domain $T_{\Omega(n)}$, and
likewise, $H^{n-1}$ is that of $T_{\Omega(n-1)}$. 
The restriction of the holomorphic Juhl operator to these real manifolds
 also yields
a conformally covariant operator
$C^\infty(H^n)\To C^\infty(H^{n-1})$, 
see \cite[Thm.~E]{KKPAdS}.

The holomorphic Juhl operator $D_{\lambda\to\nu}$ gives yet another symmetry breaking operator, 
intertwining the (relative) holomorphic discrete series representation $\pi_\lambda^{(n)}$ of $\widetilde G$ and the one $\pi_{\nu}^{(n-1)}$
of the subgroup $\widetilde G'$ \cite[Thm.~6.3]{KP16b}.
Moreover, the differential operator $D_{\lambda\to\nu}$ induces a \emph{continuous} map
between the Bergman spaces by the general theory \cite[Thm.~5.13]{KP16a}.
Its adjoint is denoted by $D_{\lambda\to\nu}^*$.  
We determine the operator norm of $D_{\lambda \to \nu}$
 in Proposition \ref{prop:normJuhl}.


\subsection{Two constants $c_\ell(\lambda)$ and $r_\ell(\lambda)$}\label{sec:const-conf}

Throughout Section \ref{sec:HJT}
the parameter set is $(\lambda,\nu)\in\C^2$ with $\nu-\lambda\in\N$ and $n\geq3$.
We use the following notation:
\begin{equation}
\label{eqn:alphal}
\alpha=\lambda-\frac{n-1}2,\qquad \ell=\nu-\lambda.
\end{equation}

As in Section \ref{sec:constants} devoted to the tensor product case, the main results in this section involve the following two constants:
\begin{eqnarray}\label{eqn:c-ell}
c\equiv c_\ell(\lambda)&:=& \int_{-1}^1\left\vert C_\ell^\alpha(v)\right\vert^2 (1-v^2)^{\alpha-\frac12}dv\nonumber\\
&=&  \frac{\pi 2^{n-2\lambda} \Gamma(2\lambda+\ell-n+1)}{\ell!\left(\lambda+\ell-\frac{n-1}2\right)
\Gamma\left(\lambda-\frac{n-1}2\right)^2},\\
\label{eqn:r-ell}
r\equiv r_\ell(\lambda)&:=&\frac{b_{n-1}(\nu)}{b_n(\lambda)}\nonumber\\
&=&
\frac{\Gamma\left(\lambda+\ell-\frac{n-1}2\right) \Gamma\left(\lambda+\ell-n+2\right)}
{(2\pi)^{\frac32}2^{2\ell+1} \Gamma\left(\lambda-\frac n2\right)\Gamma\left(\lambda-n+1\right)},
\end{eqnarray}
where
$b_n(\lambda)$ is a Plancherel density (see \eqref{eqn:bnlmd} below).


\subsection{Holomorphic Juhl transforms}

\begin{defn}[holomorphic Juhl transform]
For $\lambda\in\C$, the holomorphic Juhl transform $D_\lambda$
 is a collection of the holomorphic Juhl operators
$$
D_\lambda \colon \mathcal O(T_{\Omega(n)})\To
\operatorname{Map}(\mathbb N, \mathcal O(T_{\Omega(n-1)})), 
\quad
f \mapsto \{(D_{\lambda} f)_{\ell}\}_{\ell \in {\mathbb{N}}}, 
$$
where $ (D_\lambda f)_\ell:=D_{\lambda\to\lambda+\ell} f$. 
\end{defn}
The holomorphic Juhl transform $D_\lambda$ intertwines $(\pi_\lambda^{(n)},\mathcal O(T_{\Omega(n)}))$ with the formal direct sum $\widehat\bigoplus_{\ell\in\N}(\pi_{\lambda+\ell}^{(n-1)},\mathcal O(T_{\Omega(n-1)}))$; its
 inversion formula and the corresponding Parseval--Plancherel type theorems
 are given as follows.

\begin{thm}\label{thm:170887}
Suppose $\lambda>n-1$.
\begin{itemize}
\item[(1)] \emph{(inversion formula).} 
 Any $f\in \mathcal H^2(T_{\Omega(n)})_\lambda$ is recovered from 
 $D_{\lambda} f$ by
$$
f=\sum_{\ell=0}^\infty \frac1{r_\ell(\lambda)c_\ell(\lambda)}
D_{\lambda\to{\lambda+\ell}}^* \left(D_\lambda f\right)_\ell.
$$
\item[(2)]
\emph{(Parseval--Plancherel type theorem).}
For every $f\in \mathcal H^2(T_{\Omega(n)})_\lambda$, we have
\begin{equation}
\Vert f\Vert^2_{\mathcal H^2_\lambda(T_{\Omega(n)})}=
\sum_{\ell=0}^\infty \frac1{r_\ell(\lambda)c_\ell(\lambda)}
\Vert \left(D_{\lambda}f\right)_\ell\Vert^2_{\mathcal H^2_{\lambda+\ell}(T_{\Omega(n-1)})}.
\end{equation}
\end{itemize}
\end{thm}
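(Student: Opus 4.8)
The plan is to deduce both assertions from the abstract Hilbert-space machinery of Lemma \ref{lem:Hilbertdeco}, exactly as the tensor-product analogues (Theorems \ref{thm:RCinv} and \ref{thm:PPTT4RCB}) were obtained in Section \ref{subsec:proofs}. The two representation-theoretic inputs required are the branching law for the restriction $\pi_\lambda^{(n)}|_{\widetilde G'}$ and the operator norm of each individual holomorphic Juhl operator $D_{\lambda\to\lambda+\ell}$.

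First I would record the branching law. For $\lambda>n-1$ the restriction of the (relative) holomorphic discrete series decomposes as a multiplicity-free Hilbert direct sum
$$
\pi_\lambda^{(n)}\big|_{\widetilde G'}\simeq{\sum_{\ell\in\N}}^\oplus\pi_{\lambda+\ell}^{(n-1)},
$$
which furnishes an orthogonal decomposition $\mathcal H^2(T_{\Omega(n)})_\lambda\simeq\sum_{\ell\in\N}^\oplus V_\ell$ into closed $\widetilde G'$-invariant subspaces with $V_\ell\simeq\pi_{\lambda+\ell}^{(n-1)}$. Since $D_{\lambda\to\lambda+\ell}$ is a nonzero $\widetilde G'$-intertwining operator onto $\pi_{\lambda+\ell}^{(n-1)}$, multiplicity-freeness together with Schur's lemma forces $D_{\lambda\to\lambda+\ell}$ to vanish on $V_{\ell'}$ for $\ell'\ne\ell$, hence on $V_\ell^\perp$, and to act as a scalar multiple of a unitary on $V_\ell$. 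This verifies precisely the hypotheses on the maps $R_\ell:=D_{\lambda\to\lambda+\ell}$ required by Lemma \ref{lem:Hilbertdeco}.

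Next I would invoke the operator-norm computation $\Vert D_{\lambda\to\lambda+\ell}\Vert_{\mathrm{op}}^2=r_\ell(\lambda)c_\ell(\lambda)$ (Proposition \ref{prop:normJuhl}), so that the constant $C_\ell$ of Lemma \ref{lem:Hilbertdeco} equals $r_\ell(\lambda)c_\ell(\lambda)$. Lemma \ref{lem:Hilbertdeco}(1) then yields both conclusions at once: the inversion formula is the identity $f=\sum_\ell C_\ell^{-1}R_\ell^*(R_\ell f)$ with $R_\ell^*=D_{\lambda\to\lambda+\ell}^*$, while the Parseval--Plancherel identity is \eqref{eqn:Rexpansion}, namely $\Vert f\Vert^2=\sum_\ell C_\ell^{-1}\Vert R_\ell f\Vert^2$. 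Rewriting $R_\ell f=(D_\lambda f)_\ell$ gives the stated formulae verbatim.

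The real work is thus concentrated in Proposition \ref{prop:normJuhl}, and this is the step I expect to be the main obstacle. Following the F-method as in Section \ref{subsec:proofs}, I would pass to an $L^2$-model via the Laplace transform on the time-like cone $\Omega(n)$, in which the Fourier-transformed Juhl operator becomes an integral operator whose kernel is built from the Gegenbauer polynomial $C_\ell^{\lambda-(n-1)/2}$; the orthogonality relation \eqref{eqn:c-ell} for these polynomials then produces the factor $c_\ell(\lambda)$, while the discrepancy between the $L^2$- and holomorphic norms introduces the ratio of Plancherel densities $r_\ell(\lambda)=b_{n-1}(\nu)/b_n(\lambda)$ through Lemma \ref{lem:opFourier}. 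The technical difficulty, absent in the rank-one half-line $\R_+$ of the Rankin--Cohen setting, is that $\Omega(n)$ is a genuine symmetric cone, so the relevant coordinate change and Laplace-transform identities are more delicate to set up; once these are in place, the norm evaluation reduces to the classical $L^2$-theory of Gegenbauer polynomials in the same way the Jacobi case did.
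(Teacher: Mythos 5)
Your proposal is correct and follows essentially the same route as the paper: the paper's proof of Theorem \ref{thm:170887} likewise combines Lemma \ref{lem:Hilbertdeco} with the multiplicity-free branching law \eqref{eqn:absbraConf} and the operator-norm formula $\Vert D_{\lambda\to\nu}\Vert_{\mathrm{op}}^2=r_\ell(\lambda)c_\ell(\lambda)$ of Proposition \ref{prop:normJuhl}. Your sketch of that norm computation also matches the paper's actual argument, which derives it from Lemma \ref{lem:opFourier} together with the Gegenbauer-transform expression of $\widehat D_{\lambda\to\nu}$ (Proposition \ref{lem:170770}), the adjoint identity $(\widehat D_{\lambda\to\nu})^*=i^\ell\Phi_\lambda^\nu$ (Proposition \ref{prop:181248}), and the isometry constant $c_\ell(\lambda)$ for $\Phi_\lambda^\nu$ obtained from the coordinate change \eqref{eqn:fibration} and the $L^2$-orthogonality of the Gegenbauer polynomials (Proposition \ref{lem:170772}).
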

Theorem \ref{thm:170887} is proved in Section \ref{subsec:proofTHM32}. It gives an answer to Problems A.2 and B raised in Section 1 for the holomorphic Juhl transform $D_{\lambda}$. 
An answer to Problem A.1 (explicit integral formula for holographic transform) will be given in Theorem \ref{thm:CIRM18}.

From a representation-theoretic viewpoint, Theorem \ref{thm:170887} gives quantitative information on the corresponding branching law for the restriction of the (relative) holomorphic discrete series representation $\pi_\lambda^{(n)}$ of $\widetilde G$ to the subgroup $\widetilde G'$, which decomposes the restriction $\pi_\lambda^{(n)}\big\vert_{\widetilde G'}$
 into a multiplicity-free direct Hilbert sum of irreducible representations of the subgroup $\widetilde G'$, see \cite[Thm.~8.3]{K08}:
\begin{equation}\label{eqn:absbraConf}
\pi_\lambda^{(n)}\vert_{\widetilde G'}\simeq {\sum_{\ell\in\N}}^\oplus \pi_{\lambda+\ell}^{(n-1)}.
\end{equation}

\begin{cor}[projection operator]
Suppose $\lambda>n-1$ and $\ell:=\nu-\lambda\in\N$. Then
$$
\frac1{r_\ell(\lambda) c_\ell(\lambda)}D_{\lambda\to\nu}^*D_{\lambda\to\nu}
$$
is the projection operator from the Hilbert space $\mathcal H^2(T_{\Omega(n)})_\lambda$
onto the summand which is unitarily equivalent to the irreducible representation $(\pi_\nu^{(n-1)},\mathcal H^2(T_{\Omega(n-1)})_\nu)$,
see \eqref{eqn:absbraConf}.
\end{cor}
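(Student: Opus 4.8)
The plan is to recognize this statement as a direct instance of the abstract Hilbert-space formalism of Lemma \ref{lem:Hilbertdeco}, following verbatim the argument that yields the Rankin--Cohen projection Corollary \ref{cor:20181103}. Concretely, I would set $V:=\mathcal H^2(T_{\Omega(n)})_\lambda$ and, using the multiplicity-free branching law \eqref{eqn:absbraConf}, write $V\simeq\sum_{\ell\in\N}^\oplus V_\ell$ as a Hilbert direct sum of closed $\widetilde G'$-invariant subspaces $V_\ell$, where $V_\ell$ is the summand unitarily equivalent to $\pi_{\lambda+\ell}^{(n-1)}$. Taking $W_\ell:=\mathcal H^2(T_{\Omega(n-1)})_{\lambda+\ell}$ and $R_\ell:=D_{\lambda\to\lambda+\ell}$, the goal reduces to verifying the hypotheses of Lemma \ref{lem:Hilbertdeco} and then reading off \eqref{eqn:RR1}.

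First I would check that each $R_\ell$ has the required structure on the summands. Since $D_{\lambda\to\nu}$ is a symmetry breaking operator, it is $\widetilde G'$-equivariant, so by Schur's lemma combined with the multiplicity-one property of \eqref{eqn:absbraConf}, the restriction $R_\ell\vert_{V_m}$ vanishes for $m\neq\ell$, while $R_\ell\vert_{V_\ell}$ is a scalar multiple of a unitary operator onto $W_\ell$; its non-triviality is guaranteed by $c_\ell(\lambda)\neq0$, which holds for $\lambda>n-1$. Next I would invoke Proposition \ref{prop:normJuhl} to identify the operator norm $C_\ell:=\Vert R_\ell\Vert_{\mathrm{op}}^2=r_\ell(\lambda)c_\ell(\lambda)$, with $r_\ell(\lambda)$ and $c_\ell(\lambda)$ as in \eqref{eqn:r-ell} and \eqref{eqn:c-ell}.

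With these hypotheses in hand, Lemma \ref{lem:Hilbertdeco}(1), specifically the identity \eqref{eqn:RR1}, gives at once
\[
\frac{1}{r_\ell(\lambda)c_\ell(\lambda)}\,D_{\lambda\to\nu}^*D_{\lambda\to\nu}
=\frac{1}{C_\ell}\,R_\ell^*R_\ell=\mathrm{pr}_{V\to V_\ell},
\]
the orthogonal projection onto $V_\ell$, which is exactly the summand equivalent to $(\pi_\nu^{(n-1)},\mathcal H^2(T_{\Omega(n-1)})_\nu)$. This completes the argument.

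The genuine content, and hence the main obstacle, is concentrated in the operator-norm computation of Proposition \ref{prop:normJuhl}: one must show that $R_\ell\vert_{V_\ell}$ is unitary up to the precise scalar $r_\ell(\lambda)c_\ell(\lambda)$. As in the tensor-product case (Propositions \ref{prop:phinorm} and \ref{prop:RCopnorm}), I expect this to rest on passing to the $L^2$-model via the Fourier--Laplace transform and reducing to the $L^2$-orthogonality relations of the Gegenbauer polynomials $C_\ell^\alpha$ recorded in \eqref{eqn:c-ell}, with the factor $r_\ell(\lambda)$ accounting for the Plancherel-density normalizations between the two weighted Bergman spaces. Everything else in the corollary is then formal, flowing from the abstract lemma.
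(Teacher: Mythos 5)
Your proposal is correct and matches the paper's own route: the corollary is exactly the instance of \eqref{eqn:RR1} in Lemma \ref{lem:Hilbertdeco} applied with $R_\ell=D_{\lambda\to\lambda+\ell}$ via the multiplicity-free branching law \eqref{eqn:absbraConf}, with the operator norm $\Vert D_{\lambda\to\nu}\Vert_{\mathrm{op}}^2=r_\ell(\lambda)c_\ell(\lambda)$ supplied by Proposition \ref{prop:normJuhl}. You also correctly identify where the real content lies, namely the $L^2$-model computation (Propositions \ref{lem:170772} and \ref{prop:181248}, reducing to the Gegenbauer orthogonality \eqref{eqn:L2Gegen} with $r_\ell(\lambda)$ coming from the Plancherel densities via Lemma \ref{lem:opFourier}), which is precisely how the paper proceeds.
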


$$
$$


\subsection{Key operators in the proof of Theorem \ref{thm:170887}}
~~~\par
\vskip10pt

Analogously to the case of the tensor product representations (Section \ref{sec:RCT}), we introduce the following continuous operators for the proof of Theorem \ref{thm:170887}:
\begin{alignat*}{4}
D_{\lambda\to\nu}^*& \colon \mathcal H^2(T_{\Omega(n-1)})_\nu && \To && \mathcal
 H^2(T_{\Omega(n)})_\lambda
\qquad
&& \mathrm{adjoint\, of}\,\, D_{\lambda\to\nu},\\
\widehat D_{\lambda\to\nu}&\colon L^2(\Omega(n))_\lambda &&\To&& L^2(\Omega(n-1))_\nu
\qquad &&\mathrm{Fourier\,transform\, of}\,\, D_{\lambda\to\nu},\\
\Phi_\lambda^\nu&\colon L^2(\Omega(n-1))_\nu &&\To&&  L^2(\Omega(n))_\lambda
\qquad &&\mathrm{holographic\, operator}.
\end{alignat*}
See \eqref{eqn:JFourier} and \eqref{eqn:hologSO} below for the definitions of $\widehat D_{\lambda\to\nu}$  and $\Phi_\lambda^\nu$, respectively.
\vskip 7pt

The link between these operators may be summarized in the following diagram:

\begin{equation}\label{diagram:squig2}
\xymatrix{
D_{\lambda\to\nu} 
\ar@{~>}[d]_{\mathrm{Proposition}\, \ref{lem:170770}}
\ar@{~>}[rr]^{\mathrm{Theorem}\,\ref{thm:CIRM18}}
&& D_{\lambda\to\nu}^* 
 \\
\widehat{D}_{\lambda\to\nu}
\ar@{~>}[rr]_{\mathrm{Proposition}\, \ref{lem:170772}}&&  \Phi_{\lambda}^{\nu}
\ar@{~>}[u]_{\mathrm{Proposition}\,\ref{prop:181248}}
}
\end{equation}
\vskip 7pt
Among them, the holographic operator $\Phi_\lambda^\nu$ in the $L^2$-model will play a crucial role in the proof of Theorem \ref{thm:170887}. 
$$
$$

\subsection{Holographic transform in the $L^2$-model on the time-like cone
$\Omega(n)$}
\subsubsection{$L^2$-model of holomorphic discrete series}

For $\lambda>n-1$, 
we denote by $L^2(\Omega)_\lambda\equiv L^2(\Omega,m_\lambda(y)dy)$
 the Hilbert space of square integrable functions
 on the time-like cone $\Omega\equiv \Omega(n)$
 with respect to the measure 
 $m_{\lambda}(y) d y$, 
 where $m_{\lambda}$ is a positive-valued function on $\Omega$
 given by
$$
m_\lambda(y):=Q_{1,n-1}(y)^{\frac n2-\lambda}.  
$$
Let $\langle y,\zeta\rangle=\sum_{j=1}^ny_j\zeta_j$.  
Since the cone $\Omega$ is self-dual in $\R^n$, the Fourier--Laplace transform
$$
\left(\mathcal F_nF\right)(\zeta):=\int_\Omega F(y) e^{i\langle y,\zeta\rangle}dy
$$
is a holomorphic function of $\zeta\in T_\Omega$ if $F\in C_c(\Omega)$.

\begin{fact}
[Faraut--Koranyi {\cite[Thm.~XIII.1.1]{FK}}]
\label{fact:Fnisom}
For $\lambda >n-1$, 
 we set
\begin{equation}\label{eqn:bnlmd}
b_n(\lambda):=\left(2\pi\right)^{\frac{3n}2-1}2^{-2\lambda+n}\Gamma\left(\lambda-\frac n2\right)\Gamma(\lambda-n+1).  
\end{equation}
Then the Fourier--Laplace transform 
$\mathcal F_n \colon C_c(\Omega)
\To \mathcal O(T_\Omega)$
extends to a linear bijection:
\begin{eqnarray}
\label{eqn:Fnisom}
\mathcal F_n \colon L^2(\Omega)_\lambda
{\stackrel{\sim}{\to}} 
\mathcal H^2_\lambda(T_{\Omega(n)}).
\end{eqnarray}
with 
$$
\Vert\mathcal F_n F\Vert^2_{\mathcal H^2_\lambda(T_{\Omega(n)})}=
b_n(\lambda)\Vert F\Vert^2_{L^2(\Omega(n))_\lambda}
\quad
\text{for all $F \in L^2(\Omega)_{\lambda}$.  }
$$
\end{fact}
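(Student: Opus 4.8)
The plan is to reduce the statement to the classical Euclidean Plancherel theorem together with a single integral over the cone---a Gindikin gamma integral---exactly as in the rank-one situation recorded in Fact~\ref{fact:Laplace}. The only structural input I would need about the geometry is that the time-like cone $\Omega=\Omega(n)$ is \emph{self-dual} with respect to the Euclidean pairing $\langle y,\zeta\rangle=\sum_j y_j\zeta_j$, so that $\langle y,\eta\rangle>0$ whenever $y,\eta\in\Omega$; this guarantees absolute convergence of $\mathcal F_nF(\xi+i\eta)$ for $F\in C_c(\Omega)$ and, after differentiating under the integral sign, that $\mathcal F_nF\in\mathcal O(T_\Omega)$.

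For the isometry I would foliate the tube $T_\Omega=\R^n+i\Omega$ by the slices $\Im\zeta=\eta\in\Omega$. Fixing $\eta$, the function $\xi\mapsto\mathcal F_nF(\xi+i\eta)$ is the Euclidean Fourier transform (in $\xi$) of $y\mapsto F(y)e^{-\langle y,\eta\rangle}$, so Plancherel gives
\[
\int_{\R^n}\bigl|\mathcal F_nF(\xi+i\eta)\bigr|^2\,d\xi=(2\pi)^n\int_\Omega|F(y)|^2e^{-2\langle y,\eta\rangle}\,dy .
\]
Multiplying by the Bergman weight $Q_{1,n-1}(\eta)^{\lambda-n}$, integrating over $\eta\in\Omega$, and exchanging the order of integration (legitimate by positivity) reduces the whole norm to
\[
\|\mathcal F_nF\|^2_{\mathcal H^2_\lambda(T_{\Omega(n)})}=(2\pi)^n\int_\Omega|F(y)|^2\,I(y)\,dy,\qquad I(y):=\int_\Omega e^{-2\langle y,\eta\rangle}Q_{1,n-1}(\eta)^{\lambda-n}\,d\eta .
\]
By homogeneity and the covariance of $I$ under the (transpose-stable, by self-duality) linear automorphism group $G(\Omega)=\{g\in GL(n,\R):g\Omega=\Omega\}$, the relative invariance of $Q_{1,n-1}$ forces $I$ to be a relative invariant, hence proportional to $m_\lambda$ by uniqueness of relative invariants on the homogeneous cone; thus $I(y)=C_0\,m_\lambda(y)=C_0\,Q_{1,n-1}(y)^{\frac n2-\lambda}$, with the homogeneity degree $n-2\lambda$ confirmed by the scaling $\eta\mapsto\eta/t$. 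It then remains to evaluate the single constant $C_0=I(e)$ at the base point $e=(1,0,\dots,0)$, which after passing to polar coordinates in the space-like variables is a product of a Beta integral and a $\Gamma$-integral; the duplication formula turns this into the two-factor expression matching $b_n(\lambda)$ of \eqref{eqn:bnlmd}. This yields \eqref{eqn:Fnisom} as an isometry up to the scalar $b_n(\lambda)$, and in particular shows that $\mathcal F_n$ extends to a closed isometric embedding $L^2(\Omega)_\lambda\hookrightarrow\mathcal H^2_\lambda(T_{\Omega(n)})$.

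For surjectivity I would invoke the reproducing kernel \eqref{eqn:Klmbd}. Running the same gamma integral in the opposite direction, for each $\tau\in T_\Omega$ the function $G_\tau(y):=\mathrm{const}\cdot Q_{1,n-1}(y)^{\lambda-n}e^{-i\langle y,\overline\tau\rangle}$ lies in $L^2(\Omega)_\lambda$ (its weight $e^{-\langle y,\Im\tau\rangle}$ decays because $\Im\tau\in\Omega$) and satisfies $\mathcal F_nG_\tau=K_\lambda(\,\cdot\,,\tau)$, since $\Im(\zeta-\overline\tau)\in\Omega$ keeps the gamma integral in its domain of convergence. As finite linear combinations of reproducing kernels are dense in any reproducing kernel Hilbert space, the image of $\mathcal F_n$ is dense; being also closed, it is all of $\mathcal H^2_\lambda(T_{\Omega(n)})$, giving the bijection \eqref{eqn:Fnisom}.

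The main obstacle is the explicit constant: while the qualitative isometry and surjectivity are soft once the slicing and the reproducing-kernel totality are in place, pinning the scalar to exactly $b_n(\lambda)$ requires careful bookkeeping of the normalizations hidden in the measure $dy$, in the quadratic form $Q_{1,n-1}$, and in the factors $(2i)^{2\lambda}$, $(4\pi)^{-n}$ appearing in $k_{\lambda,n}$. Equivalently, the crux is the evaluation of the Gindikin gamma integral $I(e)$ for the rank-two Lorentz cone; every estimate in the argument also forces the parameter restriction $\lambda>n-1$, which is precisely the range in which $m_\lambda\,dy$ is locally integrable near $\partial\Omega$ and all the $\Gamma$-factors in $b_n(\lambda)$ are finite.
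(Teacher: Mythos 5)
The paper offers no proof of this Fact: it is quoted from Faraut--Koranyi \cite[Thm.~XIII.1.1]{FK}, so your proposal is being measured against the classical argument, and in outline it reconstructs exactly that argument. The architecture is sound: self-duality of $\Omega$ gives convergence and holomorphy of $\mathcal F_nF$; slicing $T_\Omega$ along $\operatorname{Im}\zeta=\eta$, Euclidean Plancherel, and Fubini reduce the Bergman norm to the cone integral $I(y)=\int_\Omega e^{-2\langle y,\eta\rangle}Q_{1,n-1}(\eta)^{\lambda-n}\,d\eta$; and your relative-invariance argument is correct, since $\R_+\times SO_o(1,n-1)$ acts transitively on $\Omega$ and is stable under transpose, forcing $I(y)=I(e)\,Q_{1,n-1}(y)^{\frac n2-\lambda}$. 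The restriction $\lambda>n-1$ enters exactly where you say it does.

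Two steps fail as written. First, in the surjectivity argument your density has the wrong exponent: the gamma identity reads $\int_\Omega e^{i\langle y,w\rangle}Q_{1,n-1}(y)^{s-\frac n2}\,dy=\mathrm{const}\cdot Q_{1,n-1}(w/i)^{-s}$, so your $G_\tau(y)=c\,Q_{1,n-1}(y)^{\lambda-n}e^{-i\langle y,\overline\tau\rangle}$ corresponds to $s=\lambda-\frac n2$ and produces $Q_{1,n-1}(\,\cdot-\overline\tau\,)^{-(\lambda-\frac n2)}$, \emph{not} the kernel $K_\lambda(\,\cdot,\tau)$; you need $G_\tau(y)=c\,Q_{1,n-1}(y)^{\lambda-\frac n2}e^{-i\langle y,\overline\tau\rangle}$, which still lies in $L^2(\Omega)_\lambda$ for $\lambda>n-1$. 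This is repairable, but note a logical caveat as well: the explicit kernel formula \eqref{eqn:Klmbd} is, in \cite{FK}, Prop.~XIII.1.2, itself a \emph{consequence} of Thm.~XIII.1.1, so your kernel-density route is non-circular only because the present paper quotes \eqref{eqn:Klmbd} as independent input; the self-contained alternative is the Paley--Wiener slice argument of \cite{FK}, showing the Fourier transforms of the slices $f(\cdot+i\eta)$ all equal $e^{-\langle\cdot,\eta\rangle}g$ for a single $g$ supported in $\overline\Omega$.

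Second --- and this is the crux you yourself flagged --- the constant does not come out as you claim. In the paper's displayed conventions ($\langle y,\zeta\rangle=\sum_j y_j\zeta_j$, Lebesgue measure), the substitution $\eta\mapsto\eta/2$ contributes $2^{n-2\lambda}$ while the duplication formula contributes $2^{2\lambda-n-1}$, so all $\lambda$-powers of two \emph{cancel}: one finds $I(e)=\frac{\pi^{\frac n2-1}}{2}\Gamma\left(\lambda-\frac n2\right)\Gamma(\lambda-n+1)$, hence the isometry constant $(2\pi)^nI(e)=2^{n-1}\pi^{\frac{3n}2-1}\Gamma\left(\lambda-\frac n2\right)\Gamma(\lambda-n+1)$, which differs from \eqref{eqn:bnlmd} by the factor $2^{\frac{3n}2-2\lambda}$. (Cross-check at $n=2$ in light-cone coordinates: $I(e)=\frac12\Gamma(\lambda-1)^2$, giving $2\pi^2\Gamma(\lambda-1)^2$, whereas \eqref{eqn:bnlmd} gives $2^{4-2\lambda}\pi^2\Gamma(\lambda-1)^2$.) The printed $b_n(\lambda)$ is Faraut--Koranyi's constant in \emph{their} normalization, where the spin factor carries the trace form $(x|y)=2\langle x,y\rangle$ and the volume element it induces; the factor $2^{-2\lambda+n}$ lives in that dictionary. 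So your concluding assertion that the Beta--Gamma evaluation plus duplication ``matches $b_n(\lambda)$'' is precisely the step that does not go through without performing the convention translation you only gestured at --- your own warning about hidden normalizations marks the exact point where the proposed proof is incomplete.
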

Via the isomorphism \eqref{eqn:Fnisom}, 
 an irreducible unitary representation of $\widetilde G$
 is defined on $L^2(\Omega)_\lambda$ for $\lambda >n-1$,
 to which we refer as the $L^2$-\emph{model}
 of the (relative) holomorphic discrete series representation $\pi_\lambda^{(n)}$.

Similarly,
 we define a positive-valued function 
 $$
 m_\nu'(y'):=Q_{1,n-2}(y')^{\frac{n-1}2-\nu}
 $$
 on the time-like cone $\Omega'\equiv\Omega(n-1)$ in $\R^{1,n-2}$, and we set
$L^2(\Omega')_\nu:= L^2(\Omega', m_\nu'(y')dy')$
 on which the $L^2$-model
 of the (relative) holomorphic discrete series representation
 $\pi_{\nu}^{(n-1)}$ of $\widetilde G'$ is defined via the unitary map
$$
\mathcal F_{n-1} \colon L^2(\Omega')_\nu
{\stackrel{\sim}{\to}} 
\mathcal H^2_\nu(T_{\Omega(n-1)}) \qquad
\mathrm{for}\quad\nu>n-2
$$
up to rescaling $b_{n-1}(\nu)^{-\frac12}$.

\subsubsection{Gegenbauer polynomial and Juhl's conformally covariant operator}

Let  $a_k(\ell,\alpha)$ be as in \eqref{eqn:alalpha}.
We define a polynomial of two variables by
\begin{equation}\label{eqn:IlCuv}
(I_\ell C_\ell^{\alpha})(u,v):=
\sum_{k=0}^{\left[\frac\ell2\right]}a_k(\ell,\alpha)u^k v^{\ell-2k}.
\end{equation}
For instance, we have
 $(I_0 C_0^{\alpha})(u,v)=1$, 
 $(I_1 C_1^{\alpha})(u,v)=2\alpha v$, 
 and $(I_2 C_2^{\alpha})(u,v)=\alpha(2(\alpha+1)v^2-u)$.  
By definition,
 $(I_\ell C_\ell^{\alpha})(w^2,v)$ is a homogeneous polynomial of degree $\ell$, and
$(I_\ell C_\ell^{\alpha})(1,v)$
coincides with the Gegenbauer polynomial
$C_\ell^{\alpha}(v)$, see \eqref{eqn:gegen} in Appendix. (This is the reason why we adopted
the notation $I_\ell C_\ell^{\alpha}$.)

The F-method \cite[Thm.~6.3]{KP16b} shows
 that the differential operator $\mathcal D_{\ell}^{\alpha}$ 
 (see \eqref{eqn:JuhlonCn}) is expressed as

\begin{equation}\label{eqn:iJuhl}
\mathcal D^{\alpha}_\ell=i^{-\ell}\left(I_\ell C^{\alpha}_\ell\right)
\left(-\Delta_{\C^{1,n-2}}, i\frac\partial{\partial\zeta_n}\right).
\end{equation}


\subsubsection{Construction of discrete summands in the $L^2$-model}

Following the idea of the F-method \cite{K14}, we introduce the \emph{holographic operator}
$\Phi_\lambda^\nu$ as a multiplication operator like in the tensor product case (\emph{cf.}
Definition \ref{def:Phi}). By the simplicity of its formula, the holographic operator
$\Phi_\lambda^\nu$ in the $L^2$-model plays a crucial role in the proof of Theorem \ref{thm:170887}.

Retain the basic setting \eqref{eqn:alphal} where $\ell=\nu-\lambda\in\N$
and $\alpha=\lambda-\frac{n-1}2$. 
For a function $h(y')$ on $\Omega(n-1)$, we define $\left(\Phi_\lambda^\nu h\right)(y)$ on $\Omega(n)$ by
\begin{equation}\label{eqn:hologSO}
\left(\Phi_\lambda^\nu h\right)(y):=Q_{1,n-2}(y')^{-\left(\ell+\frac12\right)}(1-y_n^2)^{\lambda-\frac n2} \left(I_\ell
C^\alpha_\ell\right)(Q_{1,n-2}(y'),-y_n)h(y').
\end{equation}

Then $\Phi_\lambda^\nu$ gives rise to a holographic operator
in the $L^2$-model in the following sense:  
\begin{prop}\label{lem:170772}
Suppose that $\lambda>n-1$ and $\nu=\lambda+\ell$ for some $\ell\in\N$.
\begin{itemize}
\item[(1)] The linear map
$\Phi_{\lambda}^{\nu} \colon L^2(\Omega(n-1))_{\nu}\longrightarrow L^2(\Omega(n))_\lambda$ is an isometry up to scalar:
$$
\Vert 
\Phi_{\lambda}^{\nu}(h)\Vert^2_{L^2(\Omega(n))_\lambda}=c_\ell(\lambda)\Vert h\Vert^2_{L^2(\Omega(n-1))_\nu}\quad\mathrm{for}\,\,\mathrm{all}\,\, h\in L^2(\Omega(n-1))_\nu, 
$$
where we recall that the constant $c_\ell(\lambda)$ is given in \eqref{eqn:c-ell}.  
\item[(2)]
$
\Phi_{\lambda}^{\nu}
$
 intertwines the irreducible unitary representation $\pi_{\nu}^{(n-1)}$ of the subgroup
$\widetilde G'$
 with the restriction $\pi_\lambda^{(n)}\big\vert_{\widetilde G'}$.
\end{itemize}
\end{prop}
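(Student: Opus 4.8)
The plan is to establish the two assertions by the same division of labour as in the tensor product case: the isometry (1) reduces to the $L^2$-orthogonality of Gegenbauer polynomials (the analogue of Proposition \ref{prop:phinorm}), while the intertwining property (2) follows by a duality-and-equivariance argument from the fact that $D_{\lambda\to\nu}$ is a symmetry breaking operator (the analogue of Proposition \ref{lem:2} and Theorem \ref{thm:phi}).

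For (1), I would first introduce the coordinate change adapted to the codimension-one foliation of the cone, parallel to \eqref{eqn:SLiota}. Writing $y=(y',y_n)$ with $y'\in\R^{n-1}$ and noting that $Q_{1,n-1}(y)=Q_{1,n-2}(y')-y_n^2$, the conditions $Q_{1,n-1}(y)>0$ and $y_1>0$ force $y'\in\Omega(n-1)$, and the transversal variable $v:=y_n/\sqrt{Q_{1,n-2}(y')}$ runs over $(-1,1)$; this yields a diffeomorphism $\Omega(n-1)\times(-1,1)\stackrel{\sim}{\to}\Omega(n)$ with $dy=\sqrt{Q_{1,n-2}(y')}\,dy'\,dv$ and $Q_{1,n-1}(y)=Q_{1,n-2}(y')(1-v^2)$. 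The crucial algebraic input is the homogeneity of \eqref{eqn:IlCuv}: since $(I_\ell C_\ell^\alpha)(w^2,v)$ is homogeneous of degree $\ell$ and $(I_\ell C_\ell^\alpha)(1,v)=C_\ell^\alpha(v)$, one has $(I_\ell C_\ell^\alpha)(Q_{1,n-2}(y'),-y_n)=Q_{1,n-2}(y')^{\ell/2}\,C_\ell^\alpha(-v)$, with $|C_\ell^\alpha(-v)|^2=|C_\ell^\alpha(v)|^2$ by parity.

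Inserting the explicit formula \eqref{eqn:hologSO} into $\Vert\Phi_\lambda^\nu h\Vert^2_{L^2(\Omega(n))_\lambda}$ and applying the coordinate change, I expect the powers of $Q_{1,n-2}(y')$ to assemble precisely into the density $m_\nu'(y')=Q_{1,n-2}(y')^{\frac{n-1}2-\nu}$, so that the $y'$-integral reproduces $\Vert h\Vert^2_{L^2(\Omega(n-1))_\nu}$, while the transversal factors assemble into the Gegenbauer weight $(1-v^2)^{\alpha-\frac12}$. The remaining one-dimensional integral is then $\int_{-1}^1|C_\ell^\alpha(v)|^2(1-v^2)^{\alpha-\frac12}\,dv=c_\ell(\lambda)$ by \eqref{eqn:c-ell}, giving the desired constant. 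The bookkeeping of these exponents — in particular checking that the transversal weight coming from the factor $(1-y_n^2)^{\lambda-\frac n2}$ together with $Q_{1,n-1}(y)^{\frac n2-\lambda}$ from the measure collapses to exactly $(1-v^2)^{\alpha-\frac12}$, and that the $Q_{1,n-2}$-powers match $m_\nu'$ — is the main computational obstacle, and is where the normalizations $\alpha=\lambda-\frac{n-1}2$ and $\nu=\lambda+\ell$ enter decisively.

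For (2), I would pass through the $L^2$-model. By \cite[Thm.~6.3]{KP16b} the Juhl operator $D_{\lambda\to\nu}$ intertwines $\pi_\lambda^{(n)}\vert_{\widetilde G'}$ with $\pi_\nu^{(n-1)}$; since the Fourier--Laplace transforms $\mathcal F_n,\mathcal F_{n-1}$ are equivariant unitaries up to scalar (Fact \ref{fact:Fnisom}), the operator $\widehat D_{\lambda\to\nu}$ (see \eqref{eqn:JFourier}) intertwines the two $L^2$-models. As all representations involved are unitary, the Hilbert-space adjoint $(\widehat D_{\lambda\to\nu})^*\colon L^2(\Omega(n-1))_\nu\to L^2(\Omega(n))_\lambda$ then intertwines $\pi_\nu^{(n-1)}$ with $\pi_\lambda^{(n)}\vert_{\widetilde G'}$. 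It remains to identify $\Phi_\lambda^\nu$ with a scalar multiple of $(\widehat D_{\lambda\to\nu})^*$; I would verify this by pairing $\Phi_\lambda^\nu h$ against a test function $F\in C_c(\Omega(n))$ and using the explicit integral expression of $\widehat D_{\lambda\to\nu}$ from Proposition \ref{lem:170770} together with the same coordinate change, exactly as in the proof of Proposition \ref{lem:2}. Establishing this adjoint relation — confirming that the multiplication operator \eqref{eqn:hologSO} is dual to the Gegenbauer-type integral operator $\widehat D_{\lambda\to\nu}$ — is the second place where care is needed; once it is in hand, both parts of the proposition follow.
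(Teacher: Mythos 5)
Your proposal is correct and follows essentially the same route as the paper: part (1) is exactly the paper's computation via the diffeomorphism \eqref{eqn:fibration} (your opposite sign convention for $v$ is harmless by the parity of $C_\ell^\alpha$), with the exponent bookkeeping packaged in the paper by the function $M(y',v)$ of \eqref{eqn:Ayv} and the identities \eqref{eqn:mmQ}--\eqref{eqn:holoiota}, reducing to the Gegenbauer norm \eqref{eqn:L2Gegen}. Your part (2) is precisely the paper's argument as well: equivariance of $\widehat D_{\lambda\to\nu}$ plus unitarity gives equivariance of its adjoint, and the pairing computation you sketch is the paper's proof of Proposition \ref{prop:181248}, which identifies $(\widehat D_{\lambda\to\nu})^*=i^\ell\Phi_\lambda^\nu$.
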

We also discuss some further basic properties of the holographic operators
$\Phi_\lambda^\nu$ in Proposition \ref{prop:181248}.

\subsubsection{Proof of Theorem \ref{thm:170887}}\label{subsec:proofTHM32}

Postponing the proof of Proposition \ref{lem:170772} until Section
\ref{sec:357} and Proposition \ref{prop:181248} below until Section \ref{sec:358}
 we complete the proof of Theorem  \ref{thm:170887}.
\begin{prop}\label{prop:normJuhl}
Suppose $\lambda>n-1$ and $\nu=\lambda+\ell$ with $\ell\in\N$. Then the differential operator
$D_{\lambda\to\nu}$ extends to a continuous linear map
$D_{\lambda\to\nu} \colon \mathcal H^2(T_{\Omega(n)})_\lambda\longrightarrow \mathcal H^2(T_{\Omega(n-1)})_\nu$ with the following operator norm:
$$
\Vert D_{\lambda\to\nu}\Vert_{\mathrm{op}}^2=r_\ell(\lambda)c_\ell(\lambda).
$$
\end{prop}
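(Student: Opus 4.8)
The plan is to follow verbatim the two-step reduction used for the Rankin--Cohen operators in Proposition \ref{prop:RCopnorm}: first pass to the $L^2$-model by the Fourier--Laplace transforms, and then identify the operator norm of the transformed Juhl operator $\widehat D_{\lambda\to\nu}$ with the constant $c_\ell(\lambda)$ via the holographic operator $\Phi_\lambda^\nu$. The continuity of $D_{\lambda\to\nu}$ between the Bergman spaces is already guaranteed by \cite[Thm.~5.13]{KP16a}, so all operator norms in sight are finite and the manipulations below are legitimate.

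First I would apply Lemma \ref{lem:opFourier}(1) to the two Fourier--Laplace transforms supplied by Fact \ref{fact:Fnisom}, namely $\mathcal F_n\colon L^2(\Omega(n))_\lambda \to \mathcal H^2(T_{\Omega(n)})_\lambda$ and $\mathcal F_{n-1}\colon L^2(\Omega(n-1))_\nu \to \mathcal H^2(T_{\Omega(n-1)})_\nu$, taking $D=D_{\lambda\to\nu}$ and $\widehat D=\widehat D_{\lambda\to\nu}=\mathcal F_{n-1}^{-1}\circ D_{\lambda\to\nu}\circ\mathcal F_n$ (see \eqref{eqn:JFourier}). The relevant ratio of Plancherel densities in Lemma \ref{lem:opFourier} is $b_{n-1}(\nu)/b_n(\lambda)$, which by the definition \eqref{eqn:r-ell} is precisely $r_\ell(\lambda)$. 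Hence Lemma \ref{lem:opFourier}(1) yields
\[
\Vert D_{\lambda\to\nu}\Vert_{\mathrm{op}}^2 = r_\ell(\lambda)\,\Vert \widehat D_{\lambda\to\nu}\Vert_{\mathrm{op}}^2,
\]
reducing the whole statement to the claim $\Vert \widehat D_{\lambda\to\nu}\Vert_{\mathrm{op}}^2 = c_\ell(\lambda)$.

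For the second step I would invoke the adjoint relation between $\widehat D_{\lambda\to\nu}$ and the holographic operator $\Phi_\lambda^\nu$ encoded by the arrow from $\widehat D_{\lambda\to\nu}$ to $\Phi_\lambda^\nu$ in diagram \eqref{diagram:squig2}, which is the Juhl analog of Proposition \ref{lem:2}: $(\Phi_\lambda^\nu)^*$ is a scalar of modulus one times $\widehat D_{\lambda\to\nu}$. One establishes this exactly as in the proof of Proposition \ref{lem:2}, pairing $\Phi_\lambda^\nu h$ against a test function $F$ in the measure $m_\lambda(y)\,dy$ on $\Omega(n)$ and transporting the Gegenbauer-weighted multiplication operator \eqref{eqn:hologSO} into the integral expression of $\widehat D_{\lambda\to\nu}$. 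Since taking adjoints preserves the operator norm, this gives $\Vert \widehat D_{\lambda\to\nu}\Vert_{\mathrm{op}}=\Vert \Phi_\lambda^\nu\Vert_{\mathrm{op}}$, and Proposition \ref{lem:170772}(1) identifies the latter as $\Vert \Phi_\lambda^\nu\Vert_{\mathrm{op}}^2=c_\ell(\lambda)$. Combined with the displayed identity above, this finishes the proof.

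The main obstacle is the second step, specifically confirming that $\widehat D_{\lambda\to\nu}$ is proportional to $(\Phi_\lambda^\nu)^*$. This is where the F-method does the real work: one must use formula \eqref{eqn:iJuhl}, expressing $\mathcal D_\ell^\alpha$ as multiplication by the symbol $(I_\ell C_\ell^\alpha)\!\left(-\Delta_{\C^{1,n-2}},\, i\,\partial/\partial\zeta_n\right)$, and then check that after restriction to $\zeta_n=0$ and transport to the cone this multiplication operator is dual to the Gegenbauer-weighted operator \eqref{eqn:hologSO}. The delicate part is the bookkeeping of the Gegenbauer normalization $a_k(\ell,\alpha)$ together with the weights $m_\lambda$ and $m_\nu'$ on $\Omega(n)$ and $\Omega(n-1)$; everything else is a formal transcription of the Rankin--Cohen argument, with the Jacobi polynomials replaced by the Gegenbauer polynomials $C_\ell^\alpha$ and the interval $[-1,1]$ replaced by the appropriate slice of the light cone.
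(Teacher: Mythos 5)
Your proposal is correct and coincides with the paper's own proof: the paper likewise combines Lemma \ref{lem:opFourier}(1) (with the Plancherel densities $b_{n-1}(\nu)/b_n(\lambda)=r_\ell(\lambda)$), the adjoint identity $(\widehat D_{\lambda\to\nu})^*=i^\ell\Phi_\lambda^\nu$ of Proposition \ref{prop:181248} (proved exactly as you outline, via the Gegenbauer integral expression of Proposition \ref{lem:170770} and the coordinate change \eqref{eqn:mmAQ-SO}--\eqref{eqn:holoiota}), and the isometry $\Vert\Phi_\lambda^\nu\Vert_{\mathrm{op}}^2=c_\ell(\lambda)$ of Proposition \ref{lem:170772}(1). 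No gaps.
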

\begin{proof}
It follows from Lemma \ref{lem:opFourier} (1) and  Propositions \ref{lem:170772}
and \ref{prop:181248} that
$$
\Vert D_{\lambda\to\nu}\Vert^2_{\mathrm{op}}=
r_\ell(\lambda) \Vert \widehat D_{\lambda\to\nu}\Vert^2_{\mathrm{op}}
=r_\ell(\lambda)\Vert \Phi_{\lambda}^{\nu}\Vert^2_{\mathrm{op}}
=r_\ell(\lambda)c_\ell(\lambda).
$$
\end{proof}

\begin{proof}[Proof of Theorem \ref{thm:170887}]
By Lemma \ref{lem:Hilbertdeco} and by the (abstract) branching law \eqref{eqn:absbraConf}
for the restriction $\widetilde G\downarrow \widetilde G'$, 
the theorem follows from the expression of the operator norm of the differential operator $D_{\lambda\to\nu}$ given in Proposition \ref{prop:normJuhl}.
\end{proof}

${}$


The rest of this section is devoted to the proof of Proposition \ref{lem:170772}.

\subsubsection{Coordinate change in the $L^2$-model.}

 As in Section \ref{sec:coord-changeSL2} for the tensor product case, we introduce the following coordinates of the time-like cone $\Omega(n)$ in $\R^{1,n-1}$:
\begin{equation}\label{eqn:fibration}
\iota \colon \Omega(n-1)\times (-1,1)\stackrel
{\sim}{\longrightarrow}
\Omega(n), \quad (y',v)\mapsto (y',-\sqrt{Q_{1,n-2}(y')}v),
\end{equation}
which is a bijection because $(y',y_n)\in\Omega(n)$ if and only if 
$y'\in\Omega(n-1)$ and $ y_n^2<Q_{1,n-2}(y')$.

We define a function on $\Omega(n-1) \times (-1,1)$ by
\begin{equation}\label{eqn:Ayv}
M(y',v)\equiv M_{\lambda,\nu}(y',v):= Q_{1,n-2}(y')^{\frac{\ell+1}2}(1-v^2)^{\frac n2-\lambda}.
\end{equation}

Via the isomorphism $\iota$, the ratio of the densities $m_\lambda(y)$ and $m_\nu(y')$, and the holographic
operator $\Phi_\lambda^\nu$ are expressed as follows:
\begin{eqnarray}
\label{eqn:mmAQ-SO}
\frac{m'_\nu(y')}{m_\lambda(y)} &=&M(y',v)^{-1}\;Q_{1,n-2}(y')^{-\frac\ell2},\\
\label{eqn:mmQ}
m_\lambda(y)dy&=&M(y',v)^2\; m'_{\nu}(y')dy'(1-v^2)^{\lambda-\frac n2}dv,\\
\left(\Phi_\lambda^\nu h\right)\circ \iota(y',v)&=&M(y',v)^{-1}\; C_\ell^\alpha(v) h(y'). \label{eqn:holoiota}
\end{eqnarray}

\subsubsection{Fourier transform of the holomorphic Juhl operators}

For $\lambda>n-1$ and $\nu=\lambda+\ell$ ($\ell\in\N$), the holomorphic Juhl operator $D_{\lambda\to\nu}$ gives rise to a continuous operator $\mathcal H^2(T_{\Omega(n)})_\lambda\To 
\mathcal H^2(T_{\Omega(n-1)})_\nu$ between the weighted Bergman spaces \cite[Thm.~5.13]{KP16a}.

 We define a linear map 
$
\widehat D_{\lambda\to\nu}:L^2(\Omega)_\lambda\To L^2(\Omega')_{\nu}
$
 by
\begin{equation}\label{eqn:JFourier}
\widehat D_{\lambda\to\nu}:=\mathcal F_{n-1}^{-1}\circ D_{\lambda\to\nu}\circ\mathcal F_n.
\end{equation}

Then the idea of the F-method \cite{KP16a} implies
 that the ``Fourier transform''
$\widehat D_{\lambda\to\nu}$ of the holomorphic Juhl operator
$D_{\lambda\to\nu}$ is given by a Gegenbauer transform (\emph{cf}. \cite[Chap.~15]{DB})
 along the trajectory in \eqref{eqn:fibration}
 where the parameter $v$  moves:

\begin{prop}\label{lem:170770}
The operator $\widehat D_{\lambda\to\nu}$ is given by the following integral transform:
$$
\left(\widehat D_{\lambda\to\nu} F\right)(y')=
 i^{-\ell} Q_{1,n-2}(y')^{\frac{\ell+1}2}\int_{-1}^1F\circ\iota(y',v) C_\ell^\alpha(v)dv \quad \mathrm{for}
 \quad y'\in\Omega',
$$
where we set $\alpha=\lambda-\frac{n-1}2$.
\end{prop}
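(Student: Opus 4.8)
The plan is to reduce everything to the multiplier description of $\mathcal D_\ell^\alpha$ under the Fourier--Laplace transform, which is exactly what the F-method expression \eqref{eqn:iJuhl} supplies. I would work first on the dense subspace $C_c(\Omega)$, where differentiating under the integral sign in the definition of $\mathcal F_n$ is unproblematic, and extend afterwards by the known continuity of $D_{\lambda\to\nu}$ between the weighted Bergman spaces \cite[Thm.~5.13]{KP16a}. The starting point is the elementary rule that $\frac{\partial}{\partial\zeta_j}$ intertwines, via $\mathcal F_n$, with multiplication by $iy_j$. Consequently $-\Delta_{\C^{1,n-2}}$ corresponds to multiplication by $Q_{1,n-2}(y')$ (two sign flips cancel) and $i\frac{\partial}{\partial\zeta_n}$ to multiplication by $-y_n$.

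Substituting these two multipliers into \eqref{eqn:iJuhl} shows that, before restriction,
\[
\left(\mathcal D_\ell^\alpha\mathcal F_nF\right)(\zeta)=\mathcal F_n\!\left(i^{-\ell}\left(I_\ell C_\ell^\alpha\right)\!\left(Q_{1,n-2}(y'),-y_n\right)F\right)(\zeta).
\]
Applying $\mathrm{Rest}_{\zeta_n=0}$ removes the term $y_n\zeta_n$ from the phase $\langle y,\zeta\rangle$, so the exponential becomes $e^{i\langle y',\zeta'\rangle}$ and the $\zeta'$-dependence is precisely of Fourier--Laplace type for the cone $\Omega(n-1)$. It then remains to integrate out the $y_n$-direction.

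Here the coordinate change $\iota$ of \eqref{eqn:fibration} does the work: writing $y=\iota(y',v)$ with $y_n=-\sqrt{Q_{1,n-2}(y')}\,v$ gives $dy=\sqrt{Q_{1,n-2}(y')}\,dy'\,dv$ with $v\in(-1,1)$, while the homogeneity relation $\left(I_\ell C_\ell^\alpha\right)(t^2u,tv)=t^\ell\left(I_\ell C_\ell^\alpha\right)(u,v)$ (immediate from \eqref{eqn:IlCuv}) applied with $t=\sqrt{Q_{1,n-2}(y')}$ collapses the two-variable polynomial into $Q_{1,n-2}(y')^{\ell/2}\,C_\ell^\alpha(v)$, using $\left(I_\ell C_\ell^\alpha\right)(1,v)=C_\ell^\alpha(v)$. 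Combining the Jacobian power $Q_{1,n-2}(y')^{1/2}$ with this $Q_{1,n-2}(y')^{\ell/2}$ produces the total factor $Q_{1,n-2}(y')^{(\ell+1)/2}$, and the remaining integral over $y'$ is recognized as $\mathcal F_{n-1}$ of the bracketed expression. Inverting $\mathcal F_{n-1}$ then yields the asserted formula.

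I expect the only genuinely delicate point to be the bookkeeping of signs and powers: confirming that $\partial_{\zeta_j}\leftrightarrow iy_j$ sends $-\Delta_{\C^{1,n-2}}$ to $+Q_{1,n-2}(y')$ and $i\frac{\partial}{\partial\zeta_n}$ to $-y_n$, and then checking that the Jacobian exponent $\tfrac12$ and the homogeneity exponent $\tfrac\ell2$ add up correctly to $\tfrac{\ell+1}2$. The analytic justifications---differentiating under the integral and interchanging the $y_n$- and $y'$-integrations---are routine on $C_c(\Omega)$ and transfer to $L^2(\Omega)_\lambda$ by density together with the known continuity of $\widehat D_{\lambda\to\nu}$.
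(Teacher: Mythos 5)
Your proof is correct and follows essentially the same route as the paper: both pass through the F-method multiplier identity \eqref{eqn:iJuhl} (with $-\Delta_{\C^{1,n-2}}\leftrightarrow Q_{1,n-2}(y')$ and $i\frac{\partial}{\partial\zeta_n}\leftrightarrow -y_n$), apply the coordinate change \eqref{eqn:fibration}, and use the homogeneity $(I_\ell C_\ell^\alpha)(t^2u,tv)=t^\ell(I_\ell C_\ell^\alpha)(u,v)$ to collapse the two-variable polynomial into $Q_{1,n-2}(y')^{\ell/2}C_\ell^\alpha(v)$, recognizing the remaining $y'$-integral as $\mathcal F_{n-1}$. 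Your sign and exponent bookkeeping (Jacobian $Q^{1/2}$ combining with $Q^{\ell/2}$ to give $Q^{(\ell+1)/2}$) matches the paper exactly, and your density-plus-continuity remark is a harmless extra precaution.
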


\begin{proof}
Let $\alpha=\lambda-\frac{n-1}2$. 
Then it follows from \eqref{eqn:JuhlonCn} and \eqref{eqn:iJuhl} that 
\begin{eqnarray}
\left(i^{\ell}D_{\lambda\to\nu}\circ\mathcal F_n\right)F&=&
\mathrm{Rest}_{\zeta_n=0}\circ \left(I_\ell C_\ell^\alpha\right)
\left( -\Delta_{\C^{1,n-2}},i\frac{\partial}{\partial\zeta_n}\right)\mathcal F_nF
\nonumber\\
&=&\mathrm{Rest}_{\zeta_n=0}\circ\mathcal F_n
\left( I_\ell C_\ell^\alpha(Q_{1,n-2}(y'),-y_n) F\right), \label{eqn:FJuhl}
\end{eqnarray}
for $F\in L^2(\Omega(n))_\lambda$.
Since $(I_\ell C_\ell^\alpha)(u^2,w)=u^\ell C_\ell^\alpha\left(\frac wu\right)$,
 the right-hand side amounts to
\begin{eqnarray*}
&&\int_{\Omega(n-1)}\int_{-1}^1 F\circ\iota(y',v)
Q_{1,n-2}(y')^{\frac{\ell+1}2}
C^\alpha_\ell(v) e^{i\langle y',\zeta'\rangle} dy' dv\\
&=&\mathcal F_{n-1}\left(
Q_{1,n-1}(y')^{\frac{\ell+1}2}\int_{-1}^1F\circ\iota(y',v) C^\alpha_\ell(v)dv\right)
\end{eqnarray*}
via the diffeomorphism \eqref{eqn:fibration}. Since the left-hand side
 of \eqref{eqn:FJuhl} is equal to $i^\ell \mathcal F_{n-1}\circ\widehat D_{\lambda\to\nu}F$
 by the definition \eqref{eqn:JFourier} of $\widehat D_{\lambda\to\nu}$, we proved Proposition
 \ref{lem:170770}.
\end{proof}

\subsubsection{Proof of Proposition \ref{lem:170772}}\label{sec:357}

\begin{proof}[Proof of Proposition \ref{lem:170772}]
Let $\alpha=\lambda-\frac{n-1}2$ as before.
By \eqref{eqn:mmQ} and \eqref{eqn:holoiota} we have 
$$
\Vert\Phi_{\lambda}^{\nu} h\Vert^2_{L^2(\Omega(n))_\lambda}=\Vert h\Vert^2_{L^2(\Omega(n-1))_{\lambda+\ell}}
\int_{-1}^1\left\vert C^{\alpha}_\ell(v)\right\vert^2 (1-v^2)^{\alpha-\frac 12} dv.
$$
Thus the first assertion holds by the definition \eqref{eqn:c-ell} of $c_\ell(\lambda)$.
 
The second assertion follows readily from Proposition \ref{prop:181248}
 below.
\end{proof}

\subsubsection{Holographic operators and the adjoint of symmetry breaking operators}
\label{sec:358}

We have constructed operators $\Phi_\lambda^\nu$ in the
$L^2$-model and $\widehat\Phi_\lambda^\nu$ in the holomorphic model using the F-method. On the other hand, the adjoint of symmetry
 breaking operators between unitary representations yield holographic operators in general (\emph{cf}. Theorem \ref{thm:SBTinj} (1) below).
In our setting, these operators are proportional to each other because the branching law \eqref{eqn:absbraConf} is multiplicity-free. We determine the proportionality constants:
\begin{prop}\label{prop:181248}
Suppose $\lambda> n-1$ and $\ell=\nu-\lambda\in\N$. Then we have
\begin{align*}
\left(\widehat D_{\lambda\to\nu}\right)^* &=i^\ell \Phi_{\lambda}^{\nu}, 
\\
D_{\lambda\to\nu}^* &=i^\ell r_\ell(\lambda) \widehat{\Phi_\lambda^\nu}.
\end{align*}
\end{prop}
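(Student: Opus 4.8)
The plan is to establish the two identities in sequence: the first in the $L^2$-model by a direct adjoint computation, and the second by transferring the first to the holomorphic model through the Fourier--Laplace transforms $\mathcal F_n$ and $\mathcal F_{n-1}$. This parallels the treatment of the tensor product case in Propositions \ref{lem:2} and \ref{prop:PsiRC}, and it closes the square \eqref{diagram:squig2}.

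For the first identity I would fix $h\in C_c(\Omega(n-1))$ and $F\in C_c(\Omega(n))$, which are dense in the respective Hilbert spaces, and compute the pairing $(h,\widehat D_{\lambda\to\nu}F)_{L^2(\Omega(n-1))_\nu}$. Substituting the explicit Gegenbauer integral formula for $\widehat D_{\lambda\to\nu}$ from Proposition \ref{lem:170770}, and using that $\alpha=\lambda-\frac{n-1}2$ is real and that $Q_{1,n-2}(y')>0$ on $\Omega(n-1)$, the prefactor $i^{-\ell}$ turns into $i^{\ell}$ under complex conjugation while $C_\ell^\alpha(v)$ and the power of $Q_{1,n-2}(y')$ remain real. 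I would then pass from the $y'$-integral to an integral over $\Omega(n)$ via the diffeomorphism $\iota$ in \eqref{eqn:fibration}, inserting the measure decomposition \eqref{eqn:mmQ} together with the expression \eqref{eqn:holoiota} for $\Phi_\lambda^\nu\circ\iota$. The one calculation that must come out exactly right is the cancellation $M(y',v)(1-v^2)^{\lambda-\frac n2}=Q_{1,n-2}(y')^{\frac{\ell+1}2}$, which follows from the definition \eqref{eqn:Ayv} of $M$; with it the pairing collapses to $i^\ell(\Phi_\lambda^\nu h,F)_{L^2(\Omega(n))_\lambda}$. Comparing the two sides gives $(\Phi_\lambda^\nu)^\ast=i^\ell\widehat D_{\lambda\to\nu}$, and taking adjoints yields the equivalent form $(\widehat D_{\lambda\to\nu})^\ast=i^\ell\Phi_\lambda^\nu$. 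Continuity of $\widehat D_{\lambda\to\nu}$ and of $\Phi_\lambda^\nu$ (Proposition \ref{lem:170772}(1)) then lets me extend the identity from these dense subspaces to all of the Hilbert spaces.

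For the second identity I would write $D_{\lambda\to\nu}=\mathcal F_{n-1}\circ\widehat D_{\lambda\to\nu}\circ\mathcal F_n^{-1}$ from the definition \eqref{eqn:JFourier} and take adjoints. By Fact \ref{fact:Fnisom} each Fourier--Laplace transform satisfies $\mathcal F_n^\ast=b_n(\lambda)\mathcal F_n^{-1}$ and $\mathcal F_{n-1}^\ast=b_{n-1}(\nu)\mathcal F_{n-1}^{-1}$, so that
\[
D_{\lambda\to\nu}^\ast=(\mathcal F_n^{-1})^\ast(\widehat D_{\lambda\to\nu})^\ast\mathcal F_{n-1}^\ast=\frac{b_{n-1}(\nu)}{b_n(\lambda)}\,\mathcal F_n\circ(\widehat D_{\lambda\to\nu})^\ast\circ\mathcal F_{n-1}^{-1}.
\]
Inserting the first identity $(\widehat D_{\lambda\to\nu})^\ast=i^\ell\Phi_\lambda^\nu$, recognizing $\widehat{\Phi_\lambda^\nu}=\mathcal F_n\circ\Phi_\lambda^\nu\circ\mathcal F_{n-1}^{-1}$ as the holomorphic-model transfer of $\Phi_\lambda^\nu$, and using the definition \eqref{eqn:r-ell} of $r_\ell(\lambda)=b_{n-1}(\nu)/b_n(\lambda)$ gives $D_{\lambda\to\nu}^\ast=i^\ell r_\ell(\lambda)\widehat{\Phi_\lambda^\nu}$. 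Alternatively this last step is exactly Lemma \ref{lem:opFourier}(2) applied to $D_{\lambda\to\nu}$.

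The genuinely delicate point is the bookkeeping of constants in the first step: the powers of $Q_{1,n-2}(y')$ and of $1-v^2$ arising from the Gegenbauer integral, from the weight $M$, and from the two measures must cancel to leave a clean factor $i^\ell$ and nothing else, and the reality of $C_\ell^\alpha$ and of $Q_{1,n-2}(y')$ must be invoked to handle the conjugation. Everything else is formal once Proposition \ref{lem:170770} and the coordinate identities \eqref{eqn:mmQ} and \eqref{eqn:holoiota} are in hand, since the analytic heavy lifting---the Gegenbauer-transform description of $\widehat D_{\lambda\to\nu}$ and the decomposition of the invariant measure along the foliation \eqref{eqn:fibration}---has already been carried out.
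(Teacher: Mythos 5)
Your proposal is correct and follows essentially the same route as the paper: the first identity by pairing $\bigl(h,\widehat D_{\lambda\to\nu}F\bigr)$ against the Gegenbauer integral formula of Proposition \ref{lem:170770} and converting via the coordinate change \eqref{eqn:fibration} with the measure and holographic-operator identities (your use of \eqref{eqn:mmQ} with the cancellation $M(y',v)(1-v^2)^{\lambda-\frac n2}=Q_{1,n-2}(y')^{\frac{\ell+1}2}$ is just an equivalent bookkeeping of the paper's appeal to \eqref{eqn:mmAQ-SO} and \eqref{eqn:holoiota}), and the second identity by Lemma \ref{lem:opFourier} with $r_\ell(\lambda)=b_{n-1}(\nu)/b_n(\lambda)$, exactly as the paper does. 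Your extra care with dense subspaces and the continuity of $\Phi_\lambda^\nu$ from Proposition \ref{lem:170772}(1) is harmless and avoids any circularity, since only part (2) of that proposition relies on the present statement.
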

\begin{proof}
Take  any $h\in L^2(\Omega(n-1))_\nu$ and $F\in L^2(\Omega(n))_\lambda$ with $\nu-\lambda=\ell\in\N$. 
Since $\alpha=\lambda-\frac{n-1}2$ is real, $C_\ell^\alpha(v)$ is real-valued for $-1< v< 1$, hence we have from Proposition
\ref{lem:170770}
\begin{eqnarray*}
&&\left( h,\widehat D_{\lambda\to\nu}F\right)_{L^2(\Omega(n-1))_\nu}\\&=&i^\ell
\int_{\Omega(n-1)}h(y')\left(\int_{-1}^1 Q_{1,n-2}(y')^{\frac{\ell+1}2}\overline{F\circ\iota(y',v )}
 C_\ell^\alpha(v)dv\right) m'_\nu(y')dy'\\
&=& i^\ell \int_{\Omega(n)}h(y')\overline{F(y)}(I_\ell C_\ell^\alpha)(Q_{1,n-2}(y'),-y_n)m'_\nu(y')dy\\
&=& i^\ell \int_{\Omega(n)}\left(\Phi_{\lambda}^{\nu}h\right)(y)\overline{F(y)} m_\lambda(y)dy\\
&=& i^\ell\left(\Phi_{\lambda}^{\nu}h,F\right)_{L^2(\Omega(n))_\lambda}.
\end{eqnarray*}
Here we have used \eqref{eqn:mmAQ-SO} and \eqref{eqn:holoiota} in the third identity.
Hence the first equality is shown. 
By Lemma \ref{lem:opFourier}, 
 the second equality follows from the first one.
\end{proof}

\subsection{Explicit integral formula for the holographic operator}

In this section, we give an integral formula for the holographic operator $D_{\lambda\to\nu}^*$
in the holomorphic model, giving thus an answer to Problem A.1 in Section 1, see Theorem 
\ref{thm:CIRM18} below.

\subsubsection{Construction of discrete summands in the holomorphic model}

In contrast to the holographic operators $\Psi_{\lambda',\lambda''}^{\lambda'''}$ (Definition \ref{def:HOtensor}) in the tensor product case in Section \ref{sec:RCT}, we do not have a simple integral
expression for a holographic operator like \eqref{eqn:psi} in the present setting. 
Instead, we adopt an alternative approach to construct a holographic operator
 by introducing a {\it{relative reproducing kernel}} $K_{\lambda,\nu}(\zeta,\tau')$ as below.

For $\lambda,\nu\in\C$ with $\ell:=\nu-\lambda\in\N$, we set
\begin{equation}\label{eqn:holoKlmdnu}
K_{\lambda,\nu}(\zeta,\tau'):=\frac{
\left((\zeta_1-\overline \tau_1)^2
-(\zeta_2-\overline \tau_2)^2-\cdots - (\zeta_{n-1}-\overline \tau_{n-1})^2-\zeta_n^2\right)^{-\nu}}
{\zeta_n^{\lambda-\nu}},
\end{equation}
where $\zeta=(\zeta_1,\cdots,\zeta_n)\in T_{\Omega(n)}$ and $\tau'=(\tau_1,\cdots,\tau_{n-1})\in T_{\Omega(n-1)}$.

\begin{rem}
$K_{\lambda,\nu}(\zeta,\tau')$ may be viewed as the holomorphic counterpart of the distribution kernel  of a conformally covariant \emph{integral} symmetry breaking operator for the pair of Riemannian
manifolds $(S^n, S^{n-1})$, see \cite{KS}. See also \eqref{eqn:Klmdnu} for the case $n=2$.
\end{rem}

Let $d\mu_\nu$ be a measure on $T_{\Omega(n-1)}$ given
by
$$d\mu_\nu(\tau'):=\left(\frac i2\right)^{n-1}Q_{1,n-2}(\mathrm{Im}\; \tau')^{\nu-n+1}d\tau'd\overline \tau'.$$

\begin{thm}[holographic operator]\label{thm:CIRM18}
Let $n\geq3$.
 Suppose $\lambda>n-1$ and
 $\nu=\lambda+\ell$ with $\ell\in\N$. 
 Then the integral
 $$
 \int_{T_{\Omega(n-1)}} K_{\lambda,\nu} (\zeta,\tau') g(\tau') d\mu_\nu(\tau')
 $$
 converges for all
  $g\in \mathcal H^2(T_{\Omega(n-1)})_\nu$ and $\zeta\in T_{\Omega(n)}$.
  Moreover, it gives the adjoint  operator $D^*_{\lambda\to\nu}$ up to scalar multiplication:
\begin{eqnarray*}
\left(D^*_{\lambda\to\nu} g\right)(\zeta)=C\int_{T_{\Omega(n-1)}} K_{\lambda,\nu}(\zeta,\tau') g(\tau') d\mu_\nu(\tau'),
\end{eqnarray*}
where the constant $C$ is given by
\begin{equation}\label{eqn:CIRM18}
C= 
\frac{2^{2\lambda-2n+\ell-1}(\lambda-n+1)_{n+\ell-1}(2\lambda-n)_{\ell+1}}{i^{2\lambda+2\ell} \pi^n \ell !}.
\end{equation}
In particular, it
 yields an injective continuous $\widetilde G'$-intertwining operator between weighted Bergman spaces,
$
 \mathcal H^2(T_{\Omega(n-1)})_\nu
\hookrightarrow \mathcal H^2(T_{\Omega(n)})_\lambda.
$
\end{thm}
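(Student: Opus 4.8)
The plan is to realize $D^*_{\lambda\to\nu}$ through the reproducing-kernel calculus and to identify its integral kernel by applying the holomorphic Juhl operator to the reproducing kernel of $\mathcal H^2(T_{\Omega(n)})_\lambda$. Since $K_\lambda(\cdot,\zeta)\in\mathcal H^2(T_{\Omega(n)})_\lambda$ with $f(\zeta)=\langle f,K_\lambda(\cdot,\zeta)\rangle$, and $D_{\lambda\to\nu}$ is continuous between the weighted Bergman spaces, for every $g\in\mathcal H^2(T_{\Omega(n-1)})_\nu$ and $\zeta\in T_{\Omega(n)}$ I would write
\begin{align*}
(D^*_{\lambda\to\nu}g)(\zeta)
&=\langle D^*_{\lambda\to\nu}g,\,K_\lambda(\cdot,\zeta)\rangle_{\mathcal H^2(T_{\Omega(n)})_\lambda}
=\langle g,\,D_{\lambda\to\nu}K_\lambda(\cdot,\zeta)\rangle_{\mathcal H^2(T_{\Omega(n-1)})_\nu}\\
&=\int_{T_{\Omega(n-1)}} g(\tau')\,\overline{\bigl(D_{\lambda\to\nu}K_\lambda(\cdot,\zeta)\bigr)(\tau')}\,d\mu_\nu(\tau'),
\end{align*}
where I use that $d\mu_\nu$ is exactly the measure defining the inner product on $\mathcal H^2(T_{\Omega(n-1)})_\nu$. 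Thus the whole statement reduces to the single differential identity $\overline{\bigl(D_{\lambda\to\nu}K_\lambda(\cdot,\zeta)\bigr)(\tau')}=C\,K_{\lambda,\nu}(\zeta,\tau')$.

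The heart of the matter is to compute $D_{\lambda\to\nu}K_\lambda(\cdot,\zeta)=k_{\lambda,n}\,\mathrm{Rest}_{w_n=0}\circ\mathcal D_\ell^{\alpha}\bigl[Q_{1,n-1}(w-\bar\zeta)^{-\lambda}\bigr]$ with $\alpha=\lambda-\frac{n-1}2$. Setting $u=w-\bar\zeta$ and $Q:=Q_{1,n-1}(u)=Q_{1,n-2}(u')-u_n^2$, the key claim is the closed-form reduction
$$\mathcal D_\ell^{\alpha}\bigl[Q^{-\lambda}\bigr]=c_{\ell,\lambda,n}\;u_n^{\ell}\,Q^{-\lambda-\ell},\qquad \alpha=\lambda-\tfrac{n-1}2,$$
for an explicit constant $c_{\ell,\lambda,n}$. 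Although $\mathcal D_\ell^{\alpha}=\sum_k a_k(\ell,\alpha)\,\partial_{u_n}^{\ell-2k}\Delta^k_{\C^{1,n-2}}$ a priori produces a sum of terms of type $u_n^{\ell-2j}Q^{-\lambda-\ell+j}$, all lower-order contributions cancel precisely at the distinguished value $\alpha=\lambda-\frac{n-1}2$, leaving a single monomial; this cancellation is the manifestation of the F-method. It can be proved either by the Gegenbauer generating function — which turns $\sum_\ell \mathcal D_\ell^{\alpha}t^\ell$ into the operator $(1-2t\,\partial_{u_n}+t^2\Delta_{\C^{1,n-2}})^{-\alpha}$ applied to $Q^{-\lambda}$ — or by a direct induction built on the elementary identities $\partial_{u_n}Q^s=-2s\,u_nQ^{s-1}$ and $\Delta_{\C^{1,n-2}}Q^s=2s(2s+n-3)Q^{s-1}+4s(s-1)u_n^2Q^{s-2}$ together with the coefficients $a_k(\ell,\alpha)$ in \eqref{eqn:alalpha}. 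Restricting to $w_n=0$ (so that $u_n=-\bar\zeta_n$ and $u'=\tau'-\bar\zeta'$) and conjugating then yields $\bar\zeta_n^{\ell}\bigl(Q_{1,n-2}(\tau'-\bar\zeta')-\bar\zeta_n^2\bigr)^{-\nu}=K_{\lambda,\nu}(\zeta,\tau')$, since $\nu=\lambda+\ell$ and $K_{\lambda,\nu}(\zeta,\tau')=\zeta_n^{\ell}\bigl(Q_{1,n-2}(\zeta'-\bar\tau')-\zeta_n^2\bigr)^{-\nu}$ by \eqref{eqn:holoKlmdnu}. Collecting $k_{\lambda,n}$, $c_{\ell,\lambda,n}$ and the complex conjugation of the factor $(2i)^{2\lambda}$ in $k_{\lambda,n}$ then produces the constant $C$ in \eqref{eqn:CIRM18}.

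The remaining assertions are immediate once the kernel is identified. Convergence of the integral for every $g\in\mathcal H^2(T_{\Omega(n-1)})_\nu$ and $\zeta\in T_{\Omega(n)}$ holds because $D_{\lambda\to\nu}K_\lambda(\cdot,\zeta)\in\mathcal H^2(T_{\Omega(n-1)})_\nu$ (as $K_\lambda(\cdot,\zeta)\in\mathcal H^2(T_{\Omega(n)})_\lambda$ and $D_{\lambda\to\nu}$ is continuous), so that $\tau'\mapsto \overline{K_{\lambda,\nu}(\zeta,\tau')}$ lies in that space and the Cauchy--Schwarz inequality applies. Moreover $D^*_{\lambda\to\nu}$ is continuous as the adjoint of the continuous operator $D_{\lambda\to\nu}$, and it intertwines $\pi_\nu^{(n-1)}$ with $\pi_\lambda^{(n)}\big\vert_{\widetilde G'}$ because the adjoint of a $\widetilde G'$-intertwining operator between unitary representations is again $\widetilde G'$-intertwining. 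Finally, by Proposition \ref{prop:normJuhl} together with the multiplicity-one branching law \eqref{eqn:absbraConf}, the map $D^*_{\lambda\to\nu}$ is an isometry up to the nonzero scalar $r_\ell(\lambda)c_\ell(\lambda)$, hence bounded below and therefore injective, giving the claimed embedding $\mathcal H^2(T_{\Omega(n-1)})_\nu\hookrightarrow\mathcal H^2(T_{\Omega(n)})_\lambda$.

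The main obstacle is the differential identity of the second paragraph: establishing that the Gegenbauer combination $\sum_k a_k(\ell,\alpha)\,\partial_{u_n}^{\ell-2k}\Delta^k_{\C^{1,n-2}}$ collapses $Q^{-\lambda}$ to the single monomial $u_n^{\ell}Q^{-\lambda-\ell}$ at $\alpha=\lambda-\frac{n-1}2$ requires exploiting the full cancellation encoded in the $a_k(\ell,\alpha)$, and keeping exact track of the resulting scalar $c_{\ell,\lambda,n}$ — and hence of the constant $C$ in \eqref{eqn:CIRM18} — is the most delicate computational part.
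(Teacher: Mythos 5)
Your reduction steps coincide exactly with the paper's: the reproducing-kernel computation in your first paragraph is precisely the paper's Lemma~\ref{lem:adker} applied to $(\mathcal R,\mathcal H_1,\mathcal H_2)=(D_{\lambda\to\nu},\mathcal H^2(T_{\Omega(n)})_\lambda,\mathcal H^2(T_{\Omega(n-1)})_\nu)$, and your "key claim" $\mathcal D_\ell^{\alpha}\bigl[Q^{-\lambda}\bigr]=c_{\ell,\lambda,n}\,u_n^{\ell}Q^{-\lambda-\ell}$ at $\alpha=\lambda-\frac{n-1}2$ is verbatim the paper's Bernstein--Sato type identity, Theorem~\ref{thm:Cassis}, with $c_{\ell,\lambda,n}=q(n,\ell;\lambda)=\frac{2^\ell}{\ell!}(2\lambda-n+1)_\ell(\lambda)_\ell$. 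Your convergence argument (via $D_{\lambda\to\nu}K_\lambda(\cdot,\zeta)\in\mathcal H^2(T_{\Omega(n-1)})_\nu$ and Cauchy--Schwarz), the identification of the restricted, conjugated kernel with $K_{\lambda,\nu}(\zeta,\tau')$ including the sign $u_n=-\bar\zeta_n$, and the final intertwining/injectivity assertions are all correct and match the paper.

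The one genuine gap is that you never prove the key identity: you offer two strategies (the generating-function heuristic $\sum_\ell\mathcal D_\ell^\alpha t^\ell=(1-2t\,\partial_{u_n}+t^2\Delta_{\C^{1,n-2}})^{-\alpha}$, or induction on the elementary identities for $\partial_{u_n}Q^s$ and $\Delta_{\C^{1,n-2}}Q^s$, both of which you state correctly) and explicitly defer the cancellation and the bookkeeping of $c_{\ell,\lambda,n}$ --- but since the explicit constant $C$ in \eqref{eqn:CIRM18} is part of the theorem, the statement is not actually established. It is worth knowing that the paper sidesteps exactly this computation: after Lemma~\ref{lem:cirm180318} writes $\mathcal D_\ell^\alpha Q^{-\lambda}=\sum_j q_j\,\zeta_n^{\ell-2j}Q^{-\lambda-\ell+j}$, only the top coefficient $q_0$ is computed directly (Proposition~\ref{prop:DQtop}\,(1), a one-line count of where $\zeta_n^\ell Q^{-\lambda-\ell}$ can arise, using $C_\ell^\alpha(1)=(2\alpha)_\ell/\ell!$); the vanishing of all lower terms $q_j$, $j\geq1$, is then proved \emph{representation-theoretically}: Schur's lemma forces $D_{\lambda\to\lambda+j}\circ D_{\lambda\to\lambda+\ell}^*=0$ for $j<\ell$ since the targets are mutually inequivalent irreducibles, so the image of $D^*_{\lambda\to\nu}$ lies in $\bigcap_{j=0}^{\ell-1}\mathrm{Ker}(D_{\lambda\to\lambda+j})$, which Proposition~\ref{prop:KerJuhl} identifies with functions whose normal derivatives up to order $\ell-1$ vanish on $\zeta_n=0$; a downward induction then kills each $q_j$. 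Your proposed direct route is viable --- the collapse at the distinguished $\alpha$ does occur (one can check $\ell=1,2$ by hand from your identities) --- but it is precisely the "delicate computational part" you flag, and carrying it out is what separates your sketch from a complete proof; the paper's argument shows how to trade that computation for the multiplicity-one structure of the branching law, at the cost of invoking the kernel characterization of the Juhl operators.
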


We first show that Theorem \ref{thm:CIRM18} is derived from the following
Bernstein--Sato type
identity for the holomorphic Juhl operator.

\begin{thm}\label{thm:Cassis}
Let $\mathcal D_\ell^\alpha$ be the differential operator as in \eqref{eqn:iJuhl}.  
We set
\begin{eqnarray}\label{eqn:qnl}
q(n,\ell;\lambda):=\frac{2^{\ell}}{\ell!}\,(2\lambda-n+1)_\ell(\lambda)_\ell.
\end{eqnarray}
Then,
$$
\zeta_n^{-\ell}\,\mathcal D_{\ell}^{\lambda-\frac{n-1}2} Q_{1,n-1}(\zeta)^{-\lambda}= q(n,\ell;\lambda)\, Q_{1,n-1}(\zeta)^{-\lambda-\ell}.
$$
\end{thm}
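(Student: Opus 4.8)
The plan is to reduce the Bernstein--Sato identity to a three-term recurrence among the operators $\mathcal D_\ell^\alpha$ and then argue by induction on $\ell$. Throughout I write $Q:=Q_{1,n-1}(\zeta)$ and $\Delta:=\Delta_{\C^{1,n-2}}$, and I use $Q_{1,n-2}(\zeta)=Q+\zeta_n^2$ together with $\alpha=\lambda-\tfrac{n-1}2$, so that $2\alpha=2\lambda-n+1$. First I would record two elementary building-block formulas. A direct differentiation gives
\[
\partial_{\zeta_n}Q^{-\mu}=2\mu\,\zeta_n\, Q^{-\mu-1},
\]
while applying $\Delta$ to $Q^{-\mu}$ and substituting $Q_{1,n-2}=Q+\zeta_n^2$ yields
\[
\Delta Q^{-\mu}=2\mu(2\mu-n+3)\,Q^{-\mu-1}+4\mu(\mu+1)\,\zeta_n^2\,Q^{-\mu-2}.
\]
The sign in the coefficient $4\mu(\mu+1)$ is the delicate point: it arises because the signature $(1,n-2)$ of $\Delta$ matches the signs of the $\zeta_j^2$ occurring in $Q$, so the cross terms add rather than cancel. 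In particular both operators preserve the span of the monomials $\zeta_n^a Q^{-\mu}$, graded by the homogeneity degree $a-2\mu$, so the whole computation stays within this family.

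Next I would derive an operator recurrence. Starting from the classical three-term recurrence $\ell\, C_\ell^\alpha(t)=2(\ell+\alpha-1)t\,C_{\ell-1}^\alpha(t)-(\ell+2\alpha-2)C_{\ell-2}^\alpha(t)$, homogenizing it to $I_\ell C_\ell^\alpha$ and substituting $U\mapsto-\Delta$, $w\mapsto i\partial_{\zeta_n}$ via \eqref{eqn:iJuhl}, I obtain the operator identity
\[
\ell\,\mathcal D_\ell^\alpha=2(\ell+\alpha-1)\,\partial_{\zeta_n}\mathcal D_{\ell-1}^\alpha-(\ell+2\alpha-2)\,\Delta\,\mathcal D_{\ell-2}^\alpha\qquad(\ell\ge2),
\]
where the minus sign is produced by the factor $i^{-2}=-1$.

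Then I would run the induction. The cases $\ell=0,1$ are immediate from $a_0(0,\alpha)=1$ and $a_0(1,\alpha)=2\alpha$, the latter matching $q(n,1;\lambda)=2\lambda(2\lambda-n+1)$ precisely because $2\alpha=2\lambda-n+1$. For the inductive step, assuming $\mathcal D_m^\alpha Q^{-\lambda}=q(n,m;\lambda)\,\zeta_n^m Q^{-\lambda-m}$ for $m=\ell-1,\ell-2$, I substitute into the recurrence and apply the two building-block formulas; this produces exactly two types of terms, $\zeta_n^{\ell-2}Q^{-\lambda-\ell+1}$ and $\zeta_n^{\ell}Q^{-\lambda-\ell}$. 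Writing $M:=2\lambda-n-1$ and using the ratio $q(n,\ell-1;\lambda)/q(n,\ell-2;\lambda)=\tfrac{2}{\ell-1}(2\lambda-n+\ell-1)(\lambda+\ell-2)$, the coefficient of the lower term becomes proportional to $(2\ell+M)(M+\ell)-(M+\ell)(M+2\ell)$, which vanishes identically since $2\ell+M=M+2\ell$; hence that term cancels. The coefficient of $\zeta_n^{\ell}Q^{-\lambda-\ell}$ simplifies, via $(M+\ell)(M+\ell+1)=(2\lambda-n+1)_\ell/(2\lambda-n+1)_{\ell-2}$ and $(\lambda+\ell-1)(\lambda+\ell-2)=(\lambda)_\ell/(\lambda)_{\ell-2}$, to $\tfrac{2^\ell}{(\ell-1)!}(2\lambda-n+1)_\ell(\lambda)_\ell=\ell\,q(n,\ell;\lambda)$; dividing by $\ell$ closes the induction.

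The asserted identity is purely algebraic in $\zeta$, an equality of holomorphic functions on the locus $\{Q\neq0\}\supset T_{\Omega(n)}$, so no analytic subtlety enters and the formal manipulation of the constant-coefficient operators is legitimate. The main obstacle is the bookkeeping in the inductive step: securing the correct sign in the Laplacian building block and verifying the two scalar identities, both of which succeed only because the shift $\alpha=\lambda-\tfrac{n-1}2$ converts the Gegenbauer recurrence coefficients into the Pochhammer ratios built into $q(n,\ell;\lambda)$.
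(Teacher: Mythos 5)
Your proof is correct, and it takes a genuinely different route from the paper's. The paper first records (Lemma \ref{lem:cirm180318}) that $\mathcal D_\ell^\alpha\, Q_{1,n-1}(\zeta)^{-\lambda}$ is a priori of the form $\sum_{j} q_j\,\zeta_n^{\ell-2j}\, Q_{1,n-1}(\zeta)^{-\lambda-\ell+j}$, computes the top coefficient $q_0=q(n,\ell;\lambda)$ by isolating the unique source of the term $\zeta_n^{\ell}Q_{1,n-1}(\zeta)^{-\lambda-\ell}$ (via $p_0(n,\ell;\alpha)=C_\ell^\alpha(1)=(2\alpha)_\ell/\ell!$, see \eqref{eqn:p0}), and then kills the lower coefficients $q_j$, $j\geq 1$, by \emph{representation theory}: for $\lambda>n-1$ the compositions $D_{\lambda\to\lambda+j}\circ D_{\lambda\to\lambda+\ell}^*$ vanish for $j<\ell$ by Schur's lemma (irreducibility and mutual inequivalence of the summands in \eqref{eqn:absbraConf}), which, combined with the kernel description of Proposition \ref{prop:KerJuhl} and the adjoint formula \eqref{eqn:DKint}, forces $q_j=0$ by downward induction. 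You replace all of this by the three-term Gegenbauer recurrence $\ell\,C_\ell^\alpha(t)=2(\ell+\alpha-1)t\,C_{\ell-1}^\alpha(t)-(\ell+2\alpha-2)C_{\ell-2}^\alpha(t)$, homogenized and transported through \eqref{eqn:iJuhl} into the operator identity $\ell\,\mathcal D_\ell^\alpha=2(\ell+\alpha-1)\,\partial_{\zeta_n}\mathcal D_{\ell-1}^\alpha-(\ell+2\alpha-2)\,\Delta_{\C^{1,n-2}}\,\mathcal D_{\ell-2}^\alpha$, plus a two-step induction on $\ell$. I verified the key computations: your building blocks are right (in $\Delta_{\C^{1,n-2}}Q_{1,n-1}^{-\mu}$ the coefficient $2\mu(2\mu-n+3)$ and the \emph{positive} sign of $4\mu(\mu+1)\zeta_n^2$ hold exactly because the signature of $\Delta_{\C^{1,n-2}}$ matches that of $Q$ on the first $n-1$ variables, so the gradient-squared term reproduces $Q_{1,n-2}=Q_{1,n-1}+\zeta_n^2$); the operator recurrence carries the correct sign after the $i$-power bookkeeping; and in the inductive step the coefficient of $\zeta_n^{\ell-2}Q^{-\lambda-\ell+1}$ cancels identically while that of $\zeta_n^{\ell}Q^{-\lambda-\ell}$ equals $\ell\,q(n,\ell;\lambda)$, as your Pochhammer-ratio identities assert. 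As for what each approach buys: yours is elementary and self-contained, needs none of the Hilbert-space continuity or branching-law input, is valid verbatim for every $\lambda\in\C$ (the paper's Schur-lemma step requires $\lambda>n-1$, with the extension to general $\lambda$ left implicit via polynomial dependence), and it exhibits the Bernstein--Sato identity as precisely equivalent to the Gegenbauer recurrence; the paper's route, on the other hand, explains \emph{conceptually} why the lower-order terms must vanish (orthogonality of inequivalent summands in the multiplicity-free branching law) and produces Proposition \ref{prop:KerJuhl} as a byproduct. One cosmetic point: your inductive step divides by $q(n,\ell-2;\lambda)$, which vanishes for special values of $\lambda$ (zeros of $(2\lambda-n+1)_{\ell-2}(\lambda)_{\ell-2}$); since all coefficients involved are polynomials in $\lambda$, the generic-$\lambda$ computation extends to all $\lambda$ by Zariski density, and a sentence to that effect would make the write-up airtight.
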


\begin{rem}\label{rem:Cassis}
Theorem \ref{thm:Cassis} shows that the complex power of the quadratic form 
$Q_{1,n-1}$ satisfies the Bernstein--Sato type identity not only for the power of the Laplacian (see \eqref{eqn:LapQpower} below) but also for another operator closely
related to the holomorphic Juhl operator.
\end{rem}
Postponing the proof of Theorem \ref{thm:Cassis}, we complete the proof of Theorem \ref{thm:CIRM18}. For this, we also
 use the following lemma.
\begin{lem}\label{lem:adker}
Let $D_j$ ($j=1,2$) be complex manifolds, and $\mathcal H_j$ Hilbert spaces contained in $\mathcal O(D_j)$ with
reproducing kernels $K^{(j)}(\cdot,\cdot)$.
Suppose that $\mathcal R \colon \mathcal H_1\to\mathcal H_2$ is a continuous linear map, and
$\mathcal R^* \colon \mathcal H_2\to\mathcal H_1$ is its adjoint operator.
Then,
\begin{itemize}
\item[(1)] $\overline{\mathcal R K^{(1)}(\,\cdot, \zeta)(\tau')}=\left(\mathcal R^*K^{(2)}(\,\cdot,\tau')\right)(\zeta)$ for $\zeta\in D_1, \tau'\in D_2$;
\item[(2)] $\left(\mathcal R^*g\right)(\zeta)=\left( g, \mathcal R K^{(1)}(\,\cdot, \zeta)\right)_{\mathcal H_2}$ for $g\in\mathcal H_2$ and $\zeta\in D_1$.
\end{itemize}
\end{lem}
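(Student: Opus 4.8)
The plan is to derive both identities directly from the reproducing property of the kernels $K^{(j)}$ together with the defining relation of the adjoint operator; no analytic subtlety is involved, since $\mathcal{R}$ is assumed continuous and hence $\mathcal{R}^*$ is a well-defined bounded operator. Throughout I use the convention that the inner product $(\cdot,\cdot)_{\mathcal{H}_j}$ is linear in the first variable and conjugate-linear in the second, so that the reproducing property reads $f(w)=(f, K^{(j)}(\cdot,w))_{\mathcal{H}_j}$ for $f\in\mathcal{H}_j$, and so that $K^{(j)}(\cdot,w)\in\mathcal{H}_j$ for each fixed $w\in D_j$.

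First I would prove (2), as (1) will follow from it. Fix $g\in\mathcal{H}_2$ and $\zeta\in D_1$. Since $\mathcal{R}^*g\in\mathcal{H}_1$, the reproducing property in $\mathcal{H}_1$ gives $(\mathcal{R}^*g)(\zeta)=(\mathcal{R}^*g, K^{(1)}(\cdot,\zeta))_{\mathcal{H}_1}$, and the definition of the adjoint turns the right-hand side into $(g, \mathcal{R}K^{(1)}(\cdot,\zeta))_{\mathcal{H}_2}$. Here one notes that $\mathcal{R}K^{(1)}(\cdot,\zeta)\in\mathcal{H}_2$, so the pairing is meaningful; this is exactly the asserted formula (2).

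Then (1) follows by specializing (2) to $g=K^{(2)}(\cdot,\tau')$ and invoking the reproducing property once more, this time in $\mathcal{H}_2$. Writing $F:=\mathcal{R}K^{(1)}(\cdot,\zeta)\in\mathcal{H}_2$, part (2) yields $(\mathcal{R}^*K^{(2)}(\cdot,\tau'))(\zeta)=(K^{(2)}(\cdot,\tau'), F)_{\mathcal{H}_2}$. The conjugate-symmetry of the inner product rewrites the right-hand side as $\overline{(F, K^{(2)}(\cdot,\tau'))_{\mathcal{H}_2}}$, and the reproducing property in $\mathcal{H}_2$ evaluates $(F, K^{(2)}(\cdot,\tau'))_{\mathcal{H}_2}=F(\tau')$. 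Hence the quantity equals $\overline{F(\tau')}=\overline{(\mathcal{R}K^{(1)}(\cdot,\zeta))(\tau')}$, which is precisely (1).

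There is essentially no obstacle here: the only points to watch are the placement of the complex conjugation, which dictates the order of the arguments in the two inner products, and the discipline of keeping the two distinct reproducing kernels $K^{(1)}$ and $K^{(2)}$ separate, evaluating each in the Hilbert space where it lives. Once the conventions are fixed, (2) is a one-line consequence of the adjoint relation, and (1) is a one-line consequence of (2).
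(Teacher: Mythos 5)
Your proof is correct and follows essentially the same argument as the paper: both rest on the single adjoint identity $(\mathcal R^*g, K^{(1)}(\cdot,\zeta))_{\mathcal H_1}=(g,\mathcal R K^{(1)}(\cdot,\zeta))_{\mathcal H_2}$ combined with the reproducing property, and your derivation of (1) by setting $g=K^{(2)}(\cdot,\tau')$ in (2) reproduces exactly the displayed identity the paper uses for (1). The only (immaterial) difference is the order: the paper proves (1) and (2) each in one line directly, while you derive (1) as a specialization of (2).
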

\begin{proof}
(1)\enspace The first assertion results from the reproducing property of $K^{(j)}(\cdot,\cdot)$ applied to the following identity:
$$
(\mathcal R^* K^{(2)}(\,\cdot,\tau'), K^{(1)}(\,\cdot,\zeta))_{\mathcal H_1}
=( K^{(2)}(\,\cdot,\tau'),\mathcal R K^{(1)}(\,\cdot,\zeta))_{\mathcal H_2}.
$$
\par\noindent
(2)\enspace  The statement is immediate from the following: 
$$\left(\mathcal R^*g\right)(\zeta)= \left(\mathcal R^*g, K^{(1)}(\,\cdot,\zeta)\right)_{\mathcal H_1}= (g,\mathcal R K^{(1)}(\,\cdot,\zeta))_{\mathcal H_2}.
$$
\end{proof}

\begin{proof}[Proof of Theorem \ref{thm:CIRM18}]
Applying Lemma \ref{lem:adker} to the triple
$$
(\mathcal R, \mathcal H_1,\mathcal H_2)=( D_{\lambda\to\nu}, \mathcal H^2(T_{\Omega(n)})_\lambda, \mathcal H^2(T_{\Omega(n-1)})_\nu),
$$
we obtain the following integral expression of the adjoint operator $D_{\lambda\to\nu}^*$:
\begin{equation}\label{eqn:DKint}
\left(D_{\lambda\to\nu}^*g\right)(\zeta)=
\int_{T_{\Omega(n-1)}} g(\tau')\overline{D_{\lambda\to\nu} K_\lambda(\,\cdot,\zeta)(\tau')}
d\mu_\nu(\tau').
\end{equation}
Here, we have viewed the reproducing kernel $K_\lambda(\tau,\zeta)=k_{\lambda,n} Q_{1,n-1}(\tau-\overline{\zeta})^{-\lambda}$ defined in \eqref{eqn:Klmbd} as a function of 
$\tau\in T_{\Omega(n)}$ with parameter $\zeta\in T_{\Omega(n)}$ and applied
the holomorphic Juhl operator $D_{\lambda\to\nu}$. 
Writing $\tau$ as $\tau =(\tau', \tau_n)$, we get from Theorem \ref{thm:Cassis}:
\begin{eqnarray*}
&&D_{\lambda\to\nu} K_\lambda(\tau,\zeta)\\
&=&
k_{\lambda,n}\, q(n,\ell;\lambda)\,
\mathrm{Rest}_{\tau_n=0}\circ
(\tau_n-\overline\zeta_n)^\ell
Q_{1,n-1}(\tau-\overline{\zeta})^{-\lambda-\ell}\\
&=& 
(-1)^\ell{k_{\lambda,n}} q(n,\ell;\lambda) \overline{K_{\lambda,\nu}(\zeta, \tau')}
 \end{eqnarray*}
for $\tau'\in T_{\Omega(n-1)},
$
by the definition \eqref{eqn:holoKlmdnu} of the relative reproducing kernel $K_{\lambda,\nu}(\zeta,\tau')$. Since $q(n,\ell;\lambda)\in\R$ when $\lambda\in\R$, the integral formula of
Theorem 
\ref{thm:CIRM18} is shown with the constant $C=(-1)^\ell\overline{k_{\lambda,n}} q(n,\ell;\lambda)$.
A short computation shows the formula  \eqref{eqn:CIRM18}.
\end{proof}
The rest of this section is devoted to the proof of Theorem \ref{thm:Cassis}.

\subsubsection{Proof of Theorem \ref{thm:Cassis}}

\begin{lem}\label{lem:cirm180318}
Suppose $\ell\in\N$.
Then there exist  $q_j\equiv q_j(n,\ell;\lambda)$ ($0\leq 2j\leq\ell)$ such that
$$
\mathcal D_\ell^\alpha Q_{1,n-1}(\zeta)^{-\lambda}=\sum_{j=0}^{\left[\frac\ell2\right]}
q_j\zeta_n^{\ell-2j} Q_{1,n-1}(\zeta)^{-\lambda-\ell+j}.
$$
\end{lem}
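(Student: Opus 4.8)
The plan is to exploit the elementary identity $Q_{1,n-2}(\zeta')=Q_{1,n-1}(\zeta)+\zeta_n^2$, which lets me rewrite every intermediate expression purely in terms of the single scalar $Q:=Q_{1,n-1}(\zeta)$ and the variable $\zeta_n$. Concretely, I would work inside the linear span $\mathcal M$ of the ``monomials'' $\zeta_n^a\,Q^{-b}$ (with $a\in\N$, $b\in\C$) and show that the two building blocks of $\mathcal D_\ell^\alpha$ preserve $\mathcal M$ while shifting two natural gradings in a controlled way.

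First I would record the two basic computations. Since $\partial_{\zeta_n}Q=-2\zeta_n$, one gets
\[
\frac{\partial}{\partial\zeta_n}\bigl(\zeta_n^a Q^{-b}\bigr)=a\,\zeta_n^{a-1}Q^{-b}+2b\,\zeta_n^{a+1}Q^{-b-1}.
\]
For the transverse Laplacian, a direct computation using $\partial_{\zeta_1}Q=2\zeta_1$ and $\partial_{\zeta_i}Q=-2\zeta_i$ for $2\le i\le n-1$ gives, for any $g$,
\[
\Delta_{\C^{1,n-2}}\,g(Q)=4\,Q_{1,n-2}(\zeta')\,g''(Q)+2(n-1)\,g'(Q),
\]
and substituting $Q_{1,n-2}(\zeta')=Q+\zeta_n^2$ with $g(Q)=Q^{-b}$ yields
\[
\Delta_{\C^{1,n-2}}\bigl(\zeta_n^a Q^{-b}\bigr)=2b(2b-n+3)\,\zeta_n^a Q^{-b-1}+4b(b+1)\,\zeta_n^{a+2}Q^{-b-2}.
\]
The crucial point is that this substitution removes the spurious factor $Q_{1,n-2}(\zeta')$, so that both operators genuinely map $\mathcal M$ into itself; note also that neither formula ever produces a negative power of $\zeta_n$, since the $a\,\zeta_n^{a-1}$ term drops out when $a=0$.

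Next I would introduce two gradings on $\mathcal M$: the homogeneity degree $d:=a-2b$ (assigning weight $1$ to $\zeta_n$ and $-2$ to $Q$) and the parity $\epsilon:=a\bmod 2$. From the two displayed formulas, $\partial_{\zeta_n}$ lowers $d$ by $1$ and flips $\epsilon$, whereas $\Delta_{\C^{1,n-2}}$ lowers $d$ by $2$ and preserves $\epsilon$. Hence each summand $a_k(\ell,\alpha)\,(\partial/\partial\zeta_n)^{\ell-2k}\Delta_{\C^{1,n-2}}^{k}$ of $\mathcal D_\ell^\alpha$ lowers $d$ by exactly $(\ell-2k)+2k=\ell$ and shifts $\epsilon$ by $\ell-2k\equiv\ell\pmod 2$. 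Applying the full operator to $\zeta_n^0 Q^{-\lambda}$, which has $d=-2\lambda$ and $\epsilon=0$, therefore lands in the part of $\mathcal M$ with $d=-2\lambda-\ell$, $\epsilon\equiv\ell\pmod2$, and $a\ge0$.

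Finally I would read off the admissible monomials: a term $\zeta_n^a Q^{-b}$ with $a-2b=-2\lambda-\ell$ satisfies $b=\lambda+\tfrac12(a+\ell)$, and the constraints $a\ge0$, $a\equiv\ell\pmod2$ force $a=\ell-2j$ with $0\le j\le[\ell/2]$, hence $b=\lambda+\ell-j$. This is precisely the claimed expansion, the coefficients $q_j=q_j(n,\ell;\lambda)$ being obtained by accumulating the scalar factors from the two formulas above. I do not expect a real obstacle here; the lemma is structural rather than computational, and the only point needing care is the grading bookkeeping under the noncommuting factors $\partial_{\zeta_n}^{\ell-2k}$ and $\Delta_{\C^{1,n-2}}^{k}$, which is resolved once one verifies that each factor acts as stated on \emph{all} of $\mathcal M$, not merely on $Q^{-\lambda}$.
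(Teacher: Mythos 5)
Your argument is correct in substance but takes a genuinely different route from the paper. The paper's proof trades the tangential Laplacian for the full one via $\Delta_{\C^{1,n-2}}=\Delta_{\C^{1,n-1}}+\partial_{\zeta_n}^2$, rewrites $\mathcal D_\ell^\alpha=\sum_k p_k(n,\ell;\alpha)\,\partial_{\zeta_n}^{\ell-2k}(\Delta_{\C^{1,n-1}})^k$, and then exploits the Bernstein--Sato type identity $\Delta_{\C^{1,n-1}}Q_{1,n-1}^{-\lambda}=2\lambda(2\lambda-n+2)Q_{1,n-1}^{-\lambda-1}$, so that the full Laplacian acts by pure scalar shifts $Q^{-\lambda}\mapsto s_k(n,\lambda)Q^{-\lambda-k}$ and the whole lemma reduces to the elementary analogue for $\partial_{\zeta_n}^{\ell-2k}$ alone. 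You instead keep $\Delta_{\C^{1,n-2}}$ and use the ``dual'' substitution $Q_{1,n-2}(\zeta')=Q_{1,n-1}(\zeta)+\zeta_n^2$ on the function side, showing that the span of the monomials $\zeta_n^aQ^{-b}$ is stable under both building blocks and controlling the output by homogeneity degree $a-2b$ and parity of $a$. Both are elementary and both work; the paper's decomposition pays off later (in the proof of Proposition \ref{prop:DQtop}~(1)) because the top coefficient $q_0$ comes only from the $k=0$ term and is immediately computable, whereas your two-term recursions determine all contributions explicitly but make extracting $q_0$ a slightly longer bookkeeping exercise (one must follow the exponent-raising branches $2b\,\zeta_n^{a+1}Q^{-b-1}$ and $4b(b+1)\,\zeta_n^{a+2}Q^{-b-2}$ through every composition).

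One step as written is unjustified: you claim that $a\ge 0$ and $a\equiv\ell\pmod 2$ ``force $a=\ell-2j$ with $0\le j\le[\ell/2]$,'' but these constraints alone also permit $a=\ell+2,\ell+4,\dots$ (i.e., $j<0$, which would produce the forbidden powers $Q^{-\lambda-\ell-1}, Q^{-\lambda-\ell-2},\dots$). The grading rules out nothing above $a=\ell$; what does is growth control, which fortunately is already contained in your displayed formulas: each application of $\partial_{\zeta_n}$ raises the $\zeta_n$-exponent by at most $1$ and each application of $\Delta_{\C^{1,n-2}}$ by at most $2$, so starting from $a=0$ the operator $\partial_{\zeta_n}^{\ell-2k}\Delta_{\C^{1,n-2}}^k$ yields only exponents $a\le(\ell-2k)+2k=\ell$, i.e., $j\ge 0$. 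With that one-sentence patch the proof is complete.
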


\begin{proof}
It is easy to see that an analogous statement holds for $\left(\frac{\partial}{\partial\zeta_n}\right)^\ell$ instead of $\mathcal D_\ell^\alpha$, namely, there
exist $q_j'\equiv q_j'(n,\ell;\lambda)$ ($0\leq 2j\leq\ell$) such that
\begin{equation}\label{eqn:derQ}
\left(\frac{\partial}{\partial\zeta_n}\right)^\ell
Q_{1,n-1}(\zeta)^{-\lambda}=\sum_{j=0}^{\left[\frac\ell2\right]}
q_j'\zeta_n^{\ell-2j} Q_{1,n-1}(\zeta)^{-\lambda-\ell+j}.
\end{equation}

We rewrite $\mathcal D_\ell^\alpha$ as a polynomial of $\Delta_{\C^{1,n-1}}=\frac{\partial^2}{\partial\zeta_1^2}-\frac{\partial^2}{\partial\zeta_2^2}-\cdots-\frac{\partial^2}{\partial\zeta_n^2}$
 and $\frac{\partial}{\partial\zeta_n}$ by substituting $\Delta_{\C^{1,n-2}}=\Delta_{\C^{1,n-1}}+\frac{\partial^2}{\partial\zeta_n^2}$ into
\eqref{eqn:JuhlonCn}:
\begin{equation}\label{eqn:Dshiftp}
\mathcal D_\ell^\alpha =
\sum_{k=0}^{\left[\frac\ell2\right]} p_k(n,\ell;\alpha)\left(\frac{\partial}{\partial\zeta_n}\right)^{\ell-2k}
\left(\Delta_{\C^{1,n-1}}\right)^k,
\end{equation}
where the first coefficient is given by
\begin{equation}\label{eqn:p0}
p_0(n,\ell;\alpha)=\sum_{k=0}^{\left[\frac\ell2\right]}a_k(\ell,\alpha)=
 C_\ell^\alpha(1)=\frac{(2\alpha)_\ell}{\ell!}.
\end{equation}

An iterated use of the formula
$$
\Delta_{\C^{1,n-1}} Q_{1,n-1}(\zeta)^{-\lambda}=2\lambda(2\lambda-n+2)Q_{1,n-1}(\zeta)^{-\lambda-1},
$$
leads us to
\begin{equation}\label{eqn:LapQpower}
\left(\Delta_{\C^{1,n-1}}\right)^kQ_{1,n-1}(\zeta)^{-\lambda}=s_k(n,\lambda) Q_{1,n-1}(\zeta)^{-\lambda-k},
\end{equation}
for some polynomials $s_k(n,\lambda)$ of $\lambda$ of degree $2k$.
We note that $s_0(n,\lambda)=1$.
 Now the lemma follows from \eqref{eqn:derQ}.
\end{proof}

Clearly the coefficients
$q_j=q_j(n,\ell; \lambda)$ in Lemma \ref{lem:cirm180318}
are unique.
The proof of Theorem \ref{thm:Cassis} is reduced to the following proposition
on these coefficients $q_j(n,\ell; \lambda)$.

\begin{prop}\label{prop:DQtop} 
${}$
\begin{itemize}
\item[(1)] 
{\rm{(the first term)}}. Recall that
$q(n,\ell;\lambda)$ is defined in \eqref{eqn:qnl}. Then,
$$
q_0\left(n,\ell;\lambda\right)=q(n,\ell;\lambda).
$$
\item[(2)] 
{\rm{(vanishing of higher terms)}}.
$$
q_j\left(n,\ell;\lambda\right)=0\quad\mathrm{for}\,\mathrm{all}\,\, j\geq1.
$$
\end{itemize}
\end{prop}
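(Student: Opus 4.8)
The plan is to substitute the reduced form \eqref{eqn:Dshiftp} of $\mathcal D_\ell^\alpha$ into $Q:=Q_{1,n-1}(\zeta)^{-\lambda}$ and simply keep track of the resulting powers of $\zeta_n$ and of $Q_{1,n-1}(\zeta)$. Writing $\partial_n:=\partial/\partial\zeta_n$ and combining \eqref{eqn:Dshiftp} with the eigenvalue relation \eqref{eqn:LapQpower} (whose coefficients iterate to $s_k(n,\lambda)=4^k(\lambda)_k(\lambda+1-\tfrac n2)_k$, with $s_0=1$), one obtains
$$
\mathcal D_\ell^\alpha\, Q_{1,n-1}(\zeta)^{-\lambda}=\sum_{k=0}^{[\ell/2]}p_k(n,\ell;\alpha)\,s_k(n,\lambda)\,(\partial_n)^{\ell-2k}Q_{1,n-1}(\zeta)^{-\lambda-k}.
$$
Thus everything is reduced to the purely one-variable computation of $(\partial_n)^m Q_{1,n-1}(\zeta)^{-\mu}$, where $Q_{1,n-1}(\zeta)=Q_{1,n-2}(\zeta')-\zeta_n^2$ with $\zeta'$ frozen. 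A short induction on $m$, based on $\partial_n Q_{1,n-1}(\zeta)^{-p}=2p\,\zeta_n\,Q_{1,n-1}(\zeta)^{-p-1}$, yields the closed form with coefficients $b^{\mu}_{m,i}=\frac{m!\,2^{m-2i}}{i!\,(m-2i)!}(\mu)_{m-i}$, namely the sum over $i$ of $b^{\mu}_{m,i}\,\zeta_n^{m-2i}Q_{1,n-1}(\zeta)^{-\mu-m+i}$. Putting $m=\ell-2k$, $\mu=\lambda+k$ and reading off the coefficient of $\zeta_n^{\ell-2j}Q_{1,n-1}(\zeta)^{-\lambda-\ell+j}$ (the contribution $i=j-k$) identifies the coefficients of Lemma \ref{lem:cirm180318} as the finite sum $q_j=\sum_{k=0}^{j}p_k(n,\ell;\alpha)\,s_k(n,\lambda)\,b^{\lambda+k}_{\ell-2k,\,j-k}$.

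Assertion (1) is then immediate: in $q_0$ only $k=i=0$ survives, and with $b^{\lambda}_{\ell,0}=2^\ell(\lambda)_\ell$ and $p_0(n,\ell;\alpha)=C_\ell^\alpha(1)=\tfrac{(2\alpha)_\ell}{\ell!}$ from \eqref{eqn:p0}, one gets $q_0=\tfrac{(2\alpha)_\ell}{\ell!}\,2^\ell(\lambda)_\ell$; since $2\alpha=2\lambda-n+1$ this is precisely $q(n,\ell;\lambda)$ of \eqref{eqn:qnl}.

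Assertion (2) is the crux. Inserting the explicit $s_k$, the binomial expression $p_k=\sum_{t\ge k}\binom{t}{k}a_t(\ell,\alpha)$ coming from \eqref{eqn:alalpha}, and $b^{\lambda+k}_{\ell-2k,\,j-k}$, the required vanishing $q_j=0$ for $j\ge1$ turns into a terminating balanced hypergeometric summation in $k$ (or, after interchanging the $k$- and $t$-sums, a ${}_3F_2$ or ${}_2F_1$ evaluated at a special argument). The plan is to establish this by a classical summation theorem — Chu--Vandermonde or Pfaff--Saalsch\"utz — once the Pochhammer factors are cleared; alternatively, one can induct on $\ell$, using the three-term recurrence and the derivative relation $\tfrac{d}{dx}C_\ell^\alpha=2\alpha\,C_{\ell-1}^{\alpha+1}$ for the Gegenbauer polynomials that govern the $a_k(\ell,\alpha)$, to reduce the $\ell$-th vanishing to the $(\ell-1)$- and $(\ell-2)$-th ones.

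I expect the genuine obstacle to be exactly this cancellation: the positivity of the top coefficient $q_0$ must coexist with the exact annihilation of every lower power of $Q_{1,n-1}(\zeta)$, which is the Bernstein--Sato phenomenon forcing the combinatorial sum to collapse. The most economical execution will likely avoid term-by-term summation altogether by packaging the whole family through the Gegenbauer generating function $\sum_\ell (I_\ell C_\ell^\alpha)(u,v)\,r^\ell=(1-2vr+ur^2)^{-\alpha}$; with $u\mapsto\Delta_{\C^{1,n-2}}$, $v\mapsto\partial_n$ (recall $\mathcal D_\ell^\alpha=(I_\ell C_\ell^\alpha)(\Delta_{\C^{1,n-2}},\partial_n)$ by \eqref{eqn:JuhlonCn} and \eqref{eqn:IlCuv}) this becomes $\sum_\ell\mathcal D_\ell^\alpha r^\ell=\bigl((1-r\partial_n)^2+r^2\Delta_{\C^{1,n-1}}\bigr)^{-\alpha}$, trading the infinite family of identities for the single statement that this operator applied to $Q_{1,n-1}(\zeta)^{-\lambda}$ equals $Q_{1,n-1}(\zeta)^{-\lambda}\,{}_2F_0(2\lambda-n+1,\lambda;\,;2r\zeta_n/Q_{1,n-1}(\zeta))$. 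The remaining difficulty there is to make rigorous sense of the $(-\alpha)$-power acting on $Q_{1,n-1}(\zeta)^{-\lambda}$ and to match it against this ${}_2F_0$, which is equivalent to the same hypergeometric evaluation but organized more transparently.
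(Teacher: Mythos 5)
Your reduction is sound as far as it goes, and your part (1) is essentially the paper's own proof of Proposition \ref{prop:DQtop} (1): substituting \eqref{eqn:Dshiftp} into $Q_{1,n-1}(\zeta)^{-\lambda}$, iterating \eqref{eqn:LapQpower} to $s_k(n,\lambda)=4^k(\lambda)_k\bigl(\lambda+1-\tfrac n2\bigr)_k$, and the closed form for $\bigl(\partial/\partial\zeta_n\bigr)^m Q_{1,n-1}(\zeta)^{-\mu}$ (your $b^\mu_{m,i}$ satisfies the correct two-term recurrence) are all verifiable, and extracting the unique $k=0$ contribution to the top term together with \eqref{eqn:p0} gives $q_0=q(n,\ell;\lambda)$ exactly as in the paper. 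The genuine gap is in part (2), which you yourself call the crux: you reduce the vanishing $q_j=0$ for $j\ge1$ to a terminating hypergeometric summation and then only \emph{name} candidate tools (Chu--Vandermonde, Pfaff--Saalsch\"utz, an induction on $\ell$ via Gegenbauer recurrences, or the generating-function packaging), without clearing the double sum hidden in $p_k=\sum_{t\ge k}\binom tk a_t(\ell,\alpha)$, without checking that the resulting ${}_3F_2$ is terminating and balanced so that any of these theorems applies, and --- in the generating-function variant --- with an admitted unresolved problem of making sense of $\bigl((1-r\partial_n)^2+r^2\Delta_{\C^{1,n-1}}\bigr)^{-\alpha}$ acting on $Q_{1,n-1}(\zeta)^{-\lambda}$ and matching it against the ${}_2F_0$. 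Since the vanishing must hold identically in $\lambda$, nothing short of an actual evaluation certifies the collapse; as written, assertion (2) remains a plan, not a proof.

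It is worth knowing that the paper's proof of (2) avoids this combinatorics entirely and explains the Bernstein--Sato phenomenon you point to by representation theory: for $\lambda>n-1$ the $\widetilde G'$-modules $\mathcal H^2(T_{\Omega(n-1)})_{\lambda+j}$, $j\in\N$, are irreducible and mutually inequivalent, so Schur's lemma forces $D_{\lambda\to\lambda+j}\circ D^*_{\lambda\to\lambda+\ell}=0$ for $j<\ell$, whence $\operatorname{Image}\bigl(D^*_{\lambda\to\lambda+\ell}\bigr)\subset\bigcap_{j=0}^{\ell-1}\operatorname{Ker}\bigl(D_{\lambda\to\lambda+j}\bigr)$; by Proposition \ref{prop:KerJuhl} this intersection consists of holomorphic functions whose normal derivatives up to order $\ell-1$ vanish on $\zeta_n=0$, and feeding this into the expansion \eqref{eqn:Dstarsum} of $D^*_{\lambda\to\nu}$ kills the coefficients $q_j$ one at a time by downward induction on $j$ (first for $\lambda>n-1$, which suffices since each $q_j$ is polynomial in $\lambda$). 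If you prefer your purely computational route, you must actually perform the summation --- for instance, interchange the $k$- and $t$-sums and verify that the inner sum is a Chu--Vandermonde ${}_2F_1(-N,b;c;1)$ --- or else carry out the generating-function argument rigorously; either would yield an alternative, self-contained analytic proof, but neither is done in your proposal.
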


\begin{proof}[Proof of Proposition \ref{prop:DQtop} (1)]
In the expression 
$$
\left(\frac{\partial}{\partial\zeta_n}\right)^{\ell-2k} \left(\Delta_{\C^{1,n-1}}\right)^k 
Q_{1,n-1}(\zeta)^{-\lambda}=s_k(n,\lambda)\left(\frac{\partial}{\partial\zeta_n}\right)^{\ell-2k} 
Q_{1,n-1}(\zeta)^{-\lambda-k},
$$
the term $\zeta_n^\ell \;Q_{1, n-1}(\zeta)^{-\lambda-\ell}$ occurs 
only when $k=0$, and its coefficient is given by
$
s_0(n,\lambda) 2^\ell (\lambda)_\ell=2^\ell(\lambda)_\ell.
$
By \eqref{eqn:Dshiftp}, we get
$$
q_0(n,\ell;\lambda)=p_0(n,\ell;\alpha)\cdot 2^\ell(\lambda)_\ell.
$$
Now the first assertion of Proposition \ref{prop:DQtop}
follows from \eqref{eqn:p0}.
\end{proof}

In order to prove the second assertion of Proposition \ref{prop:DQtop}, we discuss the kernel of the holomorphic Juhl operator
 $D_{\lambda\to\nu} \colon \mathcal O(T_{\Omega(n)})\To
\mathcal O(T_{\Omega(n-1)})$.

\begin{prop}\label{prop:KerJuhl}
Suppose $\lambda-\frac{n-1}2\not\in\{0,-1,-2,\cdots\}$. Then for any $N\in\N$ we have
\begin{multline}
\bigcap_{j=0}^N\mathrm{Ker}\left(D_{\lambda\to\lambda+j}\right)\\=
\left\{f\in\mathcal O(T_{\Omega(n)})\colon \mathrm{Rest}_{\zeta_n=0}\circ \left(\frac{\partial}{\partial\zeta_n}\right)^j f=0\,\,\mathrm{for}\,\mathrm{all}\,0\leq j\leq N\right\}. \nonumber
\end{multline}
\end{prop}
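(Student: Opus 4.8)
The plan is to prove the two inclusions separately, with the nontrivial direction handled by an induction on the order of the normal derivative. The one structural fact driving everything is that $\Delta_{\C^{1,n-2}}$ involves only $\partial/\partial\zeta_1,\dots,\partial/\partial\zeta_{n-1}$ and therefore commutes both with $\partial/\partial\zeta_n$ and with the restriction $\mathrm{Rest}_{\zeta_n=0}$. Writing $\alpha=\lambda-\frac{n-1}2$, the definition \eqref{eqn:JuhlonCn} of $\mathcal D_\ell^\alpha$ then yields, for every $f\in\mathcal O(T_{\Omega(n)})$ and every $j\in\N$,
\[
D_{\lambda\to\lambda+j}f=\sum_{k=0}^{[j/2]}a_k(j,\alpha)\,\Delta_{\C^{1,n-2}}^{k}\Bigl(\mathrm{Rest}_{\zeta_n=0}\circ\bigl(\tfrac{\partial}{\partial\zeta_n}\bigr)^{j-2k}f\Bigr).
\]
The crucial feature of this expansion is that it is triangular in the restricted normal derivatives $\mathrm{Rest}_{\zeta_n=0}\,(\partial/\partial\zeta_n)^{i}f$: only indices $i=j-2k\le j$ appear, and the top index $i=j$ occurs solely in the $k=0$ term.

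From this the inclusion $\supseteq$ is immediate. If $f$ lies in the right-hand side, then $\mathrm{Rest}_{\zeta_n=0}\,(\partial/\partial\zeta_n)^{i}f=0$ for all $0\le i\le N$; since $j-2k\le j\le N$ throughout the displayed sum whenever $j\le N$, each summand vanishes and hence $f\in\mathrm{Ker}(D_{\lambda\to\lambda+j})$ for all $0\le j\le N$.

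For the converse I would take $f\in\bigcap_{j=0}^N\mathrm{Ker}(D_{\lambda\to\lambda+j})$ and prove $\mathrm{Rest}_{\zeta_n=0}\,(\partial/\partial\zeta_n)^{j}f=0$ for $0\le j\le N$ by induction on $j$. Since $\mathcal D_0^\alpha=a_0(0,\alpha)\,\mathrm{id}=\mathrm{id}$, the base case $j=0$ is exactly $\mathrm{Rest}_{\zeta_n=0}f=D_{\lambda\to\lambda}f=0$. For the inductive step, fix $j\le N$ and assume the vanishing for all $i<j$; in the displayed formula for $D_{\lambda\to\lambda+j}f$ every term with $k\ge1$ has $j-2k<j$ and so dies by the inductive hypothesis, leaving $a_0(j,\alpha)\,\mathrm{Rest}_{\zeta_n=0}\,(\partial/\partial\zeta_n)^{j}f=0$.

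The only point requiring care, and the sole place where the hypothesis on $\lambda$ enters, is the nonvanishing of the leading coefficient. From \eqref{eqn:alalpha} one computes
\[
a_0(j,\alpha)=\frac{2^{j}\,\Gamma(\alpha+j)}{\Gamma(\alpha)\,j!}=\frac{2^{j}(\alpha)_j}{j!},
\]
which is zero precisely when one of $\alpha,\alpha+1,\dots,\alpha+j-1$ vanishes, i.e. when $\alpha\in\{0,-1,-2,\dots\}$. The standing assumption $\alpha=\lambda-\frac{n-1}2\notin\{0,-1,-2,\dots\}$ excludes this, so $a_0(j,\alpha)\ne0$ and we may divide to conclude $\mathrm{Rest}_{\zeta_n=0}\,(\partial/\partial\zeta_n)^{j}f=0$, closing the induction. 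I do not expect a genuine obstacle: the real content is the recognition of the triangular structure together with the observation that the nondegeneracy condition on $\lambda$ keeps all the leading coefficients $a_0(j,\alpha)$ nonzero, so that the tower of Juhl operators precisely detects vanishing to successive orders along $\zeta_n=0$.
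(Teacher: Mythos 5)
Your proposal is correct and follows essentially the same route as the paper: both exploit that $\Delta_{\C^{1,n-2}}$ commutes with $\partial/\partial\zeta_n$ and with $\mathrm{Rest}_{\zeta_n=0}$, so the Juhl operators are triangular in the restricted normal derivatives, and both run an induction (yours on $j$, the paper's on $N$ --- the same argument) whose only nontrivial point is that the leading coefficient $a_0(j,\alpha)=\frac{2^j(\alpha)_j}{j!}$ is nonzero precisely under the hypothesis $\alpha=\lambda-\frac{n-1}{2}\notin\{0,-1,-2,\cdots\}$. Your explicit identification of where that hypothesis enters matches the paper's reasoning exactly.
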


\begin{proof}
By the definition \eqref{eqn:holoJuhl} of $D_{\lambda\to\nu}$,
the right-hand side is clearly contained in the left-hand side. To see the opposite inclusion,
we recall from the definition \eqref{eqn:holoJuhl} that the symmetry breaking operator $D_{\lambda\to\lambda+j}$
is of the form
$$
D_{\lambda\to\lambda+j}=\mathrm{Rest}_{\zeta_n=0}\circ\left(a_0 \left(\frac{\partial}{\partial\zeta_n}\right)^j+
\sum_{k=1}^{\left[\frac j2\right]} a_k
\left(\frac{\partial}{\partial\zeta_n}\right)^{j-2k}
\Delta^k_{\C^{1,n-2}}\right),
$$
where the first coefficient $a_0\equiv a_0\left(j,\alpha\right)$ is given by
$$
 a_0\left(j,\alpha\right)=\frac{2^j}{j!}(\alpha)_\ell \qquad \mathrm{with}\quad\alpha=\lambda-\frac{n-1}2.
$$

We now prove the proposition by induction on $N$. The statement is clear for $N=0$ and $1$ because
$D_{\lambda\to\lambda}= \mathrm{Rest}_{\zeta_n=0}$ and $D_{\lambda\to\lambda+1}= \mathrm{Rest}_{\zeta_n=0}\circ \frac{\partial}{\partial\zeta_n}$.

Suppose that $f\in\mathcal O(T_{\Omega(n)})$
satisfies $D_{\lambda\to\lambda+j}f=0$ for $0\leq j\leq N+1$. By the inductive assumption, we get
$\mathrm{Rest}_{\zeta_n=0}\circ \left(\frac{\partial}{\partial\zeta_n}\right)^jf=0$ ($0\leq j\leq N)$. Since $a_0\equiv a_0(j,\alpha)$ is nonzero for any $j$ by the assumption on $\lambda$, $D_{\lambda\to\lambda+N+1}f=0$ implies  
$\mathrm{Rest}_{\zeta_n=0}\circ \left(\frac{\partial}{\partial\zeta_n}\right)^{N+1}f=0$.
Thus the proposition is proved by induction.
\end{proof}

\begin{proof}[Proof of Proposition \ref{prop:DQtop} (2)]
Let $\nu=\lambda+\ell$. By \eqref{eqn:DKint} and Lemma \ref{lem:cirm180318}, we obtain
\begin{eqnarray}\label{eqn:Dstarsum}
&&\left( D_{\lambda\to\nu}^* g\right)(\zeta)
\\&=&(-1)^\ell \overline{k_{\lambda,n}}\sum_{j=0}^{\left[\frac\ell2\right]}q_j\zeta_n^{\ell-2j}
\int_{T_{\Omega(n-1)}}
g(\tau') Q_{1,n-1}(\zeta-\overline{\tau})^{-\lambda-\ell+j}d\mu_\nu(\tau'), \nonumber
\end{eqnarray}
where we write $\tau = (\tau',0)= (\tau_1',\cdots, \tau_{n-1}',0)$ by abuse of notations.

On the other hand, the composition map
$$
D_{\lambda\to\lambda+j}\circ D_{\lambda\to\lambda+\ell}^* \colon 
\mathcal H^2(T_{\Omega(n-1)})_{\lambda+\ell}\longrightarrow \mathcal H^2(T_{\Omega(n-1)})_{\lambda+j}
$$
is a $G'$-intertwining operator for any $j$. 
Since the $G'$-modules $\mathcal H^2(T_{\Omega(n-1)})_{\lambda+j}$ ($j\in\N$) are irreducible and mutually inequivalent if $\lambda>n-1$, such an intertwining operator
must be zero unless $j=\ell$. Therefore
$$
\mathrm{Image}\left(D_{\lambda\to\lambda+\ell}^*\right)\subset
\bigcap_{j=0}^{\ell-1}\mathrm{Ker}\left(D_{\lambda\to\lambda+j}\right).
$$
We now prove that $q_j\equiv q_j(n,\ell;\lambda)$ vanishes for all $1\leq j\leq\left[\frac\ell2\right]$
by downward induction on $j$. For simplicity, we treat the case where $\ell$ is even, say $\ell=2m$. The case where $\ell$ is odd can be dealt with similarly.

By Proposition \ref{prop:KerJuhl}, we have
$$
\mathrm{Rest}_{\zeta_n=0}\circ D_{\lambda\to\nu}^*g=0 \quad
\text{for all}\quad g\in
\mathcal H^2(T_{\Omega(n-1)})_\nu. 
$$
Then it follows from \eqref{eqn:Dstarsum} that
$$
 (-1)^\ell\, \overline{k_{\lambda,n}} q_m
\int_{T_{\Omega(n-1)}}
g(\tau') Q_{1,n-2}(\zeta'-\overline{\tau})^{-\lambda-m}d\mu_\nu(\tau')=0.
$$
Thus we conclude that $q_m=0$ because
$k_{\lambda,n}\neq0$.

Suppose that we have shown $q_j=0$ for $j=m, m-1,\cdots, m+1-s$ for some $s\geq1$.
 If $s\leq m-1$,
then $2s\leq\ell-1 (=2m-1)$ and we can proceed by 
applying Proposition \ref{prop:KerJuhl} with $N=\ell-1$, hence
$$
\mathrm{Rest}_{\zeta_n=0}\circ \left(\frac{\partial}{\partial\zeta_n}\right)^{2s}
\circ D_{\lambda\to\nu}^*g=0.
$$
By the inductive assumption, we obtain
$$
 (-1)^\ell \overline{k_{\lambda,n}} q_{m-s}
\int_{T_{\Omega(n-1)}}
g(\tau') Q_{1,n-2}(\zeta'-\overline{\tau})^{-\lambda-m-s}d\mu_\nu(\tau')=0
$$
for all $g\in
\mathcal H^2(T_{\Omega(n-1)})_\nu$. Thus we conclude that $q_{m-s}=0$ 
as far as $s\leq m-1$. Hence
 we have shown $q_j=0$ for all $j\geq1$.
 \end{proof}
 Thus the proof of Theorem \ref{thm:Cassis} (hence, also the one of Theorem \ref{thm:CIRM18}) is completed.

\section{Perspectives of symmetry breaking and holographic transforms}
\label{sec:conclusion}

We end this article with discussion
 on a representation-theoretic background of Problems A and B
 in a broader framework.

In Section \ref{sec:concl1},
 we consider these problems from the viewpoint of the branching laws of unitary representations of locally compact groups.  
In Section \ref{sec:concl2},
 we investigate Problems A and B for triples $(G,G',\pi)$
 such that

\begin{itemize}
\item[] $\bullet\,(G,G')$ is a reductive symmetric pair of holomorphic type;
\item[] $\bullet\,\pi$ is a unitary highest weight module
 of $G$ of scalar type, 
\end{itemize}
generalizing the settings for the main results in Sections \ref{sec:RCT} and \ref{sec:HJT}.  
The role of special orthogonal polynomials in these cases
 is clarified in Section  \ref{sec:concl3}.

\subsection{Branching laws, symmetry breaking transform and holographic transform}\label{sec:concl1}

Let $G\supset G'$ be a pair of groups,
 $\pi$ an irreducible $G$-module,
 and $\rho$ an irreducible $G'$-module.  
We recall from Section \ref{sec:Intro}
 that an element in $\mathrm{Hom}_{G'}\left(\pi\vert_{G'},\rho\right)$
 (resp. in
$\mathrm{Hom}_{G'}\left(\rho,\pi\vert_{G'}\right)$) is said to be a symmetry breaking operator (resp. a holographic operator).
We also recall that a symmetry breaking transform (resp. a holographic transform) is a collection of symmetry breaking operators (resp. holographic operators)
 where $(\rho,W)$ runs over a certain set $\Lambda$ of irreducible representations of the subgroup $G'$.

If $\pi$ is a unitary representation of a locally compact group $G$
 on a Hilbert space $V$, 
 then Mautner's theorem guarantees that the restriction ($\pi\vert_{G'},V$)
 is unitarily equivalent to the direct integral of irreducible unitary representations of the subgroup $G'$:
\begin{equation}\label{eqn:BL}
\pi\vert_{G'}\simeq\int_{\widehat{G'}}^\oplus m_\pi(\rho)\rho\, d\mu(\rho),
\end{equation}
where $\widehat{G'}$ is the set of equivalence classes of irreducible unitary representations of $G'$
(\emph{unitary dual}), $\mu$ is a Borel measure on $\widehat{G'}$ endowed with the Fell topology, and $m_\pi \colon \widehat{G'}\To \N\cup\{\infty\}$ is a measurable function (\emph{multiplicity}). 
The irreducible decomposition \eqref{eqn:BL} is called branching law
 of the restriction $\pi\vert_{G'}$, which is unique up to isomorphism if $G'$ is a type $I$ group, in particular, if $G'$ is a real reductive group by a theorem of Harish-Chandra \cite{HC}. 

The (abstract) branching law \eqref{eqn:BL}
 would be enriched through Problems A and \ref{prob:B}
 by geometric realizations of irreducible representations
 and explicit intertwining operators:
\begin{align*}
&&\text{from (LHS) to (RHS)}  \qquad   & \text\it{symmetry\,\, breaking\,\, transform};\\
&&\text{from (RHS) to (LHS)}  \qquad  & \text{\it{holographic\, transform}}.  
\end{align*}

In the unitary case, it is natural to take $\Lambda$ to be the support of the measure $\mu$ in \eqref{eqn:BL}. If $\Lambda$ is a countable set, then the branching law \eqref{eqn:BL} is \emph{discretely decomposable} 
 without continuous spectrum. 
A criterion for the triple $(G,G',\pi)$ to admit a discretely decomposable restriction $\pi\vert_{G'}$ was studied in \cite{kdeco98,kdeco98Inv} when $G\supset G'$ is a pair of real reductive groups.

On the other hand, 
 the multiplicity $m_{\pi}(\rho)$ in \eqref{eqn:BL} is not always finite
 when $\pi$ and $\rho$ are infinite-dimensional.  
A geometric criterion for the pair $(G,G')$ 
 to assure that $\operatorname{Hom}_{G'}(\pi^{\infty}|_{G'}, \rho^{\infty})$
 is finite-dimensional
 for all smooth irreducible $G$-modules $\pi^{\infty}$ and $G'$-modules $\rho^{\infty}$ was established 
 in \cite{xKOfm}.  

\vskip 1pc

If the branching law \eqref{eqn:BL} is discretely decomposable and multiplicity free, 
 then we could expect a simple and detailed study of symmetry breaking transform
 and holographic transform.  
In this case, 
 since the vector space $\mathrm{Hom}_{G'}(\pi\vert_{G'},\rho)$
 is one-dimensional, 
 symmetry breaking operator is unique up to scaling
 for every $\rho$, 
 and the symmetry breaking transform is defined
 as the collection of countably many such operators.

\subsection{Symmetric pairs of holomorphic type}\label{sec:concl2}

In this section, we provide a geometric condition (see Setting \ref{setting:takagi18} below) that assures the branching law $\pi\vert_{G'}$ to be discretely decomposable and multiplicity free. 
In this case, we see that every
symmetry breaking operator is a
 differential operator (\emph{e.g.}  the Rankin--Cohen transforms studied in Section \ref{sec:RCT} and the holomorphic Juhl transforms in Section \ref{sec:HJT}), 
 and that our symmetry breaking transform $D$
is injective, hence giving an affirmative 
answer to Problem A.0 in Section \ref{sec:Intro}.
The main results in Sections \ref{sec:RCT} and \ref{sec:HJT}
 are built on special cases of this general setting.

Let us fix some notations. Let $G$ be a connected reductive Lie group, $\theta$ a Cartan
involution, $K=\{g\in G : \theta g=g\}$, $\mathfrak g=\mathfrak k+\mathfrak p$ the corresponding Cartan decomposition, and $\mathfrak g_\C=\mathfrak k_\C+\mathfrak p_\C$ its complexification.
Assume that there exists a central element $Z$ of $\mathfrak k_\C$ such that
$$
\mathfrak g_\C=\mathfrak k_\C+\mathfrak n_++\mathfrak n_-
$$
is the eigenspace decomposition of $\operatorname{ad}(Z)$
 with eigenvalues $0,1$, and $-1$, respectively.
This assumption is satisfied
 if and only if $G$ is locally isomorphic to a direct product of compact Lie groups 
 (with $Z=0$) and noncompact Lie groups of Hermitian type. 
Then the associated Riemannian symmetric space $X=G/K$
 becomes a Hermitian symmetric space 
 with complex structure induced from the Borel embedding $G/K\subset G_\C/K_\C\exp(\mathfrak n_+)$. 
Take a Cartan subalgebra $\mathfrak t$ of $\mathfrak k$, and write 
$\rho(\mathfrak n_+)$ for half the sum of roots in $\Delta(\mathfrak n_+,
\mathfrak t_\C)$.

\begin{sett}\label{setting:takagi18} 
Let $(G,G')$ be a reductive symmetric pair of holomorphic type,
 that is,
 $X=G/K$ and $Y=G'/K'$ are both Hermitian symmetric spaces and the natural embedding $\iota \colon Y\hookrightarrow X$ is holomorphic. 
Let $\mathcal L=G\times_K\C_{\lambda}$ be
 a $G$-equivariant holomorphic line bundle over $X$
 associated to a unitary character $\C_\lambda$ of $K$, 
 and we set 
$\mathcal H^2(X,\mathcal L):=\left(\mathcal O\cap L^2\right)(X,\mathcal L)$.  
Assume $\lambda$ satisfies the following condition:
\begin{equation}\label{eqn:lmdpos}
\begin{cases}
\langle \lambda,\alpha\rangle=0
 \quad &\forall\alpha\in\Delta
(\mathfrak k_\C,\mathfrak t_\C),
\\
\langle \lambda-\rho(\mathfrak n_+),\alpha\rangle>0 \quad &\forall\alpha\in\Delta(\mathfrak n_+,\mathfrak t_\C).  
\end{cases}
\end{equation}
\end{sett}

 The Hilbert space $\mathcal H^2(X,\mathcal L)$ is naturally identified with a weighted Bergman space, which is nonzero if $\lambda$ satisfies the
 condition \eqref{eqn:lmdpos}.  
We denote by
$\pi$ the representation of $G$ on the Hilbert space $\mathcal H^2(X,\mathcal L)$, which is irreducible and unitary, and is
 called a {\it{holomorphic discrete series representation}} of $G$.  
The list of irreducible symmetric pairs $(G,G')$ 
 of holomorphic type 
 may be found in \cite[Table 3.4.1]{K08}.  
\begin{fact}
[{see \cite[Thm.~B]{K08}}]
In Setting \ref{setting:takagi18}, the restriction $\pi\vert_{G'}$ is discretely decomposable and multiplicity free.
\end{fact}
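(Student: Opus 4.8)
The plan is to treat the two assertions---discrete decomposability and multiplicity-freeness---separately, following the general machinery of \cite{kdeco98,kdeco98Inv,K08}.

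First I would establish discrete decomposability via $K'$-admissibility. The central element $Z\in\mathfrak k_\C$ inducing the complex structure on $X$ is fixed by the involution $\tau$ with $G'=(G^\tau)_o$, precisely because the embedding $\iota\colon Y\hookrightarrow X$ is holomorphic; hence $Z\in\mathfrak k'_\C$ and it is central there, governing the complex structure of $Y$ as well. Since $\pi$ is a scalar-type unitary highest weight module, $d\pi(Z)$ acts semisimply with spectrum bounded on one side, and its eigenspaces are the finite-dimensional graded pieces $S^m(\mathfrak n_-)\otimes\C_\lambda$, each a $K$-module. As $Z$ is central in $\mathfrak k'$, every irreducible $K'$-module carries a single $Z$-eigenvalue and can therefore occur only inside one such finite-dimensional eigenspace, whence with finite multiplicity. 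Thus $\pi\vert_{K'}$ is $K'$-admissible, and by the criterion of \cite{kdeco98} the restriction $\pi\vert_{G'}$ is discretely decomposable.

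For multiplicity-freeness I would invoke the propagation theorem for the multiplicity-free property. Restricting the $G$-action, the subgroup $G'$ acts holomorphically on the Hermitian symmetric space $X$ (as $\iota$ is holomorphic), and $\mathcal L=G\times_K\C_\lambda$ becomes a $G'$-equivariant holomorphic line bundle with one-dimensional fibers. The scalar-type hypothesis \eqref{eqn:lmdpos} makes the isotropy action on each fiber a character, so the fiberwise multiplicity-freeness needed for the propagation theorem holds automatically. It then suffices to exhibit an anti-holomorphic diffeomorphism $\sigma$ of $X$ together with a totally real slice $S$ meeting every $G'$-orbit, such that $\sigma$ fixes $S$ pointwise and preserves each orbit---that is, to show the $G'$-action on $X$ is strongly visible. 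Granting this, the propagation theorem gives that $\mathcal H^2(X,\mathcal L)=(\mathcal O\cap L^2)(X,\mathcal L)$ decomposes without multiplicity under $G'$.

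The main obstacle is the geometric input of the last step: verifying strong visibility of the $G'$-action on $X$, i.e.\ constructing the slice $S$ and the compatible anti-holomorphic involution $\sigma$. For a symmetric pair of holomorphic type this rests on a polar-type decomposition $X=G'\cdot S$ and on choosing $\sigma$ from a suitable real form commuting with the relevant structure; once the orbit geometry along the transversal directions is pinned down---as in the codimension-one foliations used in Sections~\ref{sec:RCT} and \ref{sec:HJT}---the remaining checks are routine. The explicit branching laws \eqref{eqn:absbra} and \eqref{eqn:absbraConf} proved earlier are exactly the two lowest-rank instances of this general mechanism.
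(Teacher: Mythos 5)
The paper does not actually prove this Fact---it is imported verbatim from \cite[Thm.~B]{K08}---so the comparison is with the proof in that reference, and your proposal follows the same two-track route. Your admissibility half is correct and essentially complete: holomorphicity of $\iota$ forces the characteristic element $Z$ to be fixed by the involution $\tau$ defining $G'$, hence central in $\mathfrak k'_\C$; $d\pi(Z)$ acts on the underlying $(\mathfrak g,K)$-module $S(\mathfrak n_-)\otimes\C_\lambda$ semisimply with one-sided spectrum and finite-dimensional eigenspaces; Schur's lemma pins each irreducible $K'$-module to a single $Z$-eigenvalue, giving $K'$-admissibility, and Kobayashi's admissibility theorem then yields discrete decomposability (with finite multiplicities) of $\pi\vert_{G'}$. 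One small citation slip: the theorem ``$K'$-admissible $\Rightarrow$ $G'$-admissible'' is Part I of the trilogy (Invent.\ Math.\ \textbf{117} (1994)), not \cite{kdeco98,kdeco98Inv}; this does not affect the mathematics.

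The genuine gap is in the multiplicity-free half. You correctly identify the mechanism used in \cite{K08}---propagation of the multiplicity-free property along a strongly visible action, with the scalar-type hypothesis \eqref{eqn:lmdpos} disposing of the fiber---but strong visibility of the $G'$-action on $X=G/K$ is the substantive theorem behind Theorem B, not a routine check to be deferred. It rests on a generalized Cartan decomposition $G=G'\exp(\mathfrak a)K$ with $\mathfrak a$ a maximal abelian subspace of the $(-\tau)$-eigenspace of $\mathfrak p$ (a Flensted-Jensen/Hoogenboom/Matsuki-type result), on verifying that the slice $S=\exp(\mathfrak a)\cdot o$ is totally real for the complex structure $\operatorname{ad}(Z)$, and on constructing an anti-holomorphic involution $\sigma$ of $X$ fixing $S$ pointwise and preserving every $G'$-orbit---together with, for the propagation theorem, a compatible anti-holomorphic lift of $\sigma$ to the line bundle $\mathcal L$. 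None of this follows from the computations of Sections \ref{sec:RCT} and \ref{sec:HJT}, whose branching laws \eqref{eqn:absbra} and \eqref{eqn:absbraConf} are consequences, not ingredients, of the general statement. So: right architecture, coinciding with the cited proof, but with the decisive geometric step asserted rather than established.
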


Any irreducible $G'$-module
 that occurs in the branching law \eqref{eqn:BL}
 for the unitary representation $(\pi,{\mathcal{H}}^2(X,{\mathcal{L}}))$
 is of the form 
 $\mathcal H^2(Y,\mathcal W)$
 for some $G'$-equivariant holomorphic vector bundle $\mathcal W$ over $Y$
 associated to an irreducible finite-dimensional unitary representation $W$
 of $K'$, 
 and such bundles ${\mathcal{W}}$ are classified.
 Thus $\Lambda$ is parametrized by a subset of $\widehat{K'}$, or by a 
 subset of dominant integral weights which can be
  described in terms of the root data (see \cite[Thm.~8.3]{K08}). 
 We write $\rho_\ell$ for the irreducible unitary representation
 of $G'$ corresponding to $\ell\in\Lambda$, and identify $\Lambda$ as a subset of $\widehat{G'}$
  by $\ell\mapsto\rho_\ell$.  
Here is a summary on general results
 about symmetry breaking operators
 in this setting:
\begin{fact}
\label{fact:4.3}
In Setting \ref{setting:takagi18}, let $\mathcal W$ be the $G'$-equivariant holomorphic vector bundle corresponding to $\ell\in\Lambda$.
\begin{itemize}
\item[(1)] Any continuous $G'$-homomorphism $\mathcal O(X,\mathcal L)\To\mathcal O(Y,\mathcal W)$
is given as a holomorphic differential operator, and induces a continuous $G'$-homomorphism between
the Hilbert spaces $\mathcal H^2(X,\mathcal L)\To\mathcal H^2(Y,\mathcal W)$.
\item[(2)] Any continuous $G'$-homomorphism $\mathcal H^2(X,\mathcal L)\To \mathcal H^2(Y,\mathcal W)$ extends to a continuous $G'$-homomorphism $\mathcal O(X,\mathcal L)\To\mathcal O(Y,\mathcal W)$.
\end{itemize}
\end{fact}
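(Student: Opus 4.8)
The plan is to reduce both assertions to a single statement: that restriction to the space of $\widetilde K$-finite vectors furnishes a bijection $\mathrm{Hom}_{G'}\!\left(\mathcal O(X,\mathcal L),\mathcal O(Y,\mathcal W)\right)\cong \mathrm{Hom}_{(\mathfrak g',K')}\!\left(\pi_K,(\rho_\ell)_{K'}\right)$, and that every element of the left-hand side is a holomorphic differential operator along $\iota\colon Y\hookrightarrow X$. Here $\pi_K$ denotes the underlying Harish-Chandra module of $(\pi,\mathcal H^2(X,\mathcal L))$ and $(\rho_\ell)_{K'}$ that of the summand $\mathcal H^2(Y,\mathcal W)$. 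Granting this, (1) and (2) follow quickly, as explained below, and the whole weight of the proof rests on the \emph{localness} clause.

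For the localness I would argue through the duality underlying the F-method. Write $\mathfrak q=\mathfrak k_\mathbb C+\mathfrak n_+$ and $\mathfrak q'=\mathfrak k'_\mathbb C+\mathfrak n'_+$ for the parabolic subalgebras defining the complex structures of $X$ and $Y$. A continuous $G'$-homomorphism $T$ induces on $\widetilde K$-finite vectors a $(\mathfrak g',K')$-homomorphism, whose algebraic dual is a $\mathfrak g'_\mathbb C$-homomorphism from the generalized Verma module $\mathrm{ind}_{\mathfrak q'}^{\mathfrak g'}(W^\vee)$ to $\mathrm{ind}_{\mathfrak q}^{\mathfrak g}(\mathbb C_\lambda^\vee)|_{\mathfrak g'_\mathbb C}$. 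Such a map is determined by the image of the canonical generator, a $\mathfrak q'$-singular vector, and under the Poincar\'e--Birkhoff--Witt identification $\mathrm{ind}_{\mathfrak q}^{\mathfrak g}(\mathbb C_\lambda^\vee)\simeq U(\mathfrak n_-)\otimes\mathbb C_\lambda^\vee$ this image is polynomial in $\mathfrak n_-$; its symbol along the $\mathfrak n'_-$-directions is exactly a holomorphic differential operator. Infinitesimal discrete decomposability of $\pi|_{G'}$, guaranteed by the preceding Fact (via \cite{K08,kdeco98}), is precisely what legitimizes this passage to the algebraic dual, since it excludes the continuous spectrum on which no such reduction is available. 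Once $T$ is known to be a holomorphic differential operator, the second clause of (1) is immediate from the general continuity theorem \cite[Thm.~5.13]{KP16a}, which asserts that a holomorphic differential symmetry breaking operator restricts to a bounded map $\mathcal H^2(X,\mathcal L)\To\mathcal H^2(Y,\mathcal W)$.

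Assertion (2) then follows formally. Given a continuous $G'$-homomorphism $S\colon\mathcal H^2(X,\mathcal L)\To\mathcal H^2(Y,\mathcal W)$, continuity and $G'$-equivariance between unitary representations force $S$ to preserve $\widetilde{K'}$-finite vectors; since $\pi|_{G'}$ is $\widetilde{K'}$-admissible these coincide with the $\widetilde K$-finite vectors, so $S$ restricts to an element $s\in\mathrm{Hom}_{(\mathfrak g',K')}(\pi_K,(\rho_\ell)_{K'})$. Pulling $s$ back through the bijection above yields a holomorphic differential operator $\widetilde D\colon\mathcal O(X,\mathcal L)\To\mathcal O(Y,\mathcal W)$ whose restriction to $\widetilde K$-finite vectors is $s$. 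Both $\widetilde D|_{\mathcal H^2(X,\mathcal L)}$ (continuous by \cite[Thm.~5.13]{KP16a}) and $S$ agree with $s$ on the dense subspace of $\widetilde K$-finite vectors, hence coincide on all of $\mathcal H^2(X,\mathcal L)$; thus $\widetilde D$ is the sought extension. The multiplicity-free property enters only to make each of these Hom-spaces at most one-dimensional, so that the identifications are canonical up to scalars.

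The principal obstacle is the localness step. The bookkeeping of the last two paragraphs---restriction to Harish-Chandra modules, Schur's lemma, density, and the quotation of \cite[Thm.~5.13]{KP16a}---is routine, but the claim that \emph{every} continuous intertwiner between the Fr\'echet spaces of holomorphic sections is a differential operator is genuinely delicate: it fails for pairs $(G,G')$ not of holomorphic type, where nonlocal (integral) symmetry breaking operators arise, as in Example \ref{ex:Poisson}. I would therefore devote the core of the argument to making the generalized-Verma-module duality precise and to verifying that the singular-vector computation produces a finite-order holomorphic differential operator supported along $Y$.
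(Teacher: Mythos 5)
Your proposal is correct, but it is organized differently from the paper's own proof, which is much shorter because it outsources the hard analysis. For part (1) the paper simply cites the localness theorem \cite[Thm.~5.3]{KP16a} and the continuity theorem \cite[Thm.~5.13]{KP16a}; what you do instead is sketch a proof of the localness theorem itself via the generalized-Verma-module duality, which is indeed the mechanism behind the cited result, and you rightly identify it as the genuinely delicate step (and correctly note it fails outside the holomorphic-type setting). For part (2) the routes diverge more substantially: the paper takes the injection $\operatorname{Hom}_{G'}(\mathcal O(X,\mathcal L)\vert_{G'},\mathcal O(Y,\mathcal W))\hookrightarrow \operatorname{Hom}_{G'}(\mathcal H^2(X,\mathcal L)\vert_{G'},\mathcal H^2(Y,\mathcal W))$ furnished by (1) and proves surjectivity by a \emph{dimension count}, matching the branching law for generalized Verma modules \cite[Thm.~5.2]{K12} (via the duality theorem \cite[Thm.~A]{KP16a}) against the unitary branching law \cite[Thm.~8.3]{K08}, so multiplicity-freeness with identical support does real work there. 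You instead give a direct extension argument: restrict $S$ to $\widetilde K'$-finite vectors, use $\widetilde K'$-admissibility to identify these with the $\widetilde K$-finite vectors, pass to the Harish-Chandra module, and invert the restriction bijection $\operatorname{Hom}_{G'}(\mathcal O,\mathcal O)\cong\operatorname{Hom}_{(\mathfrak g',K')}(\pi_K,(\rho_\ell)_{K'})$, concluding by density. This buys generality --- your argument for (2) would survive without multiplicity-freeness and without computing either branching law explicitly --- at the price of needing the full surjectivity of the duality correspondence (every $(\mathfrak g',K')$-homomorphism dualizes into the Verma module, not merely into the larger $K'$-finite dual), which is exactly where discrete decomposability must be invoked; a clean way to close this, consonant with your sketch, is to observe that restriction, $K'$-finite dualization, and the differential-operator construction form a cycle of injections whose composite is the identity, forcing each to be bijective. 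The only point worth writing out rather than asserting is your claim that $\widetilde K'$-finite vectors coincide with $\widetilde K$-finite vectors: this follows since admissibility makes each $K'$-isotypic component finite-dimensional, and the continuous $K'$-projections preserve the dense subspace of $K$-finite vectors, so each isotypic component consists of $K$-finite vectors --- routine, but it is the hinge of your extension step.
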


\begin{proof}
(1)\enspace The first statement is proved in \cite[Thm.~5.3]{KP16a} (\emph{localness theorem}), and the second one is in \cite[Thm.~5.13]{KP16a}.
\par\noindent
(2)\enspace
By (1) there is a natural injective map
\begin{equation}\label{eqn:OH2}
   \operatorname{Hom}_{G'}(\mathcal O(X,\mathcal L)\big\vert_{G'}, \mathcal O(Y,\mathcal W)) \hookrightarrow 
   \operatorname{Hom}_{G'}(\mathcal H^2(X,\mathcal L)\big\vert_{G'}, \mathcal H^2(Y,\mathcal W)).
\end{equation}
To prove that \eqref{eqn:OH2} is surjective, we observe that the left-hand side of
\eqref{eqn:OH2} is understood by the branching law
 for the generalized Verma module
 \cite[Thm.~5.2]{K12} via the duality theorem \cite[Thm.~A]{KP16a}, 
 whereas the right-hand side of \eqref{eqn:OH2} is given by
the
 branching law
 of the unitary representation $\mathcal H^2(X,\mathcal L)|_{G'}$
 (\cite[Thm.~8.3]{K08}), and that they coincide
 under the condition \eqref{eqn:lmdpos}. Hence \eqref{eqn:OH2} is bijective.
 \end{proof}

In order to clarify the dependence of the parameter $\ell$, 
 we write ${\mathcal{W}}_{\ell}$ 
 for the $G'$-equivariant vector bundle corresponding to $\ell \in \Lambda$ from now.  
Then Fact \ref{fact:4.3} tells that the one-dimensional vector space
\[
   \mathrm{Hom}_{G'}\left( \mathcal O(X,\mathcal L)\big\vert_{G'},\mathcal O(Y,\mathcal W_\ell)\right)
\simeq 
 \mathrm{Hom}_{G'}\left( \mathcal H^2(X,\mathcal L)\big\vert_{G'},\mathcal H^2(Y,\mathcal W_{\ell})\right)
\]
 is spanned by a
\emph{differential} symmetry breaking operator. We fix such a generator $D_\ell$ for every $\ell\in\Lambda$.

Since $D_\ell \colon \mathcal H^2(X,\mathcal L) \to \mathcal H^2(Y,\mathcal W_\ell)$ is a continuous operator
between the Hilbert spaces,
its operator norm $\Vert D_\ell\Vert_{\mathrm{op}}$ is finite and its adjoint
$D_\ell^*$ is a continuous linear operator. Set
$$
 C_\ell:=\Vert D_\ell\Vert_{\mathrm{op}}^2, \quad \Psi_\ell:=\frac1{C_\ell} D_\ell^*.
$$
Let $D=(D_{\ell})_{\ell \in \Lambda}$ be the symmetry breaking transform.  
Then we have the following:
\begin{thm}\label{thm:SBTinj}
Suppose we are in Setting \ref{setting:takagi18}.
\begin{itemize}
\item[(1)] $\Psi_\ell\colon \mathcal H^2(Y,\mathcal W_\ell)\To\mathcal H^2(X,\mathcal L)$ is a holographic operator. Moreover, it is an isometry up to renormalization.

\item[(2)] The symmetry breaking transform $D$ is injective on $\mathcal H^2(X,\mathcal L)$.

\item[(3)]
Any $f\in\mathcal H^2(X,\mathcal L)$ is recovered from its symmetry breaking transform $Df$ by
$$
f=\sum_{\ell\in\Lambda}\Psi_\ell \left( Df\right)_{\ell}.
$$
\item[(4)] The norm $\Vert f\Vert_{\mathcal H^2(X,\mathcal L)}$ is recovered from the sequence of
norms $\Vert \left( Df\right)_{\ell}\Vert_{\mathcal H^2(Y,\mathcal W_\ell)}$ by
$$
\Vert f\Vert_{\mathcal H^2(X,\mathcal L)}^2=
\sum_{\ell\in\Lambda}\frac1{C_\ell}
\Vert \left( Df\right)_{\ell}\Vert_{\mathcal H^2(Y,\mathcal W_\ell)}^2.
$$
\end{itemize}
\end{thm}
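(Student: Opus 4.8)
The plan is to recognize the statement as the abstract shadow of the concrete results of Sections \ref{sec:RCT} and \ref{sec:HJT}, and to deduce all four assertions from the general Hilbert-space Lemma \ref{lem:Hilbertdeco} once its representation-theoretic hypotheses are verified. Accordingly I would set $V:=\mathcal H^2(X,\mathcal L)$, $W_\ell:=\mathcal H^2(Y,\mathcal W_\ell)$, and $R_\ell:=D_\ell$, so that $C_\ell=\Vert R_\ell\Vert_{\mathrm{op}}^2$ and $\Psi_\ell=\frac1{C_\ell}R_\ell^*$ match the notation of the theorem. Lemma \ref{lem:Hilbertdeco} is phrased for the index set $\mathbb N$, but its proof is insensitive to this and applies verbatim to the countable set $\Lambda$ furnished by the discretely decomposable branching law.

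First I would record the branching law secured by Setting \ref{setting:takagi18}, namely \cite[Thm.~B]{K08}: the restriction $\pi|_{G'}$ decomposes as a multiplicity-free Hilbert direct sum $V\simeq{\sum_{\ell\in\Lambda}}^{\oplus}V_\ell$ of pairwise inequivalent irreducible closed $G'$-submodules $V_\ell$, each unitarily isomorphic to $W_\ell$. By Fact \ref{fact:4.3}(1) the chosen generator $D_\ell$ is a continuous $G'$-homomorphism $V\to W_\ell$, and $D_\ell\neq0$ since it spans a one-dimensional Hom-space. Next I would check the two structural hypotheses of Lemma \ref{lem:Hilbertdeco}. Because the summands are pairwise inequivalent, Schur's lemma forces $D_\ell$ to annihilate $V_{\ell'}$ for all $\ell'\neq\ell$; if $D_\ell$ vanished on $V_\ell$ as well it would vanish on the dense algebraic sum $\bigoplus_\ell V_\ell$ and hence be zero, a contradiction. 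Thus $D_\ell|_{V_\ell^\perp}\equiv0$, while $D_\ell|_{V_\ell}\colon V_\ell\to W_\ell$ is a nonzero bounded intertwiner between isomorphic irreducible unitary $G'$-modules and is therefore, again by Schur, a scalar multiple of a unitary, i.e. unitary up to scaling. This places us exactly in the hypotheses on $R_\ell$ in Lemma \ref{lem:Hilbertdeco}.

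The four assertions then follow formally. Since $\pi$ and the $\rho_\ell$ are unitary, the Hilbert adjoint $D_\ell^*$ is again $G'$-equivariant, so $\Psi_\ell=\frac1{C_\ell}D_\ell^*$ is a holographic operator; the norm identity $\Vert D_\ell^*w\Vert_V^2=C_\ell\Vert w\Vert_{W_\ell}^2$ from Lemma \ref{lem:Hilbertdeco}(2) gives $\Vert\Psi_\ell w\Vert_V^2=\frac1{C_\ell}\Vert w\Vert_{W_\ell}^2$, so $\sqrt{C_\ell}\,\Psi_\ell$ is an isometry, proving (1). The inversion formula (3) and the Parseval--Plancherel identity (4) are precisely the inversion and expansion formulae of Lemma \ref{lem:Hilbertdeco}(1) (see \eqref{eqn:Rexpansion}), rewritten using $\frac1{C_\ell}D_\ell^*(D_\ell f)=\Psi_\ell(Df)_\ell$; and injectivity (2) is immediate from (4), since $Df=0$ forces $\Vert f\Vert_V=0$.

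The only genuinely substantial input is the passage to an honest discrete, multiplicity-free Hilbert direct sum with closed irreducible isotypic components — that is, the absence of continuous spectrum together with the closedness and irreducibility of each $V_\ell$ — which is exactly what the hypotheses of Setting \ref{setting:takagi18} buy through \cite[Thm.~B]{K08}, combined with the localness and continuity statement of Fact \ref{fact:4.3} needed to make each $D_\ell$ a bounded operator between the Bergman spaces. Granting these, the remaining obstacle is merely the bookkeeping of the normalizing scalars $C_\ell$, which is handled uniformly by Lemma \ref{lem:Hilbertdeco}.
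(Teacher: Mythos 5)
Your proposal is correct and follows essentially the same route as the paper: both rest on the multiplicity-free discrete branching law \eqref{eqn:HKSK} (Fact \ref{fact:4.3}), Schur's lemma, and the abstract Hilbert-space Lemma \ref{lem:Hilbertdeco}, with your explicit verification of that lemma's hypotheses ($D_\ell\vert_{V_\ell^\perp}\equiv0$ and $D_\ell\vert_{V_\ell}$ unitary up to scalar) simply spelling out what the paper leaves implicit. The only cosmetic difference is that you deduce the injectivity (2) from the Parseval identity (4), whereas the paper proves it directly by expanding $f=\sum_\ell f_\ell$ and applying Schur's lemma componentwise.
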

\begin{proof}
The unitary representation of $G$ on the Hilbert space $\mathcal H^2(X,\mathcal L)$ is decomposed discretely and multiplicity freely into the Hilbert
direct sum:
\begin{equation}\label{eqn:HKSK}
\mathcal H^2(X,\mathcal L)|_{G'}
 \simeq \sum_{\ell\in\Lambda}{}^{\oplus}\mathcal H^2(Y,\mathcal W_\ell)
\end{equation}
as unitary representations of the subgroup $G'$
by Fact \ref{fact:4.3}.  
\par\noindent
(1)\enspace
 The adjoint operator $D_\ell^*$ is a $G'$-homomorphism because 
 $\mathcal H^2(X,\mathcal L)$ and both $\mathcal H^2(Y,\mathcal W_\ell)$ are unitary representations. The second assertion follows from Schur's lemma
 because $\mathrm{Hom}_{G'}(\mathcal H^2(Y,\mathcal W_\ell), \mathcal H^2(X,\mathcal L)\big\vert_{G'})$ is one-dimensional.
\par\noindent
(2)\enspace
Expand $f\in\mathcal H^2(X,\mathcal L)$ as $f=\sum_{\ell\in\Lambda}f_\ell$
according to the decomposition \eqref{eqn:HKSK}. 
Then $(Df)_\ell$ is a nonzero multiple of $f_\ell$ by Schur's lemma since the decomposition \eqref{eqn:HKSK}
is multiplicity free. Hence, if $Df=0$, then $f_\ell=0$ for all $\ell\in\Lambda$, and therefore $f=0$.
\par\noindent
Statements (3) and (4) are direct consequences of Lemma \ref{lem:Hilbertdeco}.
\end{proof}

\subsection{Role of orthogonal polynomials}\label{sec:concl3} 
In this section we investigate Problems A and B in Setting \ref{setting:takagi18}, and clarify
the role of the F-method and special orthogonal polynomials for the $L^2$-theory of symmetry breaking transforms consisting of holomorphic differential operators.

Suppose we are in  Setting \ref{setting:takagi18}. 
As we have seen in Section \ref{sec:concl2}, 
Theorem \ref{thm:SBTinj} (2) solves Problem A.0, whereas 
Theorem \ref{thm:SBTinj} (1), (3), and (4) give a framework for Problems A.1, A.2, and B, respectively,
 for the symmetry breaking 
transform $D=(D_\ell)_{\ell \in \Lambda}$. 
Thus the solution to Problems A and B is reduced to the following four 
questions of finding explicit description and closed formul{\ae} of
\begin{itemize}
\item the support of $\Lambda$;
\item holomorphic differential operators $D_\ell$;
\item the operator norm $\| D_\ell \|_{\operatorname{op}}$;
\item the adjoint operator $D_\ell^*$.
\end{itemize}

Let us summarize briefly what was known, what has been proved in this article,
and what looks promising.

As we mentioned in Section \ref{sec:concl2},
the explicit description of $\Lambda$, equivalently, the 
branching law \eqref{eqn:HKSK} for the restriction $\pi\vert_{G'}$ in Section \ref{sec:concl1} was proved  in 
\cite[Thm. 8.3]{K08}, which gives a generalization of the Hua--Kostant--Schmid formula in the case when $G'=K$. 
Denote by $\mathrm{rank}_\R G/G'$ the split rank of the reductive symmetric space
$G/G'$.
Then it
turns out
 that $\Lambda$ is a free abelian semigroup generated by 
 $\mathrm{rank}_\R G/G'$ elements, 
 see \cite{K08}.

It is more involved to construct symmetry breaking operators $D_{\ell}$
 explicitly 
than determining $\Lambda$, namely, the
branching law of the restriction $\pi\vert_{G'}$. As of now, an explicit construction of $D_\ell$ for all $\ell\in\Lambda$ with exhaustion theorem is known when $\operatorname{rank}_\R G/G'=1$,
see \cite{KP16b}. There are 
six families of such symmetric pairs $(G,G')$,
 and the resulting symmetry breaking operators include
 the Rankin--Cohen operators and the holomorphic Juhl operators.

In order to obtain the operator norm $\Vert D_\ell\Vert_{\mathrm{op}}$
 of such holomorphic differential operators $D_{\ell}$, we
 have developed the idea of the F-method to connect $\Vert D_\ell\Vert_{\mathrm{op}}$ with the $L^2$-norm
of special polynomials $P_\ell$ in the following two cases in this article.
\vskip7pt
{\small{\centerline{
\begin{tabular}{c|c}
$\qquad D_\ell$ & $\qquad P_\ell$\\
  \hline
  &\\
  Rankin--Cohen operators & Jacobi polynomials \\&\\
  \hline
  &\\
  Juhl operators & Gegenbauer polynomials
\end{tabular}}}}
\vskip7pt
\noindent
The relationship between $D_\ell$ and $P_\ell$ follows from the fact that the $G'$-equivariance condition on the operator $D_\ell$ is transformed into a certain differential equation 
 (\emph{e.g.}\ Jacobi differential equation \eqref{eqn:JacobiDE})
 for the polynomial $P_\ell$. 
It is plausible that this idea would work in the full generality of Setting~\ref{setting:takagi18}.

Concerning the adjoint operator $D_\ell^*$, this article has provided two kinds of integral formul{\ae}, that is, by the line integral (Definition \ref{def:HOtensor}), see
Proposition \ref{prop:PsiRC}, and by the integral over the tube domain (Theorem
\ref{thm:CIRM18}). The former has an advantage that the formula is simple and does not require the unitarity of representations, whilst the latter uses a natural idea of the ``relative reproducing kernel" $K_{\lambda,\nu}(\zeta,\tau')$, see
\eqref{eqn:holoKlmdnu}.


\section{Appendix: Jacobi polynomials and Gegenbauer polynomials}\label{sec:appendixJacob}
\subsection{The Jacobi polynomials}
Suppose $\alpha,\beta\in\C$ and $\ell\in\N$.
The Jacobi polynomial $P^{\alpha,\beta}_\ell(t)$ is a polynomial solution to the
Jacobi differential equation
\begin{equation}\label{eqn:JacobiDE}
\left((1-t^2)\frac{d^2}{dt^2}+(\beta-\alpha-(\alpha+\beta+2)t)\frac d{dt}+\ell(\ell+\alpha+\beta+1)\right)y=0,
\end{equation}
which is normalized  by
 $P^{\alpha,\beta}_\ell(1)=\frac{\Gamma(\alpha+\ell+1)}{\Gamma(\alpha+1)\ell!}=
 \frac{(\alpha+1)_\ell}{\ell!}$. Then it satisfies the Rodrigues formula
\begin{equation}\label{eqn:JRodrigues}
(1-t)^\alpha(1+t)^\beta P_\ell^{\alpha,\beta}(t)=\frac{(-1)^\ell}{2^\ell\ell!}\left(\frac d{dt}\right)^\ell\left((1-t)^{\ell+\alpha}(1+t)^{\ell+\beta} \right).
\end{equation}

The Jacobi polynomial
 $P^{\alpha,\beta}_\ell(t)$ is
nonzero and is
 a polynomial of degree $\ell$ for generic parameters (see \cite[Thm. 11.2]{KP16b} for the precise condition).
Explicitly, one has
\begin{eqnarray}\label{eqn:Jacobi}
 P^{\alpha,\beta}_\ell(t)&=&
\frac{(\alpha+1)_\ell}{\ell!}
 {}_2F_1\left(-\ell,\alpha+\beta+\ell+1;\alpha+1;\frac{1-t}2\right) \nonumber
 \\
 &=&
\sum_{j=0}^\ell
\frac{(\alpha+\beta+\ell+1)_j(\alpha+j+1)_{\ell-j}}{(\ell-j)!j!}\left(\frac{t-1}2\right)^j.
\end{eqnarray}

The first Jacobi polynomials are
\begin{itemize}
\item $P_0^{\alpha,\beta}(t)= 1,$
\item
$P_1^{\alpha,\beta}(t)=\frac{1}{2} (\alpha-\beta+(2+\alpha+\beta) t)$.
\end{itemize}
For real $\alpha,\beta$ with $\alpha,\beta>-1$, the Jacobi polynomials $\left\{P_\ell^{\alpha,\beta}\right\}_{\ell\in\N}$ form an orthogonal
basis in the Hilbert space $L^2\left((-1,1),(1-x)^\alpha(1+x)^\beta dx\right)$ with the following
norm (see \cite[page 301]{AAR} for example):
\begin{equation}\label{eqn:Pnorm}
\int_{-1}^1\left\vert P_\ell^{\alpha,\beta}(x)\right\vert^2(1-x)^\alpha(1+x)^\beta dx=
\frac{2^{\alpha+\beta+1}\Gamma(\ell+\alpha+1)\Gamma(\ell+\beta+1)}{(2\ell+\alpha+\beta+1)\Gamma(\ell+\alpha+\beta+1)\ell!}.
\end{equation}

When $\alpha=\beta$ these polynomials yield Gegenbauer polynomials (see 
\eqref{eqn:PtoC} below), and they further reduce to Legendre polynomials in the case when $\alpha=\beta=0$.

\subsection{The Gegenbauer polynomials}
For $\alpha\in\C$ and $\ell\in\N$, the Gegenbauer polynomial
 (or ultraspherical polynomial) $C^\alpha_\ell(t)$ 
 is defined by 
 \begin{equation}\label{eqn:gegen}
C^\alpha_\ell(t)=\sum_{k=0}^{\left[\frac\ell2\right]} a_k(\ell,\alpha) t^{\ell-2k},
\end{equation}
where $ a_k(\ell,\alpha)$ is given in \eqref{eqn:alalpha}. The Gegenbauer polynomials are special cases of the Jacobi polynomials by
\begin{equation}\label{eqn:PtoC}
\frac{(2\alpha+1)_\ell}{(\alpha+1)_\ell} P_\ell^{\alpha,\alpha}(x)= C_\ell^{\alpha+\frac12}(x),
\end{equation}
and have
 the generating function:
$$
(1-2t r+r^2)^{-\alpha}=\sum_{\ell\in\N} C_\ell^\alpha(t) r^\ell.
$$
We note that $C_\ell^\alpha(1)=\frac{(2\alpha)_\ell}{\ell!}$. If $\alpha>-\frac12$, then the Gegenbauer polynomials $\left\{ C_\ell^\alpha(v)\right\}_{\ell\in\N}$ form an
orthonormal basis in the Hilbert space $L^2\left((-1,1), (1-v^2)^{\alpha-\frac12}dv\right)$ with the
following
 $L^2$-norm (see \cite[7.313]{GR}):
\begin{equation}\label{eqn:L2Gegen}
\int_{-1}^1 \left\vert C_\ell^{\alpha}(v)\right\vert^2(1-v^2)^{\alpha-\frac{1}{2}}\,dv = \frac{\pi 2^{1-2\alpha}\Gamma(\ell+2\alpha)}{\ell!(\ell+\alpha)\Gamma\left(\alpha\right)^2}.
\end{equation}

\footnotesize{ \noindent  Addresses:
T. Kobayashi. Graduate School of Mathematical Sciences and  Kavli IPMU (WPI), The University of
 Tokyo,
3-8-1 Komaba, Meguro, Tokyo, 153-8914 Japan; \texttt{{
 toshi@ms.u-tokyo.ac.jp}}.\vskip5pt

\noindent M. Pevzner.
Laboratoire de Math\'ematiques de Reims, Universit\'e
de Reims-Champagne-Ardenne, FRE 2011 CNRS, F-51687, Reims, France; \texttt{{
 pevzner@univ-reims.fr.}}


\begin{thebibliography}{99} 

\bibitem{AAR}G.~E. Andrews, R. Askey, R. Roy, \emph{Special functions.} Encyclopedia of Mathematics and its Applications, {\bf{71}}. Cambridge University Press, Cambridge, 1999. xvi+664 pp.

\bibitem{Clerc}
J.-L.~ Clerc, 
Another approach
to Juhl's conformally covariant differential operators from $S^n$ to $S^{n-1}$,
SIGMA Symmetry Integrability Geom. Methods Appl.~ {\bf{13}} (2017), 
Paper No. 26, 18 pp.

\bibitem{Cohen} H. Cohen, {Sums involving the values at
negative integers of $L$-functions of quadratic characters},
\emph{Math. Ann}. \textbf{217} (1975),
\href{http://dx.doi.org/10.1007/BF01436180}
{pp. 271--285}.

\bibitem{DB} L.~Debnath, D.~Bhatta,
\emph{Integral transforms and their applications.}
Third edition. Chapman and Hall/CRC, Boca Raton, FL, 2015. xxvi+792 pp.

\bibitem{FK} J.~Faraut,  A.~Kor\'anyi, \emph{Analysis on Symmetric Cones}.
Oxford Science Publications, 1994.

\bibitem{FG}
C.~Fefferman, C.~R.~ Graham, 
Juhl's formulae for GJMS operators and Q-curvatures.
J.~Amer.~Math.~Soc. {\bf{26}} (2013), 1191--1207. 

\bibitem{GR} I.~S. Gradshteyn, I.~M. Ryzhik. \emph{Table of integrals, series, and products.}Translated from the Russian. Translation edited and with a preface by Daniel Zwillinger and Victor Moll. Eighth edition. Revised from the seventh edition. Elsevier/Academic Press, Amsterdam, 2015. xlvi+1133 pp.

\bibitem{HC} Harish-Chandra, Representations of semisimple Lie groups on a Banach space. \emph{Proc. Nat. Acad. Sci. U. S. A.} {\bf{37}} (1951), 170--173.

\bibitem{Juhl}A. Juhl, \emph{ Families of conformally covariant differential operators, $Q$-curvature and holography.} Progr. Math., 
\href{http://link.springer.com/book/10.1007/978-3-7643-9900-9/page/1}{\bf{275}}. \emph{Birkh\"auser}, Basel, 2009. 
%





\bibitem{KKMOOT} M. Kashiwara, A. Kowata, K. Minemura, K. Okamoto, T. Oshima, M. Tanaka, 
Eigenfunctions of invariant differential operators on a symmetric space,
\emph{Ann. of Math}. \textbf{107} (1978), pp. 1-39.

\bibitem{kdeco98}
T. Kobayashi, Discrete decomposability of the restriction of
$A_{\mathfrak q}(\lambda)$ with respect to reductive
subgroups II--micro-local analysis and asymptotic $K$-support, \textit{Ann. of Math.} (2) \textbf{147} (1998).
         
         \bibitem{kdeco98Inv}
T. Kobayashi, Discrete decomposability of the restriction of
$A_{\mathfrak q}(\lambda)$ with respect to reductive subgroups
         III--restriction of Harish-Chandra modules and associated varieties, \textit{Invent. Math.} \textbf{131} (1998),
         \href{http://dx.doi.org/10.1007/s002220050203}{pp. 229--256}.


%
\bibitem{K08} T. Kobayashi, Multiplicity-free theorems of the restrictions of unitary highest weight modules with respect to reductive symmetric pairs. Representation theory and automorphic forms,  Progr. Math., 
{\textbf{255}}, 
Birkh\"auser, Boston, (2008), 
\href{http://dx.doi.org/10.1007/978-0-8176-4646-2_3}
{pp. 45--109}.

\bibitem{K12} T. Kobayashi, Restrictions of generalized Verma modules
 to symmetric pairs, 
\emph{Transform. Groups } {\bf{17}} (2012), 
\href{http://dx.doi.org/10.1007/s00031-012-9180-y}{pp. 523--546}.


\bibitem{K14} T. Kobayashi,
F-method for symmetry breaking operators, 
\emph{
Diff. Geometry and its Appl.}
{\textbf{33}} (2014), 
\href{http://dx.doi.org/10.1016/j.difgeo.2013.10.003}{pp. 272--289.}


\bibitem{KKPAdS} T. Kobayashi, T. Kubo, M. Pevzner, 
Conformal symmetry breaking operators for anti-de Sitter spaces. 
In: Geometric methods in physics XXXV, pp. 69--85, Trends Math., Birkh\"auser/Springer, Cham, 2018. 

\bibitem{xKOfm}
T.~Kobayashi, T.~Oshima,
Finite multiplicity theorems for induction and restriction,
Adv. Math., {\bf{248}} (2013),
\href{http://dx.doi.org/10.1016/j.aim.2013.07.015}
{921--944}.

\bibitem{KP16a}
T.~Kobayashi, M.~Pevzner,
\textit{Differential symmetry breaking operators. 
\emph{I}.\ General theory and F-method.}\
Selecta Math.\ (N.S.) \textbf{22} (2016),
\href{http://dx.doi.org/10.1007/s00029-015-0207-9}{801--845}.

\bibitem{KP16b} 
T.~Kobayashi, M.~Pevzner,
\textit{Differential symmetry breaking operators. \emph{II}.\ Rankin--Cohen operators for symmetric pairs.} 
Selecta Math.\ (N.S.) \textbf{22} (2016), 
\href{http://dx.doi.org/10.1007/s00029-015-0208-8}{847--911}. 
%

\bibitem{KS}
T. Kobayashi, B. Speh, Symmetry Breaking for Representations of
Rank One Orthogonal Groups, Mem. Amer. Math. Soc., {\textbf {238}}, Amer.
Math. Soc., Providence, RI, 2015, \href{
http://dx.doi.org/10.1090/memo/1126}{v+112} pp., ISBN: 978-1-4704-1922-6.

\bibitem{Ma}
J.~M. Maldacena, The large $N$ limit of superconformal field theories and supergravity. 
\emph{Adv. Theor. Math. Phys}. {\bf{2}} (1998), no. 2, pp. 231--252. 


\bibitem{Mol} V.F. Molchanov, Tensor products of unitary representations of the three-dimensional Lorentz group.
\emph{Math. USSR, Izv.} {\bf{15}} (1980), pp. 113--143.

\bibitem{Repka} J. Repka, Tensor products of holomorphic discrete series representations.
\emph{Can. J. Math.} \textbf{31} (1979), pp. 836--844.


\bibitem{W1}{example}
E. Witten, Anti-de-Sitter space and holography,
\emph{Adv. Theor. Math. Phys}.
{\bf {2}} (1998),
\href{DOI: http://dx.doi.org/10.4310/ATMP.1998.v2.n2.a2}{pp. 253--291.}



\bibitem{Z}
D.~Zagier, 
Modular forms and differential operators. 
Proc.~Indian Acad.~Sci. (Math.~Sci.) {\bf{104}}, 57--75 (1994).

\end{thebibliography}
\end{document}